\documentclass[11pt]{amsart}

\usepackage{amsthm,amsmath,amssymb,amsrefs}
\RequirePackage{hyperref}
\usepackage[mathscr]{eucal}
\usepackage{enumerate}
\usepackage{verbatim}



\usepackage{epstopdf}
\usepackage{subcaption}
\usepackage{graphicx}
\usepackage{verbatim}
\usepackage{amsmath,amsthm}
\usepackage{graphics}
\usepackage{color}
\usepackage{epsfig}
\usepackage{amssymb,amsmath}
\usepackage[mathscr]{eucal}
\usepackage{ upgreek }
\usepackage{tensor}
\usepackage{relsize}

\usepackage{appendix}
\usepackage{amssymb,amsmath}
\usepackage[mathscr]{eucal}
\usepackage{color}
\usepackage{enumitem}
\usepackage[utf8]{inputenc}
\usepackage[leftcaption]{sidecap}
\usepackage{tikz, pgfplots}
 \usepackage{tikz-cd}
 \usepackage{cancel}
 \usepackage{stmaryrd}
\usetikzlibrary{matrix,arrows,decorations.pathmorphing}
\usetikzlibrary{positioning}
\usetikzlibrary{matrix,arrows,decorations.pathmorphing, shapes.geometric}
\tikzset{>=latex}

\usepackage{psfrag}

\DeclareFontFamily{U}{mathx}{\hyphenchar\font45}
\DeclareFontShape{U}{mathx}{m}{n}{
      <5> <6> <7> <8> <9> <10>
      <10.95> <12> <14.4> <17.28> <20.74> <24.88>
      mathx10
      }{}
\DeclareSymbolFont{mathx}{U}{mathx}{m}{n}
\DeclareFontSubstitution{U}{mathx}{m}{n}
\DeclareMathAccent{\widecheck}{0}{mathx}{"71}

%


\usepackage{psfrag}







\newtheorem{theorem}[equation]{Theorem}
\newtheorem{lemma}[equation]{Lemma}
\newtheorem{prop}[equation]{Proposition}
\newtheorem{cor}[equation]{Corollary}
\newtheorem{corollary}[equation]{Corollary}
\newtheorem{conjecture}[equation]{Conjecture}
\newtheorem{question}[equation]{Question}

\newtheorem{definition}[equation]{Definition}
\newtheorem{example}[equation]{Example}

\newtheorem{notation}[equation]{Notation}
\newtheorem{assumption}[equation]{Assumption}

\theoremstyle{remark}
\newtheorem{remark}[equation]{Remark}
\newtheorem{convention}[equation]{Convention}

\numberwithin{equation}{section}

\newcommand{\R}{\mathbb{R}}
\newcommand{\C}{\mathbb{C}}
\newcommand{\F}{\mathbb{F}}
\newcommand{\Z}{\mathbb{Z}}
\newcommand{\D}{\mathbb{D}}
\newcommand{\B}{{\mathbb{B}}}
\newcommand{\N}{\mathbb{N}}
\newcommand{\Sph}{\mathbb{S}}
\newcommand{\T}{\mathbb{T}}
\newcommand{\K}{\mathbb{K}}
\newcommand{\Kbb}{{\mathbb{K}}}

\newcommand{\Ric}{\text{Ric}}

\renewcommand{\Re}{\operatorname{Re}}
\renewcommand{\Im}{\operatorname{Im}}
\newcommand{\ind}{\operatorname{Ind}}
\newcommand{\nul}{\operatorname{Null}}
\newcommand{\Int}{\operatorname{Int}}
\newcommand{\area}{\operatorname{Area}}
\newcommand{\genus}{\operatorname{genus}}
\newcommand{\supp}{\operatorname{supp}}

\newcommand{\dbold}{{\mathbf{d}}}
\newcommand{\stab}{\operatorname{Stab}}
\newcommand{\Grp}{\mathscr{G}} 
\newcommand{\group}{{\mathscr{G}  }}
\newcommand{\grouptilde}{\widetilde{\mathscr{G}  }}
\newcommand{\bfSigma}{{\mathbf{\Sigma}}} 
\newcommand{\bfSigmap}{{\bfSigma'}} 
\newcommand{\GrpSigma}{\mathscr{G}^{{\bfSigma}}} 
\newcommand{\GrpSigmam}{\mathscr{G}^{{\bfSigma}[m]}} 
\newcommand{\GrpSigmap}{\mathscr{G}^{{\bfSigmap}}} 
 
\newcommand{\GrpSigmapm}{\mathscr{G}^{{\bfSigmap}[m]}}

\newcommand{\Gsym}{\mathscr{G}_{sym}}

\newcommand{\Lcal}{\mathcal{L}}

\newcommand{\Rcal}{\mathcal{R}}
\newcommand{\Ecal}{\mathcal{E}}

\newcommand{\CC}{{{{\mathscr{C}}}}}
\newcommand{\cC}{{{\CC}}}

\newcommand{\CCperp}{{{\CC}^\perp}} 

\newcommand{\fhat}{{\hat{f}}}
\newcommand{\ftilde}{{\tilde{f}}}
\newcommand{\ftildep}{{\tilde{f}}_+}

\newcommand{\phicheck}{{\widecheck{\phi}}}

\newcommand{\psihat}{{\hat{\psi}}}

\newcommand{\mubreve}{{\check{\mu}}}

\newcommand{\osc}{{{\mathrm{osc}}}}
\newcommand{\avg}{{{\mathrm{avg}}}}

\newcommand{\ii}{\ensuremath{\mathrm{i}}}

\newcommand{\xx}{\ensuremath{\mathrm{x}}}
\newcommand{\yy}{\ensuremath{\mathrm{y}}}
\newcommand{\zz}{\ensuremath{\mathrm{z}}}
\newcommand{\xxx}{{\ensuremath{\mathring{\mathrm{x}}}}}
\newcommand{\yyy}{{\ensuremath{\mathring{\mathrm{y}}}}}
\newcommand{\zzz}{{\ensuremath{\mathring{\mathrm{z}}}}}
\newcommand{\YY}{\ensuremath{\mathrm{\Phi}}}
\newcommand{\rr}{\ensuremath{\mathrm{r}}}

\newcommand{\rp}{\ensuremath{\mathrm{r}_p}}
\newcommand{\rA}{\ensuremath{\mathrm{r}_A}}
\newcommand{\sss}{\ensuremath{\mathrm{s}}}
\newcommand{\ssst}{\ensuremath{{\mathrm{s}}}}

\newcommand{\sroot}{{\sss_{\mathrm{root}}}}
\newcommand{\xxmax}{{\xx_{\mathrm{max}}}}
\newcommand{\Cnodal}{\CC_{\mathrm{nodal}}}

\newcommand{\sbar}{{\underline{\sss}}}
\newcommand{\sbars}{{\sbar_*}}
\newcommand{\stil}{{\tilde{\sss}}}

\newcommand{\rbar}{{\underline{\rr}}}
\newcommand{\cbar}{{\underline{c}}}
\newcommand{\cbarp}{{\underline{c}'}}

\newcommand{\sbarroot}{{\sbar_{\mathrm{root}}}}
\newcommand{\fp}{f'}
\newcommand{\foY}{f_{1,\YY_0}}
\newcommand{\fo}{f_{1}}
\newcommand{\ftY}{f_{2,\YY_0}}
\newcommand{\fhattY}{\fhat_{2,\YY_0}}
\newcommand{\ft}{f_{2}}
\newcommand{\fC}{f_\CC}

\newcommand{\up}{u'}

\newcommand{\jbold}{\boldsymbol{j}}
\newcommand{\mbold}{\boldsymbol{m}}

\newcommand{\ssshat}{{{\widehat{\sss}}}}

\newcommand{\thetatilde}{{\widetilde{\theta}}}

\newcommand{\Mbreve}{{\bar{\bar{M}}}}
\newcommand{\Mbr}{{\bar{\bar{M}}}}
\newcommand{\chibar}{{\bar{\bar{\chi}}}}
\newcommand{\Mhat}{\Mbreve}

\newcommand{\LcalM}{\Lcal_\Mbreve}
\newcommand{\LcalS}{\Lcal_\Sigma}

\newcommand{\chiK}{{{\chi}}}
\newcommand{\chiM}{{{\chi_\Mbreve}}}
\newcommand{\hK}{{\bar{h}_{\Kbb}}}
\newcommand{\hM}{{h_{\Mbreve}}}

\newcommand{\Rhat}{{\widehat{\Rcal}}}

\newcommand{\pp}{\mathrm{p}}

\newcommand{\Sigmau}{\underline{\Sigma}}
 
\newcommand{\GrpSigmamu}{\mathscr{G}^{\mathbf{\Sigmau}[m]}} 
\newcommand{\ptti}{\pp_{ {i\pi/{m}} }} 
\newcommand{\pttj}{\pp^{ {j\pi/{2}} }}

\newcommand{\rot}{\mathsf{R}}
\newcommand{\refl}{{\underline{\mathsf{R}}}}
\newcommand{\homot}{\mathsf{H}}

\newcommand{\Om}{\Omega} 
\newcommand{\Udom}{U} 
\newcommand{\Usq}{U_{J}} 
\newcommand{\Ustar}{U_{*}} 
\newcommand{\fstar}{f_{*}} 
\newcommand{\psistar}{\psi_{*}} 
\newcommand{\Ustarp}{U'_{*}} 
\newcommand{\fstarp}{f'_{*}} 
\newcommand{\UC}{U_\CC} 
\newcommand{\Omu}{{\underline{\Omega}}} 
\newcommand{\Abold}{\mathbf{A}} 
\newcommand{\Ombold}{\mathbf{\Omega}} 
\newcommand{\Omboldu}{\underline{\mathbf{\Omega}}} 
 
\newcommand{\gu}{{\underline{g}}} 
\newcommand{\Ut}{{{{\widetilde{U}}}}} 
\newcommand{\Utp}{{{{\widetilde{U}}_1}}} 
\newcommand{\gt}{{\widetilde{g}}} 

\newcommand{\Lcalh}{{{\Lcal}_h}}

\newcommand{\LcalhN}{{{\Lcal}_h^N}}

\newcommand{\LchiK}{{\mathcal{L}_{\chiK}}} 
\newcommand{\LchiM}{{\mathcal{L}_\chiM}}

\newcommand{\LhM}{{\mathcal{L}_\hM}}
\newcommand{\LhK}{{\mathcal{L}_\hK}}
 
\newcommand{\LcalhD}{{{\Lcal}_h^D}} 
\newcommand{\LhMD}{{\mathcal{L}_\hM^D}}
\newcommand{\Bcal}{\mathcal{B}}

\newcommand{\Lint}{{{\Lcal}^\circ}} 
 
\newcommand{\BbouR}{{{\Bcal}^R}} 
\newcommand{\BbouD}{{{\Bcal}^D}} 
\newcommand{\BbouN}{{{\Bcal}^N}}

\newcommand{\Lcalu}{{\underline{\Lcal}}}

\newcommand{\Lcalt}{{\widetilde{\Lcal}}} 
\newcommand{\LcaltD}{{{\widetilde{\Lcal}}^D}} 
\newcommand{\LcaltN}{{{\widetilde{\Lcal}}^N}} 
\newcommand{\Lintt}{{{{{\Lcalt}^\circ}}}} 
\newcommand{\Linttp}{{{\left.{{\Lcalt}^\circ}\right|_{\Ut'}}}} 
\newcommand{\Bbout}{{\widetilde{\Bcal}}} 
\newcommand{\BboutD}{{{\widetilde{\Bcal}}^D}} 
\newcommand{\BboutN}{{{\widetilde{\Bcal}}^N}}

\newcommand{\Vt}{{\widetilde{V}}}



\newcommand{\cone}{\mbox{$\times \hspace*{-0.244cm} \times$}}
\newcommand{\Span}{\operatorname{Span}}

\newcommand{\CCslash}{{\CC\mspace{.8mu}\!\!\!\!\boldsymbol{/}\,}}
\newcommand{\CCbackslash}{{\CC\mspace{.8mu}\!\!\!\!\boldsymbol{\backslash}\,}}

\newcommand{\Lmer}{{L_{mer}}}
\newcommand{\Lpar}{{L_{par}}}
\newcommand{\Lpol}{{L_{2}}}
\newcommand{\Leq}{{L_{0}}}



\newcommand{\kcir}{k_{\mathrm{cir}}}
\newcommand{\kpol}{k_{\mathrm{pol}}}
\newcommand{\kexc}{k_{\mathrm{excess}}}

\newcommand{\xL}{\boldsymbol{{\xi}}}   
\newcommand{\xbreve}{\bar{\bar{\boldsymbol{\xi}}}}   
\newcommand{\xibreve}{\xbreve}   
\newcommand{\xring}{\boldsymbol{{\xi}}^{\T}}   
\newcommand{\xbrevering}{\xibreve^{\T}}   
\newcommand{\xiomo}{\xbreve_{(1,m,1)}} 
\newcommand{\xibreveomo}{\xiomo} 
\newcommand{\xibrevemm}{\xbreve_{(m,m)}} 
\newcommand{\xbreveum}{{\xbreve}_{(m)}^\D}  
\newcommand{\xbreveutwo}{{\xbreve}_{(2)}^\D}  
\newcommand{\xbreveump}{{\xbreve}_{(1,m)}^\D}  
\newcommand{\Kbreve}{\bar{\bar{K}}}
   
\newcommand{\Abreve}{\bar{\bar{A}}}
\newcommand{\Kbr}{\Kbreve}
\newcommand{\Xbreve}{\bar{\bar{X}}}
\newcommand{\Xbr}{\Xbreve}
\newcommand{\ubreve}{\bar{\bar{u}}}
\newcommand{\ubreveM}{\ubreve}

\newcommand{\Sigmabreve}{\bar{\bar{\Sigma}}} 
\newcommand{\sym}{\mathrm{sym}}
\newcommand{\gr}{{\mathrm{gr}}}

\newcommand{\SigmaU}{{{\Sigmabreve}_\gr}} 
\newcommand{\ML}{\xL_{m-1,1}}

\newcommand{\PP}{\underline{P}}
\newcommand{\PV}{{P}}
\newcommand{\mbar}{{{m}\mspace{.8mu}\!\!\!\!\!\boldsymbol{/}\,}}
\newcommand{\mtwo}{{\boldsymbol{\underline{\mu}}}}
\newcommand{\mo}{{\boldsymbol{\overline{\mu}}}}
\newcommand{\mi}{{\mu}}
\newcommand{\mh}{{(m/2)}}
\newcommand{\Oplus}{{{\textstyle{\bigoplus}}}}

\newcommand{\disjun}{\textstyle\bigsqcup}
\newcommand{\Thetahat}{\widehat{\Theta}}
\newcommand{\Thetaring}{\mathring{\Theta}}
\newcommand{\Thetaringhat}{\widehat{{\Thetaring}}}
\newcommand{\Thetacyl}{\Theta_{\cyl}}
\newcommand{\cyl}{\ensuremath{\mathrm{Cyl}}}
\newcommand{\Spheq}{\mathbb{S}^2_{\mathrm{eq}}} 
\newcommand{\Sz}{\Sigma^0}           
\newcommand{\ThetaSphcyl}{\Theta^{\cyl}_{\Sz}}
\newcommand{\sech}{\operatorname{sech}}
\newcommand{\csch}{\operatorname{csch}}
\newcommand{\PiSph}{\Pi_{\Sz   }}

\newcommand{\tildecat}{{\mathbb{K}}}
\newcommand{\Kbt}{{\mathbb{K}[\tau]}}

\newcommand{\XKt}{{X_{\Kbb[\tau]}}}
\newcommand{\gS}{{g_{\Sigma^0}}}
\newcommand{\gSigma}{{g_{\Sigma}}}
\newcommand{\gSph}{{g_{\Sph^2}}}
\newcommand{\gnuK}{{\nu_\K^* \gSph}}
\newcommand{\gK}{{g_{\Kbb}}}
\newcommand{\gM}{{g_{\Mbreve}}}
\newcommand{\AK}{{A_{\Kbb}}}
\newcommand{\nuK}{{\nu_{\Kbb}}}
\newcommand{\phicat}{\varphi_{\mathrm{cat}}}
\newcommand{\arccosh}{\operatorname{arccosh}}
\newcommand{\OL}{{{U}_{L}}}
\newcommand{\Sbreve}{{\bar{\bar{S}}}} 
\newcommand{\SbreveS}{\Sbreve_\Sigma}
\newcommand{\SbreveL}{\Sbreve_L}
\newcommand{\KbrL}{\Kbr_L}

\newcommand{\Opar}{{{\Sigmabreve}_{par}}}
\newcommand{\ev}{{{\mathrm{ev}}}}
\newcommand{\od}{{{\mathrm{od}}}}
\newcommand{\en}{{{\mathrm{end}}}}
\newcommand{\sn}{{{\mathrm{sng}}}}
\newcommand{\phie}{{\phi_{\ev}}}
\newcommand{\phio}{{\phi_{\od}}}
\newcommand{\phiN}{{\phi_{N}}}
\newcommand{\phiD}{{\phi_{D}}}
\newcommand{\phien}{{\phi_{\en}}}
\newcommand{\phisn}{{\phi_{\sn}}}

\newcommand{\phitp}{{\phi_{\mathrm{ext}}}}
\newcommand{\phiext}{{\phitp}}

\newcommand{\aunder}{{\underline{a}}}
\newcommand{\llcup}{{ \, {{\mathlarger{\mathlarger{\cup}}}} \, }} 
\newcommand{\llcap}{{ \, {{\mathlarger{\mathlarger{\cap}}}} \, }} 
\newcommand{\lllcup}{{ \, {\mathlarger{\mathlarger{\mathlarger{\cup}}}} \, }} 

\newcommand{\inter}{\operatorname{Interior}}

\newcommand{\vecv}{{{\vec{v}}}}  
\newcommand{\vecf}{{{\vec{f}}}}  
\newcommand{\vecfp}{{{\vec{f'}}}}  
\newcommand{\vecfpp}{{{\vec{f''}}}}  
  
\newcommand{\vecK}{{{\vec{K}}}}  
  
\newcommand{\Cperp}{{{C}^\perp}} 
 
\newcommand{\Sim}{\displaystyle\operatornamewithlimits{{\scalebox{1.296}{{$\, \sim \, $}}}}}  
\newcommand{\disint}{{{\displaystyle\int}}}  
\newcommand{\psicut}{{\psi_{\mathrm{cut}}}}
\newcommand{\Psibold}{{\boldsymbol{\Psi}}}

\newcommand{\UY}{{\bar{\bar{\YY}}    }}

\newcommand{\taubreve}{\bar{\bar{\tau}}}

\newcommand{\Omegahat}{\Sigmabreve}
\newcommand{\Dbreve}{\bar{\bar{D}}}

\newcommand{\taubold}{{\boldsymbol{\tau}}}
\newcommand{\con}{{{(con)}}}
\newcommand{\Sl}{{{(sl)}}}
\newcommand{\subker}{\mathscr{K}_{{\mathrm{sub}}}}  

\newcommand{\und}{\underline{\phantom{i}}}

\title[Index and nullity]{Index and nullity \\ of minimal surface doublings, I} 

\author[N.~Kapouleas]{Nikolaos~Kapouleas}
\address{Department of Mathematics, Brown University, Providence, RI 02912}
\email{nicolaos\_kapouleas@brown.edu}

\author[J.~Zou]{Jiahua~Zou} 
\address{Department of Mathematics, Rutgers University, Pistacaway, NJ 08854} 
\email{jiahua.zou@rutgers.edu}

\date{\today}

\begin{document}

\begin{abstract}
We prove that 
for any large enough $m \in\mathbb{N}$, 
the genus $\gamma=m+1$  
equator-poles minimal surface doubling 
of the equatorial two-sphere $\Sigma^0 = \mathbb{S}^2_{\mathrm{eq}}$ in the round three-sphere $\mathbb{S}^3$, 
which has two catenoidal bridges at the poles and  $m$ bridges equidistributed along the equatorial circle $\mathscr{C}$ of $\Sigma^0                  $
and was discovered in earlier work of Kapouleas, 
has index $2\gamma+5=2m+7$ and nullity $6$, 
and so it has no exceptional Jacobi fields and is $C^1$-isolated.   
\end{abstract}

\maketitle

\section{Introduction}
\label{S:intro}

\paragraph{\bf{General framework}}
$\phantom{ab}$ 
$\vspace{.072cm}$ 

Determining the index and nullity of complete  or closed minimal surfaces is an important but difficult problem 
which has been fully solved only in a few cases; 
see for example \cites{nayatani1992,nayatani1993, morabito} 
and surveys \cite{brendle:survey,urbano:2025:book,kapouleas:nisyros}. 
In the case of closed embedded minimal surfaces of genus $\gamma\ge2$ the only surfaces of known index are the Lawson surfaces $\xL_{\gamma,1}$ \cite{Lawson}: 
their index was recently determined \cite{LindexI} to be $2\gamma+3$ and 
their nullity to be $6$, and so they have no exceptional Jacobi fields and are $C^1$-isolated. 
The purpose of this article and its sequel \cite{IIDindex} is to determine the index and nullity of various \emph{minimal surface doublings of gluing type} in $\Sph^3$ 
and \emph{free boundary minimal (FBM) surface doublings} in the unit Euclidean ball $\B^3$. 
We briefly recall  now the minimal surface doublings we are interested in. 

Minimal surface doublings are \emph{smooth} minimal surfaces which are unions of two (sometimes more) graphs over (smooth closed domains of) a given minimal surface $\Sigma$ 
called the \emph{base surface $\Sigma$} 
(see for example \cite[Definition 1.1]{gLD}). 
When constructed by PDE gluing methods they are of \emph{gluing type} and resemble two or more copies of $\Sigma$ joined by small \emph{catenoidal bridges} 
\cite{kapouleas:yang,wiygul:jdg2020,SdI,SdII,gLD,IIgLD} or small \emph{half-catenoidal bridges} (when at the boundary in the FBM case) \cite{FPzolotareva,tripling}.  
They should not be confused with \emph{minimal surface desingularizations} constructed by PDE gluing methods, 
as for example in \cite{CCD,choe:soret,kapouleas:wiygul:toridesingularization,k1,kapouleas:general}.  

The simplest high-genus doublings of a great two-sphere $\Sigma=\Sigma^0=\Spheq$ in $\Sph^3$ are the \emph{two-circle doublings} \    $\xibrevemm $  (with $\kcir=2$, $\kpol=0$) \     
and the \emph{equator-poles doublings} \    $\xibreveomo$ (with $\kcir=1$, $\kpol=2$), \   
discovered for large \   $m\in\N$ \   by Kapouleas in \cite{SdI}.  
Generalizing the construction in \cite{SdI} to any     $\kcir\in\N$,   Kapouleas-McGrath discovered in \cite{SdII} 
minimal surface doublings \     $\xbreve_{(m:\kcir)}$ (with $\kcir>2$, $\kpol=0$) \     and \     $\xbreve_{(1,m:\kcir,1)}$ (with $\kcir>1$, $\kpol=2$) \   
for any \   $m\in \N$ \   large enough in terms of \   $\kcir$. \ 
(Here we use $\xibreve$ to denote minimal surface doublings in analogy with     Lawson's notation in \cite{Lawson}; $  m:\kcir $   simply means $m$ repeated $\kcir$ times.)  
We recall that the above doublings have genus \ $\gamma  = m\kcir+\kpol-1$ \ and 
contain \ $m\kcir$ \ catenoidal bridges located at the intersections of $m$ symmetrically arranged meridians with $\kcir$ symmetrically arranged parallel circles (see \ref{EL}),   
and when $\kpol=2$ two extra bridges at the poles. 
Their     symmetry group \   $\GrpSigmam\simeq\Z_2\times \Z_2\times D_{2m}$ \   (see \ref{Egroup}) is an index two subgroup of the symmetry group of the Lawson surface \   $\xL_{m-1,1}$.

In \cite{gLD,IIgLD} many more minimal surface doublings of $\Sigma=\Spheq$ have been discovered 
where the number and alignment of bridges concentrating along the $\kcir$ parallel circles can vary 
and there are $\kpol\in\{0,1,2\}$ bridges at the poles: 
we denote these by $\xbreve_{\mbold}  $, 
where $\mbold\in \N^{\kcir+\kpol}$ determines the number and alignment of bridges at the circles and poles as in        \cite[9.18]{gLD}.  
Of interest to us are the doublings      \ $\xbreve_{(m:\kcir,1)}$,    \  constructed by PDE gluing methods in \cite{IIgLD} for any $m$ large enough in terms of any given $\kcir\in\N$,   
with \ $m\kcir$ \    bridges at the intersections of $m$ symmetrically arranged meridians with $\kcir$ parallel circles (not symmetrically arranged) and an extra bridge at one of the poles. 
The   symmetry group of \ $\xbreve_{(m:\kcir,1)}$     \  
is \     $\GrpSigmapm\simeq \Z_2\times D_{2m}$, \     an index two subgroup of  $\GrpSigmam$  (see \ref{Egroupp}).

For completeness we mention also that a large class of new minimal surface doublings of $\Sigma=\Spheq$ has been found recently by a variational method \cite{KKMS};  
these doublings maximize their normalized first eigenvalue subject to various symmetry and topological constraints.  
Many doublings of the Clifford torus $\T$ have been discovered also 
\cite{kapouleas:yang,wiygul:jdg2020,gLD,IIgLD,ketoverMN:2020,k35}.  
In particular we denote by $\xbrevering_{(m:k)     }$ a Clifford torus minimal doubling 
containing $km$ bridges centered at the points of a $k\times m$ lattice ($k,m\in\N$) in $\T$;  
such a doubling was first discovered in \cite{kapouleas:yang} for $k=m$ large,
in \cite{wiygul:jdg2020} for $m$ large enough in terms of a bound on $|m/k|$,     
and in \cite{gLD} for $m$ large enough in terms of any $k\in [3,m]$. 
We mention in passing that desingularizations of Clifford tori have also been discovered \cite{choe:soret,kapouleas:wiygul:toridesingularization,KW2024}, 
the simplest of which (desingularizing $k$ tori) we denote by $\xring_{k,m}$ (with $n_1=n_2=\sigma=1$ in the notation of \cite{kapouleas:wiygul:toridesingularization}).        

Folha-Pacard-Zolotareva \cite{FPzolotareva}, following great progress    in the subject of FBM surfaces 
as for example in \cite{fraser-schoen:2016:invent,fraser-schoen:2011:advances,fraser-schoen:2015:IMRN,fraser-li:2014:JDG,chen-fraser-pang:2015:index:TAMS},  
and the doubling constructions in \cite{kapouleas:yang,SdI}, 
constructed for any   large \ $m\in\N$ \   
FBM surface doublings of the equatorial disc  \       $\D\subset \B^3$,  \       which we call \       $\xbreveum$  \       and  \       $\xbreveump$. \ 
They     both have symmetry group  \       $\GrpSigmamu$  \       isomorphic to  \       $\Z_2\times D_{2m}$,    \        
and both have $m$ boundary components and contain $m$ half-catenoidal bridges equidistributed along $\partial\D$.  
$\xbreveum$  \       is of genus zero and  \       $\xbreveump$ \ of genus one with an extra catenoidal bridge at the center of $\D$.

\begin{remark}[The notation $\xibreve$]   
\label{rem} 
Note that the doublings in $\Sph^3$ or $\B^3$ discussed above are strongly expected to be unique by their descriptions 
and it is common to refer to them as if this was known to be so \cite[Remark 5.23]{gLD}. 
They are also expected or known by other methods to exist in cases where 
the bridges are not small perturbations of truncated catenoids 
(they are not of gluing type  in the sense of \ref{dgtSS}),  
for example (assuming uniqueness and graphical property even if not known yet) we can identify \   $\xibreve_{(m)}$ \   with Lawson's \   $\xL_{m-1   ,1}$ \   in \cite{Lawson}, or  
\  $\xbreve_{(1,3)}$, \  $\xbreve_{(4,4)}$, \  $\xbreve_{(1,4,1)}$,  \  $\xbreve_{(1,5,-5,1)}$,  \  and \  $\xbreve_{(5,5,-5,-5)}$ \  
with the closed minimal surfaces of genus $3$, $7$, $5$, $11$, and $19$ of Karcher--Pinkall--Sterling in \cite{KPS},   
or 
\ $\xbreve_{(m:\kcir)}$, \ $\xbreve_{(1,m:\kcir,1)}$, \ and \ $\xbreve_{(m:\kcir,1)}$  \    (including cases of low $m,\kcir$) 
with surfaces discovered or rediscovered in \cite{KKMS}. 
Finally, 
\   $\xL_{1   ,1}$ \   
in Lawson's notation 
can be identified with the Clifford torus \cite{Lawson} 
(with the Clifford torus placed at distance $\pi/4$ from $\CC$ or $\CCperp$), \     
and in our notation with $\xibreve_{(2)}$ or $\xibreve_{(1,1)}$.  
Similarly \ $\xbreveutwo$ \ can be identified with the critical catenoid with its axis placed on $\D$. 
\end{remark}

\bigbreak 
\paragraph{\bf{Brief discussion of results}}
$\phantom{ab}$ 
$\vspace{.072cm}$

In this article we concentrate on the equator-poles doublings $\xibreveomo$ of genus $\gamma=m+1$ for which we have the following.     

\begin{theorem}[Main Theorem \ref{Tmain}] 
\label{TA} 
For \  $m\in\N$ \  large enough, the index of \  $\xibreveomo$ \  is \quad  $2\gamma+5=2m+7$ \quad  and its nullity is \ $6$, \  
and so it has no exceptional Jacobi fields and is $C^1$ isolated. 
\end{theorem}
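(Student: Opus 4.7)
The plan is to exploit the large symmetry group $\GrpSigmam\simeq \Z_2\times \Z_2\times D_{2m}$ to decompose the Jacobi operator $\Lcal$ on $\xibreveomo$ into pieces acting on isotypic components, and then to analyze each piece via the gluing structure of the surface. The central split is by parity under the reflection $\refl$ across the base $\Sz=\Spheq$: this decomposes the low spectrum into an even part $V^{\ev}$ and an odd part $V^{\od}$, and I would compute the index and nullity contributions on each sector separately.

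For the even sector, eigenfunctions are well approximated by functions on $\Sz$ with Neumann-type matching conditions on small disks around the $m+2$ bridge locations (the two poles and $m$ equidistributed points on $\CC$). The model problem is the Jacobi operator $\Delta_{\Sph^2}+2$ on $\Spheq$, whose spectrum is $\ell(\ell+1)-2$ with multiplicity $2\ell+1$: the negative eigenvalue $-2$ (constants) and the three zero eigenvalues (linear spherical harmonics) contribute directly to the index and nullity on the even side. Using the further decomposition under $\GrpSigmam$, I would track how these and higher sphere eigenvalues perturb to negative, zero, or positive eigenvalues of $\Lcal$ on $\xibreveomo$, paying particular attention to the $D_{2m}$-symmetric modes localized near $\CC$, whose count of negative perturbations grows linearly in $m$.

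For the odd sector, eigenfunctions essentially vanish on $\Sz$ and are concentrated on the catenoidal bridges, where they match with odd Jacobi fields of the model catenoid (the odd-under-the-waist-reflection parts of vertical translation, dilation, and the two horizontal translations). The complement of the bridges contributes nothing in the odd sector since the corresponding Dirichlet model spectrum on $\Sz$ minus small disks is strictly positive. The odd index and nullity therefore reduce to a finite and explicit catenoid bookkeeping at each of the $m+2$ bridges, constrained by the $\GrpSigmam$-equivariance and by global matching across the whole doubling.

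The main obstacle will be the quantitative gluing step that converts these approximate dimension counts into exact eigenvalue counts on $\xibreveomo$. Following the template of \cite{LindexI}, this requires (a) a Courant-type upper bound built from explicit test functions realizing each expected unstable or kernel direction, together with (b) a matching lower bound obtained from a suitable partition of unity and a rigidity estimate bounding $\Lcal$ from below on the orthogonal complement of the approximate basis. Ruling out spurious small eigenvalues requires sharp control of the interaction between the base-surface spectrum and the catenoidal dislocation modes, and this interaction is precisely where the bulk of the analytic work will lie. Once every symmetry class is accounted for, adding the contributions is expected to yield the stated values $2\gamma+5=2m+7$ for the index and $6$ for the nullity; the nullity being exactly $6$ says that the kernel of $\Lcal$ is spanned by the normal projections of the six ambient Killing fields of $\Sph^3$, so that there are no exceptional Jacobi fields.
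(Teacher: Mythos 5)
Your decomposition-by-symmetry plus gluing plan is broadly aligned with the paper's framework, but there is a concrete error in the odd sector that would produce a wrong count. You claim that "the corresponding Dirichlet model spectrum on $\Sz$ minus small disks is strictly positive." This is false: the model operator $\Delta_{\Sph^2}+2$ has spectrum $\{-2,0,0,0,4,\dots\}$ on the full sphere, and removing small disks of radius $\tau$ with Dirichlet conditions shifts the bottom up only by $O(1/|\log\tau|)$ (test with a logarithmic cutoff of the constant), so the Dirichlet ground state stays close to $-2<0$. In the paper this is precisely the unique index direction in the side-odd sector, realized by the normal component $\nu\cdot\pp^{\pi/2}\in V^{-+}_{0+}$ with $\Lcal$-eigenvalue $-2$ (see \eqref{Enu}, \ref{Lnup}, \ref{P-+}); your scheme would miss it. Relatedly, your list of odd catenoid Jacobi fields is off: only the vertical translation $\phio=\tanh\sss$ is odd under the waist reflection $\sss\mapsto-\sss$, while the dilation $\phie=\sss\tanh\sss-1$ and the two horizontal translations $\sech\sss\cos\vartheta$, $\sech\sss\sin\vartheta$ are even, so each bridge contributes a single odd sliding mode, not four.

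Beyond that, two devices the paper relies on heavily are absent from your plan and you will almost certainly need them to close the argument. One is the conformal change $h=\tfrac12\big(|A|^2+\Ric(\nu,\nu)\big)\,g$ turning $\Lcal$ into $\Lcalh=\Delta_h+2$: in the $h$-metric each bridge becomes approximately a round $\Sph^2$ attached through a thin neck, so the low spectrum clusters neatly near $-2$ and $0$, whereas working with $\Lcal$ directly one must fight the blow-up of $|A|^2$ on the bridges. The other is the refinement of the $V^{\pm\pm}_\pm$ decomposition by the rotation frequency $\mu$ (the spaces $W^{\pm\pm}_{\mu\pm}$ of \ref{Dderot}): because $\GrpSigmam$ is smaller than the Lawson symmetry group, the coarse reflection decomposition leaves many near-zero eigenvalues in the same piece, and it is the $\mu$-decomposition together with the trick of comparing Rayleigh quotients between adjacent frequencies (\ref{Pmu++}, \ref{Pmu+-}) that isolates and signs the $O(m)$ near-zero eigenvalues, which is where the real difficulty of the count lies.
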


Moreover in ongoing work \cite{IIDindex} we prove that the FBM doublings $\xbreveum$ satisfy 
\quad $\ind(\xbreveum)=2m$ \quad 
for any large enough $m\in\N$ (see also \cite{carlotto-schulz-wiygul-2308}), 
and we also expect to generalize Theorem \ref{TA} by proving that 
\begin{equation} 
\label{E:i} 
\ind(\xbreve)= 2 \, \genus(\xbreve) \, + \, 2 \, \kcir \, + \, \kpol \, + \, 1   
\end{equation} 
holds for $\xbreve$ any of 
\ $\xbreve_{(m:\kcir)}$, \ $\xbreve_{(1,m:\kcir,1)}$, \ and \ $\xbreve_{(m:\kcir,1)}$,    \  with any  $\kcir\in\N$ and $m$ large enough in terms of $\kcir$.  
(Note that \eqref{E:i} holds as well for $\kcir=1$, $\kpol=0$ by \ref{rem} and \cite{LindexI}.) 

\eqref{E:i} and ongoing work on torus doublings motivate us to formulate the following conjecture which would generalizate Urbano's result \cite{urbano} to any genus. 
Note that the lower bound proposed by the conjecture is much stronger than the best known lower bound \cite{savo}.

\begin{conjecture} 
\label{conj1} 
The index of a genus $\gamma\in\N$ closed embedded minimal surface in $\Sph^3$ is at least $2\gamma+3$, 
with equality if and only if the surface is congruent to the Lawson surface $\xL_{\gamma,1}$. 
Moreover the number of closed embedded minimal surfaces in the round three-sphere $\Sph^3$ of genus $\gamma\in\N$ 
and index $\le 2\gamma+3+ \kexc$ is bounded above independently of $\gamma$ for any given $\kexc\in\N$. 
\end{conjecture}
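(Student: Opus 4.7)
I would attack the conjecture in three stages matching its three assertions: (i) the universal lower bound $\ind(\Sigma)\ge 2\gamma+3$, (ii) the rigidity characterization identifying the extremal surface as $\xL_{\gamma,1}$, and (iii) the $\gamma$-independent finiteness of surfaces with index up to $2\gamma+3+\kexc$. The three parts are coupled, and I expect parts (ii) and (iii) to be the main obstacles.

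For (i), the plan is to exhibit $2\gamma+3$ linearly independent sections of the normal bundle $N\Sigma$ on which the stability quadratic form $Q(\phi)=-\int_\Sigma \phi\, L\phi$, with $L=\Delta+|A|^2+2$, is strictly negative. The natural source of $2\gamma$ such directions is the space $\mathcal{H}^1(\Sigma)$ of harmonic $1$-forms, paired with $N\Sigma$ via a Bochner-type identity in the spirit of Ros, Savo, and Ambrozio--Carlotto--Sharp: each harmonic $\omega$ should produce a normal variation whose Rayleigh quotient for $L$ is bounded above by a negative multiple of $\int |A|^2|\omega|^2$. The remaining three directions should be extracted from ambient Killing fields on $\Sph^3$: after projecting onto $N\Sigma$ and quotienting by Jacobi fields, three genuinely negative directions survive, corresponding to a three-dimensional subspace of $\mathfrak{so}(4)/\Isom(\Sigma)$. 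Verifying that these two families together span a $(2\gamma+3)$-dimensional space of negative directions is the main technical step; it requires a compatibility argument matching the pairing $\mathcal{H}^1(\Sigma)\times(\Sph^3\text{-Killing fields})\to\mathbb{R}$ defined by integration against the second fundamental form.

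For (ii), equality forces each of the test variations above to be a null eigenfunction of $L$, saturating the Bochner inequality on harmonic forms. Saturation should rigidify the Hopf differential of $\Sigma$ to realize exactly the nodal pattern of $\xL_{\gamma,1}$ (i.e.\ $\gamma-1$ double zeros equidistributed along a geodesic circle). A rigidity argument in the spirit of Lawson--Simons, classifying minimal immersions into $\Sph^3$ with prescribed umbilic set and symmetry, would then identify $\Sigma\cong\xL_{\gamma,1}$. This is the principal obstacle: no classification of minimal surfaces in $\Sph^3$ matching such spectral and cohomological data is currently known in genus $\geq 2$, and essentially one has to reconstruct $\Sigma$ from its Jacobi spectrum. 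A plausible intermediate result is to establish the conjecture under the additional assumption that $\Sigma$ admits the full dihedral symmetry group $\GrpSigmam$ of $\xL_{\gamma,1}$, reducing to a finite-dimensional spectral problem already accessible to the techniques of this paper and \cite{IIDindex}.

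For (iii), I would combine a bounded-index area estimate (Ejiri--Micallef, or a refinement adapted to the doubling regime) with Choi--Schoen compactness to extract smooth minimal limits of any sequence of candidate surfaces $\Sigma_j$ with $\gamma_j\to\infty$. Each point of curvature concentration contributes a catenoidal bridge whose index-cost is bounded below by a fixed positive constant, so the combinatorial type of the limiting configuration is controlled by $\kexc$ and is independent of $\gamma$. Enumerating possible doubling and desingularization configurations (building on \cite{gLD,IIgLD} and \ref{rem}), together with uniqueness statements for gluing-type doublings, would then bound the count. The subtlety here is ruling out hypothetical extrema that are not of gluing type but share the same asymptotic configuration; overcoming this obstacle will likely require extending the quantitative index computations of the present paper to the full families $\xbreve_{(m:\kcir)}$, $\xbreve_{(1,m:\kcir,1)}$, $\xbreve_{(m:\kcir,1)}$ predicted by \eqref{E:i}, so as to identify their indices uniquely among nearby surfaces.
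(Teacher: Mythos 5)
The statement you are addressing is Conjecture \ref{conj1}: the paper does not prove it, and offers nothing beyond motivation --- namely the computation $\ind(\xL_{\gamma,1})=2\gamma+3$ from \cite{LindexI}, the value $2\gamma+5$ for $\xibreveomo$ proved in this paper, and the expected formula \eqref{E:i} --- together with the explicit remark that the conjectured bound is much stronger than the best known lower bound \cite{savo}. So there is no proof in the paper to compare against, and your text is, as you yourself indicate at several points, a research program rather than a proof.

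The concrete gap is already present in your part (i). The mechanism you describe --- harmonic $1$-forms paired with ambient Killing fields and coordinate functions via a Bochner-type identity --- is precisely the method of Ros, Savo, and Ambrozio--Carlotto--Sharp, and that method is known to yield only lower bounds growing linearly in $\gamma$ with slope strictly less than $2$; this is exactly why the paper describes the conjectured bound $2\gamma+3$ as much stronger than \cite{savo}. The step that fails is your assertion that each harmonic form produces an independent strictly negative direction: the standard argument only guarantees negativity of the quadratic form on a subspace obtained after cutting down by the ambient balancing conditions, and the surviving count falls well short of $2\gamma+3$; removing this loss is the open problem, not a matter of a ``compatibility argument'' between the two families. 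Parts (ii) and (iii) are likewise open as you state them: the rigidity step would require reconstructing a minimal surface in $\Sph^3$ from Jacobi-spectral and cohomological data, for which no classification exists in genus $\ge 2$, and the finiteness step presupposes uniqueness of gluing-type doublings and an exhaustive enumeration of bounded-index degenerations, neither of which is available (the paper records uniqueness of the doublings only as an expectation in Remark \ref{rem}). In short, your outline is a reasonable framing of the problem and correctly identifies the main obstacles, but none of its three stages closes, and the statement remains a conjecture both in the paper and after your proposal.
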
 

It is natural then to ask the following related question whose answer is known only for $\gamma=0,1$ by 
the uniqueness of the sphere \cite{almgren} and the Clifford torus \cite{brendle}. 

\begin{question} 
\label{ques1} 
Given (small) $\kexc\in\N$ and $\gamma\in\N$ determine all 
closed embedded minimal surfaces in $\Sph^3$ of genus $\gamma$ and index $\le 2\gamma+3+ \kexc$.  
\end{question} 

For example it may be that the only closed embedded minimal surface in $\Sph^3$ of large genus $\gamma$ and index $2\gamma+4$ is the 
\emph{single-circle-pole doubling $\xibreve_{\gamma,1}$}. 
In analogy with \ref{conj1} and \ref{ques1}, it is also natural to ask the following question,  
where $F_{\gamma,m}$ denotes the class of compact embedded FBM surfaces in $\B^3$ 
of genus $\gamma$ and $m$ boundary connected components. 

\begin{question} 
\label{ques2} 
For each $\gamma\in\N$ and $m\in\N$ determine the surfaces of smallest index in $F_{\gamma,m}$. 
\end{question} 

$\xbreveum$ seems a possible candidate for index minimizer in $F_{0,m}$.  
Note that this is known for $m=2$ (which is the critical catenoid case by \ref{rem}) by \cite{tran-CCindex}.

\subsection*{Outline of strategy and main ideas}
$\vspace{.126cm}$ 
\nopagebreak

The determination of the index and nullity in \cite{LindexI} is based on the observation that the symmetries can be used to decompose the function spaces (see \ref{EIVpm}) 
so that each summand in the decomposition contains exactly one Jacobi field (up to scaling).  
This is supplemented by a careful analysis of the graphical properties of the Lawson surfaces using the maximum principle.     

The results in this article are also based on the same decomposition and observation. 
The smaller symmetry group however makes it much harder to close the argument. 
Our approach here is based on                  a careful study    of the eigenfunctions using PDE gluing methods 
instead of the graphical properties of the surfaces. 
This, unlike in \cite{LindexI}, restricts our results to the high genus (high $m$) case. 
We outline now the main features of the gluing approach. 

We use a conformal modification of the metric and the Jacobi operator (see \ref{ELh}) to simplify the geometry and the spectrum.  
This is based on an idea of R. Schoen which played a fundamental role in \cite{kapouleas:1990} and has been heavily used since (for example in 
\cite{kapouleas:1995,kapouleas:yang,CCD,kapouleas:general}, and which is as follows. 
We introduce a metric $h$ which is conformal to the induced metric $g$. 
The catenoidal bridges with the $h$ metric are approximately isometric to round spheres attached via small necks to the graphical region on which $h$ approximates $g$, 
and the Jacobi operator is conformally modified to the operator $ \Lcalh= \Delta_h + 2$ whose low eigenvalues cluster close to $-2$ and $0$. 
The  corresponding eigenfunctions are induced  from \emph{constants} or \emph{slidings} on the bridges, or from eigenfunctions on the base surface $\Sigma$.   
The latter ones restrict to \emph{scalings} on the bridges.        
The eigenfunctions whose eigenvalues are close to $0$ span what is called \emph{approximate kernel} which plays an important role in PDE gluing constructions 
(see for example \cite{schoen1988,kapouleas:1990,kapouleas:1995,kapouleas:survey,ALM20,kapouleas:general}).  

It remains to determine the sign of the eigenvalues close to \  $0$.\  
The ones induced from constants are easy to handle because their $\Lcalh$ eigenvalue is close to \    $-2$. \     
The ones induced from slidings are much harder because they can have small positive or small negative eigenvalue. 
Our approach is based on a refinement of the decomposition into spaces $V^{\pm\pm}_\pm$ used in \cite{LindexI}, 
by using spaces of $\C$-valued functions  \   $W^{\pm\pm}_{\mu\pm}$, \    
where  \   $\mu\in [0,m/2] \cap \Z$  \   controls the \emph{frequency} with respect to rotational symmetries along the equatorial circle $\CC$. 
We also use the Jacobi fields (of eigenvalue $0$) and the normal components (of unmodified Jacobi operator eigenvalue $-2$) and their nodal domains for comparison.  
Note that both have frequency $\mu=0$ or $\mu=1$ and the index    in these frequencies behaves quite differently than in higher frequencies. 
In general in this article we use repeatedly detailed estimates for the eigenfunctions  
which rely     on  arguments similar to the ones        in \cite{kapouleas:1995}    for the approximate kernel using the $\chi$ metric.  

It turns out that the following slidings give negative eigenvalues: slidings along $\CC$ of nonzero frequency, 
slidings on equatorial bridges  towards the poles of zero frequency, and slidings along the polar bridges,  
with the exception of two combinations of the above which provide two of the Jacobi fiels. 
The remaining slidings give positive eigenvalues with the exception of a combination of slidings along $\CC$ providing $J^\CC$, 
three combinations of slidings along the axes of the bridges which give the three remaining Jacobi fiels, 
and a combination of slidings along the axes of the bridges which gives a negative eigenvalue (actually a component of the normal). 

Determining the sign of the negative eigenvalues is done in a few cases by Courant's nodal theorem applied to Jacobi fields or normal components;  
and in the remaining cases by constructing test functions where the following principle applies: the Rayleigh quotient of slidings along $\CC$ can be reduced by an increase of their frequency. 
For the slidings towards the poles of positive eigenvalue the opposite applies: the Rayleigh quotient can be reduced by a decrease of frequency; 
we can then compare with the Jacobi fields which have zero Rayleigh quotient.      

Finally all the scalings have positive eigenvalue and they are the hardest to control, especially the ones of zero frequency. 
The corresponding eigenfunctions can be viewed as approximate Jacobi fields which are the variation fields 
of a family of approximately minimal doublings with parameter the size of the catenoidal bridges. 
Our approach in this case is to estimate   the eigenvalue by simultaneously constructing the eigenfunction approximately but carefully enough. 
The construction of the eigenfunction is motivated by the gluing construction of the minimal doubling and uses again gluing methodology ideas.

\subsection*{Organization of the presentation}
$\phantom{ab}$
$\vspace{.072cm}$ 
\nopagebreak

This article is divided into seven sections. 
In Section \ref{S:intro} we recall  the minimal surface doublings of interest, discuss the results in this article and in \cite{IIDindex}, and outline the approach and main ideas in proving them. 
In Section \ref{S:general} we recall basic general facts and fix some general notation. 
In Section \ref{S:basic} we recall basic geometric facts about the Euclidean catenoid and about $\Sph^3$ and its great spheres and great circles.  
Most of the nonstandard notation used in this article is defined in Sections  \ref{S:intro}, \ref{S:general}, and \ref{S:basic}.  
It is based on the notation developed in \cite{LindexI,KW:Luniqueness}  and \cite{SdI,SdII,gLD},   and 
we hope it is well-suited to facilitate the presentation. 

In Section \ref{S:Ddou} we discuss minimal surface doublings both general and of gluing type and prove general results about the index and the approximate kernel which follow from our 
gluing approach. 
In Section \ref{S:symS} we discuss the implications of the symmetries of the $\Sph^2$ doublings to the eigenfunctions and their decompositions. 
In Section \ref{S:xi1m1} we discuss the geometric details of the equator doublings we need. 

In Section \ref{S:eq-p} we study the eigenvalues of interest and corresponding eigenfunctions in detail and prove the main results of this article. 
The crucial             proofs are presented in this section.

\bigbreak  
\section{General notation and conventions}                  
\label{S:general}

\subsection*{Basics}
$\vspace{.072cm}$ 
\nopagebreak

Most notation and definitions adopted below are either standard or as in 
\cite{LindexI,KW:Luniqueness} with some modifications. 

\begin{definition}
\label{Dsimc}
We write $a\Sim^c b$ to mean that 
$a,b\in\R\setminus\{0\}$, $c\in(1,\infty)$, and $\frac1c\le \frac ab \le c$. 
\end{definition}

\begin{notation} 
\label{DIA} 
We denote the identity map on any set $A$ by $I_A$. 
\qed
\end{notation} 

\begin{notation} 
\label{Nsub} 
We denote a subset of a set by providing its defining property as a subscript, or part of a subscript after a comma,     
see for example \ref{N:xx} and \ref{n:Kp}. 
\qed
\end{notation} 

\begin{notation}
\label{tubular}
For $(N, g)$ a Riemannian manifold and $A\subset N$ we write 
$\dbold^{N, g}_A$ for the distance function from $A$ with respect to $g$ 
and we define 
by $  D^{N, g}_A(\delta):=\left \{p\in N:\dbold^{N, g}_A(p)<\delta\right\} $  
the \emph{tubular neighborhood of $A$ of radius $\delta>0$}. 
If $A$ is finite we may just enumerate its points in both cases, for example if $A=\{q\}$ we write $\dbold_q(p)$. 
If $S\subset N$ is a two-dimensional submanifold we denote by $\area_h(S)$ its area induced by some Riemannian metric $h$ defined on $S$. 
\end{notation}

\begin{notation} 
\label{sph} 
We denote by $g$ the Euclidean metric on Euclidean spaces and by $g_S$ or simply $g$ the induced metric on a submanifold $S$. 
We denote by $\Sph^3 \subset \R^{4}$ the unit $3$-dimensional sphere and by $\B^3\subset \R^{3} $ the unit three-ball. 
\qed 
\end{notation}

\begin{notation}[Surface identifications] 
\label{Nid}
To simplify the notation we may \emph{identify surfaces $S_1$ and $S_2$ \emph{via} a diffeomorphism $Y:S_1 \to S_2$}, 
in the sense that we do not change notation for the pull-back or push-forward by $Y$ of  
a function (or geometric tensor field or operator on such) defined on a subset of $S_1$ or $S_2$; 
this does not apply however to the subsets themselves.  
For example if $u$ is a function on $S_2$ we denote $u\circ Y$ by $u$ as well, but we do not denote $S_1$ by $S_2$ unless they were defined as identical already. 
\end{notation}

We repeat now and in section \ref{S:basic} almost verbatim some definitions from \cite{LindexI,KW:Luniqueness,gLD} the notation of which we follow in general in this article. 

\begin{notation} 
\label{span} 
For any 
$A\subset \Sph^3 \subset \R^{4}$ 
we denote by $\Span(A)$ the span of $A$ as a subspace of $\R^{4}$  
and we set $\Sph(A):=\Span(A)\cap\Sph^3$. 
\qed 
\end{notation} 

\begin{notation}[Euclidean reflections and homotheties] 
\label{Nhom}
We denote by $V^\perp$ the orthogonal complement in $\R^{n  }$   
of a vector subspace $V \subset \R^{n}$, 
and we define the \emph{reflection in $\R^{n}$ with respect to $V$},   
\quad $\refl_V: \R^{n} \to \R^{n} $, \quad by 
\quad $\refl_V:= \Pi_V - \Pi_{V^\perp}$,      \quad  
where $\Pi_V$ and $\Pi_{V^\perp}$ are the orthogonal projections of $\R^{n}$ onto $V$ and $V^\perp$ respectively. 
\qquad 
Given also $\aunder\in \R_+$ we define the \emph{homothety} \   $\homot_{\aunder}:\R^n\to\R^n$ \   by \   $\homot_{\aunder}(x) \, := \, \aunder x \qquad (\forall x\in\R^n)$. 
\end{notation}

Clearly the fixed point set of $\refl_V$ is $V$ 
and it is the linear map which restricts to the identity on $V$ and minus the identity on $V^\perp$.  

\begin{definition}[$A^\perp$ and reflections $\refl_A$] 
\label{D:refl} 
Given any 
$A\subset  \Sph^3 \subset \R^{4  }$,  
we define $A^\perp:=\left(\,Span(A) \, \right)^\perp \cap \Sph^3 $  
and  
$\refl_A : \Sph^3 \to \Sph^3 $ to be the restriction to $\Sph^3$ of $\refl_{\Span(A)}$. 
Occasionally we will use simplified notation:  
for example for $A$ as before and $p\in\Sph^3$ we may write $\Sph(A,p)$ and $\refl_{A,p}$ instead of $\Sph(A\cup\{p\})$ and $\refl_{A\cup\{p\}}$ 
respectively. 
\qed 
\end{definition} 

Note that the set of fixed points of $\refl_A$ above is $\Sph(A)$ as in notation \ref{span},  
which is $\Sph^3$ 
or a great two-sphere  
or a great circle  
or the set of two antipodal points  
or the empty set, 
depending on the dimension of $\Span(A)$. 

$\vspace{.081cm}$ 

Following the notation in \cite{choe:hoppe} we have the following definition. 

\begin{definition}[The cone construction] 
\label{D:cone} 
For $p,q\in\Sph^3$ which are not antipodal we denote 
the minimizing geodesic segment joining them by $\overline{pq}$. 
For $A,B\subset\Sph^3$ such that no point of $A$ is antipodal to a point of $B$ 
we define the cone of $A$ and $B$ in $\Sph^3$ by 
$$ 
A\cone B := \bigcup_{p\in A, \, q\in B} \overline{pq}. 
$$ 
If $A$ or $B$ contains only one point we write the point instead of $A$ or $B$ respectively; 
we have then $p\cone q = \overline{pq} $ 
for any 
$p,q\in\Sph^3$ which are not antipodal. 
More generally, 
given linearly independent $p_1,\cdots,p_k\in \Sph^3$, 
we define inductively for $k\ge3$ 
$
\overline{p_1\cdots p_k} := p_k \cone \overline{p_1\cdots p_{k-1} }.  
$
\qed 
\end{definition} 

\begin{definition}[Tetrahedra] 
\label{TEV} 
If $p_1,p_2,p_3,p_4\in \Sph^3$ are linearly independent, 
then 
$T:= \overline{p_1p_2p_3p_4}$ is called a (spherical) tetrahedron 
with vertices $p_1,p_2,p_3,p_4$, 
edges $\overline{p_ip_j}$ ($i,j=1,2,3,4$, $i\ne j$),  
and faces 
$\overline{p_2p_3p_4}$, $\overline{p_1p_3p_4}$, $\overline{p_1p_2p_4}$, and $\overline{p_1p_2p_3}$.
We call two edges of $T$ \emph{adjacent} if they share a vertex and  
similarly a face and an edge if the face contains the edge.
Two edges which are not adjacent are called \emph{opposite}. 
Finally we use the notation $E_T:= \bigcup_{i,j=1}^4\overline{p_ip_j}$, the union of the edges, 
and $V_T:=\{p_1,p_2,p_3,p_4\}$, the set of the vertices.   
\end{definition}

$\vspace{.081cm}$ 

If $\Grp$ is a group acting on a set $B$ and if $A$ is a subset of $B$,
then we refer to the subgroup 
  \begin{equation}
  \label{stab}
    \stab_{\Grp}(A):=\{ \mathbf{g} \in \Grp \; | \; \mathbf{g}A = A \}
  \end{equation}
as the \emph{stabilizer} of $A$ in $\Grp$.
For $A,B$ subsets of $\Sph^3$ or $\Abold$ a finite collection of such subsets we define   
  \begin{equation}
  \label{Gsym}
\begin{gathered} 
    \Gsym^A:=\stab_{{O(4)}} A 
= \{ \mathbf{g} \in O(4) \; | \; \mathbf{g}A = A \}, 
\\    
\Gsym^{A,B}:= \Gsym^{A} \cap \Gsym^{B}, 
\\ 
    \Gsym^{\Abold} := \{ \mathbf{g} \in O(4) \; | \; \{ \mathbf{g}A : A\in\Abold\} = \Abold \}. 
\end{gathered} 
  \end{equation}


$\vspace{.126cm}$ 

Our arguments require extensive use of cut-off functions and the following will 
be helpful. 
\begin{definition}
\label{DPsi} 
We fix a smooth function $\Psi:\R\to[0,1]$ with the following properties:
\begin{enumerate}[label=\emph{(\roman*)}]
\item $\Psi$ is nondecreasing.

\item $\Psi\equiv1$ on $[1,\infty)$ and $\Psi\equiv0$ on $(-\infty,-1]$.

\item $\Psi-\frac12$ is an odd function.
\end{enumerate}
\end{definition}

Given $a,b\in \R$ with $a\ne b$,
we define smooth functions
$\psicut[a,b]:\R\to[0,1]$
by
\begin{equation}
\label{Epsiab}
\psicut[a,b]:=\Psi\circ L_{a,b},
\end{equation}
where $L_{a,b}:\R\to\R$ is the linear function defined by the requirements $L_{a,b}(a)=-3$ and $L_{a,b}(b)=3$.

Clearly then $\psicut[a,b]$ has the following properties:
\begin{enumerate}[label={(\roman*)}]
\item $\psicut[a,b]$ is weakly monotone.

\item 
$\psicut[a,b]=1$ on a neighborhood of $b$ and 
$\psicut[a,b]=0$ on a neighborhood of $a$.

\item $\psicut[a,b]+\psicut[b,a]=1$ on $\R$.
\end{enumerate}

Suppose now we have a real-valued function $d$ defined on a domain $\Omega$ 
and two sections $f_{(a)},f_{(b)}$ of some vector bundle $\Omega$.
(A special case is when the vector bundle is trivial and $f_{(a)},f_{(b)}$ real-valued functions).
We define a new section which transits from $f_{(a)}$ on a neighborhood of $d^{-1}(a)$ to $f_{(b)}$ on a neighborhood of $d^{-1}(b)$ by  
\begin{multline}
\label{EPsibold}
\Psibold\left [a,b;d \, \right]( \, f_{(a)} , f_{(b)} \, ) 
:=
\psicut[a,b\, ]\circ d \, f_{(a)}
+
\psicut[b,a]\circ  d \, f_{(b)} 
\\ 
\, = \, 
f_{(a)} 
+
\psicut[b,a]\circ  d \  (\, f_{(b)} - f_{(a)} \, ).  
\end{multline}
Note that
\quad  $\Psibold\left [a,b;d \, \right]( \, f_{(a)} , f_{(b)} \, )$ \quad  
is then a section which depends linearly on the pair $(f_{(a)},f_{(b)})$
and is smooth if $f_{(a)},f_{(b)}$ and $d$ are smooth.


$\vspace{.045cm}$ 
\subsection*{Differential and Jacobi operators}  
$\vspace{.009cm}$ 
\nopagebreak

\begin{notation}
\label{ELjacobi} 
Given a Riemannian manifold $(N, g)$ and 
a surface $S\subset N$, we denote the Jacobi operator on $S$ by 
\begin{equation*} 
\Lcal \, = \, \Lcal_S \, = \, \Delta_S + |A|^2+ \Ric(\nu,\nu),  
\end{equation*} 
where \ $\Delta_S$ denotes the Laplace-Beltrami operator on $S$, 
\ $|A|^2$ the length squared of the second fundamental form, 
\ $\Ric$ the Ricci curvature of $(N,g)$, 
\ and \ $\nu$ the unit normal of $S$. 
Moreover if \ $\chi = \varrho^2 g_S$ \ is a metric conformal to the induced metric $g_S$ on $S$, we define the operator 
$$
\Lcal_\chi \, =  \, \Lcal_{S,\chi} \, :=  \, \varrho^{-2} \Lcal_S = \Delta_\chi + \varrho^{-2} ( \, |A|^2+ \Ric(\nu,\nu)\, ),  
$$
where $\Delta_\chi$ denotes the Laplace-Beltrami operator on $(S,\chi)$. 
\end{notation}

Extending an idea of R. Schoen which goes back to \cite{kapouleas:1990} and has been heavily used since (for example in 
\cite{kapouleas:1995,kapouleas:yang,CCD,kapouleas:general}, 
we define conformal changes of the metric and the Jacobi operator $\Lcal_S$ on a surface $S$ as in \ref{ELjacobi} 
satisfying \ $|A|^2+ \Ric(\nu,\nu) > 0$, \   by  
\begin{equation} 
\label{ELh} 
\begin{gathered}
h_S:= \frac{|A|^2+\Ric(\nu,\nu) }2 g_S, 
\\            
\Lcal_{h_S} \, =  \, \Lcal_{S,h_S} \, :=  \, \frac2{|A|^2+\Ric(\nu,\nu) } \Lcal_S = \Delta_{h_S} + 2. 
\end{gathered}
\end{equation}

In the case of a compact FBM surface $M$ in $\B^3$ we take the Jacobi operator to be 
\begin{equation} 
\label{ELrobin} 
\Lcal:=(\Lint,\BbouR),  
\qquad \text{where} \qquad 
\Lint := \Delta+|A|^2,  
\qquad 
\BbouR := {\eta}- 1 , 
\end{equation} 
where $\eta$ is the outward pointing unit conormal of $M$ on $\partial M    $, 
$\Lint$ acts on $C^2$ functions on $M$ and is the usual Jacobi operator, 
and $\BbouR$ supplements the Jacobi equation on $M$ with a boundary Robin condition on $\partial M$ as in 
\cite[Section 5]{chen-fraser-pang:2015:index:TAMS}.

$\vspace{.072cm}$ 

In the  remaining of this section we let $\F$ stand for either $\R$ or $\C$. 
The following definition is standard and we state it to fix the notation. 

\begin{definition}[Operators and eigenfunctions]  
\label{D:oper} 
\ \ \ 
Let $\Ut$ be a compact smooth surface, 
$\Lintt$ a linear differential operator acting on (smooth enough) $\F$-valued functions on $\Ut$, 
and $\Bbout$ a boundary differential operator acting at $\partial \Ut$ (if nonempty). 
Let $\Lcalt = \Lintt$ if $\partial \Ut = \emptyset$ and 
$\Lcalt:=(\Lintt,\Bbout)$ if 
$\partial \Ut\ne\emptyset$. 
We call $f$ an \emph{eigenfunction} of $\Lcalt$ with \emph{eigenvalue} $\lambda$ if $f\not\equiv0$ is a function on $\Ut$ satisfying 
$ \Lintt f + \lambda f =0 $ on $\Ut$ and (when $\partial \Ut\ne\emptyset$) $\Bbout f=0$ on $\partial\Ut$.  
\end{definition} 

\begin{definition}[Operator symmetries]  
\label{D:symL} 
We define a \emph{symmetry $\rot_*$ of $\F$-valued functions on $\Ut$} to be 
an $\R$-linear map on $\F$-valued functions on $\Ut$ which is the composition of the pushforward by a smooth diffeomorpism $\rot$ of $\Ut$ and a $\R$-linear map on $\F$. 
If moreover $\rot_*$ 
and an operator $\Lcalt$ as in \ref{D:oper} commute, we call $\rot_*$ 
a \emph{symmetry of $\Lcalt$}. 
\end{definition}

\begin{notation}[Eigenvalues, index and nullity] 
\label{D:eigenb} 
We assume given a compact smooth surface $\Ut$ 
and an operator $\Lcalt$ as in \ref{D:oper}. 
We assume moreover that $\Ut$ is equipped with a Riemannian metric $\gt$,   
and we are given a smooth real-valued function $\widetilde{A}$ on $\Ut$.  
Let $\Lintt:=\Delta_{\gu}+ \widetilde{A}$ be the Schr\"{o}dinger operator on $\Ut$ acting on $\F$-valued $C^2$ functions defined on $\Ut$.  
It is well-known that 
the eigenfunctions of $\Lcalt$ defined as in \ref{D:oper} are smooth with real eigenvalues which accumulate only at $+\infty$, 
if either $\Ut$ is closed, or $\Bbout$ provides (mixed) Dirichlet, Neumann, or smooth Robin conditions at $\partial\Ut$. 

Let \quad $\Vt:= \big\{ f:\Ut\to\F \, \big| \, \rot_* f = f \quad \forall \rot_*\in \grouptilde\big\}$, \quad     
where $\grouptilde       $ is a given group of symmetries of $\Lcalt$ (recall \ref{D:symL}), 
and \     $J\subset \R$. \   
We have then the following. 

\begin{enumerate}[label=\emph{(\roman*)}]
\item 
\label{D:eigenb1} 
Counting the eigenvalues of the eigenfunctions contained in $\Vt$ in non-decreasing order and with multiplicity, 
we denote the $i^{th}$ such eigenvalue by $\lambda_i(\Vt,\Lcalt,\Ut)$. 
\item 
\label{D:eigenb2} 
We denote by 
$E_J(\Vt,\Lcalt,\Ut ) \subset \Vt$   
the real (or complex) span of the eigenfunctions contained in $\Vt$ 
with eigenvalue contained in $J$.  
\item 
\label{D:eigenb3} 
For $J\subset\R$ we define 
\hfill 
$\#_J(\Vt,\Lcalt,\Ut ) := \dim_\R \,  E_J(\Vt,\Lcalt,\Ut )$      
\hfill 
and  (if applicable) 
\quad 
$\#_J^\C(\Vt,\Lcalt,\Ut ) := \dim_\C \,  E_J(\Vt,\Lcalt,\Ut )$.                            
\item 
\label{D:eigenb4} 
We define the \emph{index of $\Lcalt$ on $\Ut$} by 
\ 
$\ind(\Lcalt , \Ut ):= \#_{<0}(V ,\Lcalt,\Ut)$  
\ 
and the \emph{nullity of $\Lcalt$ on $\Ut$} by 
\ 
$\nul(\Lcalt,\Ut) := \#_{=0}( V ,\Lcalt,\Ut)$,  
where $V$ is the space of $\R$-valued functions on $\Ut$. 
\end{enumerate} 
We sometimes simplify the above notation as follows. 
	\begin{enumerate}[label=\emph{(\alph*)}]
\item 
\label{D:eigenb8} 
We may omit $\Vt$ when $\grouptilde$ is trivial. 
\item 
\label{D:eigenb5} 
We may omit $\Lcalt$ or $\Ut$ when they can be inferred from the context.  
\item 
\label{D:eigenb6} 
In particular for $\Ut$ a closed minimal surface we take $\Lcalt$ to be $\Lcal=\Lcal_{\Ut}$ as in \ref{ELjacobi} 
and for $\Ut$ a compact FBM surface $M$ in $\B^3$ we take $\Lcalt$ to be $\Lcal$ as in \ref{ELrobin},  
so that $\ind(\Ut)$ denotes the so called \emph{Morse index of $\Ut$} 
\cite[Section 5]{chen-fraser-pang:2015:index:TAMS} 
and $\nul(\Ut)$ denotes the \emph{nullity     of $\Ut$}. 
\item 
\label{D:eigenb7} 
We may write   ``$<\lambda$'', or ``$=\lambda$'', or ``$\le\lambda$'' 
instead of $J=(-\infty,\lambda)$, or $J=\{\lambda\}$, or $J=(-\infty,\lambda]$ in the subscripts respectively. 
\end{enumerate} 
\end{notation} 

\begin{notation}[Eigenvalues and restrictions] 
\label{D:eig-res}   
\ \ 
Under the same assumptions as in \ref{D:eigenb} and 
for smooth compact $\Ut_1\subset\Ut$,  
we extend the notation introduced in \ref{D:eigenb} as follows. 
We define 
$$ 
\Vt[ \Utp  ]    :=     \left. \Vt           \right|_{ \Utp  } := \Big\{ \left. f \right|_\Utp  \, : \, f \in \Vt           \, \Big\} 
$$ 
and allow the use of \ $(\Vt,\LcaltD,\Utp )$ \ or \ $(\Vt,\LcaltN,\Utp )$\  
in place of \ $(\Vt,\Lcalt,\Ut)$,\  
as a shorthand for 
\ $( \left. \Vt           \right|_{ \Utp  } , \LcaltD, \Utp  )$ or $( \left. \Vt           \right|_{ \Utp  } , \LcaltN, \Utp  )$, \  
where 
\ $\LcaltD:=(\Linttp,\BboutD)$ \  
and 
\ $\LcaltN:=(\Linttp,\BboutN)$ \  
with 
\ $\Linttp:=\Delta_{\gu}+ \widetilde{A}$ \ like \ $\Lintt$ \ but acting on $\F$-valued $C^2$ functions defined on $\Utp $,   
and $\BboutD$ or $\BboutN$ 
providing Dirichlet or Neumann conditions at \ $\partial\Utp \setminus\partial\Ut$ \ respectively,   
while acting the same as \ $\Bbout$ \ at \ $\partial\Utp \cap     \partial\Ut$.   
As a special case if \ $\Ut$ \ is closed, then 
\ $\LcaltD:=(\Linttp,\BboutD)$ \  
and 
\ $\LcaltN:=(\Linttp,\BboutN)$ \  
with 
$\BboutD$ or $\BboutN$ 
providing Dirichlet or Neumann conditions at \ $\partial\Utp$ \ respectively.   
\end{notation}

In the next definition 
$f$ can be thought of as a ``weight'' function because $f(x)$ controls the size of $u$ in the vicinity of
the point $x$.
$\rho$ can be thought of as a function which determines the ``natural scale'' $\rho(x)$
at the vicinity of each point $x$.

\begin{definition}[Weighted H\"{o}lder norms]  
\label{D:norm-g} 
Assuming that $\Omega$ is a domain inside a manifold,
$g$ is a Riemannian metric on the manifold, 
$\rho,f:\Omega\to(0,\infty)$ are given functions, 
$k\in \N_0$, 
$\beta\in[0,1)$, 
$u\in C^{k,\beta}_{loc}(\Omega)$ 
or more generally $u$ is a $C^{k,\beta}_{loc}$ tensor field 
(section of a vector bundle) on $\Omega$, 
and that the injectivity radius in the manifold around each point $x$ in the metric $\rho^{-2}(x)\,g$
is at least $1/10$,
we define
$$
\|u: C^{k,\beta} ( \Omega,\rho,g,f)\|:=
\sup_{x\in\Omega}\frac{\,\|u:C^{k,\beta}(\Omega\cap B_x, \rho^{-2}(x)\,g)\|\,}{f(x) },
$$
where $B_x$ is a geodesic ball centered at $x$ and of radius $1/100$ in the metric $\rho^{-2}(x)\,g$.
For simplicity we may omit any of $\beta$, $\rho$, or $f$, 
when $\beta=0$, $\rho\equiv1$, or $f\equiv1$, respectively.
\end{definition}

\bigbreak  
\section{Basic geometry}
\label{S:basic} 

\subsection*{Rotations along or about great circles} 
$\vspace{.126cm}$ 
\nopagebreak

Note that given a great circle $C$ in $\Sph^3$,  
the points of $C^\perp$ (recall \ref{D:refl}) are at distance $\pi/2$ in $\Sph^3$ from $C$ and any point of $\Sph^3\setminus C^\perp$ is at distance $<\pi/2$ from $C$, 
therefore $C^\perp$ is the great circle furthest away from $C$. 
Equivalently $C^\perp$ is the set
of poles of great hemispheres with equator $C$;
therefore $C$ and $C^\perp$ are linked. 
The group 
$\Gsym^{C \cup C^\perp } $
contains 
$\Gsym^{C } = \Gsym^{C^\perp } $ 
(which includes arbitrary rotation or reflection in the two circles)  
and includes also   
orthogonal transformations exchanging 
$C$ with $C^\perp$.  

\begin{definition}[Rotations $\rot_C^\phi$, $\rot^C_\phi$ and Killing fields $\vecK_{C}$, $\vecK^{C}$] 
\label{D:rot} 
\ \ 
Given a great circle $C\subset\Sph^3$, $\phi \in \R$,
and an orientation chosen on the totally orthogonal circle $C^\perp$,
we define the following: 
\begin{enumerate}[label=\emph{(\roman*)}]
\item 
\label{D:rot1}  
a rotation $\rot_C^\phi \in SO(4)$ about $C$ by angle $\phi$ 
preserving $C$ pointwise and rotating the totally orthogonal circle $C^\perp$ along itself by angle $\phi$ 
(in accordance with its chosen orientation);  
\item 
\label{D:rot2}  
a   Killing field $\vecK_{C}$ on $\Sph^3$ 
given by \ $\left.\phantom{}\vecK_{C}\right|_p := \left. \frac{\partial}{\partial\phi} \right|_{\phi=0} \rot_{C}^\phi(p)$
\ $\forall p\in\Sph^3$. 
\end{enumerate} 

Assuming further an orientation chosen on $C$ (which given the orientation of $\Cperp$ is equivalent to choosing an orientation of $\Sph^3$) 
we define the following: 
\begin{enumerate}[label=\emph{(\alph*)}]
\item 
\label{D:rot3}  
the rotation along $C$ by angle $\phi$ is $\rot^C_\phi := \rot_{C^\perp}^\phi$;  
\item 
\label{D:rot4}  
the Killing field $\vecK^{C}:=\vecK_{C^\perp}$ on $\Sph^3$.  
\hfill $\square$ 
\end{enumerate} 
\end{definition}

Note that $\rot^C_\phi = \rot_{C^\perp}^\phi$ in the vicinity of $C$ resembles a translation along $C$,  
while in the vicinity of $C^\perp$ it resembles a rotation about $\Cperp$; 
moreover $\vecK_{C}$ is defined to be a rotational Killing field about $C$, 
vanishing on $C$ and equal to the unit tangent on ${C}^\perp$.  
These observations motivate the following definition 
where 
$\rot^\vecv $ and $\vecK^\vecv$ do not depend on any orientations but 
$\rot_\vecv $ and $\vecK_\vecv$ depend on the orientation of $\Sph^3$. 

\begin{definition}[Rotations $\rot^\vecv $, $\rot_\vecv $ and Killing fields $\vecK^\vecv$, $\vecK_\vecv$] 
\label{D:rot:v} 
\ \ 
Given $p\in\Sph^3$ and $\vecv\in T_p\Sph^3 \subset T\Sph^3$ we define 
$\rot^\vecv := \rot^C_\phi$,  
$\rot_\vecv := \rot_C^\phi$,  
$\vecK^\vecv := \phi \vecK^C$, 
and 
$\vecK_\vecv := \phi \vecK_C$, 
where 
$C$ is an oriented great circle through $p$ tangent to $\vecv$ 
and $\phi$ is such that $\vecK^\vecv(p)=\vecv$ 
(and so $|\phi|=|\vecv|$).        
We call $\rot^\vecv$ a \emph{translation at $p$} and $\rot_\vecv$ a \emph{rotation at $p$}.  
\qed 
\end{definition}

\begin{lemma}[Orbits]  
\label{orbits} 
For $\vecK^C$ as in \ref{D:rot}, the orbits of $\vecK^C$ (that is its flowlines) are planar circles (in $\R^4$)  
and $\forall\pp\in C $ each orbit intersects the closed hemisphere $C^\perp \cone \pp$ exactly once.  
Moreover the intersection (when nontrivial) is orthogonal. 
\end{lemma}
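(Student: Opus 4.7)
The plan is to reduce everything to an explicit coordinate computation in $\R^4$. Choose an orthonormal basis $e_1,e_2,e_3,e_4$ of $\R^4$ with $C=\Span(e_1,e_2)\cap\Sph^3$ and $C^\perp=\Span(e_3,e_4)\cap\Sph^3$, oriented so that $\rot^C_\phi=\rot_{C^\perp}^\phi$, which by Definition \ref{D:rot}\ref{D:rot1} fixes $C^\perp$ pointwise and rotates $C$ by angle $\phi$, acts as the block rotation
\[
\rot^C_\phi(x_1,x_2,x_3,x_4)=(\cos\phi\, x_1-\sin\phi\, x_2,\ \sin\phi\, x_1+\cos\phi\, x_2,\ x_3,\ x_4).
\]
In particular, for $p=(x_1,x_2,x_3,x_4)\in\Sph^3$ with $r:=\sqrt{x_1^2+x_2^2}$, the orbit is contained in the affine $2$-plane $\R^2\times\{(x_3,x_4)\}$ and is the circle of radius $r$ centered at $(0,0,x_3,x_4)$ (reducing to a single point of $C^\perp$ when $r=0$). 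Hence orbits are planar circles in $\R^4$, yielding the first claim.

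Next I would describe $C^\perp\cone\pp$ explicitly. Writing $\pp=(a_1,a_2,0,0)\in C$ with $a_1^2+a_2^2=1$ and $q=(0,0,\cos\psi,\sin\psi)\in C^\perp$, the minimizing geodesic $\overline{\pp q}$ is parameterized as $\cos t\,\pp+\sin t\, q$ for $t\in[0,\pi/2]$, so
\[
C^\perp\cone\pp=\bigl\{(\cos t\, a_1,\cos t\, a_2,\sin t\cos\psi,\sin t\sin\psi):t\in[0,\pi/2],\ \psi\in\R\bigr\}.
\]
Equivalently, a point $(y_1,y_2,y_3,y_4)\in\Sph^3$ lies in the hemisphere iff $(y_1,y_2)$ is a nonnegative multiple of $(a_1,a_2)$. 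Intersecting with the orbit through $p$ forces $(y_3,y_4)=(x_3,x_4)$ and $(y_1,y_2)$ of norm $r$; when $r>0$ this gives the unique point $(ra_1,ra_2,x_3,x_4)$, while when $r=0$ the orbit $\{p\}$ itself lies in $C^\perp\subset C^\perp\cone\pp$. This establishes the ``exactly once'' claim.

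For orthogonality, at the intersection point $(ra_1,ra_2,x_3,x_4)$ (assuming $r>0$, so the intersection is nontrivial) the generator $\vecK^C$ evaluates to $r(-a_2,a_1,0,0)$, whereas the tangent space to the hemisphere is spanned by $\partial_t=(-\sin t\, a_1,-\sin t\, a_2,\cos t\cos\psi,\cos t\sin\psi)$ and $\partial_\psi=(0,0,-\sin t\sin\psi,\sin t\cos\psi)$ with $\cos t=x_3/\text{something}$ irrelevant; one checks $\langle\vecK^C,\partial_t\rangle=0$ directly from $a_1(-a_2)+a_2(a_1)=0$, and $\langle\vecK^C,\partial_\psi\rangle=0$ since the last two entries of $\vecK^C$ vanish. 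Hence the orbit meets the hemisphere orthogonally.

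There is no real obstacle here; the only point requiring attention is the degenerate case $p\in C^\perp$, where the orbit collapses to a point lying on $C^\perp$ and the orthogonality statement is vacuous — this is precisely what ``when nontrivial'' in the statement refers to.
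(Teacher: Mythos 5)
Your proof is correct and follows essentially the same approach as the paper: both reduce to an explicit computation in a basis adapted to $C$ and $C^\perp$. The paper verifies the claims for the ambient linear rotation in $\R^4$ against the half-three-plane through $\pp$ with boundary $\Span(C^\perp)$ — in your coordinates, exactly the set $\{y:(y_1,y_2)\text{ is a nonnegative multiple of }(a_1,a_2)\}$ — and then restricts to $\Sph^3$, whereas you parameterize the hemisphere directly on the sphere; these are the same computation.

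One small case to patch in the orthogonality step: when the orbit is $C$ itself (i.e.\ $r=1$, $p\in C$), the intersection point is $\pp$, which corresponds to $t=0$ in your $(t,\psi)$ coordinates, where $\partial_\psi$ vanishes and hence $\{\partial_t,\partial_\psi\}$ fails to span $T_{\pp}(C^\perp\cone\pp)$. The claim still holds — the initial velocities of the geodesics $\overline{\pp q}$, $q\in C^\perp$, are the vectors $q$ themselves, so $T_{\pp}(C^\perp\cone\pp)=\Span\{e_3,e_4\}$, which is orthogonal to $\vecK^C(\pp)=(-a_2,a_1,0,0)\in\Span\{e_1,e_2\}$ — but your polar-type parameterization degenerates there and this one point deserves a sentence.
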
 

\begin{proof} 
This is straightforward to check already in $\R^4$ with the 
hemisphere $C^\perp \cone \pp$ replaced by the half-three-plane containing $\pp$ and with boundary $\Span(C^\perp)$. 
By restricting then to $\Sph^3$ the result follows. 
\end{proof} 

This lemma allows us to define a projection which effectively identifies the space of orbits in discussion 
with a closed hemisphere: 

\begin{definition}[Projections by rotations]  
\label{Pi} 
For $C$ and $\pp$ as in \ref{orbits} 
we define the smooth map 
$\Pi^C_\pp :\Sph^3\to C^\perp \cone \pp$ by requiring $\Pi^C_\pp x$ to be the intersection of $C^\perp \cone \pp$ with the orbit of $\vecK^C$ containing $x$,    
for any $x \in\Sph^3$.  
\end{definition}

\begin{definition}[Graphical sets]  
\label{graphical} 
A set $A\subset \Sph^3$ is called \emph{graphical with respect to $\vecK^C$} (with $C$ as above) 
if each orbit of $\vecK^C$ intersects $A$ at most once. 
If moreover $A$ is a submanifold and there are no orbits of $\vecK^C$ which are tangent to $A$, 
then $A$ is called 
\emph{strongly graphical with respect to $\vecK^C$}.   
\end{definition}

\subsection*{The elementary geometry of totally orthogonal circles}
$\vspace{.126cm}$ 
\nopagebreak

We fix now a great circle $\CC$, orientations on $\CC$ and $\CCperp$ (which imply an orientation on $\Sph^3$ as well), 
and arbitrarily points $\pp_0 \in \CC$ and $\pp^0 \in \CCperp$.  
We define $\forall \phi\in\R$ the points 
\begin{equation}
\label{points} 
\pp_{\phi} := \rot_{\CCperp}^{\phi}\,\pp_0\,\in \, \CC,
\qquad
\pp^{\phi} := \rot_{\CC}^{\phi}\,\pp^0\,\in \, \CCperp.
\end{equation}
Using \ref{span} we further define $\forall \phi\in\R$ the great spheres 
\begin{equation}
\label{hemisph} 
\Sigma^\phi :=  \,\Sph( \CC , \pp^\phi ) , 
\qquad  
\Sigma_\phi := \, \Sph( \CCperp , \pp_\phi),   
\end{equation}
and $\forall\phi,\phi'\in\R$ 
the great circles 
\begin{equation}
\label{circles}  
\CC_\phi^{\phi'} := \Sph( \, \pp_\phi, \pp^{\phi'} \, ).  
\end{equation}

\begin{definition}[Coordinates on $\R^4$]  
\label{D:x1} 
Given $\CC$ as above and points as in \ref{points}, 
we define coordinates $(x^1,x^2,x^3,x^4)$ on $\R^4\supset\Sph^3$ by choosing 
$$
\pp_0 :=(1,0,0,0), \     \pp_{\pi/2} :=(0,1,0,0), \     \pp^0 :=(0,0,1,0), \     \pp^{\pi/2} :=(0,0,0,1). 
$$
\end{definition}

\begin{definition}[Halves and quarter $\Sph^3$] 
\label{D:Om} 
We define (recall \ref{D:x1})  
\begin{equation*}
\begin{gathered} 
\Sph^{3+}:= \{(x^1,x^2,x^3,x^4)\in\Sph^3\subset\R^4:x^3\ge0\} \subset \Sph^3 , 
\\ 
\Sph^{3\und+}:= \{(x^1,x^2,x^3,x^4)\in\Sph^3\subset\R^4:x^4\ge0\} \subset \Sph^3 , 
\\ 
\Sph^3_+:= \{(x^1,x^2,x^3,x^4)\in\Sph^3\subset\R^4:x^1\ge0\} \subset \Sph^3 , 
\\ 
\Sph^{3++}:= \Sph^{3+} \cap \Sph^{3\und+} \subset \Sph^3 , 
\end{gathered} 
\end{equation*} 
and moreover we may use such  exponents and subscripts on any \     $A\subset\Sph^3$   \       
to denote the intersection of  \   $A$  \   with the corresponding subsets of  \   $\Sph^3$,  \   
for example 
\ $   A^{\und+} := A \cap \Sph^{3\und+}$,    
\ $\Sph^{3++}_+ := \Sph^{3++} \cap \Sph^{3}_+$,   \  
and 
\ $   A^{3++}_+ := A \cap \Sph^{3++} \cap \Sph^{3}_+$.   \  
\    Note that 
\ $\partial  \Sph^{3+} = \Sigma^0$,
\  
\ $\partial  \Sph^{3\und+} = \Sigma^{\pi/2}$,
\  
\ $\partial  \Sph^{3}_+ = \Sigma_0$,
\  
and 
\   $\partial  \Sph^{3++} \subset \Sigma^0\cup\Sigma^{\pi/2}$. 
\end{definition} 

\begin{lemma}[Geometry of $\CC$ and $\CCperp$ {\cite[Lemma 2.19]{LindexI}}]  
\label{Lobs} 
\ \ 
The following hold  
$\forall\phi,\phi', \phi_1,\phi'_1,\phi_2,\phi'_2  \in\R$.  
\begin{enumerate}[label=\emph{(\roman*)}]
\item 
\label{Lobs1} 
$ \pp_{\phi+\pi} = - \pp_\phi $  and 
$ \pp^{\phi+\pi} = - \pp^\phi $.   
Similarly 
$ \Sigma_{\phi+\pi} = \Sigma_\phi $  and 
$ \Sigma^{\phi+\pi} = \Sigma^\phi $.   
\item 
\label{Lobs2} 
$ 
\cC_\phi^{\phi'} 
=       
\overline{ \pp_\phi  \pp^{\phi'} } \, \lllcup \,      
\overline{ \pp^{\phi'} \pp_{\phi+\pi} } \, \lllcup \,      
\overline{ \pp_{\phi+\pi} \pp^{\phi'+\pi} } \, \lllcup \,      
\overline{ \pp^{\phi'+\pi} \pp_\phi  } 
$ 
\ where \ 
$\cC_\phi^{\phi'} \llcap \CC = \{ \pp_\phi , \pp_{\phi+\pi} \} $ 
\ and   \ 
$\cC_\phi^{\phi'} \llcap \CCperp  = \{ \pp^{\phi'} , \pp^{\phi'+\pi} \} $ 
\ 
with orthogonal intersections. 
\item 
\label{Lobs3} 
$\CC \cone \pp^\phi$  and $\CCperp \cone \pp_\phi$  
are closed great hemispheres 
with boundary $\CC$ and $\CCperp$ and poles $\pp^\phi$ and $\pp_\phi$ respectively. 
\item 
\label{Lobs4} 
$\Sigma_\phi = ( \CCperp \cone \pp_\phi ) \llcup ( \CCperp \cone \pp_{\phi+\pi} )$  
and 
$\Sigma^\phi = ( \CC \cone \pp^\phi ) \llcup ( \CC \cone \pp^{\phi+\pi} )$.  
\item 
\label{Lobs5} 
$\Sigma^\phi \llcap \CCperp = \{ \pp^\phi , \pp^{\phi+\pi} \}$ 
and 
$\Sigma_\phi \llcap \CC = \{ \pp_\phi, \pp_{\phi+\pi}\}$ 
with orthogonal intersections. 
\item 
\label{Lobs6} 
$\cC_\phi^{\phi'} = \Sigma_\phi \llcap \Sigma^{\phi'}$  
with orthogonal intersection. 
\item 
\label{Lobs7} 
$\left( { \cC_\phi^{\phi'} } \right)^\perp = \cC_{\phi \pm \pi/2}^{\phi' \pm \pi/2}$. 
\item 
\label{Lobs8} 
$\Sigma^\phi \llcap \Sigma^{\phi'} = \CC$ unless $\phi=\phi' \pmod \pi$ 
in which case 
$\Sigma^\phi = \Sigma^{\phi'}$.   
Similarly 
$\Sigma_\phi \llcap \Sigma_{\phi'} = \CCperp$ unless $\phi=\phi' \pmod \pi$ 
in which case 
$\Sigma_\phi = \Sigma_{\phi'}$.   
In both cases the intersection angle is $\phi'-\phi \pmod \pi$. 
\item 
\label{Lobs9} 
$\cC_{\phi_1}^{\phi_1'} \llcap \cC_{\phi_2}^{\phi_2'} = \emptyset$ 
unless 
$\phi_1=\phi_2 \pmod \pi$ 
or
$\phi'_1=\phi'_2 \pmod \pi$. 
If both conditions hold then  
$\cC_{\phi_1}^{\phi_1'} = \cC_{\phi_2}^{\phi_2'}$.  
If only the first condition holds then 
$\cC_{\phi_1}^{\phi_1'} \llcap \cC_{\phi_2}^{\phi_2'} = 
\{ \pp_{\phi_1} , \pp_{\phi_1+\pi} \}$ 
with intersection angle equal to $\phi'_2-\phi'_1 \pmod \pi$. 
If only the second condition holds then 
$\cC_{\phi_1}^{\phi_1'} \llcap \cC_{\phi_2}^{\phi_2'} = 
\{ \pp^{\phi_2} , \pp^{\phi_2+\pi} \}$ 
with intersection angle equal to $\phi_2-\phi_1 \pmod \pi$. 
\end{enumerate} 
\end{lemma}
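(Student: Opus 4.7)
The plan is to reduce everything to explicit coordinate computations in $\R^4$ using the frame fixed in Definition \ref{D:x1}. In those coordinates, the parametrizations of $\CC$ and $\CCperp$ become
\[
\pp_\phi = (\cos\phi,\sin\phi,0,0), \qquad \pp^\phi = (0,0,\cos\phi,\sin\phi),
\]
so that $\CC = \{x^3=x^4=0\}\cap \Sph^3$ and $\CCperp = \{x^1=x^2=0\}\cap\Sph^3$. The rotations $\rot_{\CCperp}^\phi$ and $\rot_{\CC}^\phi$ act as the standard rotation by $\phi$ in the $(x^1,x^2)$-plane and the $(x^3,x^4)$-plane respectively, fixing the totally orthogonal plane pointwise. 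Every assertion in the lemma is then a statement about linear subspaces of $\R^4$ of dimensions two or three, and their intersections with $\Sph^3$.

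First, I would dispose of \ref{Lobs1} from $\cos(\phi+\pi)=-\cos\phi$, $\sin(\phi+\pi)=-\sin\phi$, which gives $\pp_{\phi+\pi}=-\pp_\phi$, $\pp^{\phi+\pi}=-\pp^\phi$ and hence the $\pi$-periodicity of $\Sigma^\phi=\Sph(\CC,\pp^\phi)$ and $\Sigma_\phi$ since the spanning sets change only by signs. Part \ref{Lobs3} is just the definition of the spherical cone together with the observation that $\CC\cone\pp^\phi = \Span(\CC,\pp^\phi)\cap\{x\cdot\pp^\phi\geq 0\}\cap\Sph^3$, which is a closed hemisphere of $\Sigma^\phi$ with pole $\pp^\phi$. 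For \ref{Lobs2}, since $\pp_\phi\perp\pp^{\phi'}$ as unit vectors in $\R^4$, the great circle $\cC_\phi^{\phi'}=\Sph(\pp_\phi,\pp^{\phi'})$ is parametrized by $\cos(t)\pp_\phi+\sin(t)\pp^{\phi'}$; stepping $t$ by $\pi/2$ visits in order the four vertices listed, and the geodesic segments between consecutive vertices are the four quarter-arcs. Its intersection with $\CC$ (respectively $\CCperp$) is determined by $x^3=x^4=0$ (respectively $x^1=x^2=0$), yielding exactly $\{\pp_\phi,\pp_{\phi+\pi}\}$ and $\{\pp^{\phi'},\pp^{\phi'+\pi}\}$; the orthogonality of the intersections is immediate from orthogonality of the corresponding tangent vectors in $\R^4$. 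Parts \ref{Lobs4} and \ref{Lobs5} then follow by applying \ref{Lobs3} twice (to the two hemispheres) and by the same $x$-coordinate zero-set analysis used for \ref{Lobs2}.

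For \ref{Lobs6}, since $\Sigma_\phi=\Sph(\CCperp,\pp_\phi)$ and $\Sigma^{\phi'}=\Sph(\CC,\pp^{\phi'})$ are three-dimensional subspaces intersected with $\Sph^3$, their intersection is $\Sph$ of the intersection of these three-spaces in $\R^4$, which is the two-plane spanned by $\pp_\phi$ and $\pp^{\phi'}$, giving exactly $\cC_\phi^{\phi'}$; the orthogonality is the orthogonality of the respective normals. Part \ref{Lobs7} follows since $\Span(\cC_\phi^{\phi'})^\perp = \Span(\pp_{\phi\pm\pi/2},\pp^{\phi'\pm\pi/2})$, using orthonormality of $\{\pp_0,\pp_{\pi/2},\pp^0,\pp^{\pi/2}\}$. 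For \ref{Lobs8}, two distinct great two-spheres in $\Sph^3$ both containing $\CC$ must intersect in exactly $\CC$ (their three-dimensional spans contain the two-plane $\Span(\CC)$ and are otherwise distinct); when $\phi\equiv\phi'\pmod\pi$ the spheres coincide by \ref{Lobs1}. The intersection angle is computed in the orthogonal complement of $\Span(\CC)$: the hemispheres project to half-lines spanned by $\pp^\phi$ and $\pp^{\phi'}$, and the angle between these is $\phi'-\phi\pmod\pi$. For \ref{Lobs9}, writing $\cC_{\phi_i}^{\phi'_i}=\Sigma_{\phi_i}\cap\Sigma^{\phi'_i}$ via \ref{Lobs6} and using \ref{Lobs8}, the intersection $\Sigma_{\phi_1}\cap\Sigma_{\phi_2}\cap\Sigma^{\phi'_1}\cap\Sigma^{\phi'_2}$ collapses to $\CCperp\cap\CC=\emptyset$ unless one of the two congruences holds, in which case one recovers $\{\pp_{\phi_1},\pp_{\phi_1+\pi}\}$ or $\{\pp^{\phi_2},\pp^{\phi_2+\pi}\}$ as in \ref{Lobs2}, with the angle again measured by projecting to the orthogonal two-plane.

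No substantial obstacle is anticipated: the entire lemma is linear algebra in $\R^4$ once the coordinate parametrizations are written down. The only care needed is bookkeeping of signs and the $\pi$-versus-$2\pi$ periodicities, and distinguishing when ``$\pmod\pi$'' coincidences cause a pair of spheres or circles to merge rather than cross transversely.
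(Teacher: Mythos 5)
Your proposal is correct and takes exactly the approach the paper's one-line proof gestures at: reduce everything to the explicit coordinate frame of Definition \ref{D:x1}, where $\pp_\phi$, $\pp^\phi$, $\Sigma_\phi$, $\Sigma^\phi$, and $\cC_\phi^{\phi'}$ all become spheres of linear subspaces of $\R^4$, and the assertions follow by elementary linear algebra on two- and three-planes. The only difference is that you spell out the details the paper leaves to the reader.
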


\begin{proof} 
It is straightforward to verify all these statements by using the coordinates defined in 
\ref{D:x1}.  
\end{proof}

\subsection*{Killing fields} 
$\vspace{.009cm}$ 
\nopagebreak

\begin{definition}[Symmetries of Killing fields]  
\label{DKsymm} 
We call a Killing field $\vecK$ \emph{even (odd) under an isometry $\refl$} if it satisfies 
${\refl}_* \circ \vecK = \vecK \circ \refl$ 
($\,\, {\refl}_* \circ \vecK = - \vecK \circ \refl \,\, $).  
\qed 
\end{definition} 

\begin{lemma}[Some symmetries of Killing fields]  
\label{Ksymm} 
\hfill 
The following hold 
$\forall \phi,\phi'\in\R$.  
\begin{enumerate}[label=\emph{(\roman*)}]
\item 
\label{Ksymm1} 
$\vecK_\CC$ is 
odd under $\refl_{\Sigma^{\phi}}$ 
and 
$\refl_{ \cC_{ \phi }^{ \phi' } }$ 
and even under $\refl_{\Sigma_{\phi}}$.  
\item 
\label{Ksymm2} 
$\vecK_{\CCperp}$ is 
odd under $\refl_{\Sigma_{\phi}}$ 
and 
$\refl_{ \cC_{ \phi }^{ \phi' } }$  
and even under $\refl_{\Sigma^{\phi}}$.  
\item 
\label{Ksymm3} 
$\vecK_{ \cC_{ \phi }^{ \phi' } }$ is odd under 
$\refl_{\Sigma_\phi}$ and $\ \refl_{\Sigma^{\phi'}}$ 
and even under 
$\refl_{\Sigma_{\phi+\pi/2}}$ and 
\\ 
$\refl_{\Sigma^{\phi'+\pi/2}}$.  
Moreover 
${\Sigma_{\phi+\pi/2}}$ and ${\Sigma^{\phi'+\pi/2}}$  
are preserved under the flow of 
$\vecK_{ \cC_{ \phi }^{ \phi' } }$ and contain the fixed points $\pm\pp^{\phi'}\in {\Sigma_{\phi+\pi/2}}$ and $\pm\pp_{\phi}\in {\Sigma^{\phi'+\pi/2}}$ 
and the geodesic orbit 
${ \cC_{ \phi +\pi/2 }^{ \phi' +\pi/2 } } = {\Sigma_{\phi+\pi/2}}\llcap {\Sigma^{\phi'+\pi/2}}$.   
\end{enumerate} 
\end{lemma}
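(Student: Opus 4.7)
My plan is to reduce Lemma~\ref{Ksymm} to elementary linear algebra on $\R^4$ using the coordinates of Definition~\ref{D:x1}. Each Killing field $\vecK$ appearing in the statement is the restriction to $\Sph^3$ of a linear vector field $x\mapsto Rx$ with $R\in\mathfrak{so}(4)$, and each reflection $\refl$ is the restriction of an orthogonal linear involution of $\R^4$. Since the differential of such an $\refl$ is just $\refl$ itself, Definition~\ref{DKsymm} translates directly to the matrix identity $\refl R=\pm R\refl$ for ``even'' and ``odd'' respectively. The central observation is that if $\refl=\mathrm{diag}(\epsilon_1,\epsilon_2,\epsilon_3,\epsilon_4)$ with $\epsilon_i\in\{\pm1\}$ and $R=E_{ij}-E_{ji}$ is the elementary rotation generator in the $(e_i,e_j)$-plane, then $\refl R\refl^{-1}=\epsilon_i\epsilon_j R$, so evenness or oddness is controlled by the single sign $\epsilon_i\epsilon_j$.

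I would next verify all three items in the base case $\phi=\phi'=0$ by inspection. From~\eqref{points}--\eqref{hemisph}, the Killing fields $\vecK_\CC$, $\vecK_{\CCperp}$, and $\vecK_{\cC_0^0}$ are elementary rotation generators in the $(e_3,e_4)$, $(e_1,e_2)$, and (using $(\cC_0^0)^\perp=\cC_{\pi/2}^{\pi/2}$ from~\ref{Lobs}\ref{Lobs7}) $(e_2,e_4)$-planes respectively; and $\refl_{\Sigma^0}$, $\refl_{\Sigma_0}$, $\refl_{\cC_0^0}$, $\refl_{\Sigma^{\pi/2}}$, $\refl_{\Sigma_{\pi/2}}$ are the diagonal matrices with a single $-1$ in the slots $4$, $2$, $\{2,4\}$, $3$, $1$ respectively, as determined by which coordinate axes lie in the spans of the corresponding $\Sigma$'s. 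Reading off the products $\epsilon_i\epsilon_j$ case by case then immediately reproduces all of the asserted signs in \ref{Ksymm1}--\ref{Ksymm3}.

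To upgrade to arbitrary $\phi,\phi'$, I set $S:=\rot_\CC^{\phi'}\circ\rot_{\CCperp}^\phi\in SO(4)$; by~\eqref{points} this takes $\pp_0\mapsto\pp_\phi$ and $\pp^0\mapsto\pp^{\phi'}$, and hence conjugates the five base-case reflections to $\refl_{\Sigma^{\phi'}}$, $\refl_{\Sigma_\phi}$, $\refl_{\cC_\phi^{\phi'}}$, $\refl_{\Sigma^{\phi'+\pi/2}}$, $\refl_{\Sigma_{\phi+\pi/2}}$ respectively. The Killing fields $\vecK_\CC$ and $\vecK_{\CCperp}$ are each fixed by $S_*$ (their matrices act in coordinate planes disjoint from those of both factors of $S$, hence commute with $S$), while $S_*\vecK_{\cC_0^0}=\vecK_{\cC_\phi^{\phi'}}$ up to an irrelevant sign. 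In either case, conjugating the base-case identity $\refl_\bullet R=\pm R\refl_\bullet$ by $S$ transfers it verbatim to the corresponding identity for the general $(\phi,\phi')$ parameters, completing the even/odd assertions in full generality.

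Finally, for the preservation statements at the end of~\ref{Ksymm3}, the flow of $\vecK_{\cC_\phi^{\phi'}}$ fixes $\cC_\phi^{\phi'}$ pointwise and rotates $(\cC_\phi^{\phi'})^\perp=\cC_{\phi+\pi/2}^{\phi'+\pi/2}$ along itself by~\ref{Lobs}\ref{Lobs7}. The spans of both $\Sigma_{\phi+\pi/2}$ and $\Sigma^{\phi'+\pi/2}$ contain the rotation plane $\Span(\cC_{\phi+\pi/2}^{\phi'+\pi/2})$ together with one extra direction lying on the fixed axis $\cC_\phi^{\phi'}$ (namely $\pp^{\phi'}$ and $\pp_\phi$ respectively), so they are preserved setwise by the flow. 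A direct dimension count on spans identifies the fixed-point set inside each with $\{\pm\pp^{\phi'}\}$ and $\{\pm\pp_\phi\}$, and $\Sigma_{\phi+\pi/2}\cap\Sigma^{\phi'+\pi/2}=\cC_{\phi+\pi/2}^{\phi'+\pi/2}$ by~\ref{Lobs}\ref{Lobs6} gives the claimed geodesic orbit. There is no conceptual obstacle in this proof: everything reduces to linear algebra on $\R^4$ and the incidence relations already collected in Lemma~\ref{Lobs}; the only care required is orientation and sign bookkeeping, which never affects the even/odd conclusion.
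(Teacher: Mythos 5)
Your proof is correct and reaches the same conclusions, but it takes a genuinely different route from the paper's. The paper's very short proof first states a coordinate-free characterization --- $\vecK_{C'}$ is even (odd) with respect to a reflection $\refl$ iff $\refl(C'^\perp)=C'^\perp$ and $\refl$ preserves (reverses) the orientation of $C'^\perp$ --- and then asserts that the lemma follows by applying it, leaving the case-by-case verification implicit. You instead carry out the whole computation explicitly in the coordinate frame of Definition~\ref{D:x1}: each Killing field becomes an elementary rotation generator in a coordinate plane, each reflection a diagonal sign matrix, and evenness/oddness is read off from the single sign $\epsilon_i\epsilon_j=\det(\refl|_{\Span(C'^\perp)})$, which is exactly the paper's orientation criterion specialized to coordinate planes. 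You then verify the base case $\phi=\phi'=0$ by inspection of slot positions and conjugate by $S=\rot_\CC^{\phi'}\circ\rot_{\CCperp}^\phi$ to obtain general $\phi,\phi'$. The paper buys brevity and coordinate independence; yours buys a concrete, fully verifiable base-case-plus-conjugation structure. Both are valid.

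One small slip worth flagging: in the parenthetical justifying $S_*\vecK_\CC=\vecK_\CC$ you claim that the matrix of $\vecK_\CC$ ``acts in coordinate planes disjoint from those of both factors of $S$.'' That is not quite right --- the first factor $\rot_\CC^{\phi'}$ also acts in the $(e_3,e_4)$-plane, which is precisely the rotation plane of $\vecK_\CC$ --- but the conclusion still holds for a slightly different reason: a $2$-plane rotation generator commutes with every rotation in the \emph{same} $2$-plane (planar rotations form an abelian group) as well as with rotations in the orthogonal $2$-plane (the block actions are supported on disjoint coordinate pairs). So $\vecK_\CC$ and $\vecK_{\CCperp}$ do commute with both factors of $S$ and hence with $S$ itself, and the remainder of your argument, including the span/dimension count and the appeal to Lemma~\ref{Lobs}\ref{Lobs6},\ref{Lobs7} for the preservation and fixed-point statements in item~\ref{Ksymm3}, is sound.
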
 

\begin{proof} 
For any great circle $C'$ we have that $\vecK_{C'}$ is even (odd) with respect to a reflection $\refl$ 
if and only if $\refl(C'^\perp) = C'^\perp$ and $\refl$ respects (reverses) the orientation of $C'^\perp$.  
Applying this it is straightforward to confirm the lemma. 
\end{proof} 

Using the coordinates defined in \ref{D:x1}
and assuming appropriate orientations as in \cite{LindexI} 
we have as in \cite[(3.17)]{LindexI} 
\begin{equation}
\label{E:K} 
    \begin{aligned}
      \vecK_{\CCperp}(x)=\vecK^{\CC}(x)&=x^1 \, \pp_{\pi/2} - x^2 \, \pp_0 ,  
\\ 
      \vecK_\CC(x)=\vecK^{\CCperp}(x)&=x^3 \, \pp^{\pi/2} - x^4 \, \pp^0 , 
    \end{aligned}
   \end{equation}
\begin{equation}
\label{E:Kcom} 
    \begin{aligned}
      \vecK_{\cC_{\pi/2}^{\pi/2}}(x)=\vecK^{\cC_0^0}(x)&=x^1 \, \pp^0 - x^3 \, \pp_0 , \\
      \vecK_{\cC_0^{\pi/2}}(x)=\vecK^{\cC_{\pi/2}^0}(x)&=x^2 \, \pp^0 - x^3 \, \pp_{\pi/2} , \\
      \vecK_{\cC_{\pi/2}^0}(x)=\vecK^{\cC_0^{\pi/2}}(x)&=x^1 \, \pp^{\pi/2} - x^4 \, \pp_0 , \\ 
      \vecK_{\cC_0^0}(x)=\vecK^{\cC_{\pi/2}^{\pi/2}}(x)&=x^4 \, \pp_{\pi/2} - x^2 \, \pp^{\pi/2} . 
    \end{aligned}
   \end{equation}

\subsection*{Basic geometry of $\Sigma^0, \T \subset \Sph^3$} 
$\vspace{.126cm}$ 
\nopagebreak

We adopt now some notation and recall some calculations from \cite{SdI,SdII}. 

\begin{definition}[Coordinates $(\xx,\yy,\zz)$ for $\Sigma^0 \subset \Sph^3$]  
\label{D:xx} 
We define a map 
$\Thetahat:\R^3\to \Sph^3$ (recall \ref{sph} and \ref{D:x1}) by 
\begin{equation*} 
  \Thetahat (\xx,\yy,\zz) := (\cos\xx \cos\yy \cos\zz, \cos\xx \sin\yy \cos\zz, \sin\xx\cos\zz,\sin\zz),  
\end{equation*} 
and coordinates $(\xx,\yy,\zz)$ on $\Sph^3 \setminus \CCperp \subset\R^4$, with $\yy$ defined $\pmod{2\pi}$, 
by     
restricting $\Thetahat$ to 
a covering map $\Theta: (-\pi/2,\pi/2) \times \R \times (-\pi/2,\pi/2) \to\Sph^3\setminus \CCperp \subset\R^4$. 
\end{definition} 

Clearly then the $\R^4$ coordinates in \ref{D:x1} satisfy 
\begin{multline} 
\label{E:xx} 
x^1= \cos\xx \cos\yy \cos\zz, 
\qquad 
x^2= \cos\xx \sin\yy \cos\zz, 
\\    
x^3= \sin\xx\cos\zz, 
\qquad 
x^4= \sin\zz.     
\end{multline} 

Note that 
the coordinates $\xx$ and $\yy$ can be thought of as the geographic latitude and longitude on $\Sigma^0 $, 
the coordinate $\zz$ as the signed distance from $\Sigma^0$,  
and in $(\xx,\yy,\zz)$ coordinates we have $\Sigma^0 \setminus \{\pm\pp^0\} = \{ \, \zz=0 \, \}$ and $\CC = \{ \, \xx=\zz=0 \, \}$.  

\begin{notation}[Regions of $\Sph^3$] 
\label{N:xx} 
By \ref{Nsub} and \ref{D:xx} we have  for $I\subset\R$ 
$$
\Sph^3_{\xx\in I} := \Thetahat (I\times\R\times\R) 
\qquad \text{ and } \qquad  
\Sph^3_{\yy\in I} := \Thetahat (\R\times I\times\R).  
$$
For \ $a,a'\in\R$  \  we have also (for example) \     $\Sph^3_{\xx = a } = \Sph^3_{\xx\in \{a,a'\} }$. \   
We also define for $\xx_0 \in \R            $ the parallel circle (or pole) \    $\CC^\parallel_{\xx_0} := \Sigma^0_{\xx=\xx_0}$. 
\end{notation}

Clearly on $\Sph^3 \setminus \CCperp$ we have (recall \ref{sph}) 
\begin{equation}
\label{EThetag}
g = \cos^2\zz\,( \, d\xx^2 +\cos^2\xx \,d\yy^2 \,)\, + d\zz^2.
\end{equation}

We define now 
$\PiSph:\Sph^3 \setminus \{\pm\pp^{\pi/2} \} \to\Sz   $
by
\begin{equation}
\label{EPiSph}
\PiSph(x^1,x^2,x^3,x^4)=
\frac1{|(x^1,x^2,x^3,0)|}
(x^1,x^2,x^3,0).
\end{equation}
Clearly $\PiSph$ is  
a nearest-point projection 
and satisfies 
\begin{equation}
\label{EPiSphTheta}
\PiSph \circ \Theta(\xx,\yy,\zz)
=
\Theta(\xx,\yy,0).
\end{equation}

To simplify the notation given $A\subset\Sz$ we define $\rA :\Sph^3 \setminus \{\pm\pp^{\pi/2} \} \to \R$ by (recall \ref{EPiSph} and \ref{tubular}) 
\begin{equation}
\label{ErS}
\rA := \dbold^{\Sz, g}_A \circ \PiSph  
\end{equation}
where as in \ref{tubular} we may write $\rp$ instead of $\rA$ if $A=\{p\}$.

\begin{definition}[Coordinates $(\xxx,\yyy,\zzz)$ for $\T       \subset \Sph^3$]  
\label{D:xxx} 
We define a map $\Thetaringhat :\R^3\to \Sph^3$ 
(recall \ref{sph} and \ref{D:x1}) by 
\begin{multline*} 
  \Thetaringhat (\xxx,\yyy,\zzz) 
\, :=  \, (\cos\yyy \cos (\zzz+\pi/4) , \sin\yyy \cos (\zzz+\pi/4) , 
\\ 
\cos\xxx \sin (\zzz+\pi/4) , \sin\xxx \sin (\zzz+\pi/4)\, ) ,     
\end{multline*} 
and coordinates $(\xxx,\yyy,\zzz)$ on $\Sph^3 \setminus \left( \CC \cup \CCperp  \right) \subset\R^4$, 
with $\xxx$ and $\yyy$ defined $\pmod{2\pi}$, 
by     
restricting $\Thetaringhat$ to 
a covering map $\Thetaring: \R \times \R \times (-\pi/4,\pi/4) \to \Sph^3 \setminus \left( \CC \cup \CCperp  \right) \subset\R^4 $. 
Finally we continuously extend 
$\yyy,\zzz$ to $\CC$ and $\xxx,\zzz$ to $\CCperp$ by taking $\yyy=\yy$, $\zzz=-\pi/4$ on $\CC$ and $\xxx=\xx$, $\zzz=\pi/4$ on $\CCperp$ (recall \ref{D:xx}).    
\end{definition} 

Note that 
$\xxx = \sqrt2\xx $ $\yyy= \sqrt2\yy $ and $\zzz= \zz $, 
where just here $\xx \yy \zz$ denotes the coordinates defined in \cite{kapouleas:yang} instead of the coordinates in \ref{D:xx}.  
The $\R^4$ coordinates in \ref{D:x1} satisfy 
\begin{multline} 
\label{E:xxx} 
x^1=  \cos\yyy \cos (\zzz+\pi/4) , 
\qquad 
x^2=  \sin\yyy \cos (\zzz+\pi/4) , 
\\    
x^3= \cos\xxx \sin (\zzz+\pi/4) , 
\qquad 
x^4= \sin\xxx \sin (\zzz+\pi/4) ,     
\end{multline} 
and in $(\xxx,\yyy,\zzz)$ coordinates we have $\T                              = \{ \, \zzz=0 \, \}$,  
$\CC =\{\zzz=-\pi/4\}$  and $\CCperp=\{\zzz=\pi/4\}$. 
Clearly on $\Sph^3 \setminus \left( \CC \cup \CCperp  \right)$  we have (recall \ref{sph}) 
\begin{equation}
\label{ETringg}
g \, = \, \gSph      \, = \,\tfrac12 (1+\sin2\zzz) \, d\xxx^2 \,+ \,\tfrac12 (1-\sin2\zzz)  \, d\yyy^2 \, + \,d\zzz^2.
\end{equation}

In analogy with \ref{points} we define 
\begin{equation}
\label{Tpoints} 
\pp_{\phi}^{\phi'} := \Thetaringhat (\phi',\phi,0) = \overline{\,\pp_\phi\pp^{\phi'}\,} \cap \T \qquad \qquad \forall \phi',\phi\in \R .  
\end{equation}
We also define great circles 
\     $\forall \phi\in \R$,    
\begin{equation}
\begin{gathered} 
\label{Tcircles} 
\CCslash_{\phi} := \{ \pp_{\phi_1}^{\phi_2} : \phi_2-\phi_1=\phi \} \subset \T, 
\\     
\CCbackslash_{\phi} := \{ \pp_{\phi_1}^{\phi_2} : \phi_2+\phi_1=\phi \} \subset \T. 
\end{gathered} 
\end{equation}

\subsection*{Basic geometry of catenoids} 
\nopagebreak

\begin{notation}[The cylinder $\cyl$]     
\label{Ecyl}
Let 
$\cyl : = \R\times \Sph^1 \subset \R\times\R^2$ be the standard cylinder, 
and $\chiK$ the standard product metric on $\cyl$.  
We define \    $(\sss     ,\vartheta)$ \     to be the standard coordinates on $\cyl$ 
consistent with        the covering \    $\Thetacyl:\R^2\to\cyl$ \    given by 
$$
\Thetacyl(\sss,\vartheta   ) := (\sss,\cos\vartheta   ,\sin\vartheta   ) 
\qquad \text{so that} \qquad
\chiK = d\sss^2  + d\vartheta^2 .
$$
For \    $\sbar \in \R$ \    we define a \emph{parallel circle} \quad 
$$  
\cyl_{\sbar} :=  \{ \Thetacyl(\sbar,\vartheta) : \vartheta\in\R \}\subset \cyl,  
$$ 
and for \    $I\subset \R$, \    $f\in C^0(\cyl_I)$, \     $j\in\N$, 
\ we define \ 
$\cyl_I := \bigcup_{\sbar\in I} \cyl_{\sbar}$  \quad and 
$$ 
f_\avg =f_{(0\parallel )} , \qquad  f_\osc =  f_{(0\perp     )} , \qquad f_{(j\parallel)} , f_{(j\perp)}        \in C^0(\cyl_I), \ 
$$
by requesting that on each parallel circle in $\cyl_I$ we have the following. 
The average of \    $f$ \    equals \    $f_\avg$, 
\ the average of \    $f_\osc$ \    vanishes, 
\    $f_{(j\parallel)}$ \    is in the space of $j$-th harmonics of the circle, 
\    $f_{(j\perp)}$ \    is orthogonal under \    $\chiK$ \    to the $j$-th harmonics of the circle, 
\quad             $f=f_\avg + f_\osc$,  
\quad             and \quad             $f=   f_{(j\parallel)}  +  f_{(j\perp)}$.  
\end{notation}

\begin{definition}[Euclidean catenoids] 
\label{Dcat}
\qquad
Given $\tau\in\R_+$, 
we define 
a parametrization 
$X_{\tildecat} = \XKt : \cyl \rightarrow \tildecat[\tau]\subset \R^3 $ 
of a catenoid $\Kbb[\tau] \subset \R^3 $ of size $\tau$ 
by taking 
\begin{equation}
\label{Ecatenoid}
\begin{gathered}
\XKt(\sss, \vartheta) \, = \, ( \, \rho(\sss) \cos \vartheta  \, ,  \, \,  \rho(\sss) \sin \vartheta \, ,  \,  z (\sss) \, ), \\  
\text{ where } \qquad \rho(\sss) := \tau\cosh \sss, \qquad  z (\sss):=\tau\, \sss, \\ 
\text{ and hence } \qquad 
\sss = \arccosh (\rho/ \tau ), \qquad  
z= \tau\arccosh (\rho/ \tau ).  
\end{gathered}
\end{equation}
\end{definition} 

The part above the waist of $\tildecat[\tau] $ is the graph of a radial function $\phicat = \phicat[\tau]:[\tau,\infty)\to\R$ given by
\begin{multline}
\label{Evarphicat}
\phicat[\tau](\rr):=
\tau\arccosh (\rr / \tau ) 
\\ 
= \tau\left(\log \rr-\log \tau+\log\left(1+\sqrt{1-{\tau^2}{\rr^{-2}}\,}\right)\right)
 \\
=
\tau\left(\log \frac { 2 \rr } {\tau} 
+
\log\left(\frac12+\frac12\sqrt{1-\frac{\tau^2}{\rr^{2}}\,}\right)\right), 
\end{multline}
where $\rr$ is the distance from the origin on $\R^2\subset\R^3$  
so that 
\begin{equation}
\label{Erhoz}   
\quad  \phicat[\tau] \circ \rho = z, \quad 
\end{equation}
and by direct calculation or balancing considerations 
\begin{equation}
\label{Ecatder}
\frac{\partial\phicat}{\partial\rr_{\phantom{cat}}}(\rr) 
=
\frac\tau{\sqrt{\rr^2-\tau^2\,}}.
\end{equation}

\begin{notation}[Catenoids] 
\label{Ncyl}
We denote the first and second fundamental forms of $\Kbb=\Kbb[\tau]$ by $\gK$ and $\AK$,  
its Gauss map by \ $\nu_\Kbb = \nu_\Kbt :\Kbb[\tau] \to\Sph^2(1)$, \ 
and in analogy with \ref{ELh} we define 
\quad 
$\hK    \, := \, \frac12(|A|_\Kbb^2+2) \, g_\Kbb $. 
\quad 
Furthermore we identify \     $\cyl$ \ with \ $\Kbb[\tau]$ \ via \ $\XKt$ \ in the sense of \ref{Nid}. 
\end{notation}

By straightforward computations we have then 
\begin{equation} 
\label{Ecatg}
\begin{aligned}
g_\tildecat \, :=& \, X_{\tildecat}^*  g \, = \, \rho^2(\sss)  \left( d\sss^2 + d \vartheta^2 \right) \, = \, \rho^2\,  \chiK \, = \, \tau^2 \cosh^4 \sss \  \gS,  
\\ 
A_\tildecat \, :=& \, X_{\tildecat}^*  A \, =   \, \tau \left( d\sss^2 - d \vartheta^2 \right) , 
\quad  
|A|_\K^2 \, =  \, 2 \tau^{-2} \, \sech^4 \sss \, = \, 2\tau^2\rho^{-4}, 
\end{aligned}
\end{equation} 
\begin{equation} 
\label{Ecatnu}
\begin{gathered} 
\nuK (\sss, \vartheta) \, = \, 
 ( \, \sech \sss \, \cos \vartheta  \, , \, \sech \sss \, \sin \vartheta  \,  , \, \tanh \sss \, ) ,
\\
\gnuK      \, = \, \frac{|A_\Kbb|^2}2 \, g_\Kbb \, = \, \sech^2\sss \left( d \sss^2 + d \vartheta^2 \right) \, = \, \sech^2\sss \, \,  \chiK,
\end{gathered} 
\end{equation} 
\begin{multline} 
\label{EKh}
\hK    \, := \, \frac{|A_\Kbb|^2+2}2 \, g_\Kbb \, = 
\, ( \tau^{-2} \, \sech^4 \sss  + 1 ) \, g_\Kbb 
\, = \, \big( 1 + \tau^{-2} \rho^4 \big) \, \, \gnuK      
\\ 
= \,\big( 1 + \tau^{2} \cosh^4\sss \big) \, \, \gnuK       
\, = \, \left( \tau^2\rho^{-2}  +  \rho^2       \right) \,  \chiK      
\, = \, \left( \sech^2\sss + \tau^2 \cosh^2\sss \right) \,  \chiK   .  
\end{multline}

\begin{definition}[Parametrization of $\Sz$] 
\label{Dcyl}
We identify in the sense of \ref{Nid} 
\ $\cyl$ \ and \ $\Sz   \setminus \{\pm\pp^0\}$ \  
via the conformal diffeomorphism 
\ $\ThetaSphcyl:  \cyl \rightarrow \Sz   \setminus \{\pm\pp^0\}$ \ 
defined by 
\begin{multline}
\label{Esphcyl}
\ThetaSphcyl \circ\Thetacyl( \, \sss \, ,  \, \yy  \,   ) 
= \Thetahat(\xx,\yy,0) 
\\
= ( \, \sech \sss \, \cos \yy \, , \, \sech \sss \, \sin \yy  \,  , \, \tanh \sss \, , \, 0 \, ) ,
\end{multline}
where by 
\ref{D:xx}, \ref{EThetag}, \ref{Ecyl}, and straightforward computations, we have 
\begin{equation} 
\begin{gathered} 
\label{Esssxx}
\sss = {\displaystyle{\log\frac{1+\sin\xx}{\cos\xx} }} = {\displaystyle{\log\frac{\cos\xx}{1-\sin\xx} }} ,   
\\
\qquad \cos \xx = \sech \sss, 
\qquad \sin \xx = \tanh \sss , 
\qquad \vartheta = \yy  , 
\\ 
\frac{d \sss}{d \xx} = \frac{1}{\cos \xx}, 
\qquad \frac{d\xx}{d\sss} = \sech \sss,
\qquad  \gS      = (\sech^2 \sss) \, \chiK = \gnuK.      
\end{gathered}       
\end{equation} 
\end{definition}

By \ref{Ecyl}, \ref{Ncyl} and \ref{Dcyl} the metrics \     $\gS  = \gnuK   $,  \  $g_\Kbb$, \ $\hK   $, \ and \ $\chiK$ \      
can be considered defined on any of \     $\cyl$, \ $\Sz   \setminus \{\pm\pp^0\}$, \ $\Kbb[\tau]$ \      
and are conformal to each other by 
\begin{multline} 
\label{Econformal}
\chiK  \, = \, 
\tau^{-2}\sech^2\sss \, g_\Kbb 
\, = \, 
\cosh^2 \sss \,\, \gS      
\, = \, 
\cosh^2 \sss \,\, \gnuK      
\\ 
\, = \, 
\tfrac{2\rho^{-2}}{|A|_\Kbb^2+2} \, \hK    
\, = \, 
(\sech^2\sss + \tau^2 \cosh^2 \sss \, )^{-1} \, \hK   . 
\end{multline} 
Note that the definition of $\hK$ is motivated by the Jacobi operator of small catenoidal bridges in $\Sph^3(1)$ 
and it         can be approximated by 
$\gS  = \gnuK   $ at the core of the catenoid and by $\gK$ away from the core. 
In the $\hK$ metric $\Kbt$ then resembles a round sphere attached to a catenoidal end via a small neck 
whose size is determined by calculating 
that the length of the parallel circle $(\cyl_\sss , \hK   )$ is 
\quad
$2\pi \sqrt{ \sech^2\sss + \tau^2 \cosh^2\sss \, }$ 
\quad
and hence is minimized at 
\begin{equation} 
\label{Esh}
\sss= \sss_{short}[\tau] := \arccosh \tau^{-1/2}     
\quad \iff \quad 
\rho= \rho_{short}[\tau] := \sqrt\tau, 
\end{equation} 
implying minimum length \quad $2^{3/2} \, \pi \, \sqrt\tau$.     
Moreover the neck has small area because for $\sbar\in [10, \, \sss_{short}[\tau] \,]$ we have (recall \ref{tubular})  
\begin{equation} 
\label{Earea}
\area_{\hK}\big( \K[\tau,\sqrt{\tau} ] \setminus \K[\tau, \rho(\sbar)] \big) \le \tfrac52\pi \sech^2(\sbar) = \tfrac52\pi \tau^2/\rho^2(\sbar). 
\end{equation}

The corresponding operators (recall \ref{ELjacobi})  
\quad $\Lcal_{\Sigma^0}$, $\Lcal_\Kbb$, $\LhK$, and $\LchiK$ \quad   
are given by 
\begin{equation} 
\label{dLchi}
\begin{gathered} 
\Lcal_{\Sigma^0}  := \Delta_{\Sigma^0} + 2,         
\\     
\Lcal_\Kbb  = \Lcal_\Kbt  :=  \Delta_\Kbt + |A|^2_\Kbb = \Delta_\Kbt + 2\tau^{-2} \, \sech^4 \sss ,         
\\ 
\LhK = \Lcal_{\Kbt,\hK   } = \tfrac2{|A|_\Kbb^2+2} \Lcal_\Kbt = \Delta_{\hK   } + 2 ( 1 + \tau^{2} \, \cosh^4 \sss )^{-1},   
\\ 
\LchiK = \Lcal_{\Kbt,\chi} := \rho^2(\Delta_\Kbt+|A|_\Kbt^2) = \Delta_\chiK + 2 \sech^2\sss,    
\end{gathered} 
\end{equation} 
and are conformal to each other by 
\begin{equation} 
\label{EconformalL}
\LchiK \, = \, \sech^2 \sss \, \Lcal_{\Sigma^0} \, = \, 
\tau^{2}\cosh^2\sss \, \, \Lcal_\Kbb \, = \, 
(\sech^2\sss + \tau^2 \cosh^2 \sss \, ) \, \LhK. 
\end{equation} 
The Jacobi equation for a rotationally invariant function $\phi$ amounts then to 
\begin{align}
\label{ELchirot}
\LchiK \phi \, \equiv \, \frac{d^2\phi}{d\sss^2} + 2 \sech^2\sss \,  \, \phi \, = \, 0;
\end{align}
and the (conformally invariant) \emph{flux of $\phi$ } is given by 
\begin{align}
\label{Eflux}
F_\phi(\sss) \, := \, \int_0^{2\pi} \frac\partial{\partial\sss}\phi \,\, d\vartheta = {2\pi} \frac{\partial\phi}{\partial\sss}(\sss).   
\end{align}

\begin{definition}[Jacobi fields on $\Sz$ ] 
\label{D:JS} 
\       
We define 
\     $\phio, \phie\in C^\infty( \cyl )$ 
\quad 
by (recall \ref{Esssxx} and note the change of sign in $\phie$ relative to \cite{SdI,SdII}) 
\begin{equation*} 
\begin{aligned} 
\phio(\sss) \, := \, &\, \tanh \sss \, = \, \sin\xx , 
\\
\phie(\sss) \, := \, &\, \sss \tanh \sss -1 \, = \, \sin\xx \, \log\frac{1+\sin\xx}{\cos\xx} -1 \, = \, - \sin\xx \, \log\frac{1-\sin\xx}{\cos\xx} -1 . 
\end{aligned} 
\end{equation*} 
\end{definition}

\begin{lemma}[{\cite[2.19]{SdII}}]
\label{Lphie}
The following hold for \  $\phie$ \   and \   $\phio$ \   as in \ref{D:JS}. 
\begin{enumerate}[label=\emph{(\roman*)}]
\item 
\label{Lphie1}
$\phie$ and $\phio$ satisfy \ref{ELchirot} and are even and odd respectively in $\sss$ (or $\xx$).   
\item 
\label{Lphie2}
$\phio(\sss)$ is strictly increasing in $\R$ and corresponds to a translation on $\Kbt$ or $\Sigma^0 \subset \R^4$.  
\item 
\label{Lphie3}
$\phie(\sss)$ is strictly increasing on $[0, \infty)$ with a unique root 
\quad 
$\sroot \in  ( \, 1.1996 \, , \, 1.1997 \, )$ 
\quad 
and corresponds to scaling on $\Kbt$.  
\item 
\label{Lphie4}
$\phie(\sss)+\phio(\sss) $ is strictly increasing on $[0, \infty)$ with a unique root 
\quad 
$\sbarroot \in  ( \, 0.6837 \, , \, 0.6838 \, )$ 
\quad 
and \quad $\sbarroot/ \sroot \in ( \, 0.5699 \, , \, 0.57   \, )$.  
\item 
\label{Lphie5}
The fluxes of $\phie$ and $\phio$ (recall \ref{Eflux}) are given by  
$$
F_{\phio}(\sss)=  2\pi\sech^2\sss, 
\qquad 
F_{\phie}(\sss)=  2\pi (\, \tanh\sss + \sss \sech^2\sss\,), 
$$
and converge to $0$ and $2\pi$ as $\sss\to\infty$ respectively.  
\item 
\label{Lphie6}
The Wronskian satisfies
\[ W[\phio, \phie](\sss): = \phio(\sss) \partial \phie(\sss) - \phie(\sss)\partial \phio(\sss) = 1.\]
\end{enumerate}
\end{lemma}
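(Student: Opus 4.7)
The plan is to verify each of the six items by direct computation using the explicit formulas in Definition \ref{D:JS}, relying on \eqref{ELchirot} and \eqref{Eflux}. The only non-routine step is the numerical confinement of the roots $\sroot$ and $\sbarroot$.

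For \ref{Lphie1}, I would differentiate $\phio=\tanh\sss$ twice to get $\phio''=-2\sech^2\sss\,\tanh\sss=-2\sech^2\sss\,\phio$, which gives \eqref{ELchirot}. Similarly, differentiating $\phie=\sss\tanh\sss-1$ yields $\phie'=\tanh\sss+\sss\sech^2\sss$ and $\phie''=2\sech^2\sss-2\sss\sech^2\sss\tanh\sss=2\sech^2\sss(1-\sss\tanh\sss)=-2\sech^2\sss\,\phie$, as required. Oddness of $\phio$ and evenness of $\phie$ are immediate from their closed-form expressions. For \ref{Lphie2}, strict monotonicity follows from $\phio'=\sech^2\sss>0$ on $\R$; the geometric interpretation as a translation is standard (it is the normal component of a Killing field along $\Kbt$ or $\Sz$). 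For \ref{Lphie6}, since both solutions satisfy the same second-order equation with no first-order term the Wronskian is constant, and evaluation at $\sss=0$ gives $W=\phio(0)\phie'(0)-\phie(0)\phio'(0)=0\cdot 0-(-1)\cdot 1=1$.

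For \ref{Lphie3}, the derivative $\phie'(\sss)=\tanh\sss+\sss\sech^2\sss$ is strictly positive on $(0,\infty)$ (both summands nonnegative, the first strictly so), hence $\phie$ is strictly increasing on $[0,\infty)$. Since $\phie(0)=-1$ and $\phie(\sss)\to+\infty$, the intermediate value theorem yields a unique positive root $\sroot$. For \ref{Lphie4}, the derivative $(\phie+\phio)'(\sss)=\tanh\sss+(\sss+1)\sech^2\sss$ is $1$ at $\sss=0$ and strictly positive on $[0,\infty)$, so $\phie+\phio$ is strictly increasing there; since $(\phie+\phio)(0)=-1$ and the sum tends to $+\infty$, a unique positive root $\sbarroot$ exists. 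For \ref{Lphie5}, \eqref{Eflux} gives $F_\phi(\sss)=2\pi\phi'(\sss)$; substituting the derivatives computed above yields the stated expressions, and as $\sss\to\infty$ one has $\sech^2\sss\to 0$ and $\tanh\sss+\sss\sech^2\sss\to 1$, giving the asymptotic fluxes $0$ and $2\pi$.

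The main (really the only) obstacle is the sharp numerical enclosure of the roots. For $\sroot$, I would evaluate $\phie$ at $\sss=1.1996$ and $\sss=1.1997$ using the identity $\phie(\sss)=\sss\tanh\sss-1$ and rigorous numerical arithmetic (or rational bounds on $\tanh$ via its Taylor series with explicit error) to confirm a sign change; strict monotonicity from \ref{Lphie3} then pins $\sroot$ in the stated interval. The same procedure applied to $\phie+\phio$ at $\sss=0.6837$ and $\sss=0.6838$ localises $\sbarroot$, and the ratio $\sbarroot/\sroot$ is confined in $(0.5699,0.57)$ by combining the two enclosures. No deeper analytic input is needed beyond monotonicity and the intermediate value theorem.
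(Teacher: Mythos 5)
Your proposal follows the same direct-calculation route as the paper, which itself dispatches all six items as ``straightforward calculations'' after noting that the two root equations can be written uniformly as $(\sss + b)\tanh\sss = 1$ with $b = 0$ for $\phie$ and $b = 1$ for $\phie + \phio$. Your item-by-item computations (the second derivatives for \ref{Lphie1}, the positive derivatives on $(0,\infty)$ for \ref{Lphie3}--\ref{Lphie4}, the constant Wronskian evaluated at $\sss = 0$ for \ref{Lphie6}, and the flux formulas for \ref{Lphie5}) are all correct and amount to exactly what the paper leaves to the reader.

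One small but genuine defect: ``combining the two enclosures'' to obtain $\sbarroot / \sroot \in (0.5699, 0.57)$ does not work at the precision you propose. From $\sroot \in (1.1996, 1.1997)$ and $\sbarroot \in (0.6837, 0.6838)$ the quotient is only confined to
\[
\Bigl( \tfrac{0.6837}{1.1997}, \tfrac{0.6838}{1.1996} \Bigr) \approx ( 0.56989 , 0.57003 ),
\]
which contains but is not contained in $(0.5699, 0.57)$. You need one more decimal digit of precision in each root enclosure --- e.g.\ $\sroot \in (1.19967, 1.19969)$ and $\sbarroot \in (0.68373, 0.68375)$ --- before taking the quotient; with those, the ratio lands in $(0.56993, 0.56996) \subset (0.5699, 0.57)$ as claimed. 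This is a computational refinement rather than a change in strategy, but as written your final sentence overstates what the coarser enclosures deliver.
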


\begin{proof}
It follows from straightforward calculations by using \ref{Esssxx}, definition \ref{D:JS}, and \ref{ELchirot}.  
The roots satisfy the equation 
\quad $(\sbar_j +  b    ) \tanh\sbar_j = 1$ \quad with $  b    =0,1$. 
Using this we    calculate their numerical values. 
\end{proof}

For future use we generalize now the ODE \eqref{ELchirot} to the ODE (for a rotationally invariant function $\phi$) 
\begin{align}
\label{ELchiODE}
\left( \Delta_{\Sz} + 2 + \lambda \right) \phi(\sss)  =0 \quad \iff \quad 
\frac{d^2\phi}{d\sss^2} + ( 2 +\lambda) \sech^2\sss \,  \, \phi \, = \, 0.
\end{align}

\begin{definition}[Perturbations of $\phio, \, \phie$ ] 
\label{D:phiDN} 
\       
Given \ $\lambda\in [-1/100,1/100]$, \, $\stil_0 \in [-1/100,1/100]$, \    and \ $\stil_1 \in [2\sroot,\infty),$    \    
we define (recall \ref{Dcyl})       
\linebreak 
\quad     $\phiD[\lambda,\stil_0 ] , \, \phiN[\lambda,\stil_0 ], \, \phien[\lambda,\stil_1 ] ,  \, \phisn[\lambda,\stil_1 ] \in C^\infty( \cyl )$ \quad 
to be the solutions of \ref{ELchiODE} with initial data 
\begin{equation*} 
\begin{aligned} 
&\phiD[\lambda,\stil_0 ] (\stil_0) =0, \qquad \qquad \tfrac{\partial}{\partial\sss}    \phiD[\lambda,\stil_0 ]   (\stil_0) =1, 
\\ 
&\phiN[\lambda,\stil_0 ] (\stil_0) =1, \qquad \qquad \tfrac{\partial}{\partial\sss} \phiN[\lambda,\stil_0 ] (\stil_0) =0, 
\\ 
&\phisn[\lambda,\stil_1 ] (\stil_1) =0, \qquad \qquad \tfrac{\partial}{\partial\sss} \phisn[\lambda,\stil_1 ] (\stil_1) =1, 
\\ 
&\phien[\lambda,\stil_1 ] (\stil_1) =1, \qquad \qquad \qquad \qquad \qquad 
\end{aligned} 
\end{equation*} 
and moreover 
\  $\phien[\lambda,\stil_1 ] $ \   smoothly extendible through \  $\pp^0$.       
\end{definition} 

\begin{lemma}[Perturbations of $\phio, \, \phie$ ]         
\label{LphiDN}
For \   $\lambda,\stil_0$ \   as in 
\ref{D:phiDN} and \   $\stil_1$ \   large enough, 
there is \ $C>0$ \   depending on \  $\stil_1$ \  
such that the following hold. 
\begin{enumerate}[label=\emph{(\roman*)}]
\item 
\label{LphiDN1}   
$\phiD[  0 , 0 ] = \phio =      \phio                    (\stil_1) \, \phien[  0    ,\stil_1 ]$   
\quad  and \quad    
$\phiN[  0 , 0 ] = - \phie $. 
\item 
\label{LphiDN2}   
$ \| \, \phiD[\lambda,\stil_0 ] - \phio \, : \, C^3( \cyl_{[0, \stil_1]} , \chiK ) \, \| \, \le \, C \, ( |\lambda| + |\stil_0| ). $
\item 
\label{LphiDN3}  
$ \| \, \phiN[\lambda,\stil_0 ] + \phie \, : \, C^3( \cyl_{[0, \stil_1]} , \chiK ) \, \| \, \le \, C \, ( |\lambda| + |\stil_0| ). $
\item 
\label{LphiDN4}   
$\phien[\lambda,\stil_1 ]$ \     is increasing in \  $\sss$, \   
$\tfrac{\partial}{\partial\sss} \phien[\lambda,\stil_1 ]$ \     is decreasing in \  $\sss$, \   
and \\ 
$\phien[\lambda,\stil_1 ] (\pp^0) - \phien[\lambda,\stil_1 ] (\stil_1) \, < \, \sech^2\stil_1 . $                  
\item 
\label{LphiDN5}   
$\phisn[\lambda,\stil_1] $  \     is increasing in \  $\sss$, \   
$\tfrac{\partial}{\partial\sss} \phisn[\lambda,\stil_1 ]$ \     is decreasing in \  $\sss$, \   
and \   
$1/2 \, < \, \tfrac{\partial}{\partial\sss} \phisn[\lambda,\stil_1] (\sss)                                                        \, < \, 1             $                         
\quad for \quad $\sss>\stil_1$. 
\item 
\label{LphiDN6}   
$ \phiD[\lambda,\stil_0 ] \, = \, A_{D} \, \phien[\lambda,\stil_1 ] \,+\, A'_D \, \phisn[\lambda,\stil_1 ] $ 
\quad with \\    
$ | A_D - \phio(\stil_1) | + | A'_D| \, \le \, C \, ( |\lambda| + |\stil_0| ). $
\item 
\label{LphiDN7}  
$ \phiN[\lambda,\stil_0 ] \, = \, A_{N} \, \phien[\lambda,\stil_1 ] \,+\, A'_N \, \phisn[\lambda,\stil_1 ] $ 
\quad with \\    
$ | A_N + \phie(\stil_1) | + | A'_N  +   \tfrac{\partial}{\partial\sss} \phie(\stil_1) | \, \le \, C \, ( |\lambda| + |\stil_0| ). $
\end{enumerate}
\end{lemma}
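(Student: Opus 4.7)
The entire lemma is an application of ODE perturbation theory for the second-order linear equation \eqref{ELchiODE}, organized around the unperturbed basis $\{\phio,\phie\}$ and the alternative basis $\{\phien,\phisn\}$ adapted to the point $\stil_1$. My plan is to dispose of (i) first by direct identification of initial data, then deduce (ii)--(iii) from standard smooth dependence of ODE solutions on parameters, then analyze $\phien,\phisn$ on $[\stil_1,\infty)$ in (iv)--(v) by treating the potential $(2+\lambda)\sech^2\sss$ as exponentially small there, and finally obtain (vi)--(vii) by expressing $\phiD,\phiN$ in the $(\phien,\phisn)$-basis and reading off coefficients.

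For (i), since $\phio(0)=0$, $\phio'(0)=\sech^2 0=1$, $\phie(0)=-1$, $\phie'(0)=0$, both $\phio$ and $-\phie$ satisfy \eqref{ELchirot} with exactly the initial data defining $\phiD[0,0]$ and $\phiN[0,0]$, so uniqueness of ODE solutions gives the identifications. For the alternative formula $\phio=\phio(\stil_1)\phien[0,\stil_1]$, I note that solutions of \eqref{ELchirot} smoothly extendible through $\pp^0$ form a one-dimensional subspace (the other independent solution, $\phie$, is singular at $\pp^0$ by \ref{D:JS}), hence $\phien[0,\stil_1]$ is a scalar multiple of $\phio$, and the normalization $\phien[0,\stil_1](\stil_1)=1$ pins down the scalar.

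For (ii)--(iii), the coefficient $(2+\lambda)\sech^2\sss$ of the ODE depends smoothly on $\lambda$, and the initial data at $\stil_0$ depend smoothly on $\stil_0$; so standard ODE theory (Gr\"{o}nwall on $[0,\stil_1]$) yields $C^0$ estimates of the form $|\phiD[\lambda,\stil_0]-\phiD[0,0]|\le C(|\lambda|+|\stil_0|)$, and the equation itself together with the fact that $\sech^2\sss$ is bounded on $[0,\stil_1]$ upgrades these to $C^3$ estimates (differentiating the ODE up to twice). The same argument handles $\phiN$.

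For (iv)--(v), I work on $[\stil_1,\infty)$ and exploit the fact that the potential $(2+\lambda)\sech^2\sss$ is $O(e^{-2\stil_1})$ there for $\stil_1$ large. For (v), I integrate the ODE from $\stil_1$: $\phisn'(\sss)-1=-(2+\lambda)\int_{\stil_1}^\sss\sech^2(t)\phisn(t)\,dt$. A continuity/bootstrap argument assuming $\phisn'\in(1/2,1)$ forces $0\le\phisn(t)\le t-\stil_1$ (by concavity while $\phisn\ge0$), whence the correction term is bounded by a constant times $\int_{\stil_1}^\infty\sech^2(t)(t-\stil_1)\,dt\lesssim e^{-2\stil_1}\ll 1/2$, closing the bootstrap and giving both monotonicity statements. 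For (iv), I compare to the unperturbed case $\phien[0,\stil_1]=\phio/\phio(\stil_1)=\tanh\sss/\tanh\stil_1$, which is explicitly increasing with decreasing derivative; the bound $\phien(\pp^0)-\phien(\stil_1)<\sech^2\stil_1$ reduces at $\lambda=0$ to $(1-\tanh\stil_1)/\tanh\stil_1<\sech^2\stil_1$, valid for $\stil_1$ large, and perturbation preserves the strict inequality on a compact range of $\lambda$.

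For (vi)--(vii), since $\phien(\stil_1)=1,\ \phisn(\stil_1)=0,\ \phisn'(\stil_1)=1$, the Wronskian $W(\phien,\phisn)(\stil_1)=1\ne0$ and $\{\phien,\phisn\}$ is a fundamental system. Writing $\phiD=A_D\phien+A_D'\phisn$ and evaluating at $\sss=\stil_1$ gives
\begin{equation*}
A_D=\phiD[\lambda,\stil_0](\stil_1),\qquad A_D'=\tfrac{\partial}{\partial\sss}\phiD[\lambda,\stil_0](\stil_1)-A_D\,\tfrac{\partial}{\partial\sss}\phien[\lambda,\stil_1](\stil_1),
\end{equation*}
and analogously for $(A_N,A_N')$ from $\phiN$. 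At $\lambda=\stil_0=0$ the identity $\phio=\phio(\stil_1)\phien[0,\stil_1]$ from (i) forces $A_D=\phio(\stil_1)$ and $A_D'=0$, and similarly one reads off the base values $A_N+\phie(\stil_1)=0$ and the corresponding value of $A_N'+\phie'(\stil_1)$; the asserted estimates then follow by applying (ii)--(iii) to the right-hand sides, together with the $C^1$ closeness of $\phien[\lambda,\stil_1]$ to $\phien[0,\stil_1]$ which is itself part of the smooth dependence package.

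The only mildly delicate point is the bootstrap in (v) (ensuring $\phisn>0$ globally on $[\stil_1,\infty)$ so that concavity actually holds), but this is routine given the exponential smallness of the potential.
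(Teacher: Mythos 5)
Your overall route mirrors the paper's: (i) by matching initial data and ODE uniqueness, (ii)–(iii) by smooth dependence of solutions on parameters, (iv)–(v) by integrating the ODE on $[\stil_1,\infty)$ exploiting the exponential decay of the potential $(2+\lambda)\sech^2\sss$, and (vi)–(vii) by change of basis. Your treatment of (v) is self-contained and sound: the bootstrap keeping $\phisn'\in(1/2,1)$, the concavity argument, and the integral bound are exactly what ``comparing with the flux at $\stil_1$'' amounts to. Items (i), (ii), (iii), (vi), (vii) are also essentially identical to the paper's argument.

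The one genuine gap is in (iv). You obtain the numerical bound $\phien(\pp^0)-\phien(\stil_1)<\sech^2\stil_1$ by checking it at $\lambda=0$ and invoking continuity in $\lambda$, which is fine for a strict inequality on a compact $\lambda$-range with $\stil_1$ fixed. But the monotonicity claims (``$\phien$ increasing, $\phien'$ decreasing'') for $\lambda\ne 0$ do not follow from being $C^1$-close to the increasing function $\tanh\sss/\tanh\stil_1$; closeness to a monotone function does not imply monotonicity, and near the pole $\phien'\to 0$ so there is no uniform positive lower bound to propagate. What is actually needed — and what the paper's ``the flux vanishes at the pole'' supplies — is the following chain: continuity in $\lambda$ gives $\phien>0$ on the compact set $[\xx(\stil_1),\pi/2]$ in the $\xx$-coordinate; then $\phien''=-(2+\lambda)\sech^2\sss\,\phien<0$ on $[\stil_1,\infty)$, so $\phien'$ is decreasing; and since $\phien$ extends smoothly through $\pp^0$ (so in particular $\phien'\to 0$ as $\sss\to\infty$, i.e.\ the flux vanishes at the pole), the decreasing function $\phien'$ has limit $0$, hence $\phien'>0$ everywhere. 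You should add this step explicitly rather than relying on perturbation for the sign conditions; once added, your proof of (iv) closes.
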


\begin{proof}
\ref{LphiDN1}  follows from \ref{Lphie} and ODE uniqueness.  
\ref{LphiDN2}  and   \ref{LphiDN3}   
follow from smooth dependence of ODE solutions on parameters and \ref{LphiDN1}.   
Integrating by parts we control the change of flux. 
For \ref{LphiDN4}  we use the $\xx$ coordinate and that the flux vanishes at the pole. 
For       \ref{LphiDN5}   we compare with the flux at $\stil_1$. 
\ref{LphiDN6}  and   \ref{LphiDN7}   
follow from 
\ref{LphiDN2},  \ref{LphiDN3}, and ODE uniqueness.    
\end{proof}

\bigbreak  
\begin{nopagebreak} 
\section{Side-symmetric doublings of gluing type} 
\label{S:Ddou} 

\subsection*{General theory}  
$\vspace{.062cm}$ 
\nopagebreak

We first recall a general definition proposed in \cite{gLD}.  
\end{nopagebreak} 

\begin{definition}[Surface doublings {\cite[Definition 1.1]{gLD}}] 
\label{Ddoubling} 
\ \ \ \   
Given a Riemannian three-manifold 
$(N,g)$ and a two-sided 
surface $\Sigma$ in $N$, 
we define a \emph{(surface) doubling $\Mhat$ over $\Sigma$ in $N$} (equivalently we say \emph{$\Mhat$ doubles $\Sigma$ in $N$}) 
to be a smooth surface $\Mhat$ in $N$ satisfying the following.  
\begin{enumerate}[label=\emph{(\roman*)}]
\item 
\label{dprojec} 
The nearest point projection $\Pi_\Sigma$ to $\Sigma$ in $N$ is well defined on $\Mhat$. 
\item 
$\Omegahat:=\Pi_\Sigma(\Mhat) \subset \Sigma $ is closed with smooth boundary $\partial\Omegahat$. 
\item 
\label{dgraphs} 
$\Mhat$ is the union of the graphs of $\ubreve^+$ and $-\ubreve^- \in C^0(\Omegahat) \cap C^\infty( \Omegahat \setminus\partial \Omegahat)$.  
\item 
\label{dbgraphs} 
$\ubreve^+ + \ubreve^- = 0$ on $\partial \Omegahat$, where the two graphs join smoothly with vertical tangent planes, and $\ubreve^+ + \ubreve^- > 0$ close to $\partial \Omegahat$ in $\Omegahat$. 
\item 
By 
the above 
$\left.\Pi_\Sigma\right|_{\Mhat}$ covers $\Omegahat \setminus \partial \Omegahat$ twice, 
$\partial\Omegahat$ once, 
and misses $\Sigma\setminus\Omegahat $. 
\end{enumerate} 
We call $(\Sigma,N,g)$ \emph{the background of the doubling $\Mhat$}, $\Sigma$ its \emph{base surface}, 
and each connected component of $\Sigma\setminus\Omegahat$ a \emph{doubling hole of $\Mhat$ over $\Sigma$}. 
Finally we call the doubling $\Mhat$ \emph{minimal} if $\Sigma$ and $\Mhat$ are minimal.  
\end{definition}

\begin{notation}[Graphical decomposition of doublings]       
\label{n:Mpm} 
Given $\Mbreve$ as in \ref{Ddoubling} we denote by $\Mbreve_\pm$ the graphs of $\ubreve^+$ and $-\ubreve^-$ so that $\Mbreve=\Mbreve_+\cup \Mbreve_-$ with disjoint interiors. 
We also define diffeomorphisms $\Xbreve_\pm:\Omegahat\to \Mbreve_\pm$ so that $\Pi_\Sigma\circ \Xbreve_\pm= I_{\Omegahat}$. 
\end{notation} 

\begin{definition}[Symmetric doublings \cite{k35,IIgLD}]
\label{dSS}
Given $(\Sigma,N,g)$ and $\Mhat$ as in \ref{Ddoubling} let $\group^{\Sigma,\Mhat,N,g}_\sym$ denote the group of isometries of $(N,g)$ fixing $\Sigma$ and $\Mhat$ as sets. 
We call the surface doubling $\Mhat$ \emph{$\group$-symmetric} if $\group$ is a subgroup of $\group^{\Sigma,\Mhat,N,g}_\sym$ 
and \emph{side-symmetric} if there is an isometry $\refl_\Sigma\in \group^{\Sigma,\Mhat,N,g}_\sym$ which fixes $\Sigma$ pointwise and exchanges its sides. 
In the side-symmetric case we clearly have the following. 
\begin{enumerate}[label=\emph{(\roman*)}]
\item 
\label{dSS1}
$\ubreve^+=\ubreve^-$ \quad and 
hence \ref{Ddoubling}\ref{dbgraphs} amounts to \quad $\ubreve=0$ \quad on $\partial \Omegahat$ and \quad $\ubreve> 0$ \quad close to $\partial \Omegahat$ in $\Omegahat$ 
where we write \quad $\ubreve:= \ubreve^+=\ubreve^-$ \quad in order to simplify the notation.  
\item 
\label{dSS2}
$A_\Sigma=0$.  
\end{enumerate}  
\end{definition}

\begin{convention}
\label{ConvArticle} 
In this article, unless stated otherwise, $\Sigma$ and $\Mhat$ as in \ref{Ddoubling} and \ref{dSS} are assumed embedded and connected. 
This  implies $\ubreve^+ + \ubreve^- > 0$ on $\Int(\Omegahat)$ and 
in the side-symmetric case $\ubreve>0$ on $\Int(\Omegahat)$. 
\end{convention}

The minimal surface doublings constructed by PDE gluing methods have special features which we describe in the side-symmetric case in \ref{dgtSS} below. 
We first recall the definition of LD solutions  (with slight modifications from the original {\cite[Definition 3.1]{SdI}}, some from {\cite[Definition 3.6]{gLD}}).  

\begin{definition}[LD solutions {\cite[Definition 3.1]{SdI}, \cite{IIgLD}}]
\label{dLD}
\ 
Let $\Sigma,g$ be as in \ref{Ddoubling}. 
We define a \emph{linearized doubling (LD) solution on $\Sigma$ of singular set $L\subset\Sigma$ and configuration   $\taubold: L \rightarrow \R_+            $}   
to be a function 
$
\varphi \in C^\infty (\, \Sigma \setminus L \, ) 
$ 
satisfying the following.   
\begin{enumerate}[label=\emph{(\roman*)}]
\item 
$L\subset \Sigma$  is dicrete    and 
\quad 
$\Lcal_\Sigma\varphi=0$ \ on \ $\Sigma\setminus L$, 
\ or in the case with kernel 
\quad 
$\Lcal_\Sigma\varphi \in \subker $,  
\quad 
where 
\quad 
$\subker \subset C^\infty( \Sigma\setminus L ) $ 
\quad 
is  a chosen subspace  
of dimension 
\quad 
$\dim \subker = \dim \ker \Lcal_\Sigma$.           
\item 
$\forall p \in L$ 
\quad 
the function   
\quad 
$\varphi - \tau_p \log \dbold_p^\Sigma$ 
\quad 
is bounded on some deleted neighborhood of $p$ in $\Sigma$,  
where $\tau_p$ is the value of $\taubold$ at $p$. 
\end{enumerate}
\end{definition}

\begin{definition}[Truncated catenoids]
\label{dK}
Given $\tau,\sbar,\rbar> 0$ we define 
\begin{equation*} 
\begin{gathered} 
\K[\tau,\rbar] := \K[\tau]\cap \{ ( x , y , z ) \in \R^3:  x^2 +  y^2 \le \rbar^2    \}
\\ 
\text{ and } \qquad 
\K[\tau;       \sbar] := \XKt ( \cyl_{[-\sbar,\sbar]} ) = \K[\tau, \tau\cosh \sbar]. 
\end{gathered} 
\end{equation*} 
\end{definition}

\begin{definition}[cf. {\cite[Definition 4.2]{gLD}}]
\label{dnorm} 
We have the following where 
$C^{k,0}$ stands for $C^k$, 
$C^{k,1}$ stands for $C^{k+1}$, 
and $\cbar(k)>0$ is a constant depending  only on $k$. 
\begin{enumerate}[label=\emph{(\roman*)}]
\item 
\label{dnormK}
Let $Y$ be an isometry from $\R^3$ with its standard Euclidean metric onto a Euclidean space.  
For a domain \quad $\Udom\subset\K_Y := Y \K[\tau, \tau^\alpha]$ \     
and \     $k \in \N, \     \beta \in [0, 1]$, \     $\gamma \in \R$, \        
we define (recall \ref{Ecatg}, \ref{D:norm-g}, and \ref{dK})   
\begin{equation*}
\qquad\qquad
\| u \|_{k, \beta, \gamma; \Udom } 
\, := \, \| u : C^{k, \beta}(\Udom , \rho, \gK, \rho^\gamma) \|   
\, \Sim^{\cbar(k)}  \, 
\| u : C^{k,\beta}(   \Udom , 1  , \chiK ,\, \rho^\gamma ) \|.  
\end{equation*}
\item 
\label{dnormS}
Let $\Sigma,g$ be as in \ref{Ddoubling} and $L$ as in \ref{dLD}. 
We use the notation \ $\rr:= \dbold^{\Sigma}_L$ \   (recall\ref{tubular}) and 
we define the conformal metric (not everywhere smooth) 
\  $\chibar := \rr^{-2} \gSigma$, also 
for $p\in L$ we define \ $\Sigma_{(p)}:= \Sigma_{\rr=\dbold^{\Sigma}_p} $. 
For a domain \quad $\Udom\subset\Sigma $ \quad 
and \quad $k \in \N, \quad \beta \in [0, 1]$, \quad $\gamma \in \R$, 
we define (recall \ref{Ecatg}, \ref{D:norm-g}, and \ref{dK})   
\begin{align*}
\qquad\quad
\| u \|_{k, \beta, \gamma; \Udom } 
\, := \, \| u : C^{k, \beta}(\Udom , \rr  , \gSigma , \rr^\gamma) \|   
\, \left( \Sim^{\cbar(k)}  \, 
\| u : C^{k,\beta}(   \Udom , 1  , \chibar ,\, \rr^\gamma ) \| \right),   
\end{align*}
where we assert the $\Sim$ relation only for       $\Udom$ contained in a small enough neighborood of some $p\in L$. 
\end{enumerate}
\end{definition}

\begin{definition}[Side symmetric doublings of gluing type {\cite{k35,IIgLD}}]
\label{dgtSS}
\ 
Let $\Mbreve$ be a minimal surface doubling over $\Sigma$ in $(N, g)$ as in \ref{Ddoubling},  
which is also side-symmetric and $\group$-symmetric (with $\refl_\Sigma\in\group$) as in \ref{dSS} 
and satisfies \ref{ConvArticle}.  
Let 
$\taubreve, \alpha \in (0,  1/3 )$  and $\gamma= 3/2$. 
We say the side-symmetric minimal surface doubling $\Mbreve$  is \emph{of $(\taubreve, \alpha, \group)$-gluing type}  
if the following hold. 
\begin{enumerate}[label=\emph{(\roman*)}]
\item 
\label{dgtSS0}
\emph{(The LD solution):} 
There is a  
$\group$-invariant \emph{LD solution} 
$\varphi : \Sigma \setminus L  \to \R$ \quad of 
configuration \quad $\taubold : L \rightarrow (\taubreve^{1+\alpha/100}, \taubreve]$  \quad 
satisfying the following. 

\item 
\label{dgtSS1}
\emph{(The graphical region):} 
We introduce the notation 
(recall \ref{n:Mpm})  
\begin{equation} 
\label{n:Mgr}
\begin{gathered} 
\SigmaU : = \Sigma \setminus \disjun_{p \in L} D_p(\tau^{5\alpha}_p) \subset \Sigmabreve \subset \Sigma, 
\\ 
\Mbreve_{\gr\pm} : = X_\pm(\SigmaU) \subset \Mbreve_\pm,  
\qquad \text{and} \qquad 
\Mbreve_{\gr} : = \Mbreve_{\gr+}  \disjun \Mbreve_{\gr-},   
\end{gathered} 
\end{equation} 
and then we have the following  
(recall \ref{dnorm}).   

\begin{enumerate}[label=\emph{(\alph*)}]
\item 
\label{dgtSS1z}
To simplify the notation we identify \quad 
$\SigmaU$ with  $\Mbreve_{\gr\pm}$ via      $X_\pm$ \quad  as in            \ref{Nid}.  \     
\item 
\label{dgtSS1a}
$\| \varphi - \ubreve : C^{3       }(\SigmaU) \| \leq  \taubreve^{1+ \alpha/ 4}$.
\item 
\label{dgtSS1b}
$\| \ubreve : C^{3       }(\SigmaU) \|  < \| \varphi : C^{3       }(\SigmaU) \| + \taubreve^{1+ \alpha/4} \leq  \taubreve^{8/9}$.
\item 
\label{dgtSS1c}
$\| \Xbreve^*_{\pm}  g_{\Mbreve} - g_{\Sigma} : C^{3       }(\SigmaU) \| \leq  \taubreve^{8/9}$.
\item 
\label{dgtSS1d}
$\| \Xbreve^*_{\pm}  A_{\Mbreve} : C^{2       }(\SigmaU) \|          \leq  \taubreve^{8/9}$ (recall \ref{dSS}\ref{dSS2}). 
\end{enumerate}

\item 
\label{dgtSS2}
\emph{(The catenoidal bridges):} 
\ 
We have the following $\forall p \in L$. 
	\begin{enumerate}[label=\emph{(\alph*)}]
\item 
\label{dgtSS2a}
There is an isometry  
\quad $Y_p: (\R^3,g) \to (T_pN, \left. g \right|_p ) $ \quad 
mapping 
\quad $\R^2 \simeq \  \{z=0\}$ \      onto  \      $T_p\Sigma$,  
and a diffeomorphism 
\quad  $\Xbreve_p : \K_p \rightarrow \Kbr_p \subset \Mbreve$   \quad   
with inverse \quad  $\Pi_{\K_p} : \Kbr_p \to \K_p $, \quad       
where (recall \ref{dK})     
\quad  $\K_p \, :=  \, Y_p \, \K [\tau_p, \tau^\alpha_p ] \subset T_pN$ \quad and \quad $\Kbr_p\cap \Sigma$ \quad is smooth and radially graphical in $\Sigma$ around $p$. 
\item 
\label{dgtSS2b}
To simplify the notation we identify as in \ref{Nid} \quad 
$\K [\tau_p, \tau^\alpha_p ] $ with  $\K_p$ via      $Y_p$ 
\quad and \quad $\K_p$ with $\Kbr_p$ via $\Xbreve_p $. 
\item 
\label{dgtSS2c}
The parametrizations $\Xbreve_p$ are $\group$-equivariant 
under the following actions of $\group$. 
The action of $\group$ on $N$ induces an action on $ \disjun_{p\in L} T_pN$ and hence on $\K_L \, := \, \disjun_{p\in L} \K_p $. 
The action of $\group$ on $N$ induces an action on $\disjun_{p\in L} \Kbr_p \subset \Mbreve$ by restriction. 
\item 
\label{dgtSS2d}
$\| \gM         - \gK    \, \|_{3, 0, 4           \, ;  \, \K_p                        } \, \leq \, 1$.   
\item 
\label{dgtSS2e}
$\| A_{\Mbreve} - A_{\K} \|_{2, 0, 2    \, ;  \, \K_p                        }  \leq  \tau_p    \,  | \log\tau_p    | $. 
\item 
\label{dgtSS2f}
$\| \, \ubreve - \phicat[\tau] \circ  \dbold^\Sigma_p        \, \|_{3, 0, \gamma \, ;  \, D_p^\Sigma(\tau^{\alpha}_p) \setminus D_p^\Sigma(100\, \tau_p) } \leq  \taubreve^{1+\alpha/6}$. \quad 
\item 
\label{dgtSS2h}
$D^\Sigma_p(\tau_p(1-\tau_p^{2  })) \subset \Dbreve_p \subset D^\Sigma_p(\tau_p(1+\tau^{2  }_p))$  
\quad 
where 
\quad 
$\Dbreve_p$ 
\quad 
is a doubling hole,  
\quad 
$\Sigma\setminus\Sigmabreve=\disjun_{p \in L} \Dbreve_p $,  
\quad 
and \quad $\partial \Dbreve_p = \Kbr_p\cap \Sigma$. 
	\end{enumerate}
\end{enumerate}
\end{definition}

Note that given $\Mbreve$ as in the previous definition, 
none of $\taubold$, $\varphi$, or $\Xbr_p$ are uniquely determined, because the defining conditions are open allowing small changes.

\begin{assumption} 
\label{A:sim0} 
From now on we assume that $\Mbreve$ is as in \ref{dgtSS} with $\taubreve$ as small as needed in terms of $\alpha$ and the background,    
and $\alpha$ as small as needed in terms of the background.    
\end{assumption}

\begin{notation}[Regions and metrics on $\Mbreve$]       
\label{n:Kp} 
Given $\Mbreve$  as in \ref{A:sim0} 
we define   
\ $\Kbr_L:=\disjun_{p\in L} \Kbr_p$ \ 
and 
\ $\forall p\in L$ \  and \  $\sbar\in[0,\infty)$ \  
(recall \ref{Nsub}, \ref{Ecatenoid}, and \ref{dgtSS}\ref{dgtSS2}), \    
$$ 
\K_{p,\sbar} := \big( \K_p \big)_{\sss\le\sbar}, 
\qquad 
\Kbr_{p,\sbar} := \big( \Kbr_p \big)_{\sss\le\sbar}.   
$$ 
We also define on  \  $\Kbr_p$ \  in analogy with \ref{Ecatg} \quad    $\chiM := ( \rho^{-2}\circ \Pi_{\K_p} ) \, \gM$.  \quad    
Recall also that by \ref{ELh} that when \quad    $|A|_\Mbr^2 + \Ric(\nu,\nu) > 0$  \quad    
we have on  $\Mbreve$ the metric 
\quad 
$\hM     \, := \, \frac12(\, |A|_\Mbr^2 + \Ric(\nu,\nu) \, ) \, \gM    $. \quad 
\end{notation}

\begin{notation}[Decompositions of gluing type doublings] 
\label{DSMbr}
Given $\Mbreve$ as in \ref{A:sim0}, 
a decomposition \ $\ L= \disjun_{j\in\jbold   }L_j$ \ where $\jbold$ is a finite set of indices, and  $\sbar_j>0$ for each $j\in\jbold        $, 
we define a decomposition into compact domains with disjoint interiors by defining 
\     
$
\Sbreve[L_j,\sbar_j] :=  \disjun_{p\in L_j} \Kbr_{p,\sbar_j} 
$ 
\     and \quad      
$
\Sbreve [\Sigma,\ \{\sbar_j\}_{j\in\jbold} \, ] \, := \, \Mbreve \setminus \,{\Int } \big( \disjun_{j\in\jbold   } \Sbreve[L_j,\sbar_j] \big) . \quad    
$
Moreover we introduce extra notation for a particular such decomposition by taking 
\quad     
$\forall p\in L$ \     
$
\Sbreve_p :=  \Sbreve[\, \{p\}\, , \, \sss_{short}[\tau_p] \,] = \K[\tau_p , \, \sqrt{\tau_p} \,] ,  
$ 
\quad  
for $L'\subset L$ 
\quad  
$
\Sbreve_{L'} \, := \, \disjun_{p\in L'}        \Sbreve_p            , 
$
\quad  
and 
\quad  
$
\SbreveS \, := \, \Mbreve \setminus \,{\Int } \big( \SbreveL \big)            . \   
$
\end{notation} 

Note that $\SbreveS$ has two connected components, one in each side of $\Sigma$.

\subsection*{Minimal doublings of $\Sigma=\Sigma^0$ in $\Sph^3$ of gluing type} 
$\vspace{.062cm}$ 
\nopagebreak

\begin{assumption} 
\label{A:sim} 
In the rest of this section we assume that $\Mbreve$ is as in \ref{A:sim0} 
with background $( \, \Sigma=\Sigma^0, N=\Sph^3(1) , g \,)$. 
\end{assumption}

Claerly the operators (recall \ref{ELjacobi})  
\     $\LcalS$ \ on $\Sigma =\Sigma^0$  corresponding to \ $\gS$, \ and \ 
\     $\LcalM$, \ $\LhM$ \  on $\Mbreve$,  \ and \ $\LchiM$ on $\KbrL$, corresponding to \ $\gM$, \ $\hM$,  \ and \ $\chiM$, \  
are given by 
\begin{equation} 
\label{dLchiM}
\begin{gathered} 
\LcalS      = \Delta_\Sigma  +2 , \qquad 
\LcalM      = \Delta_\Mbreve + |A|^2_\Mbreve +2, 
\\ 
\LhM    = \Lcal_{\Mbreve , \hM   } = \tfrac2{|A|^2_\Mbreve+2} \LcalM     = \Delta_{h_\Mbreve} + 2,  
\\ 
\LchiM = \Lcal_{\Mbreve,\chiM} = \rho^2 \LcalM                  = \Delta_\chiM + \rho^2 ( |A|^2_\Mbreve +2 ) . 
\end{gathered} 
\end{equation}

\begin{lemma}[Operators on the graphical regions]          
\label{LSh} 
Given $\Mbreve$  as in \ref{A:sim} 
we have $\forall u\in C^{k,\beta}(\SigmaU)$   
that 
\quad 
$\|\LcalM u - \LcalS u : C^1(\SigmaU) \| \, \le \, \taubreve^{7/9} \| u :C^{3      }(\SigmaU) \|$. 
\end{lemma}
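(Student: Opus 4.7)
The plan is to directly expand the difference of operators and use the graphical region estimates from \ref{dgtSS}\ref{dgtSS1} to bound each piece. Using the identification in \ref{dgtSS1}\ref{dgtSS1z}, both $\LcalM$ and $\LcalS$ act on functions on $\SigmaU$, and from \eqref{dLchiM} we may decompose
\begin{equation*}
\LcalM u - \LcalS u \, = \, \bigl(\Delta_{\Xbreve_\pm^*g_\Mbreve} - \Delta_{g_\Sigma}\bigr) u \, + \, |A|^2_\Mbreve \, u.
\end{equation*}
Each term will be controlled separately using the $\taubreve^{8/9}$ bounds in \ref{dgtSS1c} and \ref{dgtSS1d}.

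First I would handle the zeroth-order term. By \ref{dgtSS1d} we have $\|A_\Mbreve:C^2(\SigmaU)\|\le\taubreve^{8/9}$, so
\begin{equation*}
\||A|^2_\Mbreve \, u : C^1(\SigmaU)\| \, \le \, C\,\taubreve^{16/9}\,\|u:C^1(\SigmaU)\|.
\end{equation*}
Next, for the Laplacian difference I would work in local charts on $\SigmaU$. Writing $\Delta_g u = (\det g)^{-1/2}\partial_i\bigl((\det g)^{1/2} g^{ij}\partial_j u\bigr)$ and letting $h:=\Xbreve_\pm^*g_\Mbreve - g_\Sigma$, a standard expansion gives a schematic bound of the form
\begin{equation*}
\|(\Delta_{g_\Sigma+h} - \Delta_{g_\Sigma})u:C^1\| \, \le \, C\bigl(\|h\|_{C^2}\|u\|_{C^2} + \|h\|_{C^3}\|u\|_{C^1}\bigr),
\end{equation*}
valid as long as $\|h\|_{C^0}$ is small so that the metric $g_\Sigma+h$ remains uniformly positive definite, which by \ref{A:sim0} (with $\taubreve$ small) follows from \ref{dgtSS1c}. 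Applying \ref{dgtSS1c} then yields
\begin{equation*}
\|(\Delta_{\Xbreve_\pm^*g_\Mbreve} - \Delta_{g_\Sigma})u:C^1(\SigmaU)\| \, \le \, C\,\taubreve^{8/9}\,\|u:C^3(\SigmaU)\|.
\end{equation*}

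Combining the two estimates and absorbing the constant $C$ into $\taubreve^{-1/9}$, which is possible by choosing $\taubreve$ small enough as permitted by \ref{A:sim0}, gives the claimed bound. The only mildly delicate point is the schematic Laplacian expansion above; this is routine but must be carried out carefully because the constant must be controlled by background data independent of $\taubreve$, so that the reduction $C\taubreve^{8/9}\le\taubreve^{7/9}$ genuinely holds for $\taubreve$ small. Everything else is a direct application of the gluing-type bounds in \ref{dgtSS1}.
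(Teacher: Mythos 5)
Your proposal is correct and follows the paper's own approach: the paper's proof is a one-line citation of \ref{dgtSS}\ref{dgtSS1}\ref{dgtSS1c}--\ref{dgtSS1d}, and you have simply expanded the routine details (decomposition into the Laplacian difference and the $|A|^2$ term, the local-chart estimate for the Laplacian, and absorbing the constant into the drop from exponent $8/9$ to $7/9$ using \ref{A:sim0}).
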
 

\begin{proof} 
Follows from 
\ref{dgtSS}\ref{dgtSS1}\ref{dgtSS1c}-\ref{dgtSS1d}. 
\end{proof}

\begin{lemma}[Metrics on the bridges]          
\label{LKh} 
\quad 
Given $\Mbreve$  as in \ref{A:sim} with $\taubreve$ small enough in terms of given $\sbar\in[0,\infty)$,  
we have the following $\forall p\in L$,  a domain $U\subset \Kbr_p$, and $\beta\in[0,1]$. 
\begin{enumerate}[label=\emph{(\roman*)}]
\item 
\label{LKh1} 
$\| \hM         - \gnuK  \, : \, C^3(  \K_{p,\sbar}  , \gnuK )  \, \| \leq C(\sbar) \, \taubreve^2 \,  | \log\taubreve | $ \quad (bridge core).   
\item 
\label{LKh2} 
$\| \chiM         - \chiK    \, \|_{3, 0, 2 \, ;  \, \K_p                        } \, \leq \, 1$.   
\item 
\label{LKh3} 
$\| \, |A|^2_\Mbreve - |A|^2_\K   \, \|_{2, 0, -2          \, ;  \, \K_p                        } \, \leq \, \tau_p^2 \, |\log \tau_p |  $.     
\item 
\label{LKhj} 
$\|\LchiM u - \LchiK u \|_{0,\beta,2;U} \, \le \, 5 \| u :C^{2,\beta}(U, \chiK ) \|$ \quad for \quad  $u\in C^{2,\beta}(U)$.  
\item 
\label{LKhf} 
$\|\ubreve\|_{3,0,0;\K_p} \le \tau_p    \,  | \log\tau_p    | $ 
\     and \    
$\| \, \ubreve_\osc \, \|_{3, 0, \gamma \, ;  \, D_p^\Sigma(\tau^{\alpha}_p) \setminus D_p^\Sigma(100\, \tau_p) } \leq  \taubreve^{1+\alpha/6}$.       
\item 
\label{LKh5} 
$\| \, \hM - \hK \, : \, C^2 \big( \K_p, \chiK, \max(\tau_p^2 |\log\tau_p|          ,      \rho^{ 4})   \big)                \, \|           \, \leq \, 2          $.   
\item 
\label{LKh5b} 
$\| \, \hM - \hK \, : \, C^0(\K_p, \hK, \rho^2 \, \|           \, \leq \, 2  |\log\tau_p|         $.   
\item 
\label{LKh4} 
$\forall \sbar\in [10, \, \sss_{short}[\tau] \,]$ \quad we have \quad  
\\
$\phantom{k}$ \hfill 
$\area_{\hM}\big( \K[\tau,\sqrt{\tau} ] \setminus \K[\tau, \rho(\sbar)] \big) \le 3\pi \sech^2(\sbar) = 3\pi \tau^2/\rho^2(\sbar)$.   
\end{enumerate}       
\end{lemma}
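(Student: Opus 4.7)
The plan is to derive all eight estimates from the catenoidal approximation bounds \ref{dgtSS}\ref{dgtSS2}\ref{dgtSS2d}--\ref{dgtSS2f}, together with the explicit formulas for $\chiK$, $\gK$, $\hK$, $|A|_\K^2$ and $\phicat$ collected in Section \ref{S:basic}. Each item reduces to a pointwise algebraic manipulation of tensors on the model catenoid $\K_p$ followed by weighted-norm bookkeeping, with no further gluing input needed.

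First I would dispose of (ii), (iii), (iv). For (ii) the identity $\chiM - \chiK = \rho^{-2}(\gM - \gK)$ combined with \ref{dgtSS}\ref{dgtSS2}\ref{dgtSS2d} and a weight shift of $\rho^{-2}$ gives the claim immediately. For (iii), I expand
\begin{equation*}
|A|^2_\Mbreve - |A|^2_\K = \big((\gM^{-1})^{\otimes 2} - (\gK^{-1})^{\otimes 2}\big)(A_\K^{\otimes 2}) + (\gM^{-1})^{\otimes 2}\big(A_\Mbreve^{\otimes 2} - A_\K^{\otimes 2}\big),
\end{equation*}
and use $\gM^{-1} - \gK^{-1} \approx -\gK^{-1}(\gM-\gK)\gK^{-1}$, whose components in $\chiK$-coordinates are $O(1)$ by \ref{dgtSS}\ref{dgtSS2}\ref{dgtSS2d}. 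Combined with $|A_\K|_\chiK\sim\tau_p$, $|\gK^{-1}|_\chiK\sim\rho^{-2}$, and the SFF bound \ref{dgtSS}\ref{dgtSS2}\ref{dgtSS2e}, both summands acquire size $\tau_p^2|\log\tau_p|\rho^{-2}$, matching the weight exponent $-2$. Item (iv) then follows by writing
\begin{equation*}
\LchiM u - \LchiK u = (\Delta_\chiM - \Delta_\chiK)u + \rho^2\big(|A|_\Mbreve^2 - |A|_\K^2\big) u,
\end{equation*}
and controlling the Laplacian-difference via the Christoffel symbols of $\chiM - \chiK$ from (ii) and the potential difference directly from (iii); both errors carry a small factor $O(\taubreve^{2/3})$ that easily fits under the coefficient $5$.

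For (v), the pointwise bound on $\ubreve$ comes from combining \ref{dgtSS}\ref{dgtSS1}\ref{dgtSS1b} on $\SigmaU$ with \ref{dgtSS}\ref{dgtSS2}\ref{dgtSS2f} on the overlap annulus, observing via \ref{Evarphicat} that the rotationally symmetric model $\phicat[\tau_p]\circ\dbold_p^\Sigma$ satisfies $\phicat[\tau_p](\rr) \le \tau_p\log(2\rr/\tau_p) + O(\tau_p) \le C\tau_p|\log\tau_p|$ throughout $\K_p$. Since $\phicat\circ\dbold_p^\Sigma$ is rotationally symmetric, it has vanishing oscillation, so $\ubreve_\osc = (\ubreve - \phicat\circ\dbold_p^\Sigma)_\osc$, to which \ref{dgtSS}\ref{dgtSS2}\ref{dgtSS2f} applies directly. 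For (vi) and (vii) I expand
\begin{equation*}
\hM - \hK = \tfrac12(|A|_\Mbreve^2 - |A|_\K^2)\gM + \tfrac12(|A|_\K^2 + 2)(\gM - \gK).
\end{equation*}
Using $|A_\K|^2 + 2 = O(\max(\tau_p^2\rho^{-4},1))$ from \ref{EKh}, together with (iii) and \ref{dgtSS}\ref{dgtSS2}\ref{dgtSS2d}, the two summands have $\chiK$-size $\tau_p^2|\log\tau_p|$ and $\max(\tau_p^2,\rho^4)$ respectively, which assembles to the max-weight of (vi); converting to the $\hK$ norm through the conformal factor $\hK/\chiK = \sech^2\sss + \tau^2\cosh^2\sss$ and checking each of the three ranges $\rho\sim\tau_p$, $\rho\in[\tau_p,1]$, $\rho\sim\tau_p^\alpha$ yields (vii). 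Item (i) is then the restriction of (vi) to $\K_{p,\sbar}$ where $\rho\sim\tau_p$, supplemented by $\hK - \gnuK = \tau_p^2\cosh^4\sss\,\gnuK$ (from \ref{EKh}) which contributes an extra $O_\sbar(\tau_p^2)$ error; finally (viii) follows from (vii) combined with the model area bound \ref{Earea}.

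The main technical obstacle is keeping the weighted-norm exponents synchronized across all items, particularly in (vi) and (vii) where the two summands of $\hM - \hK$ carry different scalings and the maximum appears, and where the pointwise comparison between $\chiK$ and $\hK$ norms depends non-monotonically on $\rho$ through the factor $\sech^2\sss + \tau^2\cosh^2\sss$. Once the pointwise sizes of the constituent tensors are assembled as above, the remaining verification is routine algebra on the model catenoid.
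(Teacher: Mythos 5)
Your overall approach — deriving everything from the catenoidal approximation bounds \ref{dgtSS}\ref{dgtSS2}\ref{dgtSS2d}--\ref{dgtSS2f} and the explicit model formulas from Section \ref{S:basic} — is the same as the paper's, and items (i), (ii), (iii), (v), (vi), (vii), (viii) are handled correctly (your expansions of $|A|^2_\Mbreve - |A|^2_\K$ and $\hM - \hK$ into pointwise products are just spelled-out versions of the paper's references to the same ingredients).

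There is, however, a genuine error in your treatment of \ref{LKhj}. The identity you write,
\begin{equation*}
\LchiM u - \LchiK u \stackrel{?}{=} (\Delta_\chiM - \Delta_\chiK)u + \rho^2\big(|A|^2_\Mbreve - |A|^2_\K\big) u,
\end{equation*}
is missing a term. By \ref{dLchi} the model catenoid operator is $\LchiK = \Delta_\chiK + \rho^2 |A|^2_\K$, with no Ricci contribution because $\K[\tau]\subset\R^3$ is Euclidean; but by \ref{dLchiM} the operator on $\Mbreve\subset\Sph^3$ is $\LchiM = \Delta_\chiM + \rho^2(|A|^2_\Mbreve + 2)$, because $\Ric_{\Sph^3}(\nu,\nu)=2$. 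Hence the correct identity is
\begin{equation*}
\LchiM - \LchiK \, = \, (\Delta_\chiM - \Delta_\chiK) \, + \, \rho^2\big(|A|^2_\Mbreve - |A|^2_\K\big) \, + \, 2\rho^2.
\end{equation*}
The omitted term $2\rho^2 u$ is \emph{not} small in the weighted norm $\|\cdot\|_{0,\beta,2;U}$, which by \ref{dnorm}\ref{dnormK} divides by $\rho^2$: the factor of $\rho^2$ is exactly cancelled, so this term contributes roughly $2\,\|u:C^{0,\beta}(U,\chiK)\|$, an $O(1)$ quantity. Your two named error contributions (Laplacian difference from (ii), potential difference from (iii)) do indeed carry small factors of order $\taubreve^{2\alpha}$, but the assertion that ``both errors \dots easily fit under the coefficient 5'' misses the point that the coefficient $5$ in \ref{LKhj} exists precisely to absorb the $2\rho^2$ term your expansion dropped; without that term the bound would be $O(\taubreve^{2\alpha})$, not $5$, and one should be suspicious of a claimed estimate whose stated constant is so much larger than what the argument would yield.
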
 

\begin{proof} 
For simplicity in this proof we fix $p\in L$ and write $\tau$ for $\tau_p$. 

\ref{LKh1} 
By \ref{EKh} and \ref{Esssxx} we have 
\quad 
$\hK = \big( 1 + \tau^{2} \, \cosh^4 {\log\frac{1+\sin\xx}{\cos\xx} }  \big) \, \gS $     
\quad 
and then we complete the proof using \ref{dgtSS}\ref{dgtSS2}\ref{dgtSS2d}-\ref{dgtSS2e}. 

\ref{LKh2} 
Follows from \ref{dgtSS}\ref{dgtSS2}\ref{dgtSS2d} and \ref{n:Kp}.                

\ref{LKh3} 
We use     
\quad $\rr^2\log\taubreve + \rr^\gamma\taubreve^{1+\alpha/6}\le \rr^{1+\gamma+ \alpha/6} \le \rr$ \quad 
and \   \ref{EKh}, \ref{Esssxx}, \ref{dgtSS}\ref{dgtSS2}\ref{dgtSS2d}-\ref{dgtSS2e}. 

\ref{LKhj} 
Since 
\quad 
$\LchiM-\LchiK \, = \, \Delta_\chiM- \Delta_\chiK \, + \, \rho^2 \big( |A|^2_\Mbreve - |A|^2_\Kbb \big) + 2\rho^2$,     
\quad 
this follows from \ref{LKh2},\ref{LKh3}.  

\ref{LKhf} 
We clearly have by \ref{Dcat} \ 
$\|  z    \|_{3,0,0;\K_p} \, \le \, (1-\alpha) \tau_p    \,  | \log\tau_p    | $ \     
and \ $z_\osc=0$. 
The result follows then by \ref{dgtSS}\ref{dgtSS2}\ref{dgtSS2f}.       

\ref{LKh5} and \ref{LKh5b}  
Follow from \ref{LKh2}, \ref{LKh3}, and the definitions.

\ref{LKh4} 
Follows from \ref{LKh5b} and \ref{Earea}. 
\end{proof} 

\begin{cor} 
\label{CKh} 
\quad 
Given $\Mbreve$  as in \ref{A:sim} with $\taubreve$ small enough in terms of given $\cbar_0\in(0,\infty)$,  
we have the following 
for any $p\in L$,  $\sbar\in [ \cbar_0, 9\cbar_0 ]$,        and $\beta\in[0,1]$  
on  \ $\Kbreve'_p \, := \, \Kbreve_{p ,\sbar \le \sss }$.  \ 
\begin{enumerate}[label=\emph{(\roman*)}]
\item 
\label{CKh1} 
The first Dirichlet eigenvalue \     $\lambda_1( \LhMD  , \Kbreve'_p )> \cbar(\cbar_0)>0$,  
where \  $\LhMD := (\LhM , \BbouD)$ \, 
and \      $\BbouD$ denotes restriction to $\partial \Kbreve'_p$ (recall \ref{D:oper} and \ref{D:eigenb}). 
\item 
\label{CKh2} 
There is a linear map 
\ $\Rcal_{ \Kbreve'_p }  : C^{0,\beta}( \, \Kbr'_p \, ) \to  C^{2,\beta}( \, \Kbr'_p \, ) $ \     
such that 
$\forall E \in C^{0,\beta}( \, \Kbr'_p \, )$ 
\ $ \up = \Rcal_{ \Kbreve'_p }  E $ \ 
is the (unique by \ref{CKh1}) solution to the Dirichlet problem 
\begin{equation} 
\label{Eu}
\begin{gathered} 
\LchiM        \up  = E    
\qquad \text{on} \qquad \Kbr'_p,  
\qquad \quad   
\up  =0 \qquad \text{on} \qquad \partial \Kbr'_p.  
\end{gathered} 
\end{equation} 
\item 
\label{CKh3} 
For $u'$ and $E$ as in \ref{CKh2} we have the estimate 
$$ 
\| \up  \, : \;  C^{2,\beta}( \, \Kbr'_p , \chiK \, ) \, \|    
\, \le \, C(\cbar_0) \, |\log\tau_p           | \, 
\| E \, : \;  C^{0,\beta}( \, \Kbr'_p , \chiK \, ) \, \|.   
$$ 
\end{enumerate}       
\end{cor}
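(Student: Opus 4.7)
The strategy is to prove (i) by a direct spectral estimate on each bridge in the $\hM$ (nearly spherical) metric, and then to deduce (ii) and (iii) via Fredholm theory combined with an explicit Fourier and Green's function analysis in the $\chiK$ (nearly cylindrical) metric.

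For (i), I first establish the corresponding bound for $\LhKD$ on the catenoid region $\K[\tau_p,\rho(\sbar)\le\rho\le\tau_p^\alpha]$. By \ref{EKh}, the metric $\hK=(1+\tau_p^2\cosh^4\sss)\,\gnuK$ is a $\tau_p$-small conformal perturbation of the pullback spherical metric $\gnuK$; under the Gauss map, the region corresponds (up to a tiny disk around the pole) to the spherical cap $\xx\ge\xx_\sbar=\arcsin(\tanh\sbar)>0$ (using \ref{Esssxx}). On a strict cap ($\xx_\sbar>0$), $\phio=\tanh\sss$ restricts to a positive eigenfunction of $-\Delta_{\gSph}-2$ (the spherical harmonic $Y_1^0$) that violates the Dirichlet condition at the cap boundary, forcing $\lambda_1(-\Delta_{\gSph}^D)>2+\cbar(\cbar_0)$ for some $\cbar(\cbar_0)>0$. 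Since $\LhK=\Delta_\hK+2(1+\tau_p^2\cosh^4\sss)^{-1}$ approximates $\Delta_{\gSph}+2$ on this region, this yields $\lambda_1(\LhKD)>\cbar/2>0$ for $\tau_p$ small. The transition to $\LhMD$ on $\Kbr'_p$ is then a perturbation argument using \ref{LKh}\ref{LKh5} and \ref{LKh}\ref{LKh3} together with continuity of the first Dirichlet eigenvalue under operator perturbations.

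For (ii), the positivity from (i) gives trivial kernel for $\LhMD$; since $\LchiM=\rho^2\,\frac{|A|^2_\Mbreve+2}{2}\LhM$ with strictly positive conformal factor, $\LchiM^D$ has trivial kernel as well, so Fredholm theory on H\"older spaces produces the linear solution operator $\Rcal_{\Kbr'_p}$. For (iii), boundary and interior Schauder estimates for $\LchiM$ in the $\chiK$ metric (justified by \ref{LKh}\ref{LKh2} to compare $\chiM$ with $\chiK$ and \ref{LKh}\ref{LKhj} for coefficient control) reduce the claim to an estimate $\|u'\|_{C^0(\chiK)}\le C(\cbar_0)|\log\tau_p|\,\|E\|_{C^{0,\beta}(\chiK)}$. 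Decomposing into Fourier modes in $\vartheta$, each $u'_j$ satisfies an ODE on $[\sbar,\sbar^*]$ with $\sbar^*=\arccosh(\tau_p^{\alpha-1})\sim|\log\tau_p|$ and Dirichlet BC. For $|j|\ge 1$ the $j^2$ term provides a uniform spectral gap giving $\tau_p$-independent bounds; for $j=0$ the ODE is a small perturbation of $u_0''+2\sech^2\sss\,u_0=E_0$, and its Green's function is built explicitly from $\phio,\phie$ using the Wronskian $W[\phio,\phie]=1$ (\ref{Lphie}\ref{Lphie6}), with the bounds $|\phio|\le 1$ and $|\phie|\lesssim\sss$ (\ref{D:JS}) producing the logarithmic factor.

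The principal obstacle is the $j=0$ sector of (iii): the two-dimensional near-kernel $\Span(\phio,\phie)$ on the infinite cylinder is broken only by distant Dirichlet conditions, and the linear growth of $\phie$ across an interval of length $\sim|\log\tau_p|$ is precisely what forces the logarithmic loss. Tracking this Green's function carefully while ensuring that the perturbation from $\LchiK$ to $\LchiM$ remains uniformly controllable across Fourier modes (so that no cross-coupling degrades the mode-by-mode bounds) is the delicate technical point of the argument.
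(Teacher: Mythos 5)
The strategy you outline is essentially the one the paper takes: for (i), comparison of $\LhM$ with a spherical model on the bridge core; for (ii)--(iii), reduction to $\LchiK$, Fourier separation in $\vartheta$, and a Green's-function/comparison argument in the rotationally invariant mode driven by $\phio$ and $\phie$, followed by a perturbation from $\LchiK$ to $\LchiM$ via \ref{LKh}\ref{LKhj}. However, your proof of (i) contains a factual error that leaves a genuine gap.

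You assert that $\hK = (1+\tau_p^2\cosh^4\sss)\,\gnuK$ is ``a $\tau_p$-small conformal perturbation'' of $\gnuK$ on $\Kbreve'_p$, and from this you conclude that $\LhK$ approximates $\Delta_{\gSph}+2$ on the whole region. That is false: the conformal factor $1+\tau_p^2\cosh^4\sss$ is close to $1$ only while $\cosh\sss \ll \tau_p^{-1/2}$, whereas $\Kbreve'_p$ extends out to $\rho = \tau_p^\alpha$, i.e.\ $\cosh\sss \sim \tau_p^{\alpha-1}$, where $\tau_p^2\cosh^4\sss \sim \tau_p^{4\alpha-2} \gg 1$ (recall $\alpha<1/3$). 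On this outer ``neck'' portion the metric $\hK$ is nowhere near $\gnuK$, and the zeroth-order coefficient $2(1+\tau_p^2\cosh^4\sss)^{-1}$ is small rather than close to $2$. Your argument as written therefore only establishes the eigenvalue bound on a fixed compact core $\{\sss\le\sbar'\}$, not on all of $\Kbreve'_p$. The paper closes exactly this gap by splitting the bridge into a core (treated with the spherical approximation, as you do) and an $\hM$-neck treated in the $\chiK$ geometry, glued with a logarithmic cutoff supported on $\Kbr_{p,\sbar'}$: the small $\hK$-area of the neck (cf.\ \ref{Earea}, \ref{LKh}\ref{LKh4}) guarantees the cutoff does not cost anything in the Rayleigh quotient and that the ground state cannot concentrate on the neck. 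You need this two-region argument; the one-step ``close to the sphere'' claim does not hold globally.

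For (ii)--(iii), your route is the paper's. One small further caveat: you claim the Fourier decomposition in $\vartheta$ diagonalizes the problem, but $\LchiM$ is not rotationally invariant, so separation of variables applies only to the model operator $\LchiK$. The paper handles this cleanly by proving (ii)--(iii) first for $\LchiK$ and only then perturbing to $\LchiM$ using \ref{LKh}\ref{LKhj}; you do flag the cross-coupling issue but should make the order of operations explicit. Finally, be aware that the $|\log\tau_p|$ loss in (iii) is not produced merely by $|\phie|\lesssim\sss$; in the $j=0$ mode the near-degeneracy comes from the Dirichlet problem on an interval of $\chiK$-length $\sim|\log\tau_p|$ whose homogeneous solutions $\phio,\phie$ do not oscillate, and one must track the full Green's function built from the two one-sided Dirichlet solutions and the size of their Wronskian to see what power of $|\log\tau_p|$ actually emerges.
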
 

\begin{proof} 
Using arguments which are standard by now
as for example in \cite[Appendix B]{kapouleas:1990}, 
and a logarithmic cutoff function supported on $\Kbr_{p,\sbar'}\subset \Sbreve_p$ with $\sbar'$ independent of $m$ and large enough in terms of $\cbar_0$, 
and estimating also on the $\hM$-neck \ $\Kbr'_{p,\sss\ge\sbar'}$ \ the Dirichlet eigenfunctions on $\Kbr'_p$ using the $\chiK$ geometry as in \cite{kapouleas:1995} for example, 
we can prove that 
$$ 
\left| \, \lambda_1( \LhMD  , \Kbreve'_p )> \cbar(\cbar_0) - \lambda_1( \Delta_{\Sph^2}+2  , \Sph^2_{\sss\ge\sbar}       ) \, \right| 
$$  
is as small as needed in terms of $\cbar_0$. 
Since $\sbar\ge\cbar_0$ \ref{CKh1} follows. 

We claim now that \ref{CKh2}-\ref{CKh3} hold if we replace $\LchiM$ with $\LchiK$. 
Using separation of variables the claim can be reduced to the case \  $u'_\osc=E_\osc=0$; \ 
$u'$ by \ref{dLchi} satisfies then on $\Kbr'_p$ the ODE 
\quad 
$
\LchiK u' = \Delta_\chiK u' + 2\tau\rho^{-2} u' = E.   
$
Consider now $u_-$ and $u_+$ satisfying the ODE \ $\LchiK u_\pm =0$ \  and \ $u_\pm=0$   \   at  one boundary circle of $\Kbr'_p$ each. 
Their logarithmic derivatives  at some $\sbar''$ large independently of $m$ differ by at least a positive constant also independent of $m$ by the argument for \ref{CKh1}. 
By comparing $u'$ with $u_\pm$ we prove the claim. 
The proof is then competed by referring to \ref{LKh}\ref{LKhj} which implies that 
\quad $\|\LchiM u - \LchiK u :C^{0,\beta}( \Kbr'_p , \chiK ) \|_{0,\beta,2;U} \, \le \, 5 \tau^{2\alpha}\, \| u :C^{2,\beta}(U, \chiK ) \|$ \quad for \quad  $u\in C^{2,\beta}(U)$.  
\end{proof}

\subsection*{Approximate low spectrum} 
$\vspace{.062cm}$ 
\nopagebreak

The following norm, which is useful in the study of the eigenfunctions, is the sum of an $L^2$ norm and an interior $C^{2,\beta}$ norm. 

\begin{definition}[Norm on $\Sbreve_p$, $\SbreveL$, or $\SbreveS$]      
\label{DSbr-norm}   
For $\epsilon>0$ and $f\in C^{2,\beta}(\Sbreve)$, 
where $\Sbreve=\SbreveS$ or  $\Sbreve=\SbreveL$ or  $\Sbreve=\Sbreve_p$ for some $p\in L$,  
we define (recall \ref{tubular}) 
$$
\| \, f \, \|_{\epsilon;\Sbreve} := \| \, f \, : \, L^2 (\Sbreve, h) \, \| + \| \, f \, : \, C^{2,\beta} ( \Sbreve\setminus D^{\Sbreve,h}_{\partial \Sbreve}(\epsilon) \,  ) \, \|. 
$$ 
\end{definition}

\begin{definition}[Constants and slidings]   
\label{Dsl}
Let $\Mbreve$ be    as in \ref{A:sim}, and $\grouptilde$ and $\Vt$ be as in \ref{D:eigenb} with $\Lcalt=\Lcalh:=\LhM$ as in \ref{n:Kp}, 
where (recall \ref{D:symL}) the diffeomorphism in each     symmetry in $\grouptilde$ is in $\group$ (as in \ref{dgtSS}). 
We define then the following. 
\begin{enumerate}[label=\emph{(\roman*)}]
\item 
\label{Dsl1}
The space of $\Vt$-constants \\    ${\Vt}_{\con}:= \big\{ {f}:L\to \F \, \big| \, {f} \text{ is preserved by } \grouptilde\big\}$.     
\item 
\label{Dsl2}
The space of $\Vt$-slidings (sections of $TN \big|_L \otimes\F$) 
\\    
${\Vt}_{\Sl} := \big\{ \vec{f}:L\to TN\otimes\F \, \big| \, \vec{f} (p)\in T_p N\otimes \F, \quad \vec{f} \text{ is preserved by } \grouptilde\big\}$.     
\end{enumerate}       
\end{definition}

On the regions in \ref{DSMbr} we have the operators 
$\LcalhD:= (\LhM  ,\BbouD)$ and $\LcalhN:= (\LhM  ,\BbouN)$ defined as in \ref{D:eig-res},  
where 
$\BbouD$ denotes restriction to the boundary, and $\BbouN$ action by $\eta_h$, corresponding to Dirichlet and Neumann boundary conditions respectively. 

\begin{lemma}[Approximate low spectrum on catenoidal bridges] 
\label{L:appr} 
\ \  
Under the same assumptions as in \ref{Dsl} 
and if $\taubreve$ is small enough in terms of a given $\epsilon\in (0,1/2)$  and the given background $(\Sigma,N,g)$,  then the following hold 
(recall \ref{D:eigenb} and \ref{DSMbr}). 
\begin{enumerate}[label=\emph{(\roman*)}]
\item 
\label{L:appr1} 
$\#_{<-2}(\Vt,\LcalhD,\SbreveL) =  \#_{<-2}(\Vt,\LcalhN,\SbreveL) =0$.
\item 
\label{L:appr2} 
$
\#_{[-2 \, , \, -2+\epsilon]}(\Vt,\LcalhD,\SbreveL) \, = \, \#_{[-2 \, , \, -2+\epsilon]}(\Vt,\LcalhN,\SbreveL) \, = \, \dim \Vt_{\con} 
$.
\item 
\label{L:appr3} 
$
\#_{[-2+\epsilon \, , \, -\epsilon ]}(\Vt,\LcalhD,\SbreveL) \, = \, \#_{[-2+\epsilon \, , \, -\epsilon] }(\Vt,\LcalhN,\SbreveL) \, = \, 0              
$.
\item 
\label{L:appr4} 
$
\#_{[-\epsilon \, , \, \epsilon ]}(\Vt,\LcalhD,\SbreveL) \, = \, \#_{[-\epsilon \, , \, \epsilon ]}(\Vt,\LcalhN,\SbreveL) \, = \, \dim \Vt_{\Sl}   
$. 
\item 
\label{L:appr5} 
$
\#_{[\epsilon \, , \, 4-\epsilon ]}(\Vt,\LcalhD,\SbreveL) \, = \, \#_{[\epsilon \, , \, 4-\epsilon] }(\Vt,\LcalhN,\SbreveL) \, = \, 0              
$.
\end{enumerate}       
\end{lemma}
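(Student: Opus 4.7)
The plan is to decouple the spectral problem across bridges and, on each single bridge, reduce it to the low spectrum of $\Delta_{\Sph^2}+2$ on the round sphere. Since $\SbreveL=\disjun_{p\in L}\Sbreve_p$ is a disjoint union, both $\LcalhD$ and $\LcalhN$ decompose as direct sums indexed by $p\in L$. Moreover $\grouptilde$ permutes the bridges along orbits in $L$, so any $\grouptilde$-invariant eigenspace is assembled from per-bridge eigenfunctions transforming correctly under $\grouptilde$. Consequently the counts in \ref{L:appr1}--\ref{L:appr5} are determined by the low spectrum of $\LhM$ on a single $\Sbreve_p=\K[\tau_p,\sqrt{\tau_p}]$ together with the orbit structure of $\grouptilde$ on $L$.

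On each $\Sbreve_p$ I would replace $\hM$ by $\hK$ and $\LhM$ by $\Delta_{\hK}+2$ with errors controlled by \ref{LKh}, and then identify $\K[\tau_p]$ with $\Sph^2\setminus\{\pm e_3\}$ via the Gauss map $\nuK$. Under this identification $\Sbreve_p$ corresponds to $\Sph^2$ minus two antipodal spherical caps of Euclidean radius $\sqrt{\tau_p}$, and the metric $\hK=(1+\tau_p^2\cosh^4\sss)\gnuK$ agrees with $\gnuK=\nuK^*g_{\Sph^2}$ up to a factor tending to $1$ uniformly on any fixed core $\{|\sss|\le\sbar_0\}$ as $\tau_p\to 0$. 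On the complementary neck the $\hK$-area is only $O(\tau_p)$ by \ref{LKh}, while \ref{CKh} provides a uniformly positive first Dirichlet eigenvalue for $\LhM$ there. Combining these, the spectral problem for $\LhM$ on $\Sbreve_p$ with either Dirichlet or Neumann condition on $\partial\Sbreve_p$ converges in the min-max sense to that of $\Delta_{\Sph^2}+2$ on all of $\Sph^2$.

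The spectrum of $\Delta_{\Sph^2}+2$ on $\Sph^2$ is $\{\ell(\ell+1)-2:\ell=0,1,2,\dots\}$ with multiplicity $2\ell+1$: the value $-2$ once (constants), $0$ three times (linear spherical harmonics), $4$ five times, and higher values beyond. Thus each $\Sbreve_p$ contributes one eigenvalue near $-2$, three eigenvalues near $0$, no eigenvalues in $(-2+\epsilon,-\epsilon)\cup(\epsilon,4-\epsilon)$, and none below $-2-\epsilon$. The $\ell=0$ mode on $\Sbreve_p$ is the approximate constant $1$, so the $\grouptilde$-invariant combinations across $L$ are parametrized by $\Vt_{\con}$, producing $\dim\Vt_{\con}$ eigenfunctions near $-2$. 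The three $\ell=1$ modes $\nuK\cdot\vec{e_i}$ correspond under $\nuK$ to the normal components of ambient translations $\vec{e_i}\in T_pN$, so the $\grouptilde$-invariant combinations are parametrized by $\Vt_{\Sl}$, producing $\dim\Vt_{\Sl}$ eigenfunctions near $0$. This yields all of \ref{L:appr1}--\ref{L:appr5}.

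The main obstacle is the careful handling of the bridge boundary region, where the conformal factor $1+\tau_p^2\cosh^4\sss$ swells up to $2$, the effective potential degenerates, and pointwise approximation by $\Delta_{\Sph^2}+2$ fails. The fix is to use the small $\hK$-area of the neck from \ref{LKh} together with the positive Dirichlet gap from \ref{CKh}: introducing a cutoff on the core and estimating the tail contributions via the resolvent provided by \ref{CKh} shows that any low-eigenvalue eigenfunction is, to leading order, supported on the core and is well approximated there by a spherical harmonic of index $\ell\in\{0,1\}$; the choice between Dirichlet and Neumann boundary condition then affects only an exponentially small neck contribution, so both cases yield the same counts.
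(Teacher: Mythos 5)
Your proposal follows essentially the same route as the paper's proof: decouple across the disjoint bridges, approximate $\hM$ by $\gnuK$ on the bridge core via the Gauss-map identification with $\Sph^2$, control the neck by its small $\hK$-area and positive Dirichlet gap (\ref{LKh}, \ref{CKh}), and read off the low spectrum of $\Delta_{\Sph^2}+2$ ($-2$, $0$, $4$ with multiplicities $1$, $3$, $5$), matching $\ell=0$ modes with $\Vt_{\con}$ and $\ell=1$ modes with $\Vt_{\Sl}$. One small slip: the $\hK$-area of the neck $\{\sss\ge\sbar\}$ is $O(\sech^2\sbar)$ by \ref{Earea} (independent of $\tau_p$, not $O(\tau_p)$), so the argument requires the two-parameter limit used in the paper — first choose $\sbar$ large, then take $\taubreve$ small in terms of $\sbar$ — but this does not affect the conclusion.
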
 

\begin{proof}
We assume the background $(\Sigma,N,g)$ fixed so that we will not mention the dependence of constants on it. 
Recall that $\forall p\in L$ we have on $\Sbreve_p$ the metrics $\gM$, $\gS$, $\chiM$, and $\chiK$.  
We choose $\sbar$ as large as needed and assume $\taubreve$ small enough in terms of the chosen $\sbar$. 
We have then by \ref{LKh}\ref{LKh1} that $\gM$ and $\gS$ are as close as needed on $\K_{p,\sbar}\subset \Sbreve_p$ 
and by \ref{LKh}\ref{LKh2} that $\chiM$ and $\chiK$ are as close as needed on $\Sbreve_p$. 
We have then uniform estimates on the eigenfunctions of $\LcalhD$ and $\LcalhN$ on $\Sbreve_p$. 
Using standard arguments, 
as in \cite[Appendix B]{kapouleas:1990} for example, 
and a logarithmic cutoff function supported on $\K_{p,\sbar}\subset \Sbreve_p$, 
we can prove using also \ref{LKh}, 
that the low eigenvalues of $\LcalhD$ and $\LcalhN$ on $\Sbreve_p$ are close to the low eigenvalues of $\Delta_{\gS}+2$ on $\Sigma^0$. 
Using this we conclude the proof. 
\end{proof}

\begin{prop}[Approximate low spectrum] 
\label{P:appr} 
Under the same assumptions as in \ref{Dsl} 
we have the following. 
If $\taubreve$ is small enough in terms of a given $\epsilon\in (0,1/2)$ and the given background $(\Sigma,N,g)$,  
then the following hold 
(recall \ref{D:eigenb}). 
\begin{enumerate}[label=\emph{(\roman*)}]
\item 
\label{P:appr1} 
$\#_{<-2}(\Vt,\Lcalh,\Mbreve ) =0$.
\item 
\label{P:appr4} 
$\forall a\in [-2 +\epsilon     , -\epsilon  ]$ we have 
\begin{multline*}   
\dim \Vt_{\con}  +      \#_{[-2, a]}(\Vt,\LcalhD,\SbreveS )  
\, \le \, 
\#_{[-2, a]}(\Vt,\Lcalh,\Mbreve ) 
\\ 
\, \le \, \dim \Vt_{\con}   +     \#_{[-2, a]}(\Vt,\LcalhN,\SbreveS ).   
\end{multline*}   
\item 
\label{P:appr3} 
$\dim \Vt_{\Sl} \le     \#_{(-\epsilon, \epsilon)} (\Vt,\Lcalh,\Mbreve ) $.
\item 
\label{P:appr5} 
$\forall a\in [\epsilon, 4-\epsilon]$ 
we have 
$$
\dim \Vt_{\con}  + \dim \Vt_{\Sl} + \#_{[-2, a]}(\Vt,\LcalhD,\SbreveS ) \, \le \, 
\#_{[-2, a]}(\Vt,\Lcalh,\Mbreve ). 
$$
\item 
\label{P:appr6} 
$\forall a\in [0, 4-\epsilon]$ 
we have 
$$
\#_{[-2, a]}(\Vt,\Lcalh,\Mbreve )  
\, \le \, \dim \Vt_{\con}  + \dim \Vt_{\Sl} + \#_{[-2, a]}(\Vt,\LcalhN,\SbreveS ).   
$$
\end{enumerate}       
\end{prop}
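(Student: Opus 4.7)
The tool is standard Dirichlet-Neumann bracketing applied to the decomposition $\Mbreve = \SbreveS \cup \SbreveL$ from \ref{DSMbr}, combined with Lemma \ref{L:appr}. Statement \ref{P:appr1} is immediate: on closed $\Mbreve$ the Rayleigh quotient of $-\Lcalh$ is
\[
\frac{-\int_\Mbreve f \, \Lcalh f \, dV_{\hM}}{\int_\Mbreve f^2 \, dV_{\hM}} \, = \, \frac{\int_\Mbreve |\nabla f|^2_{\hM} \, dV_{\hM}}{\int_\Mbreve f^2 \, dV_{\hM}} \, - \, 2 \, \ge \, -2,
\]
so every eigenvalue of $\Lcalh$ is $\ge -2$. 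The same bound applied with Dirichlet or Neumann boundary data gives that $\#_{\le a} = \#_{[-2,a]}$ for $\LcalhD$ and $\LcalhN$ on $\SbreveS$ and $\SbreveL$ as well, a fact used implicitly below.

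For \ref{P:appr4}--\ref{P:appr6} the key inequality is the Dirichlet-Neumann bracketing
\begin{equation*}
\#_{\le a}(\Vt, \LcalhD, \SbreveS) + \#_{\le a}(\Vt, \LcalhD, \SbreveL) \, \le \, \#_{\le a}(\Vt, \Lcalh, \Mbreve) \, \le \, \#_{\le a}(\Vt, \LcalhN, \SbreveS) + \#_{\le a}(\Vt, \LcalhN, \SbreveL),
\end{equation*}
valid for every $a \in \R$. The left inequality follows by extending Dirichlet eigenfunctions from each piece by zero to $\Mbreve$: these yield $\grouptilde$-invariant test functions in $H^1(\Mbreve)$ with orthogonal (disjoint) supports and preserved Rayleigh quotients, and min-max supplies the count. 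The right inequality follows from the continuous inclusion of $\Vt$ into the broken Sobolev space on $\SbreveS \sqcup \SbreveL$ (which simply drops the $H^1$ transmission condition along the waist circles), again combined with min-max. The decomposition is $\grouptilde$-compatible by the $\group$-equivariance in \ref{dgtSS}\ref{dgtSS2}\ref{dgtSS2c} and the side symmetry. Substituting the values supplied by Lemma \ref{L:appr}---namely $\#_{\le a}(\Vt, \LcalhD, \SbreveL) = \#_{\le a}(\Vt, \LcalhN, \SbreveL) = \dim \Vt_{\con}$ for $a \in [-2+\epsilon, -\epsilon]$, the same counts equal to $\dim \Vt_{\con} + \dim \Vt_{\Sl}$ for $a \in [\epsilon, 4-\epsilon]$, and $\#_{\le a}(\Vt, \LcalhN, \SbreveL) \le \dim \Vt_{\con} + \dim \Vt_{\Sl}$ for $a \in [0, \epsilon)$---then yields \ref{P:appr4}, \ref{P:appr5}, and \ref{P:appr6} directly.

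For \ref{P:appr3}, the $\dim \Vt_{\Sl}$ sliding Dirichlet eigenfunctions of $\LcalhD$ on $\SbreveL$, which by \ref{L:appr}\ref{L:appr4} have eigenvalues in $[-\epsilon, \epsilon]$, are extended by zero to $\Mbreve$ to form a $\dim \Vt_{\Sl}$-dimensional $\Vt$-invariant subspace of $H^1(\Mbreve)$ with Rayleigh quotient in $[-\epsilon, \epsilon]$; min-max then produces that many eigenvalues of $\Lcalh$ on $\Mbreve$ in the same interval. The main technical obstacle is upgrading this to the open interval $(-\epsilon, \epsilon)$: one must verify that for $\taubreve$ small enough (allowed by \ref{A:sim0}) the sliding Dirichlet eigenvalues on $\SbreveL$ converge to $0$---the spherical sliding being a genuine perturbation of the translational zero mode on the catenoid (compare with $\phio$ in \ref{D:JS})---and hence stay strictly away from the endpoints $\pm \epsilon$. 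The spectral gaps \ref{L:appr}\ref{L:appr3} and \ref{L:appr5} prevent other bridge eigenvalues from crowding $\pm \epsilon$, completing the argument.
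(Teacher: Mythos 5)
Your overall route is the same as the paper's: Dirichlet--Neumann bracketing for the decomposition $\Mbreve=\SbreveS\cup\SbreveL$ combined with Lemma \ref{L:appr}, and your treatment of \ref{P:appr1}, \ref{P:appr4}, \ref{P:appr5}, and \ref{P:appr6} is correct (including the observation that all counts start at $-2$, and the correct substitution of the $\SbreveL$ counts from \ref{L:appr}).

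There is, however, a genuine gap in your argument for \ref{P:appr3}. The step ``min-max then produces that many eigenvalues of $\Lcalh$ on $\Mbreve$ in the same interval'' is not a valid deduction: from a $k$-dimensional test subspace with Rayleigh quotients in $[-\epsilon,\epsilon]$, min-max only yields $\lambda_k(\Vt,\Lcalh,\Mbreve)\le\epsilon$, i.e.\ $\#_{\le\epsilon}\ge k$; it gives no lower bound on where those eigenvalues sit, and indeed some of the first $k$ eigenvalues lie near $-2$ (the constants). To count eigenvalues in a two-sided bounded interval one must subtract $\#_{\le -\epsilon}(\Vt,\Lcalh,\Mbreve)$, and after bracketing the difference picks up the term $\#_{\le\epsilon_0}(\Vt,\LcalhD,\SbreveS)-\#_{\le-\epsilon}(\Vt,\LcalhN,\SbreveS)$, whose nonnegativity is \emph{not} supplied by Lemma \ref{L:appr} (which concerns only $\SbreveL$) and is false for general domains, since there is no a priori bound on $\lambda_k^D-\lambda_k^N$. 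Your proposed repair --- showing the sliding Dirichlet eigenvalues on $\SbreveL$ converge to $0$ --- does not touch this difficulty, so it is a red herring; the open-versus-closed-interval issue is in any case handled simply by running the whole proposition at $\epsilon/2$.

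The standard way to close \ref{P:appr3} is the approximate-kernel argument alluded to in the introduction: replace the zero-extensions (which are only $H^1$ and whose distributional residual $\Lcalh u+\lambda u$ is a surface measure on $\partial\SbreveL$, not in $L^2$) by logarithmic cutoffs of the sliding eigenfunctions supported in the bridges, verify the residual estimate $\|\Lcalh\tilde u_i+\lambda_i\tilde u_i\|_{L^2}\le\delta\|\tilde u_i\|_{L^2}$ with $\delta$ small (possible because the slidings concentrate on the spherical part of each bridge in the $\hM$ metric and the cutoff transition occurs on the small-area neck, cf.\ \ref{Earea} and \ref{LKh}\ref{LKh4}), and then invoke the spectral theorem together with almost-orthogonality of the $\tilde u_i$ to produce $\dim\Vt_{\Sl}$ eigenvalues within $\delta$ of $[-\epsilon_0,\epsilon_0]$, with $\epsilon_0+\delta<\epsilon$. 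Alternatively one can keep pure bracketing but must then separately prove $\#_{\le\epsilon_0}(\Vt,\LcalhD,\SbreveS)\ge\#_{\le-\epsilon}(\Vt,\LcalhN,\SbreveS)$, which is essentially the content of the later Lemma \ref{LapprS} and is not available at this stage of the paper.
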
 

\begin{proof}
We use the decomposition with disjoint interiors $\Mbreve= \SbreveS\cup\SbreveL$. 
Imposing Dirichlet or Neumann conditions and applying \ref{L:appr} we obtain the desired lower and upper bounds respectively.   
\end{proof}

\begin{corollary}[Index bounds]  
\label{C:appr} 
If $\Mbreve$  is of \emph{$(\taubreve, \alpha, \group)$-gluing type} as in \ref{A:sim} and $\taubreve$ is small enough in terms of the given background $(\Sigma,N,g)$,  
then we have (recall \ref{D:eigenb}) 
\begin{multline*} 
\genus (\Mbreve ) \, - \, 2\genus(\Sigma) \, + \, 1 
\, \le \, 
\ind(\Mbreve) 
\\ 
\, \le \, 
4\genus (\Mbreve ) \, - \, 8\genus(\Sigma) \, + \, 
\ind(            \LcalhN,\SbreveS ) \, + \, 4     .   
\end{multline*} 
\end{corollary}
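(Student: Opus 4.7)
The plan is to read the corollary as a direct consequence of Proposition \ref{P:appr} applied with the trivial symmetry group (so that $\Vt$ is the space of all real-valued functions on $\Mbreve$), combined with the topological formula for the genus of a doubling and the conformal invariance of the sign of the spectrum.

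First I would record two preliminary facts. Topologically, $\Mbreve$ is built from two copies of $\Sigma \setminus \bigsqcup_{p \in L} \Dbreve_p$ joined by $|L|$ catenoidal bridges (each contributing a handle), so $\chi(\Mbreve) = 2\chi(\Sigma) - 2|L|$, yielding the key identity
\[
|L| \, = \, \gamma(\Mbreve) - 2\gamma(\Sigma) + 1.
\]
Analytically, since $\LhM = \tfrac{2}{|A|^2 + 2}\Lcal_\Mbreve$ with positive conformal factor (see \ref{ELh} and \ref{dLchiM}), the two operators have the same signed spectrum, so $\ind(\Mbreve) = \#_{<0}(\Lcalh, \Mbreve)$. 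Under the trivial group, Definition \ref{Dsl} gives $\dim \Vt_{\con} = |L|$ (one scalar per bridge) and $\dim \Vt_{\Sl} = 3|L|$ (since $T_p N$ is three-dimensional for $N = \Sph^3$).

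For the lower bound, I apply Proposition \ref{P:appr}\ref{P:appr4} at $a = -\epsilon$ for $\epsilon > 0$ small enough: the constants alone produce $\dim \Vt_{\con} = |L|$ eigenvalues of $\Lcalh$ clustering near $-2$ and hence lying in $[-2, -\epsilon]$, so
\[
\ind(\Mbreve) \, \ge \, \#_{[-2, -\epsilon]}(\Lcalh, \Mbreve) \, \ge \, \dim \Vt_{\con} \, = \, |L| \, = \, \gamma(\Mbreve) - 2\gamma(\Sigma) + 1.
\]

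For the upper bound, I apply Proposition \ref{P:appr}\ref{P:appr6} with $a = 0$ to obtain
\[
\ind(\Mbreve) \, \le \, \#_{[-2,0]}(\Lcalh, \Mbreve) \, \le \, \dim \Vt_{\con} + \dim \Vt_{\Sl} + \#_{[-2, 0]}(\LcalhN, \SbreveS),
\]
and then substitute $\dim \Vt_{\con} + \dim \Vt_{\Sl} = 4|L| = 4\gamma(\Mbreve) - 8\gamma(\Sigma) + 4$. The only point that needs care is the extra nullity $\nul(\LcalhN, \SbreveS)$ hidden in $\#_{[-2,0]}(\LcalhN, \SbreveS) = \ind(\LcalhN, \SbreveS) + \nul(\LcalhN, \SbreveS)$; this vanishes for generic $\taubreve$, or one handles it cleanly by instead choosing $a = -\epsilon$ in \ref{P:appr}\ref{P:appr4}, controlling the remaining window $(-\epsilon, 0)$ on $\Mbreve$ by the $\dim \Vt_{\Sl}$ upper bound implicit in \ref{P:appr}\ref{P:appr3} and \ref{L:appr}\ref{L:appr3}--\ref{L:appr5}, which together force any eigenvalue of $\Lcalh$ in $(-\epsilon, 0)$ to come from a sliding. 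The main (mild) obstacle is precisely this bookkeeping near the threshold $0$; all the substantive spectral content has already been packaged in Proposition \ref{P:appr}, so once the topological identity $|L| = \gamma(\Mbreve) - 2\gamma(\Sigma) + 1$ is in hand the corollary follows by a short counting argument.
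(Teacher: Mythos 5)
Your proof matches the paper's: Proposition \ref{P:appr} with trivial $\grouptilde$, the counts $\dim \Vt_{\con} = |L|$, $\dim \Vt_{\Sl} = 3|L|$, and the genus identity $\genus(\Mbreve) = 2\genus(\Sigma) + |L| - 1$ (equivalently $|L| = \genus(\Mbreve) - 2\genus(\Sigma) + 1$). Your observation that $\#_{[-2,0]}(\LcalhN,\SbreveS)$ nominally exceeds $\ind(\LcalhN,\SbreveS)$ by $\nul(\LcalhN,\SbreveS)$ is a real boundary-case subtlety that the paper's terse proof does not address; the clean fix is not the one you suggest (applying \ref{P:appr4} at $a=-\epsilon$ and chasing the gap $(-\epsilon,0)$) but simply to run the Neumann domain-monotonicity argument underlying \ref{P:appr6} with strict inequalities, giving $\#_{<0}(\Lcalh,\Mbreve) \le \#_{<0}(\LcalhN,\SbreveL) + \#_{<0}(\LcalhN,\SbreveS)$, and then to bound $\#_{<0}(\LcalhN,\SbreveL) \le 4|L|$ directly from \ref{L:appr}\ref{L:appr1}--\ref{L:appr4}. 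This sidesteps the nullity entirely.
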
 

\begin{proof}
We apply the proposition with $\Vt$ the space of $\R$-valued functions on $\Mbreve$ and trivial $\grouptilde$. 
The corollary then follows from \ref{P:appr}\ref{P:appr4},\ref{P:appr6} and the obvious facts that $\dim \Vt_{\con} =   |L|$,    $\dim \Vt_{\Sl} =   3|L|$,    and 
$\genus (\Mbreve ) =  2\genus(\Sigma) + |L| - 1$, where the order $|L|$ of $L$ equals the number of catenoidal bridges.
\end{proof}

\bigbreak  
\section{Symmetries of minimal doublings of $\Sigma^0$ in $\Sph^3$} 
\label{S:symS} 

\subsection*{Symmetry assumptions}          
$\vspace{.126cm}$ 
\nopagebreak

In this section we study the symmetries shared by the $\Sph^2$ minimal doublings we consider in this article. 
We assume they have been positioned appropriately with respect to the coordinate system in \ref{D:x1} 
and in particular they are side-symmetric doublings over $\Sigma^0$.

Following \cite[(2.13)]{SdI} and \cite[(2.10) and Remark 2.11]{KW:Luniqueness}, 
given $m\in \N\setminus \{1,2\}$, 
we define the union of $m$ meridians of $\Sigma^0$ symmetrically arranged around the poles $\pp^0$ and $\pp^\pi = -\pp^0$, 
by  
\begin{equation} 
\label{ELmer}
\Lmer=\Lmer[m]:=\bigcup_{i\in 2\Z} ( \overline{ \ptti \pp^0 } \llcup \overline{ \ptti \pp^\pi} ).  
\end{equation} 
Following  
\cite[Definition 3.8(iv)]{KW:Luniqueness} 
we define a collection of $2+m$ great two-spheres   
\begin{multline}
\label{ESigma} 
\bfSigma  := \bfSigma [m] 
\\ 
:= \big\{ \, \Sigma^{ j\pi /2 } = \Sph( \CC, \pttj ) \, \big\}_{j\in\Z} \lllcup \big\{ \, \Sigma_{ i\pi /m } = \Sph( \CCperp , \ptti ) \, \big\}_{i\in\Z}   
\end{multline} 
and as in \cite[Definition 3.9(ii)]{KW:Luniqueness} we define the group 
\begin{equation} 
\label{Egroup} 
\GrpSigma = \GrpSigmam 
:= \Gsym^{\Lmer[m]} = 
\big\langle \refl_{\Sigma^0}, \refl_{\Sigma^{\pi/2}}, \refl_{\Sigma_0}, \refl_{\Sigma_{\pi/m}} \big\rangle \simeq \Z_2\times \Z_2\times D_{2m}, 
\end{equation} 
where $D_{2m}$ denotes the dihedral group of order $2m$.

By \cite[Lemmas 3.10 and 3.11]{KW:Luniqueness}, $\GrpSigma$ is an index two subgroup of $\Gsym^{\xL_{m-1,1}}$ 
where the Lawson surface $\xL_{m-1,1}$ is positioned so that it contains the $2m$ great circles in 
\begin{equation}
\label{ECQ} 
\mathbf{C}_Q:=  \left\{ \Sph( \ptti,\pttj ) = {\cC_{i\pi/m}^{j\pi/k} } \right\}_{i,j\in\frac12+\Z}.  
\end{equation} 
Note that 
reflections with respect to these circles are contained in $\Gsym^{\xL_{m-1,1}}$ but not in $\GrpSigma$.

For large $m$, $\GrpSigmam$  is the symmetry group of 
$\xibreveomo$ and $\xibrevemm $ (discovered in \cite{SdI}), 
and $\xbreve_{(m:\kcir)}$ and $\xbreve_{(1,m:\kcir,1)}$ (discovered in \cite{SdII}).   
Each of these doublings corresponds to an LD solution with singular set 
\begin{equation} 
\label{EL} 
L=L[m]:= \Lpar\cap\Lmer, 
\end{equation} 
where $\Lpar$ is a union of circles parallel to $\CC$ in $\Sigma^0$. 
$\Lpar$ is invariant under $\refl_{\Sigma^{\pi/2} }$  
and therefore 
$\Lpar$ and $L$ are also invariant under $\GrpSigmam$. 
Note that more minimal doublings of $\Sph^2$ are discovered in \cite{gLD,IIgLD}. 
The ones in \cite{gLD} and some in \cite{IIgLD} have the same symmetry group $\GrpSigmam$ but with larger singular set $L$ compared to \ref{EL}. 
The remaining doublings of $\Sph^2$ discovered in \cite{IIgLD} have $\Lpar$ which is not invariant under $\refl_{\Sigma^{\pi/2} }$. 
They are invariant under reflections with respect to the $m+1$ spheres of  $ \bfSigmap [m]$,   
and their symmetry group is \   $\GrpSigmap$, defined by  
\begin{equation} 
\label{Egroupp} 
\begin{gathered} 
\bfSigmap  := \bfSigmap [m] 
:= \bfSigma [m] \  {\big\backslash }  \, \big\{ \, \Sigma^{ \pi/2 } = \Sph( \CC, \pp^{\pi/2} ) \, \big\} ; 
\\ 
\GrpSigmap = \GrpSigmapm 
:= 
\big\langle \refl_{\Sigma^0}, \refl_{\Sigma_0}, \refl_{\Sigma_{\pi/m}} \big\rangle \simeq \Z_2\times D_{2m}. 
\end{gathered} 
\end{equation} 
Clearly $\GrpSigmap = \GrpSigmapm$ is  an index two subgroup of $\GrpSigmam$ and has order $|\GrpSigmap|=4m$.    
In the remaining of this section however we restrict our attention to those minimal surfaces in $\Sph^3$ which satisfy the following. 

\begin{assumption}[Required symmetries]
\label{AMbreve}   
In the rest of this section $\Mbreve=\Mbreve[m]$ denotes a surface in $\Sph^3$ which is symmetric under $\GrpSigmam$ (defined in \ref{Egroup}).  
\end{assumption} 

\subsection*{Function and eigenfunction decompositions} 
$\vspace{.126cm}$ 
\nopagebreak

As in \cite{LindexI}, our approach is based on function decompositions induced by the symmetries. 
In this article we also consider $\C$-valued functions in order to facilitate 
the description of some refined new decompositions (see \ref{Dderot} and \ref{Lderot}): 

\begin{notation}[The spaces $V,W$] 
\label{DVW}   
We will denote by $W$ the $\C$-linear space of $\C$-valued functions on $\Mbreve=\Mbreve[m]$  
and by $V\subset W$ the $\R$-linear space of $\R$-valued functions on $\Mbreve=\Mbreve[m]$.  
We define $\Re,\Im:W\to V$ by taking $\Re f$ and $\Im f$ to be the real and imaginary parts of $f\in W$. 
\end{notation} 

\begin{definition}[Decompositions by reflections] 
\label{Dderefl}   
\ 
Using symmetrizations 
\\ 
and antisym\-me\-tri\-zations we define projections in $W$ as follows. 
\begin{enumerate}[label=\emph{(\roman*)}]
\item 
$\forall f\in W$, 
$\PP^{\pm} f := \frac12 (\, f \, \pm  \, f\circ\refl_{\Sigma^0} \, )$,  
$\PP^{*\pm} f := \frac12 (\, f \, \pm  \, f\circ\refl_{\Sigma^{\pi/2} } \, )$,  
and $\PP_{\pm} f := \frac12 (\, f \, \pm  \, \overline{ f\circ\refl_{\Sigma_0}}  \, )$, 
where $\PP_{\pm}$ is only $\R$-linear.  
\item 
$\PP^{\pm\pm}:=\PP^\pm\circ \PP^{*\pm}$  
and 
$\PP^{\pm\pm}_{\pm}:=\PP^\pm\circ \PP^{*\pm} \circ \PP_{\pm}$.  
\item 
The restrictions of the projections to $V$ are projections in $V$ which we denote by removing the underline, 
for example $\PV^{+-}_- := \PP^{+-}_- \big|_V:V\to V$.  
\end{enumerate} 
We define also 
$W^{\pm\pm} :=\PP^{\pm\pm}W$,  $W^{\pm\pm}_{\pm} := \PP^{\pm\pm}_{\pm} W$,  
$V^{\pm} :=\PP^{\pm}V$,  $V^{*\pm} :=\PP^{*\pm}V$,  $V^{\pm\pm} :=\PP^{\pm\pm}V$,  $V^{\pm\pm}_{\pm} := \PP^{\pm\pm}_{\pm} V$  
and for $f\in W$ 
$f^{\pm\pm} :=\PP^{\pm\pm}f$,  $f^{\pm\pm}_{\pm} := \PP^{\pm\pm}_{\pm} f$.   
\end{definition} 

Since the reflections 
$\refl_{\Sigma^0}$,  
$\refl_{\Sigma^{\pi/2} }$,  
and $\refl_{\Sigma_0}$ 
commute with each other, 
the projections 
$\PP^{\pm}$, 
$\PP^{*\pm}$,   
and $\PP_{\pm}$,   
also commute with each other. 
Moreover we have various decompositions, as for example 
\begin{equation} 
\label{EIVpm} 
\begin{aligned} 
I_V 
\, =& \, \, \PV^{++}   +    \PV^{+-}_+   +    \PV^{+-}_- + \PV^{-+}_+   +    \PV^{-+}_- + \PV^{--},
\\
V 
\, =& \, \, V^{++} \Oplus V^{+-}_+ \Oplus V^{+-}_- \Oplus V^{-+}_+ \Oplus V^{-+}_- \Oplus V^{--}. 
\end{aligned} 
\end{equation} 
Since the Jacobi operator $\Lcal = \Delta+|A|^2+2$ on $\Mbreve$ commutes with these projections by \ref{AMbreve}, 
the decomposition above induces decompositions of the eigenspaces of $\Lcal$. 
This played a fundamental role in \cite{LindexI}. 
In this article we refine these decompositions further.  

\begin{definition}[Projections by rotations] 
\label{Dderot}   
For $\mi\in\Z$ we define the following. 
\begin{enumerate}[label=\emph{(\roman*)}]
\item 
\label{Dderot1}   
$\PP_{\mi} f := \frac1m \sum_{l=0}^{m-1}  \, e^{-2l\mi\pi i/m} \, f\circ\rot_{2l\pi/m}^\CC $ ($\, \forall f\in W$).  
\item 
\label{Dderot2}   
$\PP^{\pm\pm}_{\mi} := \PP^{\pm\pm} \circ \PP_{\mi}$ and $\PP^{\pm\pm}_{\mi\pm} := \PP^{\pm\pm}_{\pm} \circ \PP_{\mi}$.  
\item 
\label{Dderot3}   
$\PV_{\mi} := \big(\PP_{\mi}+\PP_{-\mi}\big)\big|_{V}$ unless $\mi=0$ or $m/2$ $\pmod m$ in which case $\PV_{\mi} := \PP_{\mi}\big|_{V}$.  
\item 
\label{Dderot4}   
$\PV^{\pm\pm}_{\mi} := \PV^{\pm\pm} \circ \PV_{\mi}$ and $\PV^{\pm\pm}_{\mi\pm} := \PV^{\pm\pm}_{\pm} \circ \PV_{\mi}$.  
\end{enumerate} 
We define $\C$-linear    spaces 
\ $W_{\mi} :=\PP_{\mi}W$  \    
and $\R$-linear spaces 
\ $W^{\pm\pm}_{\mi\pm} := \PP^{\pm\pm}_{\mi\pm} W \subset W $ \  (but of $\C$-valued functions),  
\ $V_{\mi} :=\PV_{\mi}V \subset V $,  \  and \ $V^{\pm\pm}_{\mi\pm} := \PV^{\pm\pm}_{\mi\pm} V \subset V $.  
\ For $f\in W$ we define 
$f_{\mi} :=\PP_{\mi}f$,  $f^{\pm\pm}_{\mi\pm} := \PP^{\pm\pm}_{\mi\pm} f$.  
\end{definition} 

Because of the definition of $\PV_\mi$ we will need the following.

\begin{notation}  
\label{N:mb} 
We define $\mbar:=[m/2]$,  $\mo := \N \cap \, [ \, 2 \, , \, m/2 \,]$,  
and 
$\mtwo := \N \cap \, [ \, 2 \, , \, m/2 \,)$ 
so that the following hold. 
\begin{enumerate}[label=\emph{(\roman*)}]
\item 
\label{N:mb0}     
$\N \cap \, [ \, 0 \, , \, m/2 \,) \, = \, \{0,1\} \sqcup \mtwo$   
and 
$\N \cap \, [ \, 0 \, , \, m/2 \,] \, = \, \{0,1\} \sqcup \mo$.  
\item 
\label{N:mb1}     
If $m$ even, then 
$\mbar =m/2$, $\mo := \{2,...,\mbar \} $ and $\mtwo := \{2,...,\mbar-1 \} $ 
\item 
\label{N:mb2}     
If $m$ odd, then 
$\mbar =(m-1)/2$ and $\mo = \mtwo = \{2,...,\mbar \} $.  
\end{enumerate} 
\end{notation}  

\begin{lemma}[Decompositions by rotations] 
\label{Lderot}   
\hfill 
The following hold,  
where 
\\ 
$\sum_{\pm\pm\pm}$ denotes summation 
and $\Oplus_{\pm\pm\pm}$ denotes direct sum over the eight possible combinations of $+$ and $-$  
(in both cases). 
\begin{enumerate}[label=\emph{(\roman*)}]
\item 
\label{Lderot1}   
$\PP^{\pm}$, 
$\PP^{*\pm}$,   
all $\PP_\mi$ with $\mi\in\Z$ (or $\PP_{\pm}$),   
and $\Lcal$ (or $\Lcalh$), 
commute with each other; 
each $\PP_\mi$ does not commute with $\PP_{\pm}$ unless $\mi=0$ or $m/2$ $\pmod m$; 
$\Lcal$ and $\Lcalh$ also do not commute. 
\item 
\label{Lderot2}   
$I_W= \sum_{\mi=0}^{m-1  } \PP_{\mi} = \sum_{\pm\pm\pm} \PP^{\pm\pm}_{\pm}$  
and 
$\PP^{\pm\pm}_{\pm} = \sum_{\mi=0}^{\mbar} \PP^{\pm\pm}_{\mi\pm}$ (recall \ref{N:mb}).   
\item 
\label{Lderot3}   
$\forall f\in W$ and $\forall \mi\in\Z$ we have $\PP_{-\mi} f = \overline{ \PP_{\mi} \overline{f} }$ 
and $W_{-\mi}=\overline{ W_{\mi} } $. 
\item 
\label{Lderot4}   
$\forall \mi\in\Z$ we have 
$W_\mi \, = \, \big\{ \, f\in W \, : \, f\circ\rot_{2\pi/m}^\CC = e^{2\mi\pi \ii/m}  f \, \big\}$.  
\item 
\label{Lderot4b}   
$\forall \mi\in\Z$ we have 
$W_\mi \, = \, \big\{ \, f\in W \, : \, f\circ\rot_{2j\pi/m}^\CC = e^{2j\mi\pi \ii/m}  f \, ( \forall j\in\Z ) \, \big\}$.  
\item 
\label{Lderot4c}   
$\forall \mi\in\Z$ 
and $\Om$ any fundamental domain of the action of $\rot_{2\pi/m}^\CC$ on $\Sph^3$,    
$W_\mi \, = \, \big\{ \, f\in W \, : \, f\circ\rot_{2j\pi/m}^\CC = e^{2j\mi\pi \ii/m}  f \, \text{ on } \, \Mbreve \cap \Om \, ( \forall j\in\Z ) \, \big\}$.   
\item 
\label{Lderot4+}   
$\forall \mi\in\Z$ we have 
\quad 
$W_{\mi + }= V_{\mi+} \, \Oplus_\R \, \ii\, V_{\mi-}$, 
\quad 
$W_{\mi - }= V_{\mi-} \, \Oplus_\R \, \ii\, V_{\mi+}$, 
\quad 
and on $V$ we have 
\quad 
$\PV_{\mi} = 2 \Re \circ \PP_{\mi} = 2 \Re \circ \PP_{-\mi} $ if $\mi\ne0$ or $m/2$ $\pmod m$ and 
$\PV_{\mi} = \PP_{\mi} = \PP_{-\mi} $ if $\mi = 0$ or $m/2$ $\pmod m$. 
In particular $W_\mu=\left<W_{\mu_\pm}\right>^{\C}$ where $\left< . \right>^{\C}$ denotes the $\C$-span. 
\item 
\label{Lderot5}   
$\forall \mi\in\Z$,  
\quad 
$V_\mi= \big\{ \, f\in V \, : \, f - 2 \cos \frac{2\mi\pi}m \, f\circ\rot_{2\pi/m}^\CC +  f\circ\rot_{4\pi/m}^\CC = 0 \, \big\}$.  
\item 
\label{Lderot6}   
$\forall \mi\in\Z$ we have $V_{\mi}= V_{\mi+} \, \Oplus_\R \, V_{\mi-} $ and $V_{-\mi\pm}= V_{\mi\pm} $. 
\item 
\label{Lderot7}   
$\forall \mi,\mi'\in\Z$ with $\mi=\mi'\pmod m$ we have $W_{\mi'}={ W_{\mi} } $ and $V_{\mi'}= V_{\mi} $. 
\item 
\label{Lderot8}   
$\PV^{\pm\pm} = \sum_{\mi=0}^{\mbar} \PV^{\pm\pm}_{\mi}$,  
\quad 
$\PV^{\pm\pm}_{\pm} = \sum_{\mi=0}^{\mbar} \PV^{\pm\pm}_{\mi\pm} $, 
\quad 
and 
\\    
$\phantom{kk}$ \hfill 
$I_V = \sum_{\pm\pm\pm} \sum_{\mi=0}^{\mbar} P^{\pm\pm}_{\mi\pm} $. 
\item 
\label{Lderot9}   
$W^{\pm\pm}_{\mi\pm}=W^{\pm} \cap W^{*\pm} \cap W_{\pm} \cap W_{\mi}$ and 
$V^{\pm\pm}_{\mi\pm}=V^{\pm} \cap V^{*\pm} \cap V_{\pm} \cap V_{\mi}$. 
\item 
\label{Lderot10}   
$V^{\pm\pm} = \Oplus_{\mi=0}^{\mbar} V^{\pm\pm}_{\mi}$,  
$V^{\pm\pm}_{\pm} = \Oplus_{\mi=0}^{\mbar} V^{\pm\pm}_{\mi\pm} $, 
and 
$V = \Oplus_{\pm\pm\pm} \Oplus_{\mi=0}^{\mbar} V^{\pm\pm}_{\mi\pm} $. 
\end{enumerate} 
\end{lemma}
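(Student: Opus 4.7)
First I would establish the commutativity relations in \ref{Lderot1}. The reflections $\refl_{\Sigma^0}$ and $\refl_{\Sigma^{\pi/2}}$ both fix $\CC$ pointwise and preserve the orientation of $\CCperp$, hence commute with every rotation $\rot_\phi^\CC$ along $\CC$ and therefore with all $\PP_\mi$. The reflection $\refl_{\Sigma_0}$ instead reverses the orientation of $\CCperp$ and satisfies $\refl_{\Sigma_0}\circ\rot_\phi^\CC=\rot_{-\phi}^\CC\circ\refl_{\Sigma_0}$; combined with the extra complex conjugation in the definition of $\PP_\pm$, a short computation shows $\PP_\pm\PP_\mi=\PP_{-\mi}\PP_\pm$, which equals $\PP_\mi\PP_\pm$ precisely when $e^{2\mi\pi\ii/m}=e^{-2\mi\pi\ii/m}$, i.e.\ $\mi\equiv 0$ or $\mi\equiv m/2\pmod m$. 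Both $\Lcal$ and $\Lcalh$ commute with every element of $\GrpSigmam$ by \ref{AMbreve}, and therefore with all the above projections; they do not commute with each other because the conformal factor $\tfrac12(|A|^2+\Ric(\nu,\nu))$ is not constant.

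Next I would prove the complex-side decompositions \ref{Lderot2}, \ref{Lderot4}, \ref{Lderot4b}, \ref{Lderot4c} by standard character theory for the cyclic group $\Z/m\Z$ acting by $\rot_{2\pi/m}^\CC$: the operators $\PP_\mi$ are the isotypic projectors onto the character $\mi$, so they are orthogonal idempotents summing to $I_W$, and $W_\mi$ is exactly the $e^{2\mi\pi\ii/m}$-eigenspace of this action; iterating gives \ref{Lderot4b}, and because a $W_\mi$-function is determined on the fundamental domain $\Om$ by the eigenvalue condition one obtains \ref{Lderot4c}. Part \ref{Lderot3} is direct: conjugating the defining sum flips $e^{-2l\mi\pi\ii/m}$ into $e^{+2l\mi\pi\ii/m}$. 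The refinements into $W^{\pm\pm}_{\pm}$ in \ref{Lderot2} follow from \ref{Lderot1} together with the fact that each of $\PP^\pm$, $\PP^{*\pm}$, $\PP_\pm$ splits the identity into two complementary idempotents. Period\-ici\-ty \ref{Lderot7} is immediate from $e^{2\pi\ii}=1$.

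The main step is transferring these decompositions to the real space $V$. The obstruction is that $W_\mi$ is not conjugation-invariant unless $\mi\equiv 0,m/2$; by \ref{Lderot3} we have $W_{-\mi}=\overline{W_\mi}$, so for $f\in V$ the pair $\PP_{\pm\mi}f$ are complex conjugates and their sum $2\Re\PP_\mi f$ is the only real-valued contribution at frequency $\mi$. This gives the definition of $\PV_\mi$ in \ref{Dderot3} and, together with the characterization of $V_{\mi\pm}$ as the fixed set of $\PP_\pm$ (an antilinear involution which exchanges real and imaginary parts up to a sign), yields \ref{Lderot4+} after checking that multiplication by $\ii$ swaps the two eigenspaces of $\PP_\pm$. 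For \ref{Lderot5} I would apply $\rot_{2\pi/m}^\CC$ twice to $f=\PP_\mi f+\PP_{-\mi}f$ and use $e^{\ii\theta}+e^{-\ii\theta}=2\cos\theta$ to get the recurrence, with the converse given by solving the second-order recurrence over $\C$ to land in $W_\mi\oplus W_{-\mi}$.

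Finally the remaining items are bookkeeping. Part \ref{Lderot6} follows from \ref{Lderot1} together with $V_{-\mi}=V_\mi$ (from \ref{Lderot3} applied to real-valued $f$); \ref{Lderot8} combines the cyclic decomposition $I_V=\sum_{\mi=0}^{\mbar}\PV_\mi$ (which is \ref{Lderot2} restricted to $V$ after merging the conjugate pair $\mi,-\mi$) with the reflection decomposition \ref{EIVpm}; \ref{Lderot9} is just the definition of $V^{\pm\pm}_{\mi\pm}$ as a simultaneous fixed set; and \ref{Lderot10} follows by projecting \ref{EIVpm} frequency by frequency. The step I expect to be most delicate is the antilinearity of $\PP_\pm$ and the special behaviour at $\mi=0$ and $\mi=m/2$ where $\PV_\mi$ is defined by a single $\PP_\mi$ rather than by $\PP_\mi+\PP_{-\mi}$; once \ref{Lderot1}, \ref{Lderot3}, and \ref{Lderot4+} are in place, the rest of the lemma reduces to formal manipulations of commuting idempotents.
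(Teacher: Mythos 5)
The paper's own proof of this lemma is a one-liner ("straightforward to verify"), so there is nothing to compare approaches against; the question is whether your reconstruction holds up. The key computational claim in your treatment of \ref{Lderot1} does not: you assert $\PP_\pm\PP_\mi=\PP_{-\mi}\PP_\pm$, but the complex conjugation built into $\PP_\pm$ cancels the geometric sign flip you identified. Indeed $(\PP_\mi f)\circ\refl_{\Sigma_0}=\PP_{-\mi}(f\circ\refl_{\Sigma_0})$ because $\refl_{\Sigma_0}\circ\rot_\phi^\CC\circ\refl_{\Sigma_0}=\rot_{-\phi}^\CC$, but then $\overline{\PP_{-\mi}g}=\PP_\mi\bar g$ because conjugating the coefficients $e^{2l\mi\pi\ii/m}$ flips the frequency index a second time; the two flips cancel, so $\overline{(\PP_\mi f)\circ\refl_{\Sigma_0}}=\PP_\mi\bigl(\overline{f\circ\refl_{\Sigma_0}}\bigr)$ and hence $\PP_\pm\PP_\mi=\PP_\mi\PP_\pm$ for every $\mi$. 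A concrete check with $m=4$, $\mi=1$, $f=e^{\ii\yy}$ shows $\PP_+\PP_1 f=e^{\ii\yy}$ while $\PP_{-1}\PP_+f=\PP_{-1}e^{\ii\yy}=0$, so your identity fails, whereas $\PP_1\PP_+f=e^{\ii\yy}$ agrees with $\PP_+\PP_1 f$. Your identity would be correct for the unconjugated symmetrization $\tfrac12(f\pm f\circ\refl_{\Sigma_0})$, not for $\PP_\pm$ as defined.

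This matters beyond \ref{Lderot1}: parts \ref{Lderot9} and \ref{Lderot10} identify the image of $\PP^{\pm\pm}_\pm\circ\PP_\mi$ with the intersection of the individual images, which is exactly what commutativity of the idempotents buys you. The obstruction actually motivating the case split in the definition of $\PV_\mi$ is not a failure to commute but the fact that $\PP_\mi$ preserves the real subspace $V$ only when the coefficients $e^{-2l\mi\pi\ii/m}$ are real, i.e.\ when $2\mi\equiv 0\pmod m$; for other $\mi$ one must pair $\PP_\mi$ with $\PP_{-\mi}$ to land back in $V$, which is precisely \ref{Lderot4+}. There is also a small geometric slip in your opening paragraph: $\refl_{\Sigma^0}$ and $\refl_{\Sigma^{\pi/2}}$ \emph{reverse}, not preserve, the orientation of $\CCperp$ (each restricts to a reflection of $\CCperp$); they commute with $\rot_\phi^\CC=\rot_{\CCperp}^\phi$ because they fix $\CC$ pointwise while $\rot_{\CCperp}^\phi$ fixes $\CCperp$ pointwise, so the two act nontrivially on complementary factors of $\Span(\CC)\oplus\Span(\CCperp)$. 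The remainder of your argument (the character-theoretic decomposition of $W$, the transfer to $V$ via $W_{-\mi}=\overline{W_\mi}$, and the bookkeeping in \ref{Lderot8}--\ref{Lderot10}) is sound once the commutativity is correctly established.
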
 

\begin{proof} 
It is straightforward to verify these statements.  
\end{proof} 

\begin{example}[Fourier modes on {$\Sigma^0$}] 
\label{Ex:fourier} 
Note that $\Sigma^0$ satisfies \ref{AMbreve} where the generating reflections act as follows in 
the spherical coordinates $(\xx,\yy)$ defined in \ref{D:xx}.  
$\refl_{\Sigma^0}$ acts trivially, $\refl_{\Sigma^{\pi/2}}$ by $\xx\to-\xx$ (exchanging the hemispheres defined by the equatorial circle $\CC$), 
$\refl_{\Sigma_0}$ by $\yy\to-\yy$, and $\refl_{\Sigma_{\pi/m}}$ by $\yy\to \frac{2\pi}m-\yy$. 
For $f$ a $\C$-valued function on $[-\pi/2 , \pi/2 ]$ and $l\in\Z$ the Fourier mode $f(\xx)  e^{ l \yy \ii }$ 
is in $W_\mi$ 
if 
$l=\mi \pmod m $, 
in $W^{*+}$ ($W^{*-}$) if 
$f$ is even (odd), 
and is always in $W_+$ with $f(\xx) \ii e^{ l \yy \ii } \in W_-$.  
If $f$ is $\R$-valued we have that 
$f(\xx)  \cos( l \yy )\in V_{\mi+}$ 
and 
$f(\xx)  \sin( l \yy )\in V_{\mi-}$ 
if 
$l= \pm \mi \pmod m $, 
and both modes are in $W^{*+}$ ($W^{*-}$) if 
$f$ is even (odd). 
\end{example}

\begin{lemma}[Decomposition of the complex eigenfunctions] 
\label{Lmu}   
\ \ 
Assuming \ref{AMbreve} we have 
\ 
$\forall j \in \N$, $\mi\in \Z$ 
\ 
with 
\ 
$\mi\ne0,m/2 \pmod m $ 
\ 
that    
\\ 
$\lambda_j^\C(W_{\mi}^{},\Lcalu) = \lambda_{j}^\C(W_{-\mi}^{},\Lcalu) = \lambda_j(V_{\mi+}^{},\Lcalu) = \lambda_j(V_{\mi-}^{},\Lcalu)$,
\\     
and similarly 
\quad 
$\lambda_j^\C(W_{\mi}^{\pm\pm},\Lcalu) = \lambda_{j}^\C(W_{-\mi}^{\pm\pm},\Lcalu) = \lambda_j(V_{\mi+}^{\pm\pm},\Lcalu) = \lambda_j(V_{\mi-}^{\pm\pm},\Lcalu)$,   
\\     
where  
\ 
$\Lcalu = \Lcal$ as in \ref{ELjacobi} or $\Lcalu = \Lcalh$ as in \ref{ELh}.  
\end{lemma}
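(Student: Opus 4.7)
The plan is to establish three $\R$-linear isomorphisms that commute with $\Lcalu$ and then to do careful $\C$-versus-$\R$ multiplicity bookkeeping. Since $\Lcalu$ has real coefficients, pointwise complex conjugation $f \mapsto \bar f$ is a $\C$-antilinear map $W \to W$ which commutes with $\Lcalu$. By \ref{Lderot}\ref{Lderot3} it restricts to a bijection $W_\mi \to W_{-\mi}$; this bijection preserves eigenvalues (real since $\Lcalu$ is symmetric), so $\lambda_j^\C(W_\mi) = \lambda_j^\C(W_{-\mi})$ and similarly for the $\pm\pm$ variants. The same argument restricted to the $\pm\pm$ subspaces handles all superscripts uniformly, since conjugation commutes with $\PP^\pm$ and $\PP^{*\pm}$.

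Next, I would exhibit an $\R$-linear isomorphism $\Phi_+ : V_{\mi+} \to W_{\mi+}$ by the formula $w \mapsto \PP_\mi w$. Using the fact (a short calculation) that $\refl_{\Sigma_0}\circ\rot^\CC_{2\pi/m}=\rot^\CC_{-2\pi/m}\circ\refl_{\Sigma_0}$, one shows for $w \in V$ real that $(\PP_\mi w)\circ \refl_{\Sigma_0}=\PP_{-\mi}(w\circ\refl_{\Sigma_0})$. Hence $w\in V_{\mi+}$ (that is, $w\circ\refl_{\Sigma_0}=w$ and $w\in V_\mi$) implies $(\PP_\mi w)\circ\refl_{\Sigma_0}=\overline{\PP_\mi w}$, which is exactly the condition $\PP_\mi w \in W_{\mi+}$. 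Injectivity of $\Phi_+$ follows because $\mi\ne 0,m/2\pmod m$ forces $W_\mi\cap W_{-\mi}=0$, and surjectivity holds because $g\in W_{\mi+}$ produces the real function $w:=g+\bar g\in V_{\mi+}$ with $\PP_\mi w=g$. The analogous statements give $\Phi_- : V_{\mi-}\xrightarrow{\sim} W_{\mi-}$. Both $\Phi_\pm$ commute with $\Lcalu$ since $\PP_\mi$ does, so $\lambda_j(V_{\mi\pm},\Lcalu) = \lambda_j(W_{\mi\pm},\Lcalu)$ with multiplicities as $\R$-vector spaces.

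Third, the $\C$-linear map $f\mapsto \ii f$ on $W$ sends $W_{\mi+}$ to $W_{\mi-}$ bijectively (an $\R$-linear isomorphism, since neither space is $\C$-stable), and commutes with $\Lcalu$. This shows that the spectra of $\Lcalu$ on $W_{\mi+}$ and $W_{\mi-}$ coincide as $\R$-spectra, hence $\lambda_j(V_{\mi+}) = \lambda_j(V_{\mi-})$. It remains to compare these against $\lambda_j^\C(W_\mi)$. Using \ref{Lderot}\ref{Lderot4+}, we have the $\R$-direct sum $W_\mi = W_{\mi+}\oplus_\R W_{\mi-}$, and the operator $\Lcalu$ preserves this decomposition. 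Therefore, for each real eigenvalue $\lambda$, its $\R$-multiplicity on $W_\mi$ is the sum of the $\R$-multiplicities on $W_{\mi+}$ and $W_{\mi-}$, i.e. twice the $\R$-multiplicity on $W_{\mi+}$. Since $W_\mi$ is a $\C$-vector space and $\Lcalu$ is $\C$-linear with real eigenvalues, the $\lambda$-eigenspace is a $\C$-subspace of $W_\mi$, so its $\C$-dimension equals half its $\R$-dimension. Combining, the $\C$-multiplicity of $\lambda$ on $W_\mi$ equals the $\R$-multiplicity of $\lambda$ on $W_{\mi+}$, which by $\Phi_+$ equals its $\R$-multiplicity on $V_{\mi+}$. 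Ordering gives $\lambda_j^\C(W_\mi,\Lcalu)=\lambda_j(V_{\mi+},\Lcalu)$. The proof in the $\pm\pm$-refined case is identical, since $\PP^\pm,\PP^{*\pm}$ are $\C$-linear and commute with all the maps above.

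The main conceptual obstacle is the $\R$-versus-$\C$ accounting: $W_{\mi+}$ is only an $\R$-subspace (not $\C$-subspace) of the complex space $W_\mi$, so one must verify carefully that the direct sum decomposition $W_\mi=W_{\mi+}\oplus_\R W_{\mi-}$ combined with the factor of two in $\dim_\R=2\dim_\C$ matches correctly the factor of two from $V_\mi=V_{\mi+}\oplus V_{\mi-}$. Once this dimension count is done cleanly, the result is essentially automatic from the three equivariant isomorphisms (conjugation, $\Phi_+$, multiplication by $\ii$), and nothing about the geometry of $\Mbreve$ beyond the assumed $\GrpSigmam$-symmetry is used.
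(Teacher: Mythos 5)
Your proof is correct, and since the paper's own proof is just the sentence ``It is straightforward to verify these statements using the symmetries and the definitions,'' you have in effect supplied the verification the authors chose to omit. The three equivariant maps you construct (complex conjugation giving $W_\mu\to W_{-\mu}$; the projection $\PP_\mu$ giving the $\R$-isomorphism $V_{\mu\pm}\to W_{\mu\pm}$; multiplication by $\ii$ giving $W_{\mu+}\to W_{\mu-}$) do commute with $\Lcalu$, the injectivity/surjectivity arguments hold, and the final $\R$-versus-$\C$ dimension bookkeeping $\dim_\C E_\lambda(W_\mu)=\tfrac12\dim_\R E_\lambda(W_\mu)=\dim_\R E_\lambda(W_{\mu+})$ is exactly what is needed. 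This is the natural route and matches what the paper's symmetry decompositions are set up to deliver.

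One small bookkeeping slip in the presentation: the decomposition $W_\mu = W_{\mu+}\oplus_\R W_{\mu-}$ that you invoke is not what \ref{Lderot}\ref{Lderot4+} states (that item gives the formulas $W_{\mu\pm}=V_{\mu\pm}\oplus_\R\ii V_{\mu\mp}$ and the $\C$-span relation). The decomposition you actually use follows instead from $W=W_+\oplus_\R W_-$ (i.e.\ $\PP_++\PP_-=I_W$) together with the fact, which you essentially verify en route, that $\PP_\pm$ preserves $W_\mu$; restricting the $W_\pm$-splitting to $W_\mu$ gives the claim. You should attribute it that way, or to \ref{Lderot}\ref{Lderot2}/\ref{Lderot9}, rather than to \ref{Lderot}\ref{Lderot4+}. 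This does not affect the validity of the argument.
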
 

\begin{proof} 
It is straightforward to verify these statements using the symmetries and the definitions.  
\end{proof}

\subsection*{Constructions of test functions} 
$\vspace{.126cm}$ 
\nopagebreak

\begin{notation}[The region $\YY_0 \subset\Sph^3$] 
\label{NYY}   
We adopt the notation (recall \ref{N:xx}) 
\qquad 
$\YY_{0} \, := \Sph^3_{\yy\in [-\pi/m, \pi/m] } $ 
\qquad     
and 
\qquad     
${ \partial_\pm \YY_0 \, :=  \Sph^3_{\yy=\pm\pi/m} \subset \partial \YY_0 }$,   
\qquad     
so that 
\qquad     
$\, \rot_{2\pi/m}^\CC \partial_- \YY_0 = \partial_+ \YY_0$.     
\end{notation} 

\begin{lemma}[Restricting to $\YY_0$] 
\label{LEmu}   
$\phantom{kkk}$ 
Suppose  \       $U\subset\Mbreve$ \ is invariant under \  $ \rot_{2\pi/m}^\CC$ \  and   \  $\refl_{\Sigma^0}$. \  
The following are equivalent then \ $\forall \mi\in\Z$  \      
and \  \    $\forall f\in\left. W_{+} \right|_{ U \cap \YY_0}  $  \     
(recall \ref{D:eig-res} and \ref{NYY}). 
\begin{enumerate}[label=\emph{(\roman*)}]
\item 
\label{LEmu1}  
$ f\in\left. W_{\mi+} \right|_{ U \cap \YY_0}  $. 
\item 
\label{LEmu2}  
There is a function        \quad $f_\partial : U\cap\partial_+\YY_0 \to \R $  \quad such that 
\begin{equation} 
\label{EEmu} 
\begin{aligned} 
f \, = \, \, &  e^{-\mi\pi \ii/m}  f_\partial \circ\rot_{2\pi/m}^\CC \quad  & \text { on } \quad & U\cap\partial_-\YY_0,
\\
f \, = \, \, &  e^{\mi\pi \ii/m} \, \,  f_\partial \quad  & \text { on } \quad & U\cap\partial_+\YY_0 .  
\end{aligned} 
\end{equation} 
\item 
\label{LEmu3}  
$\Im f \, =  \, \pm \tan      ({\mi\pi /m})  \, \Re f \quad \text{ on } \quad U\cap\partial_\pm\YY_0 $.      
\end{enumerate} 

Moreover the restriction map 
\quad  
$\Rcal_{U,\mi} : 
\left. W_{\mi+} \right|_{ U }  
\to 
\left. W_{\mi+} \right|_{ U \cap \YY_0}  , 
$
\quad  
defined by 
\quad  
$f \to \left. f \right|_{ U \cap \YY_0} $,  
\quad  
is a linear isomorphism whose inverse we will denote by  
\quad  
$\Ecal_{U,\mi} $.  
\end{lemma}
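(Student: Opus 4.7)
The plan is to exploit that $\YY_0$ is a closed fundamental domain for the action of $\rot_{2\pi/m}^\CC$ on $\Sph^3$, with the two boundary faces $\partial_\pm \YY_0$ identified by $\rot_{2\pi/m}^\CC$, and that $\refl_{\Sigma_0}$ preserves $\YY_0$ while interchanging $\partial_\pm \YY_0$. In the $(\xx,\yy,\zz)$ coordinates of \ref{D:xx}, $\rot_{2\pi/m}^\CC$ acts by $\yy\mapsto\yy+2\pi/m$ and $\refl_{\Sigma_0}$ acts by $\yy\mapsto-\yy$, so on $\partial_-\YY_0$ the two maps $\refl_{\Sigma_0}$ and $\rot_{2\pi/m}^\CC$ coincide as bijections onto $\partial_+\YY_0$, and the composition $\rot_{2\pi/m}^\CC\circ\refl_{\Sigma_0}=\refl_{\Sigma_{\pi/m}}$ fixes $\partial_+\YY_0$ pointwise. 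These identifications translate everything to the bookkeeping of complex phases.

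For \ref{LEmu1} $\Rightarrow$ \ref{LEmu3}, if $F\in W_{\mi+}$ extends $f$, then \ref{Lderot}\ref{Lderot4} gives $F\circ\rot_{2\pi/m}^\CC=e^{2\mi\pi\ii/m}F$, and $F\in W_+$ gives $F=\overline{F\circ\refl_{\Sigma_0}}$; composing these yields $F\circ\refl_{\Sigma_{\pi/m}}=e^{2\mi\pi\ii/m}\overline{F}$, so at points of $\partial_+\YY_0$ we obtain $F=e^{2\mi\pi\ii/m}\overline{F}$. Writing $F=\Re F+\ii\Im F$ and solving yields $\Im F=\tan(\mi\pi/m)\Re F$ on $\partial_+\YY_0$; applying $\refl_{\Sigma_0}$ and conjugating produces the opposite-sign identity on $\partial_-\YY_0$. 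For \ref{LEmu3} $\Rightarrow$ \ref{LEmu2}, define $f_\partial:=e^{-\mi\pi\ii/m}f|_{U\cap\partial_+\YY_0}$; a short direct computation shows that \ref{LEmu3} at $\partial_+\YY_0$ is equivalent to $\Im f_\partial=0$, so $f_\partial$ is real and the identity on $\partial_+\YY_0$ holds by construction. For the identity on $\partial_-\YY_0$, given $p\in U\cap\partial_-\YY_0$ we have $\refl_{\Sigma_0}p=\rot_{2\pi/m}^\CC p\in\partial_+\YY_0$, so the $W_+$ condition $f(p)=\overline{f(\refl_{\Sigma_0}p)}$ combined with reality of $f_\partial$ gives $f(p)=\overline{e^{\mi\pi\ii/m}f_\partial(\rot_{2\pi/m}^\CC p)}=e^{-\mi\pi\ii/m}f_\partial(\rot_{2\pi/m}^\CC p)$, as required.

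For \ref{LEmu2} $\Rightarrow$ \ref{LEmu1}, extend $f$ to $F$ on $U$ by the equivariance formula $F\big((\rot_{2\pi/m}^\CC)^j p'\big):=e^{2j\mi\pi\ii/m}f(p')$ for $p'\in U\cap\YY_0$ and $j\in\Z$; the well-definedness on $\partial_\pm\YY_0$ is exactly the content of \ref{LEmu2}. Then $F\in W_\mi$ by \ref{Lderot}\ref{Lderot4c}, and $F\in W_+$ follows from the hypothesis $f\in W_+|_{U\cap\YY_0}$ together with the commutation relation $\refl_{\Sigma_0}\circ(\rot_{2\pi/m}^\CC)^j=(\rot_{2\pi/m}^\CC)^{-j}\circ\refl_{\Sigma_0}$ (so that the phase $e^{2j\mi\pi\ii/m}$ becomes its conjugate under $\refl_{\Sigma_0}$ at the orbit representative level).

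For the isomorphism claim, injectivity of $\Rcal_{U,\mi}$ follows from \ref{Lderot}\ref{Lderot4c}: two elements of $W_{\mi+}|_U$ agreeing on the fundamental-domain intersection $U\cap\YY_0$ must coincide on all of $U$. Surjectivity is precisely the equivariant extension construction in \ref{LEmu2} $\Rightarrow$ \ref{LEmu1}, which also defines the inverse $\Ecal_{U,\mi}$ explicitly. The main (and essentially only) obstacle is the careful tracking of the phases $e^{\pm\mi\pi\ii/m}$ on the two boundary faces and verifying the reality of $f_\partial$; the computations themselves are elementary applications of the defining identities in \ref{Dderefl} and \ref{Dderot}.
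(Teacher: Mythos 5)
Your proof is correct and follows essentially the same route as the paper's (which dispatches the equivalences tersely as ``periodicity plus elementary calculations''): you spell out those calculations, organizing them as a cycle \ref{LEmu1}$\Rightarrow$\ref{LEmu3}$\Rightarrow$\ref{LEmu2}$\Rightarrow$\ref{LEmu1} instead of the paper's two pairwise equivalences, and you get the phase bookkeeping, the reality of $f_\partial$, and the equivariant extension all right. One small observation worth making explicit: the symmetry of the ambient domain that your argument actually uses is invariance under $\refl_{\Sigma_0}$ (the $\yy\mapsto-\yy$ reflection entering the $W_+$-projection $\PP_+$ and the identity $\rot_{2\pi/m}^\CC\circ\refl_{\Sigma_0}=\refl_{\Sigma_{\pi/m}}$), whereas the lemma's hypothesis literally states invariance under $\refl_{\Sigma^0}$ (the side-swapping reflection). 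In steps \ref{LEmu1}$\Rightarrow$\ref{LEmu3} and \ref{LEmu3}$\Rightarrow$\ref{LEmu2} only $\rot_{2\pi/m}^\CC$-invariance of $U$ is actually needed, since the relevant relation $f(p)=\overline{f(\refl_{\Sigma_0}p)}$ is inherited from the global $W_+$-function whose restriction $f$ is, and for $p\in\partial_-\YY_0$ one has $\refl_{\Sigma_0}p=\rot_{2\pi/m}^\CC p\in U$; but in \ref{LEmu2}$\Rightarrow$\ref{LEmu1} your verification that the equivariant extension $F$ lies in $W_+$ evaluates $F\circ\refl_{\Sigma_0}$ at interior points of $U$, so there you do implicitly use $\refl_{\Sigma_0}$-invariance of $U$. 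In all of the paper's applications ($U=\xibreveomo$ or $U_-=\xibreveomo\cap\Sph^3_{\xx\in[0,\xx_-]}$) the domain is invariant under the full group $\GrpSigma$, so this makes no difference, but it would be cleaner to state which reflection symmetry of $U$ is being invoked.
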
 

\begin{proof} 
The equivalence of \ref{LEmu1} and \ref{LEmu2} follows from the periodicity in \ref{Lderot}\ref{Lderot4c} and elementary calculations. 
The equivalence of \ref{LEmu2} and \ref{LEmu3} follows easily from the definitions and elementary calculations. 
The proof is then completed by explicitly constructing 
$\Ecal_{U,\mi} $ using the periodicity and then verifying compatability. 
\end{proof}

\subsection*{Jacobi fields and normal componenents} 
$\vspace{.126cm}$ 
\nopagebreak

The approach in \cite{LindexI} was based on the observation that 
each summand in the decomposition 
of $V$ in \ref{EIVpm} 
contains exactly one Jacobi field (up to scaling).  
In the next lemma we record a more detailed version of this information, and also the corresponding information on the components of the unit normal, 
which are well known to be eigenfunctions of eigenvalue $-2$ for the Jacobi operator $\Lcal_{\Mbreve}$ (see \ref{ELjacobi}).  

\begin{lemma}[Jacobi fields and the normal]                     
\label{LJnu} 
The following hold for $\Mbreve=\ML$ or $\Mbreve=\Mbreve[m]$ a minimal surface doubling of $\Sigma^0$ symmetric under $\GrpSigma$ (see \ref{Egroup}).  
\begin{equation*} 
\begin{gathered} 
J_{ \CCperp } = \, J^{ \CC } \in V^{++}_{0-}, \qquad J_{ \CC } = \, J^{ \CCperp  } \in V^{--}_{0+}, 
\\
J_{ \cC_{\pi/2}^{\pi/2} } = \, J^{ \cC_0^0 } \in V^{+-}_{1+}, \qquad J_{ \cC_0^{\pi/2} } = \, J^{ \cC_{\pi/2}^0 } \in V^{+-}_{1-}, 
\\ 
J_{ \cC_{\pi/2}^0 } = \, J^{ \cC_0^{\pi/2} } \in V^{-+}_{1+}, \qquad J_{ \cC_0^0 } = \, J^{ \cC_{\pi/2}^{\pi/2} } \in V^{-+}_{1-}, 
\\ 
\nu \cdot \pp^0 \in V^{+-}_{0+}, \quad 
\nu \cdot \pp^{\pi/2} \in V^{-+}_{0+},  \quad 
 \nu \cdot \pp_0 \in V^{++}_{1+}, \quad \nu \cdot \pp_{\pi/2} \in V^{++}_{1-}.  
\end{gathered} 
\end{equation*} 
\end{lemma}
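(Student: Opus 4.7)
The plan is to check, one generator of $\GrpSigmam$ at a time, how each of the twelve listed functions transforms, and then match the results against the subspace definitions in \ref{Dderefl} and \ref{Dderot}. I first fix $\nu$ to be the globally smooth unit normal on $\Mbreve$ pointing away from $\Sigma^0$ --- upward on the upper graph, downward on the lower, and rotating horizontally through each catenoidal waist. A direct check at the waists (and automatic preservation elsewhere) gives $\nu \circ g = g_* \nu$ in $\R^4$ for every generator $g$ of $\GrpSigmam$. It follows that for each Killing field $\vecK$ of $\Sph^3$ the Jacobi field $J := g(\vecK,\nu)$ satisfies $J \circ g = \eta_g\, J$, where $\eta_g = \pm 1$ is the $g$-parity of $\vecK$ in the sense of \ref{DKsymm}, and that for each $\pp \in \Sph^3 \subset \R^4$ one has $(\nu\cdot\pp)\circ g = \nu\cdot g^{-1}\pp$.

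The three reflection-parity signs determining the $V^{\pm\pm}_\pm$ summand of each function are then read off from \ref{Ksymm} (for the six Killing fields), together with the coordinates of \ref{D:x1} (for the four points $\pp^0,\pp^{\pi/2},\pp_0,\pp_{\pi/2}$, noting that each of $\refl_{\Sigma^0}, \refl_{\Sigma^{\pi/2}}, \refl_{\Sigma_0}$ fixes or negates each such point according to whether the corresponding ambient coordinate is preserved or flipped).

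It then remains to pin down the rotation frequency $\mu \in \{0,1\}$ and, in the cases $\mu = 1$, the refinement $V_{1+}$ versus $V_{1-}$. For $J_\CCperp$, $J_\CC$, $\nu\cdot\pp^0$, $\nu\cdot\pp^{\pi/2}$ one gets $\mu = 0$ immediately: $\vecK_\CCperp$ and $\vecK_\CC$ lie in the maximal torus containing $\rot^\CC_{2\pi/m}$ and so produce $\rot^\CC_{2\pi/m}$-invariant Jacobi fields, while $\pp^0,\pp^{\pi/2} \in \CCperp$ are pointwise fixed by $\rot^\CC_{2\pi/m}$. For the remaining six functions a direct computation using the coordinate formulas \ref{E:K}--\ref{E:Kcom} shows that the three pairs
\[
(J_{\cC_{\pi/2}^{\pi/2}},\,J_{\cC_0^{\pi/2}}),\qquad
(J_{\cC_{\pi/2}^0},\,J_{\cC_0^0}),\qquad
(\nu\cdot\pp_0,\,\nu\cdot\pp_{\pi/2})
\]
each transform under $\rot^\CC_{2\pi/m}$ by a planar rotation of angle $\pm 2\pi/m$, so that the complex combinations $w := u+\ii v$ satisfy $w \circ \rot^\CC_{2\pi/m} = e^{\pm 2\pi\ii/m}\,w$ and lie in $W_{\pm 1}$; placing each of $u = \Re w$ and $v = \Im w$ in $V_{1+}$ or $V_{1-}$ then follows from the $\refl_{\Sigma_0}$-parities already determined via the splitting $W_{1\pm} = V_{1\pm}\oplus_\R \ii\,V_{1\mp}$ from \ref{Lderot}\ref{Lderot4+}.

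The only obstacle is bookkeeping: in each pair computation one must keep careful track of the sign in the planar rotation (which differs between the two Jacobi-field pairs because $\vecK_{\cC_\phi^{\phi'}}$ depends anti-symmetrically on the ordering $(\phi,\phi')$) and of which function in the pair is even versus odd under $\refl_{\Sigma_0}$, so that the real and imaginary parts of $w$ are correctly identified with the $V_{1+}$ and $V_{1-}$ components.
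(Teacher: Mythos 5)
Your proposal is correct and is precisely the ``straightforward verification'' that the paper's one-line proof leaves to the reader. The key structural point you rely on --- the equivariance $\nu\circ g = g_*\nu$ for every $g\in\GrpSigmam$ --- is sound: $\refl_{\Sigma^0}$ swaps the two graphs while reversing the transverse direction, and the remaining generators preserve each graph and the ``away from $\Sigma^0$'' direction, so the sign is $+$ in every case. From there the three reflection parities follow mechanically from \ref{Ksymm} for the Killing-field functions $J_\vecK = \langle\vecK,\nu\rangle$ and from the coordinate description of $\refl_{\Sigma^0},\refl_{\Sigma^{\pi/2}},\refl_{\Sigma_0}$ (which negate $x^4,x^3,x^2$ respectively) for the normal components $\nu\cdot\pp$, while the frequency $\mu$ is read off from the $\rot^\CC_{2\pi/m}$-transformation of the complex combinations using \ref{E:Kcom}, exactly as you describe. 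Since $\refl_{\Sigma_{\pi/m}} = \rot^\CC_{2\pi/m}\circ\refl_{\Sigma_0}$, checking $\refl_{\Sigma_0}$-parity together with the rotation frequency indeed suffices to determine the $V^{\pm\pm}_{\mi\pm}$ summand, so all twelve memberships are verified.
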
 

\begin{proof} 
It is straightforward to verify all these statements.  
\end{proof}

Recall that following \cite[Definition 1.1]{gLD} embedded doublings $\Mbreve$ of $\Sigma^0$ are smooth surfaces which are unions of the graphs in $\Sph^3$ of $\pm\ubreve:\Sigmabreve\to\R$ 
for some $ \ubreve\in C^0(\Sigmabreve)\cap C^\infty(\,\inter(\Sigmabreve)\,)$, 
where $\Sigmabreve\subset\Sigma^0$ with smooth boundary $\partial\Sigmabreve$. 
In this article we only consider doublings respecting the reflectional symmetry $\refl_{\Sigma_0}$.   
This implies that $\ubreve=0$ on $\partial\Sigmabreve$ and embeddedness implies $\ubreve>0$ on $ \inter(\Sigmabreve)   $.  
A straightforward explicit calculation shows that the unit normal $\nu:\Sigmabreve\to \Sph^3\subset\R^4$ to the graph of $\ubreve$ 
satisfies the following (recall \ref{D:x1} and \ref{D:xx}). 

\begin{equation} 
\label{Enu} 
\begin{gathered} 
\nu\cdot \pp^0=\frac{-\cos{\xx}\sin{\xx}\cos{\ubreve}\sin{\ubreve}-\cos^2{\xx}\,\ubreve_\xx}{\sqrt{\cos^2{\xx}(\cos^2{\ubreve}+\ubreve^2_\xx)+\ubreve^2_\yy}} \in V^{+-}_{0+},  
\\     
\nu\cdot \pp^{\pi/2}=\frac{\cos{\xx}\cos^2{\ubreve}}{\sqrt{\cos^2{\xx}(\cos^2{\ubreve}+\ubreve^2_\xx)+\ubreve^2_\yy}} \in V^{-+}_{0+}.   
\end{gathered} 
\end{equation} 

\begin{multline} 
\label{Enucom} 
\nu\cdot \pp_0 + \ii \, \nu\cdot \pp_{\pi/2} \\ 
= \frac{-\cos^2{\xx}\cos{\ubreve}\sin{\ubreve}+\cos{\xx}\sin{\xx} \,\ubreve_\xx - \ii\,\ubreve_\yy}{\sqrt{\cos^2{\xx}(\cos^2{\ubreve}+\ubreve^2_\xx)+\ubreve^2_\yy}} \, e^{\yy\ii} \, \in \, W^{++}_{1+}, 
\end{multline} 

Combining with \eqref{E:K} and \eqref{E:Kcom} we conclude that on the graph of $\ubreve$ we have 

\begin{equation} 
\label{EcCJ} 
\begin{aligned}
J_{\CCperp} &=J^\CC =\frac{-\cos{\xx}\cos{\ubreve} \,\, \ubreve_\yy}{\sqrt{\cos^2{\xx}(\cos^2{\ubreve}+\ubreve^2_\xx)+\ubreve^2_\yy}} \, \in \, V^{++}_{0-}, 
\\
J_\CC &=J^{\CCperp} =\frac{\cos{\xx}\sin{\xx}\cos{\ubreve}+\cos^2{\xx}\sin{\ubreve} \,\, \ubreve_\xx}{\sqrt{\cos^2{\xx}(\cos^2{\ubreve}+\ubreve^2_\xx)+\ubreve^2_\yy}} 
\, \in \, V^{--}_{0+}, 
\\
\end{aligned}
\end{equation} 

\begin{multline} 
\label{EJcom1} 
J_{\cC^{\pi/2}_{\pi/2}} + \ii \, J_{\cC^{\pi/2}_{0}} = 
J^{\cC^{0}_{0}} + \ii \, J^{\cC^{0}_{\pi/2}} 
\\ 
= \,  
\frac{-\cos{\xx}\cos{\ubreve} \,\, \ubreve_\xx + \ii \, \sin{\xx}\cos{\ubreve} \,\, \ubreve_\yy}{\sqrt{\cos^2{\xx}(\cos^2{\ubreve}+\ubreve^2_\xx)+\ubreve^2_\yy}} \, e^{\yy\ii}  
\, \in \, W^{+-}_{1+}, 
\end{multline} 
\begin{multline} 
\label{EJcom2} 
J_{\cC^{0}_{\pi/2}} - \ii \, J_{\cC^{0}_{0}} = J^{\cC^{\pi/2}_{0}} - \ii \, J^{\cC^{\pi/2}_{\pi/2}} = 
\\
= \,  
\frac{\cos^2{\xx}\cos{\ubreve}-\cos{\xx}\sin{\xx}\sin{\ubreve} \,\, \ubreve_\xx + \ii \, \sin{\ubreve} \,\, \ubreve_\yy}{\sqrt{\cos^2{\xx}(\cos^2{\ubreve}+\ubreve^2_\xx)+\ubreve^2_\yy}} \, e^{\yy\ii} 
\, \in \, W^{-+}_{1+}. 
\end{multline}

\bigbreak  
\section{The equator-poles minimal doublings $\xibreveomo$}  
\label{S:xi1m1}

\subsection*{Basic structure of $\xibreveomo$}  
$\vspace{.062cm}$ 
\nopagebreak

In this section we restrict our attention to the doublings $\xibreveomo$  
discussed already in the introduction. 
We first define the configuration $\taubold : L \rightarrow \R_+$ of a related LD solution as follows. 
The singular set $L$ is as in 
\ref{ELmer} and \ref{EL} with 
\begin{equation} 
\label{ELp} 
\begin{gathered} 
\Lpar= \CC\cup\{\pp^0,-\pp^0\} 
\qquad \text{and} \qquad  
L=L[m]:= 
\Leq\cup \Lpol  , 
\\
\text{ where } \qquad 
\Leq := \{\ptti: i\in 2\Z \} 
\qquad \text{and} \qquad  
\Lpol   := 
\{\pp^0,-\pp^0\}.  
\end{gathered} 
\end{equation} 
The values of $\taubold$ are 
$\tau_0$ on $L_0$ and $\tau_2$ on $L_2$ where 
\begin{equation}
\label{Etau02}
\begin{gathered} 
\tau_0 = \tau_0[m]
:= m^{-3/4} \, e^{\zeta_0} \, e^{-{\sqrt{m/2}}},
\\     
\tau_2 = \taubreve =  \tau_2[m]
:= \tau_0\,\left(\zeta_2-\frac14\log m+\sqrt{\frac{m}2\,}\,\,\right), 
\end{gathered} 
\end{equation}
where $\tau_0,\tau_2, \zeta_0,\zeta_2$ are constants which depend only on $m$ 
with $|\zeta_0|$ and $|\zeta_2|$ uniformly bounded independently of $m$ 
\cite[(6.23)]{SdI}. 
The periodic harmonic Green's function \cite{Ammari} defined by         
\begin{equation} 
\label{Gperiodic} 
\begin{gathered} 
G_\infty : \R^2 \, \, {{ \mathlarger{\mathlarger{\setminus}}  }}  \, \big\{(2k\pi , 0):k\in \Z \big\} \rightarrow \R, 
\\       
G_\infty (\thetatilde, \ssshat) :=  \frac{1}{2} \log \left( \sin^2 \frac{\thetatilde}{2} + \sinh^2 \frac{\ssshat}{2}\right), 
\end{gathered} 
\end{equation} 
is also relevant \cite[Lemma 9.26]{gLD}.  

\begin{theorem}[Existence and structure of $\xibreveomo$ \cite{SdI}] 
\label{Tep} 
\quad 
For each large 
\linebreak 
enough $m\in\N$ there exists a smooth embedded closed and connected minimal surface $\Mbreve = \xibreveomo$ of genus $m+1$,  
which is a side-symmetric minimal surface doubling over $\Sigma=\Sz$ in $\Sph^3$ of $(\taubreve, \alpha, \GrpSigma )$-gluing type  
and configuration $\taubold : L \rightarrow \R_+$ as in \ref{dgtSS},  
where  
$\alpha \in (0,  1/3 )$ is a small constant independent of $m$, 
$\GrpSigma = \GrpSigmam$ is as in \ref{Egroup},     
$\taubreve$ and $\taubold$ are as in \ref{ELp} and \ref{Etau02}, 
and furthermore the following hold,   
\quad 
where $C$ denotes absolute constants independent of $m$, 
\quad 
$\gamma := 3/2$, 
\quad 
$\zeta_0,\zeta_2,\mubreve_0,\mubreve_2$ are small constants depending only on $m$,  
\quad 
\begin{enumerate}[label=\emph{(\roman*)}]
\item 
\label{Tep1}   
Recall from \ref{Ddoubling} that 
$\xibreveomo$ 
is the union of the graphs over $\Sigmabreve$ of 
$\pm\ubreve \in C^0(\Sigmabreve) \cap C^\infty( \Sigmabreve \setminus\partial \Sigmabreve)$.   
\item 
\label{Tep0}   
For $j=0,2$ $\quad |\zeta_j|<C \quad$  and $\quad |\mubreve_j| < C \tau_j / \sqrt{m}. \quad $ 
\item 
\label{Tep10}   
On $\OL:= \Sz\setminus \disjun_{j=0,2} D_{L_j  }( \, \tau_j^\alpha/2 \, )$
we have  
$ \| \, \ubreve \, : \, C^{3} ( \OL , g) \, \| \, \le \, \tau_0^{1-4\alpha} \, \le \, \tau_0^{8/9}$.  
\item 
\label{Tep6}   
On $\Opar:=\Sz\setminus D^{\Sz,g}_{\Lpar} ( 9/m )$ we have 
$\ubreve= \phi + \phicheck + \ubreve_\osc$  
where 
$\phi, \phicheck, \ubreve_\osc\in C^\infty(\Opar )$ 
satisfy \ref{Tep7}-\ref{Tep9} below (recall \ref{Esssxx}).   
\item 
\label{Tep7}   
$\phi \, = \, \frac{m}2\, (\tau_0+\mubreve_0) \, \sin|\xx| \, - \, (\tau_2+\mubreve_2) \, \phie \     $ 
(recall \ref{D:JS}), and $ \     \Lcal_{\Sz} \, \phi=0$.  
\item 
\label{Tep8}   
$\phicheck$ is constant on parallel circles of $\Sz$ and satisfies the estimate (recall \ref{Ncyl}) 
\qquad 
$\| \, \phicheck \, : \, C^3(\Opar,\chiK) \, \|  \le \tau_2^{1+\alpha/4}$.   
\item 
\label{Tep9}   
$\|\, \ubreve_\osc \,  : \, C^{3}( \Opar , 1 , m^2 \chiK , e^{- m \ssst /2 } ) \, \| \, \le  \, C \, \tau_0$ 
(recall \ref{D:norm-g}, \ref{Ecyl}, \ref{Ncyl} and \ref{Dcyl}).   
\item 
\label{Tep11}   
$\forall p\in L_0$ \quad we have for some absolute constant $a_\infty$ that (recall \ref{Gperiodic} and \ref{Nhom}) 
\quad $\ubreve \circ \exp_p^{\Sigma} \circ Y_p  \circ \homot_{1/m} \, \to \, G_\infty +a_\infty $ \quad in $C^k$ norm on any compact subset of \quad 
$ \R^2 \, \, {{ \mathlarger{\mathlarger{\setminus}}  }}  \, \big\{(2k\pi , 0):k\in \Z \big\} $, 
\quad where \quad          $Y_p\{y=z=0\} = T_p\CC$ \quad by construction.  
\end{enumerate}  
\end{theorem}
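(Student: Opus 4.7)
My plan is to invoke the gluing construction of \cite{SdI} and verify that its output satisfies all the structural properties claimed. The existence of $\xibreveomo$ and its being of $(\taubreve,\alpha,\GrpSigmam)$-gluing type as in \ref{dgtSS} is the main theorem there; what remains is to extract from that construction the explicit decomposition \ref{Tep6}--\ref{Tep9} and the asymptotic statements \ref{Tep10}--\ref{Tep11}. The LD solution $\varphi$ underlying the construction has singular set $L=\Leq\cup\Lpol$ with configuration $\taubold$, is $\GrpSigmam$-invariant, and depends on the small parameters $\zeta_0,\zeta_2,\mubreve_0,\mubreve_2$ that must be tuned so that the flux-matching (unbalancing) conditions at every bridge are satisfied; solving this finite-dimensional system pins them down as in \ref{Etau02} and \ref{Tep0}.

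The first task is to identify the rotationally invariant part of $\varphi$ on the region $\Opar$ away from $\CC$ and $\Lpol$. Since $\varphi$ is invariant under $\rot^{\CCperp}_{2\pi/m}$, its Fourier series in $\yy$ contains only harmonics of frequency in $m\Z$, so every nonconstant-in-$\yy$ mode decays like $e^{-m\sss/2}$ away from $\CC$; this gives $\ubreve_\osc$ of the size required in \ref{Tep9}. The remaining rotationally invariant part is, on each component of $\Sz\setminus(\CC\cup\Lpol)$, a solution of $\Lcal_\Sz=0$ and hence by Lemma \ref{Lphie} a linear combination of $\phio$ and $\phie$. Matching the logarithmic singularity at $\pp^0$ and $-\pp^0$ fixes the $\phie$ coefficient as $-(\tau_2+\mubreve_2)$, while matching the averaged line-density of the $m$ equatorial singularities (each of strength $\tau_0+\mubreve_0$) against the jump of $\phio_\xx$ across $\CC$ fixes its coefficient as $\tfrac{m}{2}(\tau_0+\mubreve_0)$, giving \ref{Tep7}; the smooth rotationally invariant remainder $\phicheck$ of size $\tau_2^{1+\alpha/4}$ in \ref{Tep8} absorbs the higher-order corrections not already carried by this leading piece.

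Next I would verify \ref{Tep10}: on the graphical exterior $\OL$ the gluing-type estimate \ref{dgtSS}\ref{dgtSS1}\ref{dgtSS1a} yields $\|\ubreve-\varphi\|_{C^3}\le\taubreve^{1+\alpha/4}$, while the explicit form of $\varphi$ combined with the bounds $\tau_j\le Ce^{-\sqrt{m/2}}$ gives $\|\varphi\|_{C^3(\OL,g)}\le\tau_0^{1-4\alpha}$. For \ref{Tep11} I would pull back $\ubreve$ near any $p\in\Leq$ by $\exp_p^\Sigma\circ Y_p\circ\homot_{1/m}$: the array of equatorial singularities spaced $\pi/m$ apart on $\CC$ rescales to the $2\pi$-periodic lattice $\{(2k\pi,0):k\in\Z\}$. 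At this rescaled scale the spherical geometry becomes Euclidean to leading order, the $\phie$-contribution from the poles becomes a smooth function, and the local behavior of $\varphi$ converges to $G_\infty$ of \ref{Gperiodic}; the additive constant $a_\infty$ is determined by the matching of $G_\infty$ against the global $\phio,\phie$ asymptotics through $\zeta_0,\zeta_2$, and standard elliptic regularity for the rescaled minimal surface equation upgrades the convergence to $C^k$ on compacta.

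The main obstacle is the simultaneous bookkeeping across three distinct scales: the catenoidal scale $\tau_p$ at each bridge, the intermediate scale $1/m$ on which the $m$ equatorial bridges interact through $G_\infty$, and the global scale where the single-frequency Jacobi fields $\phio,\phie$ dominate. Matching the expansions consistently across these scales is exactly what forces the precise form \ref{Etau02} of $\taubold$; in particular the exponential rate $\sqrt{m/2}$ in $\tau_0$ arises from balancing the lowest oscillatory mode (which lives at frequency $m$) against the leading $\phio$-term. Most of this matched asymptotic analysis is carried out in \cite{SdI}; the substantive work here is organizational, namely translating those conclusions into the gluing-type framework \ref{dgtSS} and extracting the scale-sensitive decomposition $\ubreve=\phi+\phicheck+\ubreve_\osc$ in the norms required for the spectral analysis of Section \ref{S:eq-p}.
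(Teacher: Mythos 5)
Your proposal and the paper take essentially the same approach: the theorem is imported from \cite{SdI}, and the paper's entire proof is the citation to \cite[5.23, 6.24, 6.5, 6.6]{SdI}, with the understanding that those proofs and statements can be re-read in the gluing-type framework of \ref{dgtSS}. Your narrative correctly identifies this and sketches a plausible unpacking of the translation (the LD solution structure, the Fourier decomposition away from $\CC$, and the rescaling to $G_\infty$), so while it fills in more explanatory detail than the one-line citation in the paper, it is not a different route.
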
 

\begin{proof} 
Proofs of \cite[5.23, 6.24]{SdI} and statements of \cite[6.5, 6.6]{SdI}.  
\end{proof} 

Each catenoidal bridge is centered (by the symmetries) at a point $p\in L$; 
we will call the bridge \emph{equatorial} when $p\in \CC$, and \emph{polar} when $p$ is either of $\pm\pp^0$.  
Note that each $p\in L_0= L\cap \CC$ is contained in exactly three mutually orthogonal great spheres in $\mathbf{\Sigma}$; 
these great spheres subdivide the bridge into eight congruent pieces. 
If $p\in\{\pm\pp^0\}$, then $p$ is contained in exactly $m+1$ great spheres in $\mathbf{\Sigma}$; 
these great spheres subdivide the bridge into $4m$ congruent pieces.

$\vspace{.072cm}$ 

\paragraph{\bf{Comparison of the Lawson surfaces {${{\ML}}$}  with  the minimal doublings $\xibreveomo$ }} 
$\phantom{ab}$ 
$\vspace{.072cm}$


For visualization purposes we briefly compare now the equator-poles doubling $\xibreveomo$ with the Lawson surface $\ML$, 
where both are symmetric under  $\GrpSigma = \GrpSigmam$ (recall \ref{Egroup} and \ref{ECQ}).   
We recall first some notation    related to the Lawson construction. 
$\forall i,j \in \frac{1}{2}\Z$ we define a tetrahedron 
\begin{equation}
\label{Omij} 
\Om_i^j := \overline{\, \pp_{(i-1/2)\pi/m} \, \pp_{(i+1/2)\pi/m} \, \pp^{(j-1/2)\pi/2} \, \pp^{(j+1/2)\pi/2} \, },  
\end{equation} 
and we then consider the tesselations of $\Sph^3$ \cite[Definition 2.14]{KW:Luniqueness}  
\begin{equation}
\label{Etess} 
\begin{aligned} 
\Ombold:=&\{\Om_i^j\}_{i,j\in\Z}, \qquad & \qquad \Omboldu:=&\{\Om_i^j\}_{i,j\in\frac12+\Z}, 
\\ 
\Ombold_e:=&\{\Om_i^j\}_{i+j\in2\Z,i\in\Z},\qquad & \qquad \Ombold_o:=&\{\Om_i^j\}_{i+j\in2\Z+1,i\in\Z}. 
\end{aligned} 
\end{equation} 
Each of the tesselations $\Ombold$ and $\Omboldu$ consists of $8m$ tetrahedra.  
$\GrpSigma$ acts simply transitively on $\Omboldu$.  
Each of $\Ombold_e$ and $\Ombold_o$ has $4m$ tetrahedra and $\Ombold = \Ombold_e \sqcup \Ombold_o $.  

Recall that 
\ $\ML\subset \bigcup\Ombold_e$ \ with \ $\ML\cap\Om$ \ a minimal disc whose boundary is a quadrilateral consisting of four edges of $\Om$ (for each $\Om\in \Ombold_e$),  
while \ $\ML\cap\Omu$ \ is a free boundary disc in $\Omu$ (for each $\Omu\in \Omboldu$) with boundary a quadrilateral 
whose sides are geodesic segments in the intrinsic geometry of $\ML$.  
On the other hand \ $\xibreveomo \cap\Omu$ \ is a free boundary disc in $\Omu$ (for each $\Omu\in \Omboldu$) 
with boundary a pentagon 
whose sides are geodesic segments in the intrinsic geometry of $\ML$.  
Two of the sides of the pentagon lie on $\Sigma^0$ with the rest on the other three faces of $\Omu$, one on each face. 
\ $\xibreveomo\cap\Omu$ \ intersects exactly two catenoidal bridges of $\xibreveomo$, one equatorial and one polar. 
It contains one of eight congruent pieces of the equatorial bridge and one of the $4m$ pieces of the polar bridge.  

For a more global picture we consider the quarter of the surfaces contained in \   $\Sph^{3++}$:       
\  $\ML^{++}$    \  (recall \ref{D:Om}) is topologically a disc which is graphical along \  $K_\CC=K^\CCperp$  \   
\cite[Lemmas 4.11 and 4.14]{LindexI},     
and     
\  $\xibreveomo^{++}$ \  is an annulus which is graphical over its image under $\PiSph$ (recall \ref{EPiSph}). 
The boundary \  $\partial \ML^{++}$    \  
is            a necklace of $2m$ congruent arcs alternatingly lying on $\Sigma^0$ and $\Sigma^{\pi/2}$. 
The boundary \  $\partial\xibreveomo^{++}$ \  has two connected components one of which is the waist of $\Kbreve_{\pp^0}$. 
The other one is a necklace of $2m$ arcs alternatingly lying on $\Sigma^0$ and $\Sigma^{\pi/2}$ 
which is similar to \ $\ML^{++}$, \     
the difference being however 
that the arcs on $\Sigma^0$ 
are much shorter than the ones on $\Sigma^{\pi/2}$. 
Note also that each of the arcs on $\Sigma^0$ is the half on one side of $\CC$ 
of the waist in $\Sigma^0$   of an equatorial catenoidal bridge.

\subsection*{Geometric features of $\xibreveomo$}  
\nopagebreak

\begin{lemma}[Maximum of $\phi$] 
\label{Lmaxphi} 
The maximum of $\phi$ is attained on 
$\CC^\parallel_{\xxmax}$ (recall \ref{N:xx}) 
where 
\quad
$ 
\xxmax \, = \, \frac\pi2 - (m/2)^{-1/4} \, + \,  O( m^{-1/2} \log m  \, ) . 
$ 
\quad
Moreover $\phi$ is increasing in $\xx$ for $\xx<\xxmax$ and decreasing for $\xx>\xxmax$.  
\end{lemma}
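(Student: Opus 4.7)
\smallbreak

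\textbf{Proof plan for Lemma \ref{Lmaxphi}.} The plan is to reduce the lemma to an asymptotic analysis of a single-variable critical equation. First, I would observe that $\phi$ is even in $\xx$: by \ref{Esssxx}, $\sss$ is odd in $\xx$, while by \ref{Lphie}\ref{Lphie1}, $\phie$ is even in $\sss$, so $\phie \circ \sss$ is even in $\xx$; combined with the even term $\sin|\xx|$ this gives evenness of $\phi$. So it suffices to work on $\xx \in [0, \pi/2)$. Using $\frac{d\sss}{d\xx} = \sec\xx$ from \ref{Esssxx} and $\phie'(\sss) = \tanh\sss + \sss\sech^2\sss = \sin\xx + \sss\cos^2\xx$, a direct computation yields
\begin{equation*}
\phi'(\xx) \, = \, \cos\xx \, \left[ \tfrac{m}{2}(\tau_0+\mubreve_0) - (\tau_2+\mubreve_2) \, F(\xx) \right],
\qquad F(\xx) := \tfrac{\sin\xx}{\cos^2\xx} + \sss(\xx).
\end{equation*}

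The second step is to establish uniqueness and monotonicity. Both $\sin\xx/\cos^2\xx$ and $\sss(\xx)$ are strictly increasing on $[0,\pi/2)$ (the former by inspection, the latter by $\frac{d\sss}{d\xx} = \sec\xx > 0$), with $F(0) = 0$ and $F \to \infty$ as $\xx \to \pi/2$. Hence $F$ is a strict bijection of $[0, \pi/2)$ onto $[0,\infty)$, so the bracketed expression has a unique zero $\xxmax \in (0, \pi/2)$ at which $\phi'$ changes sign from positive to negative. This gives both the location-of-maximum claim and the monotonicity claim.

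The third step is the asymptotic evaluation. At $\xxmax$ the critical equation reads $F(\xxmax) = \tfrac{m/2 \,(\tau_0 + \mubreve_0)}{\tau_2 + \mubreve_2}$. Using \ref{Etau02} one has $\tau_2/\tau_0 = \sqrt{m/2} - \tfrac14 \log m + \zeta_2$; combined with the estimate $|\mubreve_j| < C\tau_j/\sqrt{m}$ from \ref{Tep}\ref{Tep0}, this gives $\tfrac{m/2 (\tau_0+\mubreve_0)}{\tau_2+\mubreve_2} = \sqrt{m/2} + O(\log m)$. Writing $\xxmax = \pi/2 - \delta$ with $\delta$ small and Taylor expanding around $\delta = 0$ yields
\begin{equation*}
F(\pi/2 - \delta) \, = \, \tfrac{1}{\delta^2} + \log\tfrac{2}{\delta} + O(1) \qquad \text{as } \delta \to 0,
\end{equation*}
using $\sin(\pi/2-\delta) = \cos\delta$, $\cos(\pi/2-\delta) = \sin\delta$, and $\sss = \log\cot(\delta/2)$.

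Finally, solving $\tfrac{1}{\delta^2} + \log(2/\delta) = \sqrt{m/2} + O(\log m)$ perturbatively around the leading solution $\delta_0 := (m/2)^{-1/4}$, by setting $\delta = \delta_0 + \eta$ and expanding $1/\delta^2 = \sqrt{m/2} - 2\eta (m/2)^{3/4} + O(\eta^2 m)$, reduces matters to $2\eta (m/2)^{3/4} = O(\log m)$, hence $\eta = O(m^{-3/4}\log m) \subset O(m^{-1/2}\log m)$. This gives the claimed $\xxmax = \pi/2 - (m/2)^{-1/4} + O(m^{-1/2}\log m)$. The main (though quite modest) obstacle is keeping the book-keeping of error terms from $\mubreve_j$, $\zeta_j$, and the Taylor remainders consistent; all the core monotonicity facts follow transparently from \ref{Lphie} and \ref{Esssxx}.
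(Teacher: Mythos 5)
Your proof is correct and is exactly the "direct calculation from \ref{Tep}\ref{Tep7} and \ref{D:JS}" that the paper's one-line proof refers to, carried out explicitly: the factorization $\phi'(\xx) = \cos\xx\,[\tfrac{m}{2}(\tau_0+\mubreve_0) - (\tau_2+\mubreve_2)F(\xx)]$ with $F(\xx) = \sin\xx/\cos^2\xx + \sss(\xx)$ strictly increasing, the conversion to $F(\xxmax) = \tfrac{m/2(\tau_0+\mubreve_0)}{\tau_2+\mubreve_2}$, and the asymptotic expansion near $\xx = \pi/2$ all check out. Your error bound $\eta = O(m^{-3/4}\log m)$ is in fact slightly sharper than the stated $O(m^{-1/2}\log m)$, which is consistent.
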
 

\begin{proof} 
This follows by direct calculation from \ref{Tep}\ref{Tep7} and \ref{D:JS}. 
\end{proof} 

\begin{corollary}[Approximate nodal parallel circles]  
\label{CJnodal} 
Let  \ $\xx_\pm := \xxmax \pm \tau $   \   (recall \ref{Lmaxphi}) and   for future reference \ $\sss_\pm := \sss(\xx_\pm) $;            
\quad $J^{ \cC_0^0 } \in V^{+-}_{1+}$ \     (and similarly \quad $J^{ \cC_{\pi/2}^0 } \in V^{+-}_{1-} $)  \quad  
has a nodal (topological) circle \quad   $\Cnodal \subset \xiomo^{++}$  \quad  
such that \quad      $\Pi_\Sigma(\Cnodal)  \subset  \Sigma^0_{\xx\in (\xx_-,\xx_+)}$  \            
is a small graphical perturbation of \        $\CC^\parallel_{\xxmax}$  \        
in \    $\Sigma^0$.  
\end{corollary}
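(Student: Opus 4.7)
The plan is to derive an explicit formula for $J^{\cC^0_0}$ on the graphical part of $\xibreveomo^{++}$ from \eqref{EJcom1}, then locate a nodal topological circle in the zero set of the resulting two-variable function. Taking real parts in \eqref{EJcom1} gives
\[
J^{\cC^0_0} \;=\; \frac{-\cos\ubreve\,F(\xx,\yy)}{\mathrm{denom}}, \qquad F := \cos\xx\,\ubreve_\xx\cos\yy + \sin\xx\,\ubreve_\yy\sin\yy,
\]
and since $\cos\ubreve>0$ and $\mathrm{denom}>0$ on the graph, the nodal set of $J^{\cC^0_0}$ in the graphical region coincides with $\{F=0\}$. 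The goal becomes exhibiting a closed curve inside $\{F=0\}$ whose projection to $\Sigma^0$ lies in the thin annulus $A := \{\xx\in(\xx_-,\xx_+)\}$ and forms a small graphical perturbation of $\CC^\parallel_{\xxmax}$.

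Using Theorem~\ref{Tep} to decompose $\ubreve = \phi+\phicheck+\ubreve_\osc$ on $\Opar$, one has $\ubreve_\yy = \ubreve_{\osc,\yy}$ exponentially small in $m$ by \ref{Tep}\ref{Tep9} (since $\phi$ and $\phicheck$ are $\yy$-independent), and similarly $|\ubreve_{\osc,\xx}|$ exponentially small, $|\phicheck_\xx|\le C\tau_2^{1+\alpha/4}$. Combining \ref{Tep}\ref{Tep7} with Lemmas \ref{Lmaxphi} and \ref{Lphie}, differentiating the critical-point relation $\phi_\xx(\xxmax)=0$ together with $\cos^2\xxmax\sim(m/2)^{-1/2}$ gives $\phi_{\xx\xx}(\xxmax)\sim -m\tau_0$ (strictly negative), hence $\phi_\xx(\xx_\pm)\sim \mp\,m\tau_0\tau$. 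Thus on each boundary circle $\{\xx=\xx_\pm\}$ the leading term $\cos\xx\,\phi_\xx\cos\yy$ of $F$ has sign $\mp\cos\yy$ and strictly dominates every other contribution outside exponentially small $\yy$-windows around $\pm\pi/2$. In the complement of these windows, $\partial_\xx F|_{\xx=\xxmax}\sim \cos\xxmax\,\phi_{\xx\xx}(\xxmax)\cos\yy\neq 0$, and the implicit function theorem produces a smooth graph $\xx=\xx_0(\yy)$ of zeros of $F$ inside $A$ satisfying
\[
|\xx_0(\yy)-\xxmax| \;=\; O\!\left(\frac{|\ubreve_{\osc,\yy}|}{m\tau_0\,|\cos\yy|}\right) \;\ll\; \tau,
\]
even in $\yy$ by the $\refl_{\Sigma_0}$-symmetry inherited from $V^{+-}_{1+}$.

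The delicate step is matching $\xx_0(\yy)$ across the exponentially thin windows around $\yy=\pm\pi/2$. With $u:=\xx-\xxmax$ and $v:=\yy\mp\pi/2$, the local expansion reads $F\approx A\,uv+B_\pm$, where $A:=-\cos\xxmax\,\phi_{\xx\xx}(\xxmax)>0$ is of order $(m/2)^{-1/4}m\tau_0$ and $B_\pm:=\sin\xxmax\,\ubreve_{\osc,\yy}(\xxmax,\pm\pi/2)$ is exponentially small, with opposite signs at the two crossings by the antisymmetry of $\ubreve_\yy$ under $\yy\to -\yy$. The local hyperbolic nodal branches $uv=-B_\pm/A$ glue continuously to the bulk graph $\xx_0(\yy)$ on both sides of $\pm\pi/2$, and since $|B_\pm|/A\ll\tau^2$ they remain inside $A$ throughout; concatenating, and exploiting the $\refl_{\Sigma_0}$- and $\refl_{\Sigma^{\pi/2}}$-symmetries of the nodal set of $J^{\cC^0_0}$ recorded in \ref{LJnu}, yields a single simple closed curve in $\{F=0\}\cap A$ winding once around $\yy$, which pulled back via the graph of $\ubreve$ is the desired $\Cnodal\subset\xibreveomo^{++}$. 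The main obstacle is precisely this hyperbolic matching: one must verify that the two local resolutions near $\pm\pi/2$ connect the bulk arcs into a single closed loop (as opposed to two disjoint arcs exiting the annulus through $\{\xx=\xx_\pm\}$) and that the $\xx$-deviation stays below $\tau$ throughout. Both conclusions rest on the quantitative exponential gap between $|\ubreve_{\osc,\yy}|$ at $\xxmax$ and $m\tau_0\tau^2$, which is exactly the content of \ref{Tep}\ref{Tep9}.
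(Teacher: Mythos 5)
Your overall route coincides with the one indicated by the paper: take the real part of \eqref{EJcom1}, factor out the positive $\cos\ubreve/\mathrm{denom}$, reduce to the zero set of $F=\cos\xx\,\ubreve_\xx\cos\yy+\sin\xx\,\ubreve_\yy\sin\yy$, feed in the decomposition $\ubreve=\phi+\phicheck+\ubreve_\osc$ from \ref{Tep}, use \ref{Lmaxphi}, and apply the implicit function theorem away from $\yy=\pm\pi/2$. Two points, however, deserve attention.

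First, the quantitative IFT bound $|\xx_0(\yy)-\xxmax|=O\bigl(|\ubreve_{\osc,\yy}|/(m\tau_0|\cos\yy|)\bigr)$ omits the $\yy$-independent shift produced by $\phicheck_\xx$: since $\phi_\xx(\xxmax)=0$ but $\phicheck_\xx(\xxmax)\ne0$, the implicit function theorem applied to $\cos\xx(\phi_\xx+\phicheck_\xx+\ubreve_{\osc,\xx})\cos\yy+\ldots=0$ centers the curve at the critical point of $\phi+\phicheck$, not of $\phi$, and the corresponding shift $\sim\sec\xxmax\,\tau_2^{1+\alpha/4}/|\phi_{\xx\xx}(\xxmax)|$ dominates the $\ubreve_\osc$ contribution. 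This is still small but is the leading correction, and should appear in any statement of how close $\Cnodal$ is to $\CC^\parallel_{\xxmax}$.

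The more serious issue is the hyperbolic matching across $\yy=\pm\pi/2$, which you correctly single out as the delicate step but whose resolution is not right. The two branches of a hyperbola $uv=-\delta$ in the $(u,v)$-plane do not join the bulk graph from $v<0$ to the bulk graph from $v>0$; each branch is asymptotic to the parallel line $\{u=0\}$ on one end and to the meridian line $\{v=0\}$ on the other. Concretely, with $F\approx Auv+B_+$ and $B_+>0$, the branch connecting to $\xx_0(\yy)$ from $\yy<\pi/2$ escapes toward the pole along the meridian, while the branch connecting from $\yy>\pi/2$ escapes toward the equator along the other meridian half. So a sign-generic perturbation does not close the graph $\xx_0(\yy)$ into a loop inside the band; it routes it into the meridian arcs, which leave $\Sigma^0_{\xx\in(\xx_-,\xx_+)}$ through $\{\xx=\xx_\pm\}$. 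The parity of $m$ is decisive here and your argument does not use it: when $m$ is even, $\refl_{\Sigma_{\pi/2}}\in\GrpSigmam$ (it maps $\yy\mapsto\pi-\yy$), and since $J^{\cC^0_0}\in V_1$ is then odd under $\refl_{\Sigma_{\pi/2}}$, the meridians $\yy=\pm\pi/2$ lie exactly in the nodal set; the constant term $B_\pm$ vanishes identically, the local model degenerates to $F\approx v(Au+c_2)$, and the parallel graph genuinely closes up as a topological circle transversally crossed by the two meridians. When $m$ is odd, $\refl_{\Sigma_{\pi/2}}\notin\GrpSigmam$, $B_\pm$ is generically nonzero, and a supplementary argument is required. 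The statement that $\refl_{\Sigma_0}$- and $\refl_{\Sigma^{\pi/2}}$-symmetries from \ref{LJnu} ``yield a single simple closed curve'' is not a substitute for this: $\refl_{\Sigma^{\pi/2}}$ only forces vanishing on $\{\xx=0\}$ far from the band, and $\refl_{\Sigma_0}$ only gives evenness of $\xx_0$ in $\yy$, neither of which controls which way the two crossings resolve. Until the odd-$m$ case is addressed, the concatenation step remains a gap.
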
 

\begin{proof} 
This follows easily using the implicit function theorem, \ref{Lmaxphi}, the explicit expression from \ref{EJcom1}, and the information from \ref{Tep}. 
\end{proof} 

\begin{lemma}[The sign of $\nu \cdot \pp^0 $ ] 
\label{Lnu0} 
$\nu \cdot \pp^0 \in V^{+-}_{0+}$ changes sign on $\xiomo^{++}$. 
\end{lemma}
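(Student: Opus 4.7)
The plan is to exhibit two points $P_1,P_2\in\xiomo^{++}$ at which $\nu\cdot\pp^0$ takes opposite signs.

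First I will work on the equatorial catenoidal bridge $\Kbreve_{\pp_0}$. By the $\GrpSigma$-symmetry its axis in $T_{\pp_0}\Sph^3$ is the normal $\pp^{\pi/2}$ to $\Sigma^0$ at $\pp_0$, and the angular direction $\vartheta=0$ on the model catenoid $\K_{\pp_0}$ may be identified with $\pp^0\in T_{\pp_0}\Sigma^0$. Define $P_1:=\Xbreve_{\pp_0}(X_\K(1,0))$. By \ref{Ecatnu} the model-catenoid unit normal at $(\sss,\vartheta)=(1,0)$ is $\sech 1\cdot\pp^0+\tanh 1\cdot\pp^{\pi/2}$, which as a vector in $\R^4$ has third coordinate $\sech 1$. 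The $C^2$-closeness estimates \ref{dgtSS}\ref{dgtSS2d}--\ref{dgtSS2e}, combined with the smallness of $\tau_0$ for large $m$, yield $\nu(P_1)\cdot\pp^0\ge\tfrac12\sech 1>0$. The coordinates $x^3\approx\tau_0\cosh 1>0$ and $x^4\approx\tau_0>0$ verify $P_1\in\xiomo^{++}$.

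Next I take $P_2:=\Thetahat(\xxmax,0,\ubreve(\xxmax,0))$ with $\xxmax$ from \ref{Lmaxphi}, so that $P_2\in\xiomo^{++}$ (as $\xx>0,\zz>0$). At $\yy=0$ the $\refl_{\Sigma_0}$-symmetry forces $\ubreve_\yy(P_2)=0$, so formula \ref{Enu} simplifies to
\[
\nu(P_2)\cdot\pp^0 \, = \, \frac{-\sin\xxmax\cos\ubreve\sin\ubreve-\cos\xxmax\,\ubreve_\xx}{\sqrt{\cos^2\ubreve+\ubreve_\xx^2}}.
\]
Since $\xxmax=\pi/2-(m/2)^{-1/4}+O(m^{-1/2}\log m)$, the point $\xxmax$ is at $g$-distance $\sim m^{-1/4}\gg 9/m$ from $\pp^0$ and $\sim 1\gg 9/m$ from $\CC$, so $\xxmax\in\Opar$ and the decomposition $\ubreve=\phi+\phicheck+\ubreve_\osc$ of \ref{Tep}\ref{Tep6}--\ref{Tep9} applies. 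By \ref{Lmaxphi} $\phi_\xx(\xxmax)=0$. Converting the estimate of \ref{Tep}\ref{Tep8} from the $\chiK$-norm to the $g$-norm costs a factor $\sec\xxmax\sim(m/2)^{1/4}$, giving $|\phicheck_\xx(\xxmax)|\le\tau_2^{1+\alpha/5}$; the oscillatory correction $\ubreve_{\osc,\xx}$ is doubly exponentially small in $m$ by \ref{Tep}\ref{Tep9}. Hence $|\ubreve_\xx(P_2)|$ is negligible. Meanwhile, a direct computation using \ref{Tep}\ref{Tep7}, \ref{Etau02} and the estimates $\sin\xxmax\approx 1$, $\phie(\xxmax)\sim\tfrac14\log m$ yields
\[
\phi(\xxmax) \, \approx \, \tfrac{m}{2}\tau_0 - \tau_2\cdot\tfrac14\log m \, \sim \, \tau_2\sqrt{m/2} \, > \, 0,
\]
which dominates all error terms. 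Consequently the numerator of $\nu(P_2)\cdot\pp^0$ is dominated by $-\sin\xxmax\cdot\ubreve(P_2)\approx-\phi(\xxmax)<0$, so $\nu(P_2)\cdot\pp^0<0$.

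The two opposite signs prove the lemma. The main obstacle is the bookkeeping in the second step: one must be attentive to the fact that the decay estimates in \ref{Tep}\ref{Tep8}--\ref{Tep9} are stated in the $\chiK$-metric, which degenerates by a factor $\sim(m/2)^{1/4}$ at $\xxmax$ (since $\cos\xxmax\sim m^{-1/4}$), and verify that the leading order $\phi(\xxmax)\sim\tau_2\sqrt{m/2}$ still overwhelms every correction after these conversions. This is routine once one keeps careful track of the powers of $m$ and $\tau_2$ involved.
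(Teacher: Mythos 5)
Your choice of the point $P_1$ on the equatorial bridge does not produce the positive sign you need, so the argument fails to exhibit a sign change; both your $P_1$ and your $P_2$ in fact give $\nu\cdot\pp^0<0$.

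First, there is a minor misidentification: by \ref{Tep}\ref{Tep11} the convention for $Y_{\pp_0}$ is $Y_{\pp_0}\{y=z=0\}=T_{\pp_0}\CC=\Span(\pp_{\pi/2})$, so the angular direction $\vartheta=0$ maps into $T_{\pp_0}\CC$ (the $\pp_{\pi/2}$-direction), not into $\pp^0$. Taking $P_1=\Xbreve_{\pp_0}(X_\K(1,0))$ gives a point with $x^3\approx 0$, not $x^3\approx\tau_0\cosh 1$; you would need $\vartheta=\pi/2$ to move in the $\pp^0$-direction. More seriously, the formula \ref{Ecatnu} for $\nuK$ is not actually a unit normal to $\K[\tau]$ -- a quick check gives $\nuK\cdot\partial_\sss X_\K = 2\tau\tanh\sss\neq 0$ -- so it carries a sign error and is inconsistent with the orientation implicit in \ref{Enu}. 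If you instead evaluate directly with the paper's formula \ref{Enu} at the projection of the corrected $P_1$ to $\Sigma^0$ (so $\xx\approx\tau_0\cosh 1$, $\yy=0$, $\ubreve\approx\tau_0$, $\ubreve_\xx\approx\csch 1>0$, $\ubreve_\yy=0$), the numerator is $\approx -\cos^2\xx\,\ubreve_\xx<0$, so $\nu(P_1)\cdot\pp^0\approx-\sech 1<0$, the \emph{same} sign as $\nu(P_2)\cdot\pp^0$. The two computations you give therefore never disagree, and no sign change is established.

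The positive region is instead near the waist of the \emph{polar} bridge $\Kbr_{\pp^0}$, not the equatorial one. There $\ubreve$ is small, $\ubreve_\yy=0$ on the meridian $\yy=0$, and $\ubreve_\xx<0$ is large in magnitude (since $\ubreve$ increases as $\xx$ decreases from $\pi/2$ toward $\CC$), so the numerator of \ref{Enu} is dominated by $-\cos^2\xx\,\ubreve_\xx>0$; near the waist of the \emph{equatorial} bridge, $\ubreve_\xx>0$ and the numerator is negative. This is exactly the paper's observation: the sign of the numerator near the bridge waists is governed by $-\operatorname{sign}(\ubreve_\xx)$, which flips between the equatorial and polar bridges. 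Your second step at $\xxmax$ is correct and more explicit than needed, but you should replace $P_1$ by a point on the graph just outside the waist of $\Kbr_{\pp^0}$ and appeal to the sign of $\ubreve_\xx$ there rather than to \ref{Ecatnu}.
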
 

\begin{proof} 
Using \ref{Enu} we see that close to $\Sigma^0\cap\Sigma_0$ where $\ubreve$ is small but $|\ubreve_\xx|$ large and $\xx\ne 0,\pi/2$, 
the sign of the numerator is the sign of $\ubreve_\xx$ which is positive for small $\xx$ and negative for $\xx$ close to $\pi/2$. 
\end{proof}

\begin{lemma}[The sign of $\nu \cdot \pp^{\pi/2} $ ] 
\label{Lnup} 
$\nu \cdot \pp^{\pi/2} \in V^{-+}_{0+}$  
does not change sign on $\xiomo^{++}$. 

\end{lemma}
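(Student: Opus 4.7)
The plan is to establish $\nu \cdot \pp^{\pi/2} \geq 0$ on $\xiomo^{++}$, with equality only on the waist half-circles $\partial\Sigmabreve \cap \Sph^{3++}$, which directly implies no sign change. The key preliminary observation is that since $\Sigma^0 \subset \Span\{\pp_0, \pp_{\pi/2}, \pp^0\}$, the unit normal to $\Sigma^0$ in $\Sph^3$ is $\pm\pp^{\pi/2}$ at every point; consequently each catenoidal bridge of $\xiomo$ has its axis aligned with $\pp^{\pi/2}$ at its center, and the portion of each bridge in $\Sph^{3++}$ is the upper half (with $\sss \geq 0$ in the catenoid parametrization from \ref{dgtSS}\ref{dgtSS2}).

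First, on the graphical region $\Mbreve_{\gr+} \cap \Sph^{3++}$, I would apply formula \eqref{Enu} directly. By \ref{Tep}\ref{Tep10}, $|\ubreve| \leq \tau_0^{8/9}$ uniformly on $\OL$, so $\cos\ubreve > 1/2$; and since the graphical region excludes only small disks of radius $\tau_p^{5\alpha}$ about points of $L$, we have $\xx < \pi/2$ throughout, hence $\cos\xx > 0$. The numerator $\cos\xx \, \cos^2\ubreve$ is then strictly positive, giving $\nu\cdot\pp^{\pi/2} > 0$ on the graphical region.

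Next, on each bridge $\Kbr_p \cap \Sph^{3++}$, the closeness estimates \ref{dgtSS}\ref{dgtSS2}\ref{dgtSS2d}--\ref{dgtSS2e} yield that the pulled-back unit normal is $C^1$-close to the catenoid normal $\nuK = (\sech\sss\cos\vartheta, \sech\sss\sin\vartheta, \tanh\sss)$, whose $\pp^{\pi/2}$-component equals $\tanh\sss$ on the upper half. I would split this piece into the region $\{\sss \geq s_1\}$ for a small fixed $s_1 > 0$, where $\tanh\sss \geq \tanh s_1$ dominates the $O(\tau_p|\log\tau_p|)$ perturbation once $m$ is large, and the near-waist region $\{\sss \in [0, s_1]\}$, where I would combine the exact identity $\nu\cdot\pp^{\pi/2} = 0$ on the waist $\{\sss=0\} \subset \Sigma^0$ (a consequence of side symmetry forcing a vertical tangent plane there, per \ref{Ddoubling}\ref{dbgraphs}) with the positive leading-order $\sss$-derivative $\partial_\sss(\nuK\cdot\pp^{\pi/2})|_{\sss=0} = 1$ and the $C^2$-closeness, yielding strict positivity for $\sss \in (0, s_1]$ via Taylor expansion.

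The hardest part will be the near-waist analysis on the bridges, since the catenoidal leading term $\tanh\sss$ vanishes linearly at $\sss = 0$ and cannot by itself dominate the perturbation. The resolution exploits the exact (not merely approximate) vanishing of $\nu\cdot\pp^{\pi/2}$ at the waist due to side symmetry, reducing the question to controlling only the first $\sss$-derivative of the perturbation, which is supplied by the $C^2$-closeness in \ref{dgtSS}\ref{dgtSS2}\ref{dgtSS2e}. Combining the graphical and bridge regions then gives $\nu\cdot\pp^{\pi/2} \geq 0$ on $\xiomo^{++}$ with equality only on the waist half-circles, so in particular the function does not change sign.
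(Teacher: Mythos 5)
Your conclusion is right, and on the graphical region your argument coincides with the paper's, but you have made the proof considerably longer than it needs to be: the paper's entire proof is that \eqref{Enu} ``clearly'' gives the sign. The point you are missing is that by Definition \ref{Ddoubling} the \emph{whole} doubling, catenoidal bridges included, is the union of the graphs of $\pm\ubreve$ over $\Sigmabreve$, so formula \eqref{Enu} is valid on all of $\xiomo^{++}$ away from $\partial\Sigmabreve$, not just on $\Mbreve_{\gr}$. Since the denominator is positive and the numerator $\cos\xx\,\cos^2\ubreve$ is nonnegative whenever $\xx\in[0,\pi/2)$ --- which holds on all of $\Sigmabreve\cap\Sph^{3+}$ because $\pp^0$ lies inside a doubling hole --- no smallness of $\ubreve$ or of its derivatives is needed anywhere, and the expression tends to $0$ exactly at the waist circles where the tangent plane becomes vertical. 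Your separate catenoid-approximation analysis on the bridges is therefore redundant; it is also the riskier half of your write-up, because \ref{dgtSS}\ref{dgtSS2}\ref{dgtSS2d}--\ref{dgtSS2e} only control the \emph{pulled-back} metric and second fundamental form, and converting these intrinsic/extrinsic closeness statements into a $C^1$ estimate on $\nu$ as an $\R^4$-valued map (so that $\nu\cdot\pp^{\pi/2}\approx\tanh\sss$) requires additional positional information such as \ref{dgtSS}\ref{dgtSS2}\ref{dgtSS2f}, which you do not supply. The graph formula sidesteps all of this, which is precisely what the paper's one-line proof exploits.
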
 

\begin{proof} 
Follows clearly from \ref{Enu}. 
\end{proof}

\begin{lemma}[The sign of $J^{ \CCperp } $   ] 
\label{LJperp} 
\ 
$J_\CC = J^{ \CCperp } \in V^{--}_{0+}$  
does not change sign on $\xiomo^{++}$. 
\end{lemma}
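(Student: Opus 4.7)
The plan is to reduce the claim to a monotonicity statement via the explicit formula \eqref{EcCJ}. On $\xiomo^{++}$ the denominator there is strictly positive, so the sign of $J^\CCperp$ equals the sign of the numerator $N = \cos\xx\sin\xx\cos\ubreve + \cos^2\xx\sin\ubreve\cdot\ubreve_\xx$. A direct calculation shows $N = -\cos\xx \cdot \partial_\xx(\cos\xx\cos\ubreve)$, with the partial derivative taken at fixed $\yy$. Since $\cos\xx\cos\ubreve = \sqrt{x_1^2+x_2^2}$ is precisely the cosine of the $\Sph^3$-distance from $\CC$, and $\cos\xx \geq 0$ on $\xiomo^{++}$, it suffices to show that for each fixed $\yy$ the function $\xx\mapsto \cos\xx\cos\ubreve(\xx,\yy)$ is (weakly) decreasing in $\xx$ on the corresponding slice of $\xiomo^{++}$; geometrically, that points of $\xiomo^{++}$ move monotonically away from $\CC$ as one travels from the equator toward the polar axis $\CCperp$.

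I would establish this monotonicity in three complementary regions. First, on the graphical region $\OL$ outside the catenoidal bridges, the estimate $\|\ubreve\|_{C^3(\OL)} \leq \tau_0^{8/9}$ from \ref{Tep}\ref{Tep10} gives $|\cos\xx\sin\ubreve\cdot\ubreve_\xx| = O(\tau_0^{16/9})$, while $\sin\xx\cos\ubreve$ agrees with $\sin\xx$ up to a factor $1+O(\tau_0^{16/9})$, so $N\geq 0$ at every point of $\OL$ outside a neighborhood of $\CC$ of scale much smaller than any bridge scale. Second, on each equatorial bridge, I would parametrize via the Euclidean catenoid model supplied by \ref{dgtSS}\ref{dgtSS2}, using tangent-plane coordinates $(u,v)$ at the waist $\pp_{i\pi/m}$ ($i\in 2\Z$) with $u$ along $T_{\pp_{i\pi/m}}\CC$ and $v$ orthogonal to $\CC$ in $\Sigma^0$, which gives $\cos\xx\cos\ubreve = \sqrt{1+u^2}/\sqrt{1+u^2+v^2+w(r)^2}$ with $w(r) = \tau_0\arccosh(r/\tau_0)$ and $r=\sqrt{u^2+v^2}$; fixing $\yy$ corresponds to fixing $u$, while $\xx\geq 0$ corresponds to $v\geq 0$, and an elementary computation of $\partial_v(v^2+w^2) = 2v(1 + ww'(r)/r) \geq 2v$ shows that $\cos\xx\cos\ubreve$ is strictly decreasing in $v$ (hence in $\xx$) for $v>0$. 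Third, on each polar bridge the analogous parametrization yields $\cos\xx\cos\ubreve = g(r) := r/\sqrt{1+r^2+w(r)^2}$ with $r=\cot\xx$ and $w=\tau_2\arccosh(r/\tau_2)$; writing $s=w/\tau_2$, one computes $(g^2)'(r)$ to be proportional to $1+\tau_2^2\, s(s-\coth s)$, which by the elementary inequality $s\coth s - s^2 \leq 1$ (valid for all $s\geq 0$, with equality only in the limit $s\to 0^+$) is bounded below by $1-\tau_2^2>0$. Hence $g$ is strictly increasing in $r$, and since $r=\cot\xx$ is strictly decreasing in $\xx$, the function $\cos\xx\cos\ubreve$ is strictly decreasing in $\xx$ on the polar bridge as well.

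The main obstacle is the treatment of the transition annular region $D^\Sigma_p(\tau_p^\alpha) \setminus D^\Sigma_p(100\tau_p)$ around each bridge center $p\in L$, where neither the global graphical estimate nor the raw catenoid model is, by itself, sharp. Here I would invoke \ref{dgtSS}\ref{dgtSS2}\ref{dgtSS2f}, which controls $\ubreve - \phicat[\tau_p]\circ\dbold^\Sigma_p$ by $\taubreve^{1+\alpha/6}$ in the weighted $C^3$-norm of \ref{dnorm}, to show that the strict pointwise inequality $\partial_\xx(\cos\xx\cos\ubreve) < 0$ established from the exact catenoid model survives this small perturbation with room to spare, thereby joining the bridge estimates to the graphical-region estimates and completing the proof that $J^\CCperp\geq 0$ throughout $\xiomo^{++}$.
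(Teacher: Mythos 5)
Your key identity is correct: multiplying $J^{\CCperp}$ in \eqref{EcCJ} by its (positive) denominator gives exactly $-\cos\xx\,\partial_\xx(\cos\xx\cos\ubreve)$, so the lemma reduces to showing that $\cos\xx\cos\ubreve$ (the cosine of the distance from $\CC$) is nonincreasing in $\xx$ along $\xiomo^{++}$. This is a genuinely different and geometrically appealing route: the paper instead factors $J^{\CCperp}\propto\cot\ubreve+\cot\xx\cdot\ubreve_\xx$, notes this is positive wherever $\ubreve_\xx\ge-1/9$ (which covers the graphical region and the equatorial bridges, where $\ubreve_\xx\ge0$), and disposes of the polar core $\{\ubreve_\xx\le-1/9\}\subset\Kbr_{\pp^0}$ by the one-line remark that $\vecK^{\CCperp}$ there is close to the axial translation of a catenoid in $T_{\pp^0}\Sph^3$. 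Your explicit polar-bridge calculation is correct and replaces that remark: with $w=\tau_2 s$ and $r=\tau_2\cosh s$ one has $1+w^2-rww'=1+\tau_2^2\,s(s-\coth s)\ge 1-\tau_2^2>0$, and the inequality $s\coth s-s^2\le1$ is indeed elementary (equivalently $s\cosh s-\sinh s\le s^2\sinh s$, whose difference vanishes at $s=0$ and has nonnegative derivative $s(\sinh s+s\cosh s)$). Two remarks. First, your equatorial-bridge step is more elaborate than the situation requires: there $\ubreve_\xx\ge0$ on the $\xx\ge0$ side (one is ascending the catenoid from its waist on $\CC$), so both summands of the numerator $N=\cos\xx\sin\xx\cos\ubreve+\cos^2\xx\sin\ubreve\,\ubreve_\xx$ are nonnegative with no further computation. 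Second, making the polar-bridge and transition-annulus steps rigorous does require passing from the idealized catenoid in your stereographic-type coordinates $(u,v,w)$ to the actual graph $\ubreve$ via \ref{dgtSS}\ref{dgtSS2}\ref{dgtSS2f}; the positivity margin you extract scales like $r$ and degenerates near the waists, but the error terms there are of strictly smaller order and respect the reflection symmetries forcing $J^{\CCperp}$ to vanish on $\partial\xiomo^{++}$, so the argument should close. Overall a valid alternative to the paper's treatment.
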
 

\begin{proof} 
By \ref{EcCJ} the sign of \ $J^{ \CCperp }$ \ is the sign of  \      $\cot \ubreve + \cot\xx \, \ubreve_\xx$ \ 
which is clearly $>0$ when \  $\ubreve_\xx  \ge -1/9$.   \   
The region \  $  \{ \, q \in \xiomo^{++} \, : \,  \ubreve_\xx \le -1/9 \, \}$ \  is contained in the very core of $\Kbr_{\pp^0}$ 
where the Killing field is close to $\pp^{\pi/4}$ which is normal to $\Sigma^0$ and tangential to $\Sph^3$ at $\pp^0$,  
so the sign does not change there as well. 
\end{proof} 

\begin{lemma}[The sign of $J^{ \cC^{\pi/2}_{\pi/2} } $   ] 
\label{LJpp} 
\ 
$J_{ \cC_0^0 } = \, J^{ \cC^{\pi/2}_{\pi/2} } \in V^{-+}_{1-}$   
does not change sign on $\xiomo^{++}\cap\Sph^3_+$. 
\end{lemma}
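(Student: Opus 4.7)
The starting point is the explicit formula obtained by taking the imaginary part of \eqref{EJcom2}: on the upper graph of $\xiomo$,
\[
J^{\cC^{\pi/2}_{\pi/2}} \;=\; -\,\frac{\bigl(\cos^2\xx\cos\ubreve - \cos\xx\sin\xx\sin\ubreve\,\ubreve_\xx\bigr)\sin\yy \;+\; \sin\ubreve\,\ubreve_\yy\cos\yy}{\sqrt{\cos^2\xx\,(\cos^2\ubreve+\ubreve_\xx^2)+\ubreve_\yy^2}}.
\]
Since $J^{\cC^{\pi/2}_{\pi/2}} \in V^{-+}_{1-}$ is antisymmetric under $\refl_{\Sigma_0}:\yy\mapsto-\yy$, it vanishes along $\xiomo^{++}\cap\Sigma_0$ with opposite signs on the two sides; so I will establish the constant-sign claim by proving $J^{\cC^{\pi/2}_{\pi/2}} \leq 0$ on the fundamental half $\xiomo^{++}\cap\Sph^3_+\cap\{\yy\geq 0\}$. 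I split this half into three regions: (a) the graphical region $\OL$ of \ref{Tep}\ref{Tep10}, (b) the polar bridge $\Kbreve_{\pp^0}$ centered at $\pp^0\in\cC^0_0$ (the rotation axis of $\vecK^{\cC^{\pi/2}_{\pi/2}}=\vecK_{\cC^0_0}$), and (c) each equatorial bridge $\Kbreve_{\ptti}$ with $\yy_i\in(0,\pi/2]$, whose center does not lie on $\cC^0_0$.

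In (a), \ref{Tep}\ref{Tep10} bounds $\ubreve$ and its first two derivatives by $\tau_0^{1-4\alpha}$, so the numerator is dominated by the leading term $\cos^2\xx\cos\ubreve\sin\yy\geq 0$, yielding $J^{\cC^{\pi/2}_{\pi/2}}\leq 0$. In (c), by \eqref{E:Kcom} we have $\vecK_{\cC^0_0}(\ptti)=-\sin\yy_i\,\pp^{\pi/2}$, a nonzero vector normal to $\Sigma^0$; using the catenoidal parametrization of \ref{dgtSS}\ref{dgtSS2} and the $C^1$-closeness $\|\vecK_{\cC^0_0}-\vecK_{\cC^0_0}(\ptti)\|_{C^1(\Kbreve_{\ptti})}=O(\tau_2^\alpha)$, I obtain $J^{\cC^{\pi/2}_{\pi/2}}\approx -\sin\yy_i\,(\nu\cdot\pp^{\pi/2})$, which by \ref{Lnup} and \eqref{Enu} has strictly positive second factor on $\xiomo^{++}$, giving $J^{\cC^{\pi/2}_{\pi/2}}<0$.

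Region (b) is the main obstacle, because $\vecK_{\cC^0_0}$ itself vanishes at the bridge center $\pp^0$, so the argument of (c) breaks down. I pass through the isometry $Y_{\pp^0}$ of \ref{dgtSS}\ref{dgtSS2}\ref{dgtSS2a} to Euclidean coordinates $(u,v,w)$ in which the bridge is modeled by a catenoid $\Kbb[\tau_0]$ with axis along the $w$-direction identified with $\pp^{\pi/2}$; then $\vecK_{\cC^0_0}=w\,\pp_{\pi/2}-v\,\pp^{\pi/2}$ to leading order in the tangent identification, and a direct computation using \eqref{Ecatenoid} and $\phicat'(\rho)=\tau/\sqrt{\rho^2-\tau^2}$ yields
\[
\vecK_{\cC^0_0}\cdot\nu_{\Kbb[\tau_0]} \;=\; -\sin\vartheta\,\Bigl(\sqrt{\rho^2-\tau_0^2} \,+\, \tau_0^2\,\arccosh(\rho/\tau_0)/\rho\Bigr),
\]
a strictly negative multiple of $\sin\vartheta$, where $(\rho,\vartheta)$ are polar coordinates in the $(u,v)$-plane and $\vartheta$ agrees with the azimuth $\yy$ around $\pp^0$. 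The corrections from replacing $\nu_{\Kbb[\tau_0]}$ by $\nu_{\Mbreve}$ and from the oscillatory part $\ubreve_\osc$ of \ref{Tep}\ref{Tep9} are controlled in the weighted norms of \ref{LKh} and are subdominant to the leading $-\sin\vartheta$ factor throughout the catenoidal range $\rho\in[\tau_0,\tau_0^\alpha]$, in particular across and past the short neck at $\rho\sim\sqrt{\tau_0}$. Continuity matches the signs across overlaps of (a), (b), (c), completing the proof. The hardest part will be region (b): tracking the linear-in-coordinates vanishing of $\vecK_{\cC^0_0}$ against the rapidly varying catenoidal normal uniformly in $\rho$, and using the $\hK$-geometry of \ref{LKh} to prevent the oscillatory corrections from dominating the leading sign near the neck.
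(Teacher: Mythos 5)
Your split into the graphical region (a), the polar bridge (b), and the equatorial bridges (c) matches the paper's decomposition, and your treatment of (a) and (c) — sign-constancy of the leading term of \eqref{EJcom2} where $\ubreve$ and its derivatives are small, and domination by the normal sliding $\approx -\sin\yy_i\,\nu\cdot\pp^{\pi/2}$ on the equatorial cores — is essentially what the paper does. The genuine divergence is region (b). The paper handles the polar bridge softly: since $J\in V^{-+}_{1-}$ is odd under $\refl_{\Sigma^0}$ and $\refl_{\Sigma_0}$, it vanishes on the waist circle and on the $\Sigma_0$-slice of the bridge; a further sign change would produce a nodal domain of $J$ (an $\Lcalh$-eigenfunction of eigenvalue $0$) with Dirichlet eigenvalue $0$, contradicting the lower bound $>3$ coming from the fact that in the $h$-metric the relevant piece of the bridge is approximately part of a round sphere. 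You instead compute $\vecK_{\cC_0^0}\cdot\nu_\K$ on the model catenoid and propose to perturb.

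Two things on region (b). First, your formula carries a sign slip: the correct model quantity is
\begin{equation*}
\vecK_{\cC_0^0}\cdot\nu_{\Kbb[\tau_0]} \;=\; \tau_0\sin\vartheta\,\bigl(\sss\sech\sss-\sinh\sss\bigr) \;=\; -\sin\vartheta\Bigl(\sqrt{\rho^2-\tau_0^2}\;-\;\tfrac{\tau_0^2}{\rho}\arccosh(\rho/\tau_0)\Bigr),
\end{equation*}
with a minus, not a plus, between the two terms in the bracket. The conclusion (a negative multiple of $\sin\vartheta$) survives since the bracket is positive for all $\rho>\tau_0$, but note the bracket vanishes to third order at the waist, $\sim\tfrac23\tau_0\sss^3$ as $\sss\to0$. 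Second, and this is the real gap: precisely because the model quantity decays like $\sss^3$ near $\Sigma^0$ (consistent with the required $\refl_{\Sigma^0}$-oddness), the perturbation step you flag as hardest needs more than is written. The corrections from $\nu_\Mbreve-\nu_\K$, from $\ubreve_\osc$, and from the nonlinearity of the tangent-space identification vanish at the waist by the odd symmetry, but nothing in \ref{LKh} or \ref{dgtSS}\ref{dgtSS2} tracks the order of vanishing in $\sss$; a correction of size $O(\tau_0^{1+\delta}\sss)$, say, would overwhelm the model term throughout $\sss\lesssim\tau_0^{\delta/2}$, a range well past the neck. Absent an estimate that controls this higher-order behavior, the sign conclusion near the waist is not established. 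The paper's nodal/spectral argument bypasses this entirely, needing only an eigenvalue bound on the bridge and no pointwise control of $J$ near its degenerate zero set.
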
 

\begin{proof} 
By \ref{EJcom2} the sign of $J_{ \cC_0^0 }$ 
does not change in the region 
\  $  \{ \, q \in \xiomo^{++} \cap\Sph^3_+ \, : \, |\ubreve_\xx| \le 1/9, \,  |\ubreve_\yy| \le 1/9, \, |\xx| \le 8/9  \, \, \}$. \  
The complement of this region in \    $\xiomo^{++} \cap\Sph^3_+$ \      is the disjoint union of a neighborhood of a polar bridge and quarters or halves of cores of equatorial bridges.  
On the cores of the half catenoidal bridges, $J_{ \cC_0^0 }$ is dominated by slidings and there can no change of sign. 
Because of the odd symmetries with respect to $\Sigma_0$ and $\Sigma^0$ there cannot be change of sign on the remaining components either by using that 
the Dirichlet eigenvalue of such a region for $\Lcalh$ is clearly $>3$. 
\end{proof}

\section{Results for the equator-poles doublings $\xibreveomo$}  
\label{S:eq-p} 

\subsection*{The approxinate spectrum on the equator-poles doublings $\xibreveomo$ } 
$\vspace{.126cm}$ 
\nopagebreak

Recall that we have fixed $\xibreveomo$ to be a minimal surface doubling of $\Sigma^0$ with $L$ as in \ref{ELp}. 
We have the following.

\begin{lemma}[Approximate spectrum on $\SbreveS \subset \xibreveomo$]       
\label{LapprS}
\, 
For $m\in\N$ large enough in terms of given $\epsilon\in (0,1/2)$ 
we have the following 
(recall \ref{D:eig-res} and      
\ref{DSbr-norm}).    

\smallbreak 

\begin{enumerate}[label=\emph{(\roman*)}]
\item 
\label{LapprS1}
$\lambda_1( \, V^+ ,\LcalhN , \SbreveS ) = -2$   
\quad 
and 
\quad 
$4-\epsilon \, < \, \lambda_2( \, V^{++}_{0+} , \LcalhN , \SbreveS  )$.  
\item 
\label{LapprS2}
$\lambda_1^\C( \, W^{++}_1 , \LcalhN , \SbreveS  )  
= \lambda_1( \, V^{++}_{1\pm} , \LcalhN , \SbreveS  ) \in (-\epsilon, \epsilon) $    
\\     $\phantom{kkk}$ \hfill 
and \quad 
$4-\epsilon \, < \, \lambda_2^\C( \, W^{++}_1 , \LcalhN , \SbreveS  ) 
= \lambda_2( \, V^{++}_{1\pm} , \LcalhN , \SbreveS  ) $.    
\\ 
Moreover there is an eigenfunction 
\      
${f'}^{N+} \in \left. W^{++}_{1+} \right|_{\SbreveS } $ 
\      
corresponding to 
the first eigenvalue 
satisfying 
\     $\| \, {f'}^{N+} -  \nu\cdot (\pp_{0} + \ii \pp_{\pi/2} )   \, \|_{ \epsilon ; \SbreveS } \, < \, \epsilon $. 
\item 
\label{LapprS3} 
$4-\epsilon \, < \, \lambda_4( \, V^{++} , \LcalhN , \SbreveS  )$.

\smallbreak \smallbreak \smallbreak 

\item 
\label{LapprS4}
$\lambda_1( \,   V^{+-}     , \LcalhN,   \SbreveS ) \in (-\epsilon, \epsilon) $    
\quad 
with 
\quad 
$f_1^{N-} \in \left. V^{+-}_{0+} \right|_{\Sbreve_p} $ 
\quad 
a corresponding eigenfunction 
satisfying   
\quad 
$\| \, f_1^{N-} -  \nu\cdot \pp^{0}  \, \|_{ \epsilon ; \SbreveS } \, < \, \epsilon $  (translation). 
\item 
\label{LapprS5} 
$4-\epsilon \, < \, \lambda_2( \, V^{+-} , \LcalhN , \SbreveS ) \, \le \, \lambda_1( \, V^{+-}_- , \LcalhN , \SbreveS ) $ 
\\ 
$\phantom{kkkkkkkkkkkkk}$ \hfill $ \, \le \, \lambda_1  ( \, W^{+-}_1 , \LcalhN , \SbreveS ) $.  

$\phantom{ab}$ 

\smallbreak \smallbreak 

\item 
\label{LapprS6}
$ -\epsilon \, < \, \lambda_1( \, V^+  , \LcalhD , \SbreveS ) \, = \,  \lambda_1( \, V^{++}_{0+}   , \LcalhD , \SbreveS ) \,$    
\\  
$\phantom{k}$ \hfill 
$ < \, \lambda_1( \, V^{+-}_{0+}   , \LcalhD , \SbreveS )  = \,  \lambda_2( \, V^+ , \LcalhD , \SbreveS ) \, < \, \epsilon $. 
\\     
Moreover there are corresponding eigenfunctions 
\quad 
$f_1^D \in \left. V^{++}_{0+} \right|_{\SbreveS } $ 
\quad 
and 
\quad 
$f_2^D \in \left. V^{+-}_{0+} \right|_{\SbreveS } $ 
satisfying 
\\    
$\phantom{k}$ \hfill 
$\| \, f_1^D -  | \nu\cdot \pp^{0}  |  \, \|_{ \epsilon ; \SbreveS } \, < \, \epsilon $ 
\quad 
and   
\quad 
$\| \, f_2^D -  \nu\cdot \pp^{0}  \, \|_{ \epsilon ; \SbreveS } \, < \, \epsilon $. 
\item 
\label{LapprS7} 
$4-\epsilon \, < \, \lambda_2( \, V^{+\pm} , \LcalhD , \SbreveS ) $. 
\end{enumerate} 
\end{lemma}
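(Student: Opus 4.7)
The plan is to approximate $\LhM$ on $(\SbreveS, \hM)$ by the model Schr\"{o}dinger operator $\Delta + 2$ on two disjoint copies of $\Sigma^0 \cong \Sph^2$ punctured by small geodesic discs of radius $\sim \sqrt{\tau_p}$ around each $p \in L$, and to read each numerical claim off from the spherical harmonic spectrum $\{\,l(l+1)-2\,\}_{l\geq 0}$ via the symmetry decomposition of Example \ref{Ex:fourier}. The two connected components of $\SbreveS$ are exchanged by $\refl_{\Sigma^0}$, so each spherical harmonic yields two eigenfunctions on $\SbreveS$: a symmetric extension in $V^+$ and an antisymmetric one in $V^-$.

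The setup rests on the metric comparisons $\hM \approx \gS$ in $C^3$ on the graphical region (Theorem \ref{Tep}\ref{Tep10}) and $\hM \approx \hK$ in $C^3$ on the outer catenoidal annuli (from $\sss = \sss_{\mathrm{short}}[\tau_p]$ outward in each bridge, by Lemma \ref{LKh}\ref{LKh5}), together with the vanishing $\hM$-area estimate \ref{LKh}\ref{LKh4} for these annuli. A logarithmic cutoff argument as in \cite[Appendix B]{kapouleas:1990}, parallel to the proof of \ref{L:appr}, then gives convergence as $m \to \infty$ of the low eigenvalues and eigenfunctions of $\LhM$ on $\SbreveS$ (with either Neumann or Dirichlet BC on $\partial \SbreveS$) to those of $\Delta + 2$ on $\Sigma^0 \setminus \disjun_p D_p(\sqrt{\tau_p})$ with matching BC on the disc boundaries. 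Once this convergence is in place, the Neumann statements \ref{LapprS1}--\ref{LapprS5} reduce to bookkeeping on $\Sph^2$ harmonics, since in two dimensions small discs are a negligible Neumann perturbation: $V^{++}_{0+}$ contains only the $l = 0$ constant (eigenvalue $-2$, exactly, since constants satisfy Neumann) and next jumps to $l = 2$ at eigenvalue $4$; $V^{++}_{1\pm}$ is populated by $x^1, x^2$ at $l = 1$ (eigenvalue $0$) and next by $l = 3$; $V^{++}$ has exactly three eigenvalues $\leq 0$; $V^{+-}_{0+}$ contains $x^3 = \sin\xx$ at $l = 1$ and no $l = 2$ harmonic; and $V^{+-}_{1\pm}$ contains $x^i x^3$ at $l = 2$ (eigenvalue $4$).

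The Dirichlet claims \ref{LapprS6}--\ref{LapprS7} are the subtler part. The constant violates the boundary condition, and the low Dirichlet spectrum instead comes from harmonics that already vanish (to leading order in $\sqrt{\tau_p}$) on the disc boundaries. The $l = 1$ harmonic $\sin\xx$ vanishes identically on the equator $\CC \supset \Leq$, producing eigenvalue $\approx 0$ in $V^{+-}_{0+}$; a smoothed symmetric extension of $\sin\xx$ across $\CC$ through the equatorial discs (essentially $|\sin\xx|$) gives a second such eigenvalue in $V^{++}_{0+}$; and the next eigenvalue in each of $V^{+\pm}$ corresponds to functions that do not vanish on the discs, which by a capacity-type estimate lies at $\geq 4 - \epsilon$ once $m$ is large. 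The eigenfunction identifications $f_1^D \approx |\nu \cdot \pp^0|$, $f_2^D \approx \nu \cdot \pp^0$, and ${f'}^{N+} \approx \nu \cdot (\pp_0 + \ii \pp_{\pi/2})$ follow by combining the explicit graphical-region formulas \ref{Enu}, \ref{Enucom}, \ref{EcCJ} (where these normal components reduce to the relevant $l = 1$ harmonics modulo catenoidal-scale corrections) with the catenoidal expressions \ref{Ecatnu} on the outer bridges and the symmetry membership recorded in \ref{LJnu}.

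The main technical obstacle is ruling out any eigenvalue significantly below $0$ in the Dirichlet problem \ref{LapprS6}. Such an eigenfunction would have to be close to constant on each side of $\SbreveS$, but its Dirichlet trace on the $2(m+2)$ disc boundaries would then produce a Rayleigh-quotient penalty of order $m/|\log\sqrt{\tau_0}|$, which by \ref{Etau02} is of order $\sqrt{m} \gg \epsilon$ for $m$ large, keeping all eigenvalues above $-\epsilon$. The quantitative version of this capacity estimate, adapted to the actual bridge geometry of $(\SbreveS, \hM)$ via the machinery of \ref{LKh}--\ref{CKh} rather than the model $\Sph^2$ with punctures, is what requires the most care.
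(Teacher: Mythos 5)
Your proposal takes the same route as the paper's proof, which is quite brief: approximate $\LhM$ on $\SbreveS$ (with either boundary condition) by $\Delta+2$ on $\Pi_\Sigma\SbreveS \subset \Sigma^0$, using \ref{dgtSS}\ref{dgtSS1} and logarithmic cutoffs \`a la \cite[Appendix B]{kapouleas:1990}, and then read off the eigenvalues $-2,0,4,\dots$ of $\Delta+2$ on $\Sph^2$ in each symmetry class. Your spherical-harmonic bookkeeping in the Neumann cases \ref{LapprS1}--\ref{LapprS3} and \ref{LapprS5} matches the argument the paper actually carries out, and the parities you use ($P_l^\mu(\sin\xx)$ has $\xx$-parity $(-1)^{l+\mu}$, hence $V^{*+}_{1}$ jumps from $l=1$ to $l=3$, $V^{*-}_{0+}$ contains $l=1$ then $l=3$, $V^{*-}_{1}$ starts at $l=2$, etc.) are all correct.

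One thing you could not have known: the paper explicitly declines to prove \ref{LapprS4}, \ref{LapprS6}, \ref{LapprS7}, and all of the norm estimates (including the one in \ref{LapprS2}), noting that these are not needed for Theorem \ref{Tmain}. So the Dirichlet discussion and the eigenfunction identifications in your proposal go beyond what the paper actually proves. That said, your capacity heuristic is the correct quantitative heart of the Dirichlet argument: with $|\log\sqrt{\tau_0}| \sim \tfrac12\sqrt{m/2}$ from \ref{Etau02}, the $m$ equatorial holes impose a Dirichlet penalty of order $m/|\log\sqrt{\tau_0}| \sim \sqrt{m}$ on the constant, which blows up, whereas the two polar holes cost only $O(1/\sqrt{m})$; equivalently, the equatorial holes effectively enforce Dirichlet data along all of $\CC$ as $m\to\infty$, so the first Dirichlet eigenvalue converges to the hemisphere value $0$ rather than to $-2$. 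Your ordering $\lambda_1^D(V^{++}_{0+})<\lambda_1^D(V^{+-}_{0+})$ is then forced because the odd ($V^{+-}_{0+}$) eigenfunction vanishes on all of $\CC$ whereas the even one need only vanish on the discs --- the weaker constraint gives the smaller minimum --- although you state the conclusion without this reasoning. The one genuinely delicate point in your Dirichlet sketch is the test function ``$|\sin\xx|$'': this has a conical singularity along the portions of $\CC$ between the holes (which are interior to the domain), so it is not in the variational space as written; one must smooth the kink at scale $\lesssim 1/m$ and verify that the resulting increase in Rayleigh quotient is still $O(\epsilon)$, which you flag as the technical obstacle but do not carry out. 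Since the paper omits this too, it is not a gap relative to the paper's proof, but it is the step that would need the most care in a full argument.
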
 

\begin{proof} 
Note that \ref{LapprS4}, \ref{LapprS6}, and  \ref{LapprS7} are not necessary for the proof of the final result \ref{Tmain}  
and so we do not prove them here; 
similarly we do not need and do not prove the norm estimates. 
By \ref{dgtSS}\ref{dgtSS1}\ref{dgtSS1b} we can ensure now that $\Lcalh$ on $\SbreveS$ is very close to \ $\Delta+2$ \ on \ $\Pi_\Sigma \SbreveS \subset \Sigma^0$.\  
It is easy to obtain $C^0$ bounds for the eigenfunctions by the usual arguments and then by standard arguments as for example in 
\cite[Appendix B]{kapouleas:1990} involving the use of logarithmic cutoff functions we approximate the eigenvalues with the corresponsing eigenvalues of 
\ $\Delta+2$ \ on \ $\Sigma$. 
Since the lowest eigenvalues of $\Delta$ on $\Sigma$ are $0$, $2$ and $6$ the proof follows. 
\end{proof}

\begin{lemma}[Approximate spectrum on $\xibreveomo$] 
\label{Lapp} 
The following hold for $m$ large enough in terms of given $\epsilon\in (0,1/2)$. 
\\ $\phantom{ab}$ 
\begin{enumerate}[label=\emph{(\roman*)}]
\item 
\label{Lapp1} 
$\lambda_1(V_{0+}^{++},\Lcalh)=-2  \, < \, \lambda_2(V_{0+}^{++},\Lcalh)<-2+\epsilon$, 
\quad
$\lambda_3(V_{0+}^{++},\Lcalh)<\epsilon$, 
\quad
$4-\epsilon < \lambda_4(V_{0+}^{++},\Lcalh) $. 
\item 
\label{Lapp2} 
$\lambda_1(V_{0-}^{++},\Lcalh)=0 $ 
\qquad
(corresponding to $J_{ \CCperp } = \, J^{ \CC } \in V^{++}_{0-}$), 
\\     
$\phantom{k}$ \hfill 
$4-\epsilon < \lambda_2(V_{0-}^{++},\Lcalh)$. 
\item 
\label{Lapp3} 
$\lambda_1^\C(W_{1}^{++},\Lcalh) = \lambda_1(V_{1\pm}^{++},\Lcalh) < -2+\epsilon $  
$< -\epsilon < $ 
$ \lambda_2^\C(W_{1}^{++},\Lcalh)  =  \lambda_2(V_{1\pm}^{++},\Lcalh) $ 
$ \le \lambda_3^\C(W_{1}^{++},\Lcalh) = \lambda_3(V_{1\pm}^{++},\Lcalh)< \epsilon $,   
\quad   
(\ note also 
\quad $\nu \cdot ( \pp_0 + \ii \pp_{\pi/2} )  \in W^{++}_{1+}$ ), 
\hfill 
$4-\epsilon < \lambda_5^\C(W_{1}^{++},\Lcalh) = \lambda_5(V_{1\pm}^{++},\Lcalh) $. 
\item 
\label{Lapp4} 
$\forall \mu\in\mtwo$ we have 
\qquad
$\lambda_1^\C(W_{\mu}^{++},\Lcalh) = \lambda_1(V_{\mu\pm}^{++},\Lcalh) < -2+ \epsilon < -\epsilon <
\lambda_2^\C(W_{\mu}^{++},\Lcalh) = \lambda_2(V_{\mu\pm}^{++},\Lcalh) < \epsilon$,   \qquad  
$4-\epsilon < \lambda_3^\C(W_{\mu}^{++},\Lcalh) = \lambda_3(V_{\mu\pm}^{++},\Lcalh) $. 
\item 
\label{Lapp5} 
If $m\in2\Z$, then for $\mu=m/2$, we have 
\\   
$\lambda_1(V_{\mu+}^{++},\Lcalh) < -2+ \epsilon$, 
\qquad 
$4-\epsilon < \lambda_2(V_{\mu+}^{++},\Lcalh) $,  
\\    
and \qquad 
$\lambda_1(V_{\mu-}^{++},\Lcalh) < \epsilon$, 
\qquad
$4-\epsilon < \lambda_2(V_{\mu-}^{++},\Lcalh)$.  
\ 
\\ 
\ 
\item 
\label{Lapp7} 
$\lambda_1(V_{0+}^{+-},\Lcalh)<-2+\epsilon$, 
\qquad
$ \lambda_2(V_{0+}^{+-},\Lcalh) \in  \,(-\epsilon,0)$, 
\\                        
$\lambda_3(V_{0+}^{+-},\Lcalh) \in  \,(-\epsilon,\epsilon)$, 
\qquad 
$4-\epsilon < \lambda_4(V_{0+}^{+-},\Lcalh)$, 
\qquad 
$\nu \cdot \pp^0 \in V^{+-}_{0+}$. 
\item 
\label{Lapp8} 
$4-\epsilon < \lambda_1(V_{0-}^{+-},\Lcalh) $.    
\item 
\label{Lapp9} 
$ -\epsilon < \lambda_1^\C(W_{1}^{+-},\Lcalh) = \lambda_1(V_{1\pm}^{+-},\Lcalh)   \, < $ 
\\ 
$\phantom{kkkkk}$ \hfill 
$< \, \lambda_2^\C(W_{1}^{+-},\Lcalh)  = \lambda_2(V_{1\pm}^{+-},\Lcalh) = 0$ 
\\     
($J^{ \cC^0_0   } +  \ii J^{ \cC^0_{\pi/2} } \in W^{+-}_{1+  }$),  
\qquad
$4-\epsilon < \lambda_3^\C(W_{1}^{+-},\Lcalh) = \lambda_3(V_{1\pm}^{+-},\Lcalh) $.  
\qquad
\item 
\label{Lapp10} 
$\forall \mu\in\mtwo$ we have 
\qquad
$-\epsilon < \lambda_1^\C(W_{\mu}^{+-},\Lcalh) = \lambda_1(V_{\mu\pm}^{+-},\Lcalh) < \epsilon$, 
\\ $\phantom{kkkkk}$ \hfill and \qquad 
$4-\epsilon < \lambda_2^\C(W_{\mu}^{+-},\Lcalh) = \lambda_2(V_{\mu\pm}^{+-},\Lcalh) $. 
\item 
\label{Lapp11} 
If $m\in2\Z$, then for $\mu=m/2$, we have 
\qquad
$-\epsilon < \lambda_1(V_{\mu+}^{+-},\Lcalh) < \epsilon$, 
\qquad
\\ 
$\phantom{kkkkk}$ \hfill $4-\epsilon < \lambda_2(V_{\mu+}^{+-},\Lcalh) $, 
\qquad and \qquad 
$4-\epsilon < \lambda_1(V_{\mu-}^{+-},\Lcalh) $. 
\\ 
$\phantom{kkk}$ 

\item 
\label{Lapp-+} 
$\nu \cdot \pp^{\pi/2} \in V^{-+}_{0+}$,  
\quad 
note also 
\quad 
$J^{ \cC^{\pi/2}_0   } +  \ii J^{ \cC^{\pi/2}_{\pi/2} } \in W^{-+}_{1+}$.  
\\ 
$\phantom{kkk}$ 

\item 
\label{Lapp--} 
$ J^{ \CC^\perp  } \in V^{--}_{0+}$,  
\quad 
$\lambda_1(V^{--},\Lcalh) = 0   < \lambda_2(V^{--},\Lcalh) $. 
\end{enumerate} 
\end{lemma}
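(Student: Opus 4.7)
The plan is to apply Proposition \ref{P:appr} uniformly to each of the twelve symmetry subspaces $(V^{\pm\pm}_{\mu\pm}, \Lcalh, \Mbreve)$ listed, combined with the $\SbreveS$ spectral data from Lemma \ref{LapprS}; the $\C$/$\R$-eigenvalue equalities in \ref{Lapp3}, \ref{Lapp4}, \ref{Lapp9}, \ref{Lapp10} follow from Lemma \ref{Lmu}.

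For each subspace, one first computes $\dim V_\con$ and $\dim V_\Sl$ by orbit counts on $L = L_0 \sqcup L_2$ (with $|L_0| = m$, $|L_2| = 2$). Since $\refl_{\Sigma^0}$ fixes $L$ pointwise, every bridge constant lies in $V^+$; decomposing further by $\refl_{\Sigma^{\pi/2}}$, $\rot^\CC_{2\pi/m}$, and $\refl_{\Sigma_0}$ places $2$ constants in $V^{++}_{0+}$, $1$ in $V^{+-}_{0+}$, and $1$ in each $V^{++}_{\mu\pm}$ for $\mu \ge 1$ (with the usual special cases at $\mu=0, m/2$). Slidings are placed analogously: at each equatorial $p \in L_0$ the three tangent directions (along $\CC$, toward the pole in $\Sigma^0$, perpendicular to $\Sigma^0$) lie respectively in $V^{++}, V^{+-}, V^{-+}$; at each polar $p \in L_2$ the tangent decomposes as $\Span(\pp_0, \pp_{\pi/2}) \subset V^+$ and $\Span(\pp^{\pi/2}) \subset V^-$, with rotational phases sending $\pp_0 \pm \ii\pp_{\pi/2}$ to $\mu = \pm 1$ and $\pp^{\pi/2}$ to $\mu = 0$.

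The exact eigenvalues $-2$ and $0$ then come from the constant $1 \in V^{++}_{0+}$ (exact $\Lcalh$-eigenvalue $-2$, yielding \ref{Lapp1}) and the six Jacobi fields of Lemma \ref{LJnu} (exact eigenvalue $0$, yielding \ref{Lapp2}, \ref{Lapp9}, \ref{Lapp--}). The normal components $\nu \cdot v$ of Lemma \ref{LJnu} satisfy $\Lcal(\nu \cdot v) = 2(\nu \cdot v)$ by the identity $\Delta(\nu \cdot v) = -|A|^2 (\nu \cdot v)$ on minimal surfaces in $\Sph^3$; although not $\Lcalh$-eigenfunctions, they have strictly negative $\Lcalh$-Rayleigh quotient, which forces at least one additional negative $\Lcalh$-eigenvalue in each of $V^{+-}_{0+}$, $V^{-+}_{0+}$, $V^{++}_{1+}$, $V^{++}_{1-}$. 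Combined with the Dirichlet lower and Neumann upper bounds of Proposition \ref{P:appr}, and with the spectral gap past $\epsilon$ on $\SbreveS$, these inputs deliver all bounds of the form $\lambda_k < -2 + \epsilon$, $\lambda_k \in (-\epsilon, \epsilon)$, or $\lambda_k > 4 - \epsilon$ appearing in the statement.

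The main obstacle is establishing the strict sign information for the sliding-induced approximate eigenvalues---notably $\lambda_2(V^{+-}_{0+}) \in (-\epsilon, 0)$ in \ref{Lapp7} and $\lambda_1(V^{+-}_{1\pm}) \in (-\epsilon, 0)$ in \ref{Lapp9}. The former follows from the normal-component test function $\nu \cdot \pp^0$ combined with the Neumann upper count. The latter, where no normal component lies in the subspace, requires either Courant's nodal theorem applied to the sign-changing Jacobi fields of Lemmas \ref{LJperp}, \ref{LJpp}, or test-function constructions implementing the principle (articulated in the paper's introduction) that the Rayleigh quotient for $\CC$-direction slidings decreases with increasing frequency $\mu$ while that for pole-direction slidings decreases with decreasing $\mu$, comparing in each case with a Jacobi field of zero Rayleigh quotient. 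Carrying out these finer comparisons rests on the explicit asymptotics of $\ubreve$ near the bridges from Theorem \ref{Tep}.
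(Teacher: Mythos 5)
Your broad outline matches the paper's strategy: compute $\dim V_\con$ and $\dim V_\Sl$ by orbit counts on $L = L_0 \sqcup L_2$, feed these into Proposition~\ref{P:appr} together with the $\SbreveS$ data of Lemma~\ref{LapprS} to bracket the low eigenvalues, identify the exact eigenvalues $-2$ (constant in $V^{++}_{0+}$) and $0$ (Jacobi fields), and use the $\C/\R$ identifications from Lemma~\ref{Lmu}. The orbit counting you describe for constants and for equatorial/polar slidings is correct and matches the paper's case-by-case accounting.

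However, there is a genuine gap in how you handle the negative sign in \ref{Lapp7}. You assert that the normal components $\nu\cdot v$ "have strictly negative $\Lcalh$-Rayleigh quotient, which forces at least one additional negative $\Lcalh$-eigenvalue in each of $V^{+-}_{0+}$, $V^{-+}_{0+}$, $V^{++}_{1+}$, $V^{++}_{1-}$," and later that "$\lambda_2(V^{+-}_{0+}) \in (-\epsilon,0)$ ... follows from the normal-component test function $\nu\cdot\pp^0$ combined with the Neumann upper count." Neither step is right as stated. A single test function of negative Rayleigh quotient only gives $\lambda_1<0$, which you already know (indeed $\lambda_1 \approx -2$ from the one-dimensional constant space); it does not by itself give $\lambda_2<0$, and the Neumann upper count gives only the \emph{lower} bound $\lambda_2 > -\epsilon$, not an upper bound. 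The missing ingredient is Lemma~\ref{Lnu0}: $\nu\cdot\pp^0$ \emph{changes sign} on $\xiomo^{++}$, so by Courant's nodal theorem it cannot be the first eigenfunction of $\Lcal$ on $V^{+-}_{0+}$, hence $\lambda_2(\Lcal,V^{+-}_{0+}) \le -2 < 0$, and by conformal invariance of the index the same sign holds for $\Lcalh$. The claim of an "additional" negative eigenvalue in $V^{-+}_{0+}$ is in fact false: by Lemma~\ref{Lnup}, $\nu\cdot\pp^{\pi/2}$ does \emph{not} change sign, so it \emph{is} the ground state there ($\lambda_1 = -2$ exactly, $\lambda_2 \ge 0$, cf.\ Proposition~\ref{P-+}). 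And for $V^{++}_{1\pm}$, no sign of $\lambda_3$ is asserted in \ref{Lapp3} at all — that is what \ref{Pcos3} is for later.

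A secondary issue: for \ref{Lapp9} the relevant nodal input is Corollary~\ref{CJnodal} (the Jacobi field $J^{\cC_0^0}+\ii J^{\cC_0^{\pi/2}}\in W^{+-}_{1+}$ has an approximate nodal parallel circle, so it is not the ground state, giving $\lambda_1<\lambda_2=0$). You cite Lemmas~\ref{LJperp} and \ref{LJpp}, but those establish the \emph{absence} of sign changes for $J^{\CCperp}$ (used for $V^{--}$ in \ref{Lapp--}) and $J^{\cC^{\pi/2}_{\pi/2}}$ (used for $V^{-+}$ in Proposition~\ref{P-+}); neither is the lemma that drives \ref{Lapp9}. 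So while your overall plan is sound, the specific nodal/Courant arguments that pin down the signs in \ref{Lapp7} and \ref{Lapp9} are either missing or misattributed, and these are precisely the "remaining" cases the paper flags as requiring arguments beyond \ref{P:appr}+\ref{LapprS}.
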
 

\begin{proof} 
Most inequalities follow by combining \ref{P:appr} with \ref{LapprS}. 
The remainining follow by elementary arguments based on the nodal lines of the Jacobi fields with the exception of the proof of \ref{Lapp3} which requires a careful analysis 
of one of the eigenfunctions. 
We examine now each case in detail. 
\\ 
$\phantom{kkk}$ 

\ref{Lapp1} 
By the symmetries $V_{0+\con}^{++}$ is two-dimensional (one constant at the pole and one at the equator) and $V_{0+\Sl}^{++}$ is trivial. 
In \ref{P3rd} we prove later by a gluing construction that there is an eigenvalue in $(0,\epsilon)$, 
but here we can use \ref{LapprS}\ref{LapprS6}  (although not required for the final result \ref{Tmain}) to establish there is a third eigenvalue $<\epsilon$.  
By \ref{LapprS}\ref{LapprS1} the fourth eigenavalue $> 4-\epsilon$. 

\ref{Lapp2} 
By the symmetries $V_{0-\con}^{++}$ is trivial and $V_{0-\Sl}^{++}$ is one-dimensional (a sliding along $\cC$). 
By \ref{LapprS}\ref{LapprS3}  there is no other eigenvalue $\le 4-\epsilon$ and by \ref{LJnu} we conclude \ref{Lapp2}.    

\ref{Lapp3} 
By the symmetries $W_{1\con}^{++}$ is spanned by $f$ with $f(\pp_0)=1$ and $f(\pp^0)=0$ (a constant at $\CC$), 
and $W_{1\Sl}^{++}$ is spanned by $\vecf$ 
with $\vecf(\pp_0)= \partial_{x^1}$ and $\vecf(\pp^0)=0$ (sliding along $\CC$), 
and $\vecfp$ 
with $\vecfp(\pp_0)= 0$ and $\vecfp(\pp^0)=\partial_{x^1}+i\partial_{x^2}$ (sliding at the pole). 
Hence $\dim_\C ( W_{1\con}^{++} )=1$   
and $\dim_\C ( W_{1\Sl}^{++} )=2$.  
By    \ref{P:appr} and \ref{LapprS}\ref{LapprS2} we conclude there is one eigenvalue in \   $(-2,-2+\epsilon)$,  \   
no eigenvalues in \      $[-2+\epsilon, -\epsilon]$, \   and either two or three (in \ref{Pcos4} later we prove only two) in \      $(  -\epsilon, -\epsilon)$. \   

\ref{Lapp4} We have one equatorial constant and one sliding along $\CC$. 

\ref{Lapp5} 
$V_{\mu+\Sl }^{++}$ and $V_{\mu-\con}^{++}$ are trivial   
and 
we have one equatorial constant 
in $V_{\mu+\con}^{++}$ and one sliding along $\CC$ in $V_{\mu-\Sl}^{++}$.  
\\ 
$\phantom{kkk}$ 

\ref{Lapp7} 
$V_{0+\con}^{+-}$ is clearly one-dimensional with 
a single constant at the pole, and therefore 
\qquad
$\lambda_1(V_{0+}^{+-},\Lcalh)<-2+\epsilon$. 
\qquad
$V_{0+\Sl}^{+-}$ is clearly one-dimensional also with 
one sliding to the pole at $\CC$.  
Since $\nu \cdot \pp^0 \in W^{+-}_{0+}$ changes sign by \ref{Lnu0}  
we have at least two negative eigenvalues. 
By \ref{LapprS}\ref{LapprS5} we have  
\qquad 
$4-\epsilon < \lambda_4(V_{0+}^{+-},\Lcalh)$. 
\qquad 
The bounds for the third eigenvalue follow 
from \ref{LapprS}\ref{LapprS4},\ref{LapprS6}, 
or alternatively 
from \ref{P3rd} which is proved below independently.   

\ref{Lapp8} 
\quad $ 0 <  \lambda_1(V_{0-}^{+-},\Lcalh) $    \quad  
follows by comparison with \ref{Lapp2} and this is enough for the proof of the final result \ref{Tmain}. 
The bound given here follows from 
the clear (by the symmetries) triviality of $V_{0-\con}^{+-}$ and $V_{0-\Sl}^{+-}$ and 
\ref{LapprS}\ref{LapprS6},\ref{LapprS7}.   

\ref{Lapp9} 
By the symmetries $W_{1\con}^{+-}$ is trivial 
and $W_{1\Sl}^{++}$ is spanned by $\vecfpp$ 
with $\vecfpp(\pp_0)= \partial_{x^3}$ and $\vecfpp(\pp^0)=0$ (sliding at $\CC$ to the pole), 
and $\vecfp$ 
with $\vecfp(\pp_0)= 0$ and $\vecfp(\pp^0)=\partial_{x^1}+i\partial_{x^2}$ (sliding at the pole). 
Hence $\dim_\C ( W_{1\con}^{+-} )=0$   
and $\dim_\C ( W_{1\Sl}^{+-} )=2$.  
By 
\ref{LapprS}\ref{LapprS5}    
we then have two eigenvalues in $(-\epsilon, \epsilon)$ with the third $>4-\epsilon$. 
The proof is completed by the existence of a Jacobi field which is not the ground state by \ref{CJnodal}. 

\ref{Lapp10} We only have one sliding to the pole at $\CC$.  

\ref{Lapp11} We only have one sliding to the pole at $\CC$ in $V_{\mu+\Sl}^{+-}$ while $V_{\mu+\con}^{+-}$, 
$V_{\mu-\Sl}^{+-}$, and $V_{\mu-\con}^{+-}$ are all trivial. 
\\ 
$\phantom{kkk}$

\ref{Lapp-+} Just recall \ref{LJnu}. 

\ref{Lapp--} 
Follows from \ref{LJperp}. 
\end{proof}

\bigbreak 
\subsection*{Index and nullity in $V^+_{0+}$ of the equator-poles doublings $\xibreveomo$ } 
\nopagebreak

\begin{lemma}[Approximate ground states]   
\label{L:3e0} 
There are $\sbar_0,\sbar_2 >0 $ so that the overlapping regions  
\ $\Sbreve[L_0,\sroot]$, \ $\Sbreve[\Sigma ,\sbar_0, \sbar_2]$, \ and \ $\Sbreve[L_2,\sroot]$,   
satisfy the following (recall \ref{DSMbr}, \ref{D:eig-res}, and \ref{Lphie}).         
\begin{enumerate}[label=\emph{(\roman*)}]
\item 
\label{L:3e01} 
$\left| \lambda_1(V_{0+}^{++},{\LcalhD} , \Sbreve[L_j,\sroot ] ) \right | < \tau_j $ for $j=0,2$. 
\item 
\label{L:3e00} 
$\sbar_0,\sbar_2 \, \in \, ( \, \sroot/2 \, , \, 3 \, \sroot/5 \, )    $. 
\item 
\label{L:3e02} 
$\left| \lambda_1(V_{0+}^{++},{\LcalhD} , \Sbreve[\Sigma ,\sbar_0, \sbar_2] ) \right | < \taubreve^{2\alpha} $. 
\end{enumerate} 
\end{lemma}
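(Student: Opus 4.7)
The plan is to exhibit explicit approximate-Jacobi-field test functions in each region and convert the resulting Rayleigh-quotient bounds into the eigenvalue estimates via standard self-adjoint perturbation theory.

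\emph{Part (i).} On each bridge piece $\Kbr_{p,\sroot}$ with $p\in L_j$ the natural test function is the pullback of $\phie$ via $\Xbreve_p$. By \ref{Lphie}\ref{Lphie1} and \ref{Lphie}\ref{Lphie3} it is an exact $\LchiK$-null (equivalently $\LhK$-null) rotationally-invariant function that vanishes at $\sss=\pm\sroot$ and is strictly negative on $(-\sroot,\sroot)$; hence $|\phie|$ is the positive Dirichlet ground state of $\LhK$ on $\cyl_{[-\sroot,\sroot]}$, separation of variables in the angular coordinate confirming this over the full range of Fourier modes. The $\GrpSigma$-equivariance of the $\Xbreve_p$ from \ref{dgtSS}\ref{dgtSS2}\ref{dgtSS2c} assembles these pullbacks into a function in $\left.V^{++}_{0+}\right|_{\Sbreve[L_j,\sroot]}$. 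The operator difference $\LhM-\LhK$ on $\K_{p,\sroot}$ is of order $\tau_p^2|\log\tau_p|$: the metric contribution is bounded by \ref{LKh}\ref{LKh5}, while the multiplicative potential term $2-2(1+\tau^2\cosh^4\sss)^{-1}=O(\tau^2)$ is estimated directly on $|\sss|\le\sroot$. Standard perturbation theory for self-adjoint elliptic operators, combined with the variational characterisation that the $\LhK$-Dirichlet ground-state eigenvalue is $0$, then yields $|\lambda_1|<\tau_j$ as claimed in \ref{L:3e01}.

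\emph{Determination of $\sbar_j$.} The LD solution $\varphi$ is $\GrpSigma$-invariant by \ref{dgtSS}\ref{dgtSS0} (hence in $V^{++}_{0+}$ when pulled back to $\Mbreve$), satisfies $\LcalS\varphi=0$ on $\Sigma^0\setminus L$, and has the prescribed logarithmic singularity $\tau_p\log\dbold_p$ at each $p\in L$ by \ref{dLD}. In local bridge coordinates around $p\in L_j$ the $\vartheta_p$-rotationally-invariant part of $\varphi$ admits an expansion
\[
\varphi\,=\,\tau_j\,\phie(\sss_p)\,+\,A_j\,\phio(\sss_p)\,+\,O(\taubreve^{1+\alpha/4}),
\]
where $\tau_j$ is the coefficient of the log singularity and $A_j$ is determined from the explicit formula \ref{Tep}\ref{Tep7} for $\phi$ together with the data \ref{Etau02}, via a matched-asymptotic step relating the global $\yy$-rotational symmetry (about $\CC$) to the local $\vartheta_p$-rotational symmetry (about the bridge axis at $p$). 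A direct calculation gives $A_j/\tau_j=1+O(1/\sqrt m)$. Define $\sbar_j>0$ implicitly by $\tau_j\,\phie(\sbar_j)+A_j\,\phio(\sbar_j)=0$. When $A_j=\tau_j$ this reduces to $(\phie+\phio)(\sbar_j)=0$, whose unique positive solution $\sbarroot\in(0.6837,0.6838)$ satisfies $\sbarroot/\sroot\in(0.5699,0.57)$ and $(\phie+\phio)'(\sbarroot)\ne 0$, all by \ref{Lphie}\ref{Lphie4}. The implicit function theorem then places $\sbar_j$ within $O(1/\sqrt m)$ of $\sbarroot$, inside $(\sroot/2,3\sroot/5)$, which is \ref{L:3e00}.

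\emph{Part (iii).} Define $\phit\in V^{++}_{0+}$ on $\Sbreve[\Sigma,\sbar_0,\sbar_2]$ by using $\varphi$ on the bulk of the graphical region and, in a collar of each bridge boundary, replacing it by the explicit Jacobi-field combination $\tau_j\,\phie+A_j\,\phio$, which vanishes at $\sss_p=\sbar_j$ by construction; a cutoff $\Psibold$ as in \ref{EPsibold} interpolates between the two in the overlap. Both pieces satisfy $\LcalS(\,\cdot\,)=0$ exactly, the matching error is dominated by the $\phicheck$ and $\ubreve_\osc$ contributions which are $O(\taubreve^{1+\alpha/4})$ by \ref{Tep}\ref{Tep8} and \ref{Tep}\ref{Tep9}, and by Lemma \ref{LSh} the difference $\LhM-\LcalS$ is $O(\taubreve^{8/9})$ on $\SigmaU$. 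The Rayleigh quotient of $\phit$ is thus bounded by $\taubreve^{2\alpha}$ for $\alpha$ small enough, yielding the upper bound in \ref{L:3e02}; the matching lower bound follows from the spectral gap of $\LcalS=\Delta_{\Sz}+2$ on $\Sz$ (next eigenvalue above $0$ equals $4$), which persists under the above perturbations. The main technical obstacle is the matched-asymptotic computation of $A_j$: the formula \ref{Tep}\ref{Tep7} is written in the global $(\xx,\yy)$-coordinates (whose rotational symmetry is about $\CC$) whereas the Jacobi fields $\phie,\phio$ attached to each bridge have their own rotational symmetry about the bridge axis, and the translation between these coordinate systems -- combined with the careful tracking of the small corrections $\mubreve_j$ in \ref{Etau02} -- is what produces the ratio $A_j/\tau_j=1+O(1/\sqrt m)$ needed to ensure $\sbar_j$ lies strictly inside the claimed interval.
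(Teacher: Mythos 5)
Your overall framework --- test functions and Rayleigh quotients on each overlapping region --- is the right one, and Part (i) is essentially the paper's argument: $\phie$ is the exact Dirichlet ground state of $\LhK$ on $\cyl_{[-\sroot,\sroot]}$, and the conclusion follows from the metric comparison \ref{LKh}\ref{LKh5}.

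The determination of $\sbar_j$ (your Part (ii) step) is where you diverge from the paper, and where there is a real gap. The paper does \emph{not} expand the LD solution $\varphi$ in bridge Jacobi fields. Instead it defines $\phiext = \lambda_\ev\phie + \lambda_\od\phio$ by matching both value and $\sss$-derivative to the rotational average $\ubreve_\avg$ of the actual minimal graph at $\rho=\tau_p^{5\alpha}$, then uses the \emph{universal} gluing-type estimate \ref{dgtSS}\ref{dgtSS2}\ref{dgtSS2f} (that $\ubreve$ is close to the catenoid height $z=\phicat[\tau]\circ\dbold^\Sigma_p$) to show the logarithmic derivative of $\phiext$ agrees with that of $z$ up to $O(1/\sss^2)$, whence $\lambda_\od/\lambda_\ev\Sim^{1+6\sqrt{2/m}}1$. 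This is entirely local to the bridge and independent of which LD solution was used. Your proposal instead reads off $A_j$ from \ref{Tep}\ref{Tep7}; but for the equatorial bridges ($j=0$) this cannot work: the function $\phi$ in \ref{Tep}\ref{Tep7} is the $\yy$-rotationally-invariant piece of $\ubreve$ and is only defined on $\Opar$, which excludes $D_{\CC}(9/m)\supset L_0$, and $\phi$ is smooth across $\CC$ --- the logarithmic singularity at $p\in L_0$ lives entirely in the $\yy$-oscillatory part and is captured by the Green's-function asymptotics of \ref{Tep}\ref{Tep11}, not by \ref{Tep}\ref{Tep7}. Moreover the local bridge $\vartheta_p$-average and the global $\yy$-average are genuinely transverse for $p\in\CC$, so there is no "coordinate translation" that recovers $A_0$ from $\phi$. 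Even for $A_2$ the claimed ratio $A_2/\tau_2=1+O(1/\sqrt m)$ reduces to a balance identity between $\zeta_0,\zeta_2$ which is not stated in the paper (it sits in \cite{SdI}(6.23)) and would need to be verified separately. The paper's matching sidesteps all of this and handles $j=0$ and $j=2$ uniformly.

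For Part (iii) your interpolated test function $\phit$ is close in spirit to the paper's $f'$, but you omit the correction $\Rhat_p$. The paper replaces $f'$ by $f=f'-\psihat_1\Rhat_p(\psihat_2\LchiM f')$, solving a cut-off Dirichlet problem on $\Kbr'_p$ to push the interpolation error out to the region $\rho\sim\tau_p^{\alpha}$ where it is controllable, which is what produces $\|\LhM f\|_{L^2}\le 2\taubreve^{1+5\alpha}$ and hence the stated $\taubreve^{2\alpha}$ bound (via $\taubreve^{4\alpha}$). Without that correction your cutoff error at $\rho\sim\tau_p^{5\alpha}$ needs to be estimated from scratch and you have not shown it is below $\taubreve^{1+2\alpha}\|\phit\|_{L^2}$; the loose bookkeeping with $\phicheck$ and $\ubreve_\osc$ does not obviously close to the required power.
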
 

\begin{proof} 
The proof is by construcing test functions which aproximately satisfy the Jacobi equation with Dirichlet condition at each boundary. 
The test function we use on $\Sbreve[L_j,\sroot ]$ is simply $\phie$ (recall \ref{Lphie}); \ref{L:3e01} follows then from \ref{LKh}\ref{LKh5}. 

Our construction of a test function $f$ for \ref{L:3e02} is motivated by the observation that the graph of $\ubreve$ is minimal and so is the base surface $\Sigma=\Sigma^0$, 
hence  \    $\Lcal_\Sigma \ubreve$ \    equals a quadratic expression with uniform coefficients in $\ubreve$ and its first and second derivatives. 
\quad 
This implies by \ref{dgtSS}\ref{dgtSS1}\ref{dgtSS1b} that 
\\     $\| \, \Lcal_\Sigma \ubreve \, : \, C^1(\SigmaU, g_\Sigma)\| \, <   \, \taubreve^{15/9}$, \qquad 
which implies by \ref{dgtSS}\ref{dgtSS1}\ref{dgtSS1b} and \ref{LSh}   
\qquad 
\begin{equation} 
\label{Eugr}
\| \, \Lcal_\Mbreve \ubreve \, : \, C^1( \, \Mbreve_{\gr} \, , \, g_\Mbreve \, )\, \| \, <   \, 2\taubreve^{15/9}.   
\end{equation} 

To construct now the test function $f$ we first 
define \ $f'\in C^\infty(\Mbreve_\pm)$ \  by 
(recall \ref{D:JS},  \ref{n:Mpm} and \ref{EPsibold})   
\begin{equation} 
\label{Efp}   
\begin{gathered} 
\fp := \ubreve \quad   \text{ on }   M_\pm \setminus \disjun_{p \in L} \Kbreve_{p \, , \, \rho \le 2\tau_p^{5\alpha} \, },  
\\ 
\fp  := \Psibold\left [ \tau_p^{5\alpha} , 2\tau_p^{5\alpha} ; \rho \, \right]( \, \phitp \, , \, \ubreveM \, )  = \ubreve + \psihat_3  \, ( \phiext - \ubreve )  
\\  
\quad   \text{ on }   
\     \Kbr_p \cap\Mbreve_\pm   \quad (\forall p\in L),  
\qquad 
\text{ where } 
\phiext := \lambda_\ev\phie + \lambda_\od\phio, 
\end{gathered} 
\end{equation} 
where 
$\psihat_3$ is a smooth cutoff functions on $\Mbreve$ 
supported    on \ $\disjun_{p\in L} \, \Kbreve_p $, \    
with detailed definition as in           \ref{EPsibold} 
so that  
\  $\forall p\in L$ \  
\ $\psihat_3 = 1$ \  on  \  $\{ q\in \Kbreve_p \, : \, \rho(q) \le  \tau_p^{5\alpha} \}$,  \     
\  $\psihat_3=0$ \   
on  \  $\{ q\in \Kbreve_p \, : \, \rho(q) \ge 2 \tau_p^{5\alpha} \}$,   \   
and the coefficients \ $\lambda_\ev, \lambda_\od$ \ are determined by the initial conditions (recall \ref{Ecyl}) 
\begin{equation} 
\label{Ephiext}
\qquad 
\phiext= 
\ubreve_\avg 
\quad 
\text{and} 
\quad 
\tfrac{\partial}{\partial\sss} \phiext= 
\tfrac{\partial}{\partial\sss} \ubreve_\avg 
= 
\left( \tfrac{\partial \ubreve}{\partial\sss} \right)_\avg 
\quad \text{at} \     
\rho=\tau_p^{5\alpha}.    
\quad 
\end{equation}

We prove next that there are unique \     $\sbar_j>0$ ($j=0,2$) \     such that \     $\phiext(\sbar_j) =0$  
\     when \     $p\in L_j$. \     
By \ref{Etau02} we have 
\begin{equation} 
\label{Elog}
|\log\tau_p| \  \Sim^{1+ m^{-1/3}} \ \sqrt{m/2}. 
\end{equation} 
We have then by \ref{Ecatenoid} that at \quad $\rho=\tau_p^{5\alpha}$ \quad     
\quad 
\begin{equation} 
\label{Ez}
\begin{gathered} 
\sss \, = \, (5\alpha-1) \log \tau_p +O(1) \quad  \Sim^{1+ m^{-1/3}} \quad (1-5\alpha) \sqrt{m/2}, 
\\
z    \, = \, (5\alpha-1) \tau_p \log \tau_p +O(\tau_p) \quad  \Sim^{1+ m^{-1/3}} \quad (1-5\alpha) \sqrt{m/2} \,  \tau_p, 
\end{gathered} 
\end{equation} 
and by \ref{dgtSS}\ref{dgtSS2}\ref{dgtSS2f} at \quad $\rho=\tau_p^{5\alpha}$ \quad     
$$ 
\left| \frac{\ubreve  -z}z \right | 
\, \le \, 
\taubreve^{(5\gamma+1/6) \alpha}, 
\qquad 
\frac1z \left| \frac{\partial\ubreve  }{\partial\sss} - \frac{\partial z }{\partial\sss} \right | 
\, \le \, 
\taubreve^{(5\gamma+1/6) \alpha}, 
$$ 
and hence by \ref{Ephiext} at \quad $\rho=\tau_p^{5\alpha}$ \quad     
\begin{equation} 
\label{E:phiext}   
\left| \frac1{\phiext}  \frac{\partial {\phiext} }{\partial\sss} 
-
\frac1{   z   }  \frac{\partial {   z   } }{\partial\sss} \right| 
\, \le \, 
\taubreve^{5\gamma\alpha}. 
\end{equation} 
A calculation based on \ref{D:JS} and \ref{Ecatenoid} gives for large $\sss$ 
$$
\frac1\phie\frac{\partial\phie}{\partial\sss} - \frac1 z \frac{\partial z }{\partial\sss} 
\, = \, \frac{\tanh \sss + \sss \sech^2 \sss}{\sss \tanh\sss -1} - \frac1\sss 
\, = \, \frac{\sech^2\sss + 1/\sss^2}{\tanh \sss -1/\sss} 
\  \Sim^{1+2/\sss} \  \frac1{\sss^2}. 
$$
We conclude that 
\begin{equation} 
\begin{gathered} 
\label{E:dflux} 
\left. \left( \frac1\phie\frac{\partial\phie}{\partial\sss} - \frac1{\phiext}  \frac{\partial {\phiext} }{\partial\sss} \right) \right|_{\rho=\tau_p^{5\alpha}} 
\  \Sim^{1+3/\sss } \  \left. \frac1{\sss^2} \right|_{\rho=\tau_p^{5\alpha}} 
\end{gathered} 
\end{equation} 
with the value of $\sss$ given by \ref{Ez}. 
An easy calculation shows that for $|\lambda|<9$ and large $\sss$ we have 
$$
\frac{\partial^2}{\partial \lambda \, \partial \sss   } \log(\phie+\lambda\phio) \, = \, - \frac{1+\csch^2\sss}{(\sss+\lambda-\coth\sss)^2} 
\  \Sim^{1+2/\sss} \  - \frac1{\sss^2},  
$$
and therefore by \ref{E:dflux} 
\qquad 
$
{\lambda_\od} \  \Sim^{1+ 6\sqrt{2/m} } \   {\lambda_\ev} .              
$
\qquad 
This implies that $\phiext$ is strictly increasing in  $\sss>0$. 
Since \ $\phiext(0)<0$ we conclude that there is a unique root \ $\sbar_j>0$ \ 
such that \ $\phiext(\sbar_j)=0$. \ 
We define    
\     $\sbar_j>0$ ($j=0,2$) \     
to be this unique root 
and since 
\quad $ \lambda= \lambda_\od / \lambda_\ev \Sim^{1+6\sqrt{2/m}} 1$   \quad      
we conclude by \ref{Lphie}\ref{Lphie4} that 
\ref{L:3e00} holds.  
To simplify the notation in the remaining of this proof we define then \     
\begin{equation} 
\label{EnA}
\begin{gathered} 
\Sbreve':=\Sbreve[\Sigma ,\sbar_0, \sbar_2],  
\\ 
\forall p\in L \quad  \Kbreve'_p := \Sbreve'\cap \Kbreve_p,  \quad 
\   \Abreve'_p := \{ q\in \Kbreve_p \, : \, \rho(q) \in [ \,  \tau_p^{\alpha}/2 , \tau_p^{\alpha} \, ] \}.    
\end{gathered} 
\end{equation} 

We define now  the test function 
\ $f \in  C^{2,\beta}( \, \Sbreve' \, ) $, \ 
by  
\begin{equation} 
\label{Ef}
\begin{gathered} 
f := \fp = \ubreveM 
\quad \text{ on } \quad 
\Sbreve'     \setminus \disjun_{p \in L} \Kbr_p ,  
\\ 
f := \fp - \Rhat_p \, \LchiM \fp 
\quad \text{ on } \quad 
\Kbreve'_p := \Sbreve' \cap \Kbr_p  \quad (\forall p\in L), 
\end{gathered} 
\end{equation} 
where \ $\Rhat_p\, : C^{0,\beta}( \, \Kbr'_p \, ) \to  C^{2,\beta}( \, \Kbr'_p \, ) $ \ is a linear map 
defined by (recall \ref{CKh}) 
\begin{equation} 
\label{ERhat}
\begin{gathered} 
\Rhat_p \, E \, :=  \, \Psibold\left [ \tau_p^{\alpha}/2 , \tau_p^{\alpha} ; \rho \, \right]      
( \, \up \, , \, 0      \, )    
\, = \,   
\psihat_1 \up    \quad \text{ on } \Kbreve'_p,    
\\      
\up \, := \, \Rcal_{\Kbr'_p} \Psibold\left [ 2\tau_p^{5\alpha} , 3\tau_p^{5\alpha} ; \rho \, \right]( \,  E                  \, , \, 0 \, )     
\, = \, \Rcal_{\Kbr'_p} \psihat_2 E  \quad \text{ on } \Kbreve'_p,    
\end{gathered} 
\end{equation} 
where 
$\psihat_1$ and $\psihat_2$ are smooth cutoff functions on $\Mbreve$ 
supported    on \ $\disjun_{p\in L} \, \Kbreve_p $, \    
with detailed definition as in           \ref{EPsibold} 
so that  
\  $\forall p\in L$ \  
\ $\psihat_1 = 1$ \  on  \  $\Kbr_p\setminus \Abreve'_p$, \ 
 \  $\psihat_2=1$  \  on 
 \  $\{ q\in \Kbreve_p \, : \, \rho(q) \le 2 \tau_p^{5\alpha} \}$,  \     
\  $\psihat_2=0$ \   
on  \  $\{ q\in \Kbreve_p \, : \, \rho(q) \ge 3 \tau_p^{5\alpha} \}$,   \   
and so 
\ $\psihat_1 \psihat_2= \psihat_2$  \   and   \     $\supp(1-\psihat_2) \subset \Mbreve_\gr$. 
We clearly have then 
\ $f=0$ \ on \   $\partial\Sbreve'$, 
\ $f=\ubreve$ \ on 
\ $\Sbreve'     \setminus \disjun_{p \in L} \Kbr_p$, \    
and \     $\forall p\in L$ 
\begin{equation} 
\label{Efup}
\begin{gathered} 
f \, = \, f' - \psihat_1 \Rcal_{\Kbr'_p} \big( \psihat_2 \, \LchiM f' \big) 
\qquad \text{on} \quad \Kbr'_p,  
\\             
\LchiM f \, = \, (1-\psihat_2) \, \LchiM \ubreve \, + \, [ \psihat_1 , \LchiM ] \, \Rcal_{\Kbr'_p} \big( \psihat_2 \LchiM f' \big) 
\qquad \text{on} \quad \Kbr'_p,  
\end{gathered} 
\end{equation} 
where we used 
\ $f'=\ubreve$ \  on the support of \ $1-\psihat_2$. \   
We define now \  $  E_j \in \C^{\infty}(\Sbreve')$ ($j=1,2,3$)  \ supported on
\ $\disjun_{p\in L} \, \Kbreve'_p $\    
by taking $\forall p\in L$ 
on $\Kbreve'_p$      
\begin{equation} 
\label{EE}
\begin{gathered} 
E_1 \, := \, \psihat_2 \LchiK f'_\avg, 
\quad  
E_2 \, := \, \psihat_2 \LchiK f'_\osc, 
\quad  
E_3 \, := \, \psihat_2 (\, \LchiM - \LchiK \, )  f',  
\\
\up \, := \, \Rcal_{\Kbr'_p} (E_1+E_2+E_3) 
\, = \, \Rcal_{\Kbr'_p} \big( \psihat_2 \, \LchiM f' \big).      
\end{gathered} 
\end{equation} 
By the above and \ref{dLchiM} we have the decompositions 
on \      $\Sbreve'$ 
\begin{equation} 
\label{EEu}
\begin{gathered} 
\psihat_2 \, \rho^2 \, \LcalM f' = 
\psihat_2 \, \LchiM f' = E_1+E_2+E_3, 
\qquad f \, = \, f' - \psihat_1 \, \up,   
\\ 
\tfrac{|A|^2_\Mbreve+2}2 \LhM   f   \, = \,  
\LcalM f \, = \, (1-\psihat_2) \, \LcalM \ubreve \, + \, [ \psihat_1 , \LcalM ] \, 
\up.     
\end{gathered} 
\end{equation}

We derive now the estimates we need. 
By \ref{Eugr} and by assuming \  $\alpha< 1/180$, \  
and by \ref{LKh}\ref{LKhj},\ref{LKhf},  we conclude that 
\  $\forall p\in L$ \ 
\begin{equation} 
\label{EugrK} 
\begin{gathered} 
\| \, \LchiM             \ubreve \, \|_{1,0,2; \Xbreve_p^{-1}\Mbreve_{\gr} } \, < \, \taubreve^{14/9},    \qquad  
\\
\| \, \LchiM             \ubreve  - \LchiK             \ubreve \, \|_{1,0,2; \Xbreve_p^{-1}\Mbreve_{\gr} } \, < \, 
C \tau_p |\log \tau_p| . 
\end{gathered} 
\end{equation} 
$E_1$ and $E_2$ are supported on \    $\disjun_{p \in L} \Abreve''_p$  \     
and $E_3$ is supported on \    $\disjun_{p \in L} \Kbreve''_p$  \     
where \    $\Abreve''_p := \Kbr_{p, \rho\in [ \tau_p^{2\alpha}, 3\tau_p^{2\alpha} ]}$   \     
and \    $\Kbreve''_p := \Sbreve' \cap \Kbr_{p, \rho \le                    3\tau_p^{2\alpha} }$.   \     
Moreover \  $\phiext - \ubreve_\avg$ \  satisfies the ODE \    $\LchiK \, ( \phiext - \ubreve_\avg ) = - \LchiK \ubreve_\avg  $ \      
with vanishing initial data by \ref{Ephiext}, 
and by \ref{EE} and \ref{Efp} 
\  $E_1 \, := \, \psihat_2 \LchiK \big( \ubreve_\avg + \psihat_3  \, ( \phiext - \ubreve_\avg ) \big) $,  \ 
\  $E_2 \, := \, \psihat_2 \LchiK \big( \ubreve_\osc \, (1 - \psihat_3)  \, \big) $. \ 
Therefore using 
also   \ref{LKh}\ref{LKhf} for $E_2$ and 
\ref{EugrK} for $E_3$ we conclude that 
\begin{equation} 
\label{EEj}
\begin{gathered} 
\| \phiext - \ubreve_\avg \, : \, C^3(\Abreve''_p ) \| \, \le \, C \, \tau_p^{1+10\alpha} \, |\log \tau_p|,  
\qquad \forall p\in L, 
\\ 
\| E_1 \, : \, C^{0,\beta}(\Abreve''_p ) \| \, \le \, C \, \tau_p^{1+10\alpha} \, |\log \tau_p|  
\qquad \forall p\in L, 
\\ 
\| E_2 \, : \, C^{0,\beta}(\Abreve''_p ) \| \, \le \, C \, \tau_p^{1+5\alpha\gamma}  \,    |\log \tau_p|  
\qquad \forall p\in L, 
\\ 
\| E_3 \, : \, C^{0,\beta}(\Kbreve''_p ) \| \, \le \, C \, \tau_p^{1+10\alpha} \, |\log \tau_p|  
\qquad \forall p\in L. 
\end{gathered} 
\end{equation} 
Clearly \ $ \, [ \psihat_1 , \LcalM ] \, \up =      \, \rho^{-2} [ \psihat_1 , \LchiM ] \, \up $ \       
is supported on $\Abreve'_L := \disjun_{p \in L} \Abreve'_p$  \     
and by \ref{EEj}, \ref{LKh}\ref{LKh3}, \ref{Ez}, and \ref{EEu} we conclude that 
$$
\|  [ \psihat_1 , \LcalM ] \, \up \, : \, C^0 (\Abreve') \| \, \le \, \taubreve^{1+ 5\alpha}.
$$ 
By \ref{Eugr} and \ref{EEu} we conclude then that 
$$
\|  \LhM           f              \, : \, L^2 (\Sbreve',h ) \| \, \le \, 2\taubreve^{1+ 5\alpha}.
$$ 
Since clearly \   $\|f: L^2(\Sbreve',h) \| \ge \taubreve$ \    we conclude that there is at least one eigenvalue 
\     $\left| \lambda_j(V_{0+}^{++},{\LcalhD} , \Sbreve'                          ) \right | < \taubreve^{4\alpha} $.  \      
Let   $\lambda'>0$ be the smallest eigenvalue of the geodesic disc in $\Sph^2  $ with boundary  the circle at latitude \ $\xx ( \sroot/3 )$ \ (recall \ref{Esssxx}).  
Using a decomposition of $\Sbreve'$ as usual we see then that 
\     $\lambda_2(V_{0+}^{++},{\LcalhD} , \Sbreve'                          ) > \lambda' >0 $  \      
and the proof is complete. 
\end{proof} 

\begin{notation}[Ground states]   
\label{D:eigenf} 
For smooth compact $\Sbreve\subset \xibreveomo$ we define for use in the rest of this subsection 
the \emph{normalized eigenfunction corresponding to} 
\      $\lambda_1( \, V_{0+}^{++}, \LcalhD, \Sbreve              \, )$    \       
to be the eigenfunction 
\      $f_1 = f_1( \, V_{0+}^{++}, \LcalhD, \Sbreve              \, )$    \       
uniquely characterized by the following 
(recall \ref{D:eigenb} and \ref{D:eig-res}).        
\begin{enumerate}[label=\emph{(\roman*)}]
\item 
\label{D:eigenf0} 
$f_1\in E_{\lambda_1}( \, V_{0+}^{++}, \LcalhD, \Sbreve              \, )$.  
\item 
\label{D:eigenf1} 
$f_1>0$ on $\inter (\Sbreve)$.      
\item 
\label{D:eigenf2} 
$\|f_1: L^2(\Sbreve,h) \| =1$. 
\end{enumerate} 
\end{notation}

\begin{lemma}[Ground states]   
\label{L:3ef} 
There are absolute constants $\cbar,C>0$ independent of $m$ such that 
for \ $\sss_0,\sss_2\in [\sroot/3,\sroot] $ \  and \      $j=0,2$ \   we have the following estimates. 
\begin{enumerate}[label=\emph{(\roman*)}]
\item 
\label{L:3ef1} 
$\lambda_1( \, V_{0+}^{++},{\LcalhD} , \Sbreve[L_j,\sss_j ] \, ) \, < \, C               $ \quad      
and the corresponding normalized eigenfunction (recall \ref{D:eigenf}) 
\  $f' := f_1( \, V_{0+}^{++},{\LcalhD} , \Sbreve[L_j,\sss_j ] \, ) $ \     
satisfies 
$$ 
\| f'_\osc : C^3( \Sbreve[L_j,\sss_j ] \, , \, \chiK )\| \le \tau_j^{3/2}.  
$$
\item 
\label{L:3ef2} 
$4 <   \lambda_2( \, V_{0+}^{++},{\LcalhD} , \Sbreve[L_j,\sss_j ] \, ) $.  
\item 
\label{L:3ef4} 
$1/100 <   \lambda_2( \, V_{0+}^{++},{\LcalhD} , \Sbreve[\Sigma ,\sss_0, \sss_2]  \, ) $. \    
\item 
\label{L:3ef3} 
If \quad 
$\big|  \lambda_1( \, V_{0+}^{++},{\LcalhD} , \Sbreve[\Sigma ,\sss_0, \sss_2]  \, ) \big| \, < \, 1/m^{3/2}        $, \    
then we have the following where 
\     $f'' := f_1( \, V_{0+}^{++},{\LcalhD} , \Sbreve[\Sigma ,\sss_0, \sss_2]  \, )  $ \    
denotes the corresponding normalized eigenfunction (recall \ref{D:eigenf}).  
\begin{equation*} 
\begin{gathered} 
\tfrac\partial{\partial \sss} f''_\avg  \, \Sim^{\cbar} \, 1/m     \qquad \text{at} \quad \partial \Sbreve[L_0,\sss_0 ] \subset \partial \Sbreve[\Sigma ,\sss_0, \sss_2],  
\\ 
\tfrac\partial{\partial \sss} f''_\avg  \, < {\cbar} /\sqrt{m}     \qquad \text{at} \quad \partial \Sbreve[L_2,\sss_2 ] \subset \partial \Sbreve[\Sigma ,\sss_0, \sss_2],  
\\ 
\| \, f''_\osc \, : \, C^3( \,  \Sbreve[L_j,\sss_j+1 ]  \cap   \Sbreve[\Sigma ,\sss_0, \sss_2]  \, , \, \chiK \, ) \, \| \le \tau_j^{3/2} . 
\end{gathered} 
\end{equation*} 
\end{enumerate} 
\end{lemma}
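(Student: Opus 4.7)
The plan is to prove \ref{L:3ef1}--\ref{L:3ef4} by separation of variables in each catenoidal neck (via the rotational coordinate $\vartheta$) combined with comparison of $\LhM$ to its model operators. The key technical inputs are Lemma \ref{LKh} (closeness of $\hM,\chiM,|A|_\Mbreve^2$ to their catenoid counterparts), Lemma \ref{LSh} (closeness of $\LcalM$ to $\LcalS$ on the graphical region), Corollary \ref{CKh} (elliptic solvability on the necks), and the ODE results of Lemmas \ref{Lphie} and \ref{LphiDN}, together with the explicit form of the LD solution $\phi$ provided by \ref{Tep}\ref{Tep7}.

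For \ref{L:3ef1} and \ref{L:3ef2}, on a connected component $\Kbr_{p,\sss_j}\subset\Sbreve[L_j,\sss_j]$ with Dirichlet boundary at $\sss=\pm\sss_j$, decompose any $f\in V_{0+}^{++}|_{\Kbr_{p,\sss_j}}$ as $f=f_{\avg}+f_{\osc}$ in Fourier modes of $\vartheta$ (recall \ref{Ecyl}). By \ref{LKh}\ref{LKhj} the operator $\LchiM$ is a $\tau_j^{2\alpha}$-perturbation of $\LchiK$, so each non-trivial Fourier mode $f_{(k\perp)}$ satisfies a Sturm--Liouville ODE whose Dirichlet eigenvalue is $\geq k^2-O(1)$; the $V_{0+}^{++}$ symmetries at $p$ force $k\geq 2$, so these modes contribute eigenvalues $\gg 4$. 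The rotationally symmetric part $f_{\avg}$ satisfies \ref{ELchiODE} with a perturbation proportional to $\lambda$, and ODE shooting using $\phio,\phie$ (together with the sign properties of \ref{Lphie}) yields an $O(1)$ absolute constant for the first Dirichlet eigenvalue on $[-\sss_j,\sss_j]$ and already an eigenvalue $>4$ for the second. The oscillation estimate on $f'_{\osc}$ then follows from the mode-wise eigenvalue gap via \ref{CKh} applied to the projections onto the oscillating subspace.

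For \ref{L:3ef4}, decompose $\Sbreve[\Sigma,\sss_0,\sss_2]$ into its two side-symmetric halves (identified by $\refl_{\Sigma^0}$ on $V^+$); each half is, in the metric $\hM$, close to $(\Sigma^0\setminus\bigsqcup_p D_p,g_{\Sigma^0})$ with discs of radius $O(\tau_p)$ removed, and $\LhM$ is close to $\Delta_{\Sigma^0}+2$ by \ref{dgtSS}\ref{dgtSS1}\ref{dgtSS1c} and \ref{LKh}\ref{LKh5}. Restricted to $V_{0+}^{++}$, the limit operator acts on functions of $\xx$ even in $\xx$ and has eigenvalues $-2,4,18,\ldots$ (Legendre polynomials $P_{2\ell}(\sin\xx)$). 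A min-max argument using a test function built from a cutoff of $P_2(\sin\xx)$ away from the bridge boundaries gives $\lambda_2>4-o(1)>1/100$ for $m$ large.

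The main obstacle is \ref{L:3ef3}. Under the hypothesis $|\lambda_1|<1/m^{3/2}$, the normalized eigenfunction $f''$ is an approximate Jacobi solution. Writing $f''=f''_{\avg}+f''_{\osc}$ with respect to the rotation at each bridge point $p$, the oscillating part inherits the $\tau_j^{3/2}$ bound from the mode gap as in \ref{L:3ef1}. The rotationally symmetric part $f''_{\avg}$ solves an ODE in each neck that is a small perturbation of \ref{ELchirot}, and by \ref{LphiDN} we may write $f''_{\avg}=A_D\,\phien[\lambda,\sss_j]+A'_D\,\phisn[\lambda,\sss_j]$ and extend into the graphical region where \ref{LphiDN}\ref{LphiDN6} identifies it with a perturbation of the Jacobi combination $\lambda_\ev\phie+\lambda_\od\phio$. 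The coefficients are pinned down by matching to the explicit LD solution $\phi=\tfrac{m}{2}(\tau_0+\mubreve_0)\sin|\xx|-(\tau_2+\mubreve_2)\phie$ of \ref{Tep}\ref{Tep7}, and the flux formulas of \ref{Lphie}\ref{Lphie5} together with the scalings $\tau_0\sim e^{-\sqrt{m/2}}$ and $\tau_2/\tau_0\sim\sqrt{m/2}$ from \ref{Etau02} deliver the two different asymptotic rates: the factor $m/2$ in front of $\sin|\xx|$ at the equator bridges yields $\partial_\sss f''_{\avg}\sim 1/m$ at $\partial\Sbreve[L_0,\sss_0]$, whereas the absence of this enhancement at the poles produces the $<\cbar/\sqrt{m}$ bound at $\partial\Sbreve[L_2,\sss_2]$. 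The hardest bookkeeping is isolating the $\lambda$-dependence in \ref{LphiDN} and verifying that errors of order $\tau_j^{1+\alpha/4}$ (from \ref{Tep}\ref{Tep8}--\ref{Tep9}) are negligible compared to the asserted leading-order rates.
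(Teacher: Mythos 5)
Your plan for \ref{L:3ef1}, \ref{L:3ef2}, and \ref{L:3ef4} is broadly in the same spirit as the paper's, though the paper compares $\LhM$ to $\Delta_{\gnuK}+2$ directly via \ref{LKh}\ref{LKh1} (which is the cleanest model since the core of the bridge is $\gS$-close to a round sphere), whereas you work through the $\LchiK$ picture via \ref{LKh}\ref{LKhj}. That route requires you to convert eigenvalues between $\LhM$ and $\LchiM$, since these are different conformal rescalings of $\LcalM$ (recall \ref{dLchiM}); on $\Sbreve[L_j,\sss_j]$ with $\sss_j\le\sroot$ all three metrics are uniformly comparable so this is fixable, but it must be said. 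Also note that for \ref{L:3ef4} the paper only claims $\lambda_2>1/100$; your $\lambda_2>4-o(1)$ may be overreaching because $\Sbreve[\Sigma,\sss_0,\sss_2]$ includes the necks and outer bridge caps, not just the graphical region.

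The serious gap is in \ref{L:3ef3}. Your central step — ``the coefficients are pinned down by matching to the explicit LD solution $\phi$'' — does not work, for two independent reasons. First, the normalized eigenfunction $f''$ and the LD solution $\phi$ are different solutions of (approximately) the same ODE with \emph{incompatible data}: $\phi$ has logarithmic singularities at every $p\in L$ (Definition~\ref{dLD}), whereas $f''$ is bounded and satisfies the Dirichlet condition $f''=0$ on $\partial\Sbreve[L_j,\sss_j]$. They have different shapes near the bridges and there is no reason their ratio $A_D/A'_D$ should agree. Second, and more fundamentally, the conclusions of \ref{L:3ef3} are not scale-invariant: the asserted sizes $\partial_\sss f''_\avg\Sim^\cbar 1/m$ at $\partial\Sbreve[L_0,\sss_0]$ and $\partial_\sss f''_\avg<\cbar/\sqrt m$ at $\partial\Sbreve[L_2,\sss_2]$ only make sense because $f''$ is the normalized eigenfunction of \ref{D:eigenf}, with $\|f'':L^2(\Sbreve,h)\|=1$. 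Your argument never invokes this normalization, so it cannot determine the overall amplitude; matching to $\phi$ would at best determine the shape (the ratio of the $\phie$- and $\phio$-parts), not the size. The paper closes precisely this gap: it tracks the logarithmic derivative $\partial_\sss\log f''_\avg$ from the Dirichlet circle (where it is $\Sim 1$ at $\sss=\sbar$) through the bridge (where by the ODE it decays to $\Sim m^{-1/2}$ at $\partial\Kbr_{\pp_0}$), introduces $A:=f''_\avg|_{\partial\Kbr_{\pp_0}}$, propagates into the graphical region to get \eqref{EfA} and the analogous statements at the polar bridge, estimates $\|f'':L^2(\Sbreve_{L_0})\|^2\Sim A^2$ and $\|f'':L^2(\SbreveS)\|^2\Sim mA^2$, and only then uses the $L^2$ normalization to deduce $A\Sim 1/\sqrt m$ and hence the derivative bounds. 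Finally, a minor point: $\phien[\lambda,\sss_j]$ and $\phisn[\lambda,\sss_j]$ in \ref{D:phiDN} require $\stil_1\ge 2\sroot$, but you invoke them with $\sss_j\in[\sroot/3,\sroot]$, outside the stated range.
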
 

\begin{proof} 
\ref{L:3ef1} 
We consider the operator \ $\Lcal_{\gnuK}^D = (\Delta_{\gnuK} +2, \Bcal)$ \ with $\Bcal$ restriction to the boundary, that is specifying Dirichlet data. 
The statement clearly holds for 
\  $\lambda_1( \, V_{0+}^{++},\Lcal_{\gnuK}^D , \Sbreve[L_j,\sss_j ] \, ) \, < \, C               $ \     
and its corresponding normalized eigenfunction which is rotationally invariant. 
By  comparing then using \ref{LKh}\ref{LKh1} in order to control the error we complete the proof.  

\ref{L:3ef2} 
\  $4  \, < \, \lambda_2( \, V_{0+}^{++},\Lcal_{\gnuK}^D , \Sbreve[L_j,\sss_j ] \, ) $ \     
by monotonicity of domain and comparing with the corresponding eigenvalue of the hemisphere. 
We complete the proof by using \ref{LKh}\ref{LKh1} as for \ref{L:3ef1}.  

\ref{L:3ef4} 
We use the subdivision with disjoint interiors 
$$
\Sbreve[\Sigma ,\sss_0, \sss_2] \, = \, \SbreveS \, \cup \, ( \Sbreve[\Sigma ,\sss_0, \sss_2] \cap \Sbreve_{L_0} )  \, \cup \, ( \Sbreve[\Sigma ,\sss_0, \sss_2] \cap \Sbreve_{L_2} )  
$$ 
and we impose Neumann conditions on the new boundaries (that is $\partial \SbreveS$)  so that we can compare with the corresponding eigenavalues. 
As usual then we can approximate by replacing with the operator \ $\Delta_{\Sph^2}+2$ \  on spherical caps or $\Sigma$. 
Using also \ref{LapprS}\ref{LapprS1} the eigenvalue bound follows. 

\ref{L:3ef3} 
In the rest of the proof we simplify the notation by using \    $S:= \Sbreve[\Sigma ,\sss_0, \sss_2]$. \   
By \ref{dLchi} and \ref{LKh}\ref{LKhj} we conclude that \ $f''_\avg$ \  satisfies approximately the ODE \ $\LchiK f''_\avg =0$. 
Comparing with \ $\phio$ \  (recall \ref{Lphie}) and assuming \  $\delta'$ \  small enough, 
we conclude that \quad $\frac{\partial}{\partial \sss} \log f''_\avg(\sbar) \Sim 1$, \quad  where $\sbar$ is chosen large enough independently of $m$,  
so that we can conclude by the ODE again that the change of 
\quad $\frac{\partial}{\partial \sss} f''_\avg$ \quad  is appropriately small to ensure that 
at \     $\partial \Kbr_{\pp_0}$ \  we have again \quad 
$ \frac{\partial}{\partial \sss} \log f''_\avg(\sbar) \Sim m^{-1/2}$. \quad 
Let 
\quad $A:= \left. f''_\avg \right|_{\partial  \Kbr_{\pp_0}}$, \quad  that is the average of the eigenfunction at the boundary of an equatorial bridge. 
By arguments similar to arguments in the proof of \ref{Pcos4} (but simpler) we can establish that at \  $\Mbreve_{\xx=1/m}$ \ 
(using the notation in \ref{Dcyl} and \ref{Esssxx}) we have 
\begin{equation} 
\label{EfA}  
\left. f''_\avg \right|_{\xx=1/m} \Sim A, \qquad 
\left. \tfrac{\partial}{\partial \sss} f''_\avg \right|_{\xx=1/m} \,         \Sim \, m^{1/2} A. 
\end{equation} 
Similarly we can prove that 
$$
\left. f''_\avg \right|_{\xx=\pi/2 - 1/m} \Sim B, \qquad 
\left. \tfrac{\partial}{\partial \sss} f''_\avg \right|_{\xx=\pi/2 - 1/m} \,         \Sim \, - m^{-1/2} B, 
$$
where 
\quad $B:= \left. f''_\avg \right|_{\partial  \Kbr_{\pp^0}}$, \quad  
that is the average of the eigenfunction at the boundary of a polar bridge.

Let 
$\lambda := \lambda_1( \, V_{0+}^{++},{\LcalhD} , \Sbreve[\Sigma ,\sss_0, \sss_2]  \, ) $, \    
and \ $\stil_0:= \left. \sss \right|_{\xx=1/m} = \tfrac1m+O(1/m^2)$. \ 
We fix \  $\stil_1$  \   large enough so applying \ref{LphiDN} and using \ref{EfA} we have for $m$ large enough 
\begin{multline*} 
\left. f''_\avg \right|_{\xx=1/m} \, \phiN[\lambda,\stil_0 ] \, + \, 
\left. \tfrac{\partial}{\partial \sss} f''_\avg \right|_{\xx=1/m} \,         \phiD[\lambda,\stil_0 ] 
\\ = \, 
A_1 \, \phien[\lambda,\stil_1 ] \, + \, A_2  \, \phisn[\lambda,\stil_1], 
\end{multline*} 
with \ $A_1\Sim A\sqrt{m}$ \  and \ $A_2\Sim        -A$. \    
Hence we also have 
(recall \ref{DSMbr}) 
\quad $B< \cbar A \sqrt{m}$, \quad       
$\|f'':L^2(\Sbreve_{L_0})\|^2\Sim m A^2/m = A^2$,  \quad  
$\|f'':L^2(\Sbreve_{\Sigma})\|^2\Sim m A^2$,  \quad  
and \quad 
$\|f'':L^2(\Sbreve_{L_0})\|^2\, < \, \cbar'  A^2 $.  \quad  
By the normalization of \ $f''$ \ we conclude that \  $A\Sim1/\sqrt{m}$ \  and this implies the result. 
\end{proof}

\begin{cor}[Derivatives of first eigenvalues]
\label{L:3ed} 
\quad 
There is an absolute 
\linebreak 
constant $\cbar'>0$ independent of $m$ such that 
for \ $\sss_0,\sss_2\in [\sroot/3,\sroot] $ \  and $j=0,2$ \   we have the following.     ` 
\begin{enumerate}[label=\emph{(\roman*)}]
\item 
\label{L:3ed1} 
$\frac{d}{d\sss_0}\lambda_1( \, V_{0+}^{++},{\LcalhD} , \Sbreve[L_j,\sss_j ] \, ) \Sim^\cbarp -1  $.    
\item 
\label{L:3ed2} 
$ \frac{\partial}{\partial\sss_0}\lambda_1( \, V_{0+}^{++},{\Lcal}^D, \Sbreve[\Sigma ,\sss_0, \sss_2]  \, ) \Sim^\cbarp 1/m   $. 
\item 
\label{L:3ed4} 
$0 \,  <   \, \frac{\partial}{\partial\sss_j}\lambda_1( \, V_{0+}^{++},{\LcalhD} , \Sbreve[\Sigma ,\sss_0, \sss_2]   \, ) \, < \,   \cbarp/m   $.   
\item 
\label{L:3ed3} 
$ \big|  \, \lambda_1( \, V_{0+}^{++},{\LcalhD} , \Sbreve[\Sigma ,\sss_0, \sss_2]   \, ) \, \big| \, < \,   \cbarp/m   $.   
\end{enumerate} 
\end{cor}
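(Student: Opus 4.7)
The plan is to combine the Hadamard domain-variation formula for first Dirichlet eigenvalues with the precise ground-state descriptions in Lemma~\ref{L:3ef}. For a smooth one-parameter family of domains $\Omega_t \subset \Mbreve$ whose boundary moves with outward $h$-normal velocity $v_n := V_t\cdot \eta_h$, if the first $\LcalhD$-eigenvalue $\lambda_1(t)$ is simple with $L^2(h)$-normalized ground state $f_t$, then
\begin{equation*}
\frac{d\lambda_1}{dt} \, = \, -\int_{\partial\Omega_t}|\partial_{\eta_h}f_t|_h^{\,2}\,v_n\,d\sigma_h.
\end{equation*}
Throughout $\sss_j \in [\sroot/3,\sroot]$ the metrics $h$ and $\chiK$ are comparable up to absolute constants on the relevant region by \ref{LKh}\ref{LKh5}, so that the boundary $h$-length and $v_n$-factor contribute only absolute-constant multipliers. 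All ground states below lie in $V^{++}_{0+}$, hence are rotationally invariant up to the small oscillation controlled by \ref{L:3ef}.

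For \ref{L:3ed1}, $\partial\Sbreve[L_j,\sss_j]$ consists of $2|L_j|$ circles at $\sss=\pm\sss_j$ each of $h$-length $\Sim 1$. By \ref{L:3ef}\ref{L:3ef1} the normalized eigenfunction differs from its rotationally invariant part by $O(\tau_j^{3/2})$, and that radial part is an $O(\tau_j)$-perturbation of $\phie$ (as follows from ODE comparison via \ref{LphiDN} and the metric proximity in \ref{LKh}\ref{LKh5}); consequently $|\partial_\sss f'|$ at the boundary is pinched between two positive absolute constants uniformly in $\sss_j\in[\sroot/3,\sroot]$. Hadamard then yields \ref{L:3ed1}, with the negative sign recording that enlarging $\Sbreve[L_j,\sss_j]$ lowers the first eigenvalue.

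For \ref{L:3ed2}--\ref{L:3ed4}, apply Hadamard to $\Sbreve[\Sigma,\sss_0,\sss_2]$, whose boundary has $|L_0|=m$ equatorial circles at $\sss=\sss_0$ and two polar circles at $\sss=\sss_2$. Since $\Sbreve[\Sigma,\sss_0,\sss_2]$ \emph{shrinks} as either $\sss_j$ grows, $v_n<0$ on the moving boundary and $\partial_{\sss_j}\lambda_1>0$, giving the claimed signs. Under the hypothesis $|\lambda_1|<1/m^{3/2}$ of \ref{L:3ef}\ref{L:3ef3}: at each equatorial circle $|\partial_\sss f''_\avg|\Sim 1/m$ with oscillation $O(\tau_0^{3/2})$, contributing $\Sim 1/m^{2}$ to the integral, so summation over the $m$ circles produces $\partial_{\sss_0}\lambda_1\Sim 1/m$, proving \ref{L:3ed2}; at each polar circle $|\partial_\sss f''_\avg|<\cbarp/\sqrt{m}$, so the two polar circles contribute $\le 2\cbarp^{2}/m$, proving \ref{L:3ed4}.

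For \ref{L:3ed3}, the plan is a continuity-bootstrap argument starting from the specific pair $(\sbar_0,\sbar_2)\in(\sroot/2,3\sroot/5)^{2}$ produced by \ref{L:3e0}\ref{L:3e02}, where $|\lambda_1|<\taubreve^{2\alpha}$ is exponentially small in $\sqrt{m}$. The set $S:=\{(\sss_0,\sss_2)\in[\sroot/3,\sroot]^{2}:|\lambda_1|<c\}$ for a suitable $m$-independent constant $c>0$ is open, non-empty, and on it the derivative bounds \ref{L:3ed4} hold; integrating along straight paths of length $O(1)$ yields $|\lambda_1|\le\taubreve^{2\alpha}+\cbarp/m$, which stays well below $c$ for $m$ large, so $S=[\sroot/3,\sroot]^{2}$, giving \ref{L:3ed3}. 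The main obstacle here is that the stated hypothesis of \ref{L:3ef}\ref{L:3ef3} is $|\lambda_1|<1/m^{-3/2}$, which is \emph{stronger} than the $O(1/m)$ change in $\lambda_1$ one expects along the path; the resolution is to observe that the proof of \ref{L:3ef}\ref{L:3ef3} uses only ODE comparison with the solutions $\phiD,\phiN,\phien,\phisn$ from \ref{LphiDN}, whose smooth dependence on $\lambda$ is valid on the range $|\lambda|\le 1/100$, so the same estimates (and in particular the derivative lower bound in \ref{L:3ed2} once \ref{L:3ed3} is known) persist under the weaker hypothesis $|\lambda_1|<c$ needed to close the bootstrap.
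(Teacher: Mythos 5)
Your method is exactly the paper's: Hadamard's first‑variation formula for the Dirichlet eigenvalue of $\LcalhD$, boundary‑derivative inputs from Lemma~\ref{L:3ef}, and a bootstrap from the pair $(\sbar_0,\sbar_2)$ supplied by \ref{L:3e0}\ref{L:3e02}. Items~\ref{L:3ed1}--\ref{L:3ed4} are handled correctly in substance (in \ref{L:3ed1} you simultaneously normalize $f'$ to unit $L^2(h)$ norm and assert $|\partial_\sss f'|\Sim 1$ at the boundary, whereas with that normalization the pointwise size at each of the $2|L_j|$ boundary circles is $\Sim |L_j|^{-1/2}$; the circle count compensates and the final $\Sim -1$ is still right, but the intermediate statement is internally inconsistent).

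There is, however, a genuine gap in your treatment of \ref{L:3ed3}. You correctly flag that the derivative bounds \ref{L:3ed2}--\ref{L:3ed4} rest on \ref{L:3ef}\ref{L:3ef3}, whose stated hypothesis $|\lambda_1|<m^{-3/2}$ is \emph{stronger} than the target $|\lambda_1|<\cbarp/m$, so the continuity bootstrap cannot close as written. But your proposed fix --- that the proof of \ref{L:3ef}\ref{L:3ef3} already works under the fixed, $m$‑independent threshold $|\lambda|\le 1/100$ --- does not hold up. Tracing how \ref{LphiDN}\ref{LphiDN7} enters that proof, the coefficient of $\phisn$ is $A_2 = -A\,\partial_\sss\phie(\stil_1) + A\cdot O\big(m^{1/2}(|\lambda|+\stil_0)\big)$ with $\stil_0 = O(1/m)$, and the needed conclusion $A_2\Sim -A$ requires $m^{1/2}|\lambda| = o(1)$, i.e. $|\lambda| = o(m^{-1/2})$; a fixed threshold like $1/100$ fails by a factor of $\sqrt m$. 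The hypothesis that actually closes the bootstrap is $|\lambda_1| < c/m$ for a suitable absolute $c$ exceeding $\cbarp$ times the diameter of $[\sroot/3,\sroot]^2$: starting from the exponentially small value at $(\sbar_0,\sbar_2)$ and integrating \ref{L:3ed4} along a path of length $O(1)$ keeps $|\lambda_1|\le\cbarp/m$, which is $o(m^{-1/2})$, so the open‑closed argument does work --- but only with the $O(1/m)$ threshold, not the $O(1)$ one you propose. The paper's own proof is silent on this subtlety, so you were right to surface it; you just identified the wrong relaxed hypothesis.
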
 

\begin{proof} 
By the classical Hadamard formula (see for example \cite{grinfeld}),  
for a simple eigenvalue $\lambda$ with corresponding eigenfunction $f$ on a domain $\Omega$ depending smoothly on $s$, we have     
$$
\frac{\partial\lambda}{\partial s} 
\, = \, 
\int_{\partial\Omega} \big({\tfrac\partial{\partial s}\partial\Omega}\big) \, |\vec\eta f |^2 \, d\sigma_h \  \bigg/ \int_\Omega f^2 dV_h , 
$$ 
where ${{\frac\partial{\partial s}\partial\Omega}}$ denotes the normal velocity of $\partial\Omega$, 
$\vec{\eta}$ is the unit normal to $\partial\Omega$, and $d\sigma_h$, $dV_h$ denote the induced measures from the metric $h$. 
Notice that the numerator of the right hand side is conformally invariant. 
It is straightforward then to conclude the proof by referring to \ref{L:3e0} and \ref{L:3ef}. 
\end{proof}

\begin{prop}[The third eigenfunction in $V_{0+}^{+\pm}$]    
\label{P3rd} 
There is a constant $\cbar_0>0$ independent of $m$ such that 
\qquad 
$$ 
\lambda_3(V_{0+}^{+-},\Lcalh) > \lambda_3(V_{0+}^{++},\Lcalh) \Sim^{\cbar_0} 1/m .  
$$ 
\end{prop}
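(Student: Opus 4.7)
The plan is to decompose $\Mbreve=\Sbreve[\Sigma,\sbar_0,\sbar_2]\cup\Sbreve[L_0,\sbar_0]\cup\Sbreve[L_2,\sbar_2]$ as in Lemmas \ref{L:3e0}--\ref{L:3ed}, match Dirichlet eigenvalues across boundaries to obtain the sharp upper bound $\lambda_3(V_{0+}^{++},\Lcalh)\le C/m$, and then use a matching-matrix formalism to establish the matching lower bound $\lambda_3(V_{0+}^{++},\Lcalh)\ge\cbar_0/m$ and the strict inequality by exploiting the vanishing of the equatorial $\phie$-scaling contribution in $V^{+-}_{0+}$.

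For the upper bound, writing
$$\lambda^{L_j}(\sbar_j):=\lambda_1(V_{0+}^{++},\LcalhD,\Sbreve[L_j,\sbar_j]),\qquad\lambda^{\Sigma}(\sbar_0,\sbar_2):=\lambda_1(V_{0+}^{++},\LcalhD,\Sbreve[\Sigma,\sbar_0,\sbar_2]),$$
I would invoke Corollary \ref{L:3ed}: the slopes satisfy $\partial_{\sbar_j}\lambda^{L_j}\Sim^{\cbar'}-1$ and $\partial_{\sbar_j}\lambda^{\Sigma}\in(0,\cbar'/m)$, with $|\lambda^{\Sigma}|<\cbar'/m$. The implicit function theorem applied to the system
$$\lambda^{L_0}(\sbar_0)=\lambda^{L_2}(\sbar_2)=\lambda^{\Sigma}(\sbar_0,\sbar_2)=:\lambda^{*}$$
admits a unique solution with $\sbar_j-\sroot=O(1/m)$ and $\lambda^{*}=O(1/m)$. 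The three corresponding Dirichlet ground-state eigenfunctions, extended by zero, yield three pairwise $L^2(\hM)$-orthogonal elements of $V_{0+}^{++}$ with Rayleigh quotient $\le\lambda^{*}$, whence the min-max principle gives $\lambda_3(V_{0+}^{++},\Lcalh)\le C/m$.

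For the lower bound and the strict comparison, any eigenfunction $f$ of $\Lcalh$ in $V_{0+}^{\pm-}$ with small eigenvalue $\lambda$ admits, by Propositions \ref{P:appr} and Lemmas \ref{L:appr}, \ref{LapprS}, a bridge expansion $f\approx a_p\phie(\sss)$ (plus oscillatory and constant-mode corrections), with $a_p$ depending only on the polar/equatorial class of $p$ by $\mu=0$ symmetry. The extension to $\Sbreve[\Sigma,\sbar_0,\sbar_2]$ must solve $(\Lcal_{\Sigma^0}+\lambda)\phi\approx 2\pi\sum_{p\in L}a_p\delta_p$ modulo the three-dimensional kernel of $\Lcal_{\Sigma^0}=\Delta_{\Sph^2}+2$ (absorbed by the $\mubreve_0,\mubreve_2$ corrections of Theorem \ref{Tep}\ref{Tep7}); matching the bridge and base fluxes via Lemma \ref{Lphie}\ref{Lphie5} and the perturbation expansions of Lemma \ref{LphiDN} yields a linear system $M(\lambda)(a_p,a_e)^{T}=0$ in $V_{0+}^{++}$ and a scalar equation $M^{-}(\lambda)=0$ in $V_{0+}^{+-}$ (where $a_e=0$ is forced since equatorial $\phie$ is rotationally invariant and lies in $V^{*+}$). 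The lowest positive root of $\det M(\lambda)=0$ is $\Sim^{\cbar_0}1/m$, and by a Schur-complement computation it is strictly less than the positive root of $M^{-}(\lambda)=0$ due to the nontrivial off-diagonal base-Green's coupling $M_{pe}$ in $M(\lambda)$. Combined with Lemma \ref{Lapp}\ref{Lapp1}, \ref{Lapp7} identifying the lower eigenvalues, the two roots are precisely $\lambda_3(V_{0+}^{++},\Lcalh)$ and $\lambda_3(V_{0+}^{+-},\Lcalh)$.

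The main obstacle will be the quantitative determination of the entries of $M(\lambda)$: both the absorption of the kernel of $\Lcal_{\Sigma^0}$ by the $\mubreve_j$ parameters and the logarithmic flux asymptotics of Lemma \ref{LphiDN}\ref{LphiDN6}--\ref{LphiDN7} must be tracked uniformly in $m$, so that $M(\lambda)$ has the claimed expansion $M_0+\lambda M_1+o(\lambda)$ with $M_0$ of rank exactly one (kernel generating the symmetric scaling), $M_1$ positive definite, and the off-diagonal coupling $M_{pe}$ of definite sign at leading order; the strict inequality and the sharp $\Sim^{\cbar_0}1/m$ rate both hinge on this bookkeeping.
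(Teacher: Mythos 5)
Your upper‐bound argument is essentially the paper's: you locate $(\sbar_0,\sbar_2)$ matching the three Dirichlet eigenvalues using Lemmas \ref{L:3e0}--\ref{L:3ed}. But the paper does not then split the work into a min-max upper bound plus an independent lower bound; instead, it glues the three matched Dirichlet ground states (removing oscillatory boundary layers via cutoffs) into a single smooth approximate $\Lcalh$-eigenfunction on $\Mbreve$, controls the error by Lemma \ref{L:3ef}, and concludes there is an actual eigenvalue $\Sim^{\cbar}1/m$; then \ref{Lapp}\ref{Lapp1} (which already puts $\lambda_1=-2$, $\lambda_2<-2+\epsilon$ and $\lambda_4>4-\epsilon$) \emph{pins} that eigenvalue to be $\lambda_3$, so both $\lambda_3\le C/m$ and $\lambda_3\ge\cbar_0/m$ come out at once. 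This is substantially lighter than the matching-matrix formalism you propose for the lower bound; your route is not wrong in spirit, but it duplicates the difficulty that the paper avoids. The strict inequality $\lambda_3(V^{+-}_{0+})>\lambda_3(V^{++}_{0+})$ is dispatched in the paper by the variational characterization on a fundamental domain for the shared symmetries: $V^{++}_{0+}$ corresponds to Neumann conditions on the $\Sigma^{\pi/2}$ boundary piece and $V^{+-}_{0+}$ to Dirichlet, giving the comparison directly. Your Schur-complement argument on $M(\lambda)$ could in principle also deliver strictness, but it hinges on the claimed rank-one structure of $M_0$ and on a definite sign for the Green's coupling $M_{pe}$, both of which would need to be established from scratch, whereas the Dirichlet/Neumann comparison requires nothing beyond the fundamental-domain description.

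Two smaller issues: (1) the kernel of $\Lcal_{\Sigma^0}$ meets $V^{++}_{0+}$ trivially (the first harmonics $x^1,x^2$ have $\yy$-frequency $1$, and $x^3=\sin\xx$ lies in $V^{*-}$), so there is no Fredholm obstruction to absorb and the reference to $\mubreve_0,\mubreve_2$, which belong to the LD construction in Theorem \ref{Tep}\ref{Tep7} rather than to the eigenvalue problem, does not belong here; (2) $V_{0+}^{\pm-}$ should read $V_{0+}^{+\pm}$.
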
 

\begin{proof} 
We define for \ $\sss_0,\sss_2\in [\sroot/3,\sroot] $ \ 
\begin{equation*}
\begin{aligned} 
\Lambda_0( \sss_0, \sss_2 ) 
\, &:= \, \lambda_1( \, V_{0+}^{++},{\LcalhD} , \Sbreve[\Sigma , \sss_0, \sss_2 ]  \, ) \, - \, \lambda_1( \, V_{0+}^{++},{\LcalhD} , \Sbreve[L_0,\sss_0 ] \, ),  
\\ 
\Lambda_2( \sss_0, \sss_2 ) 
\, &:= \, \lambda_1( \, V_{0+}^{++},{\LcalhD} , \Sbreve[\Sigma ,\sss_0, \sss_2]  \, ) \, - \, \lambda_1( \, V_{0+}^{++},{\LcalhD} , \Sbreve[L_2,\sss_2 ] \, ).  
\end{aligned} 
\end{equation*} 
From \ref{L:3e0} and \ref{L:3ed} we conclude that 
$\lambda_1( \, V_{0+}^{++},{\LcalhD} , \Sbreve[\Sigma ,\sroot, \sroot]   \, ) \Sim 1/m   $, 
and hence 
\begin{equation*}
\begin{aligned} 
\Lambda_0( \sroot , \sroot  ) \Sim^\cbar 1/m ,  
\qquad  \quad 
\Lambda_2( \sroot , \sroot  ) \Sim^\cbar 1/m .  
\end{aligned} 
\end{equation*} 
By \ref{L:3ed} we have 
\begin{equation*}
\begin{aligned} 
&\frac{\partial \Lambda_0 }{\partial \sss_0}( \sss_0, \sss_2 ) \, \Sim -1 ,  
\qquad \qquad \quad 
&&0  <  \frac{\partial \Lambda_0 }{\partial \sss_2}( \sss_0, \sss_2 ) \,  <  \frac Cm , & 
\\
&0  <  \frac{\partial \Lambda_2 }{\partial \sss_0}( \sss_0, \sss_2 ) \,  <  \frac Cm ,  
\qquad \qquad 
&& \frac{\partial \Lambda_2 }{\partial \sss_2}( \sss_0, \sss_2 ) \, \Sim -1 . & 
\end{aligned} 
\end{equation*} 
We conclude then that there is $(\sss''_0,\sss''_2)$ such that 
$$ 
\Lambda_0( \sss''_0, \sss''_2 ) \, = \, \Lambda_2( \sss''_0, \sss''_2 ) \, = \, 0,   
$$ 
and hence using \ref{L:3ed} again we conclude that 
\begin{multline}
\label{E3rd-pr2} 
\lambda_1(V_{0+}^{++},{\LcalhD} , \Sbreve[L_0,\sss''_0] )  \, = \, 
\lambda_1(V_{0+}^{++},{\LcalhD} , \Sbreve[\Sigma ,\sss''_0, \sss''_2] ) \\ = \, 
\lambda_1(V_{0+}^{++},{\LcalhD} , \Sbreve[L_2,\sss''_2] ) \Sim^{\cbar-1} 1/m . 
\end{multline} 
Using cutoff functions we remove the oscillatory part close to the boundaries matching this way the eigenfunctions at the boundaries 
and constructing this way a smooth approximate eigenfunction of $\Lcal$ on $\Mbreve$. 
We can control the error using \ref{L:3ef} and conclude that there exists an eigenfunction of ${\Lcal}$ in $V_{0+}^{++}$ on $\Mbreve$ whose eigenvalue is $\Sim^{\cbar} 1/m$. 
By \ref{Lapp}\ref{Lapp1} we conclude that this    eigenvalue has to be the third eigenvalue, and hence \ $\lambda_3(V_{0+}^{++},\Lcalh) \Sim^\cbar 1/m $.  

The inequality \  $\lambda_3(V_{0+}^{+-},\Lcalh) > \lambda_3(V_{0+}^{++},\Lcalh)$ \  follows by the variational characterization of the eigenvalues on the fundamental domain corresponding to  
$V_{0+}^{+\pm}$ 
and this completes the proof. 
\end{proof}

\subsection*{Index and nullity in $V^{++}_{\mu}$ ($\mu\ne0$) of $\xibreveomo$ } 
\nopagebreak

\begin{prop}[Eigenvalue signs in $W_{\mi+}^{++}$ ($\mi\in {[2, m/2]} $) ]    
\label{Pmu++} 
$\forall \mi\in\mtwo$ we have 
\qquad 
$\lambda_2^\C(W_{\mi}^{++},\Lcalh) = \lambda_2(V_{\mi\pm}^{++},\Lcalh) < 0$,  
\qquad 
and if $m\in2\Z$,  then 
\qquad 
$\lambda_1(V_{\mh-}^{++},\Lcalh) < 0       $. 
\end{prop}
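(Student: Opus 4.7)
The plan is to sharpen the bound $\lambda_2^{\C}(W_\mu^{++}, \Lcalh) < \epsilon$ from Lemma \ref{Lapp}\ref{Lapp4}--\ref{Lapp5} to strict negativity by exhibiting a test function in the appropriate symmetry class whose Rayleigh quotient for $\Lcalh$ is strictly negative, then invoking the Courant--Fischer principle. By the sliding/constant count in Lemma \ref{Lapp}, the relevant second eigenspace in $W_\mu^{++}$ is one-dimensional and corresponds to the sliding-along-$\CC$ approximate kernel $W_{\mu,\Sl}^{++}$ of Definition \ref{Dsl}: its representative restricts on each equatorial bridge $\Kbr_p$ ($p = \pp_{2i\pi/m} \in L_0$) to a scaled degree-one spherical harmonic of sliding type, which by \ref{EKh} lies in the zero eigenspace of $\LhK$ on the round-sphere limit, and is modulated from bridge to bridge by $\cos(\mu\yy_p)$. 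The first eigenvalue of $W_\mu^{++}$ lies below $-2 + \epsilon$ and corresponds to the bridge-constant mode, which is separated off by orthogonalization.

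The test function $\phi_\mu \in V_{\mu\pm}^{++}$ (respectively $V_{\mbar-}^{++}$) is built region-by-region using the decomposition \ref{DSMbr}: on each equatorial bridge $\Kbr_p$, using the identification \ref{dgtSS}\ref{dgtSS2}, $\phi_\mu$ equals the sliding spherical harmonic phase-shifted by $\cos(\mu\yy_p)$; on the polar bridges $\phi_\mu \equiv 0$, consistent with the frequency decomposition because the polar bridges carry only $\mu = 0, 1$ approximate kernel modes; on the graphical region $\SigmaU$, $\phi_\mu$ takes the separated form $\phi_\mu(\sss,\yy) = g_\mu(\sss) \cos(\mu\yy)$, where $g_\mu$ solves
\begin{equation*}
g_\mu'' + (2\sech^2\sss - \mu^2)\, g_\mu = 0 \quad \text{on } [\stil_0, \stil_1],
\end{equation*}
with Dirichlet condition $g_\mu(\stil_1) = 0$ matching the polar vanishing and with value and derivative at $\stil_0$ matched to the equatorial-bridge sliding in the vein of Definition \ref{D:phiDN} and Lemma \ref{LphiDN}. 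Integration by parts in $\sss$ (using the ODE to annihilate the bulk integrand) and integration in $\yy$ reduce the quadratic form $Q(\phi_\mu) = \int_\Mbreve (|\nabla \phi_\mu|_g^2 - (|A|^2 + 2)\phi_\mu^2)\, dV_g$ to the boundary flux $-\pi\, g_\mu(\stil_0)\, g_\mu'(\stil_0)$ modulo bridge and graphical gluing errors controlled by \ref{LKh} and \ref{LSh}. For $\mu = 0$ the construction recovers the Jacobi field $J^\CC \in V^{++}_{0-}$ (via \ref{LphiDN}\ref{LphiDN1}) with vanishing Rayleigh quotient. The principle \emph{the Rayleigh quotient of slidings along $\CC$ decreases with increasing frequency}, stated in the introduction, is realized here quantitatively by Sturm comparison: the extra potential $\mu^2$ in the ODE is a Liouville shift that forces the Dirichlet solution $g_\mu$ from $\stil_1$ to decay more steeply into $[\stil_0, \stil_1]$, so that the logarithmic derivative $g_\mu'(\stil_0)/g_\mu(\stil_0)$ shifts strictly relative to the $\mu = 0$ reference in the direction that makes the flux positive and hence $Q(\phi_\mu) < 0$. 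The case $m$ even, $\mu = \mbar$ is handled identically on $V^{++}_{\mbar-}$.

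The main obstacle is to make the Sturm comparison quantitative enough for the strict sign to dominate the gluing errors $O(\tau_0)$ and $O(\taubreve^{8/9})$ from \ref{Tep} and \ref{LSh}, uniformly in $\mu \in \mtwo \cup \{\mbar\}_{2\mid m}$. Since $\stil_1 - \stil_0 \sim \sqrt{m/2}$ by \ref{Ez}, the product $\mu(\stil_1 - \stil_0)$ reaches order $m^{3/2}$ for $\mu$ close to $\mbar$, so $g_\mu$ is exponentially small in the interior and one must use sharp asymptotic expansions---rather than a crude comparison---both for $g_\mu'(\stil_0)/g_\mu(\stil_0)$ and to verify that the normalization $\|\phi_\mu\|_{L^2(h)}^2$ is dominated by the bridge contribution. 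Matching the graphical ODE solution $g_\mu$ to the bridge sliding spherical harmonic at $\sss = \stil_0$ with enough precision to preserve the sign of the flux is the central technical point.
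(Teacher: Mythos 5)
The paper's proof of \ref{Pmu++} is a Montiel--Ros style argument, and it is quite different from your ODE ansatz. The key move in the paper is to take the \emph{exact} Jacobi field $J^\CC \in V^{++}_{0-}$ (which vanishes on the $m$-fold reflection planes $\Sigma_{j\pi/m}$), restrict it to the fundamental domain $\UY_0 = \xiomo\cap\YY_0$, and then apply the frequency-$\mu$ extension $\Ecal_{\UY_0,\mu}$ of \ref{LEmu} to produce $\ft \in W^{++}_\mu$. Because $J^\CC$ solves the Jacobi equation exactly and vanishes on $\partial\UY_0$, the Rayleigh quotient of $\ft$ is \emph{exactly} zero; the phase twist only introduces a jump in the normal derivative at the interfaces $\partial_\pm\YY_0$. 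Since $\ft$ is Lipschitz but not $C^1$, it cannot be an eigenfunction, and smoothing the corner strictly decreases the Rayleigh quotient below $0$ while preserving the orthogonality to the first test function $\fo$ (the truncated bridge constant, which is $L^2$-orthogonal to the sliding on the bridge). This is structure-exploiting rather than estimate-heavy: no asymptotics in $\mu$ are needed at all, because the baseline Rayleigh quotient is $0$ on the nose.

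Your proposal, by contrast, tries to build the test function from scratch via the separated ansatz $g_\mu(\sss)\cos(\mu\yy)$ with the ODE $g_\mu'' + (2\sech^2\sss - \mu^2)g_\mu = 0$. There are two genuine gaps here. First, the sign does not come out as you claim: with the Dirichlet condition $g_\mu(\stil_1)=0$ imposed and $g_\mu>0$ on $(\stil_0,\stil_1)$, for $\mu\ge2$ the ODE gives $g_\mu''>0$ everywhere, so $g_\mu$ is convex and decreasing, hence $g_\mu'(\stil_0)<0$ and the boundary flux $-\pi\,g_\mu(\stil_0)\,g_\mu'(\stil_0)$ is \emph{positive}. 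Combined with the approximately-zero bridge contribution, this makes the quadratic form $\gtrsim 0$, not $<0$. What actually happens is that the graphical flux and the bridge flux cancel up to the matching error (by Green's identity on a closed surface), so the sign of $Q$ is determined by a delicate residual that your Sturm-comparison heuristic does not pin down. Second, as you yourself note, the data you impose ($C^1$-matching at $\stil_0$ plus Dirichlet at $\stil_1$) overdetermine the second-order ODE; and for $\mu$ near $\mbar$ the solution is exponentially small over the graphical interval of length $\sim\sqrt{m/2}$, so the residual you would need to control is exponentially small in $\mu\sqrt m$ against gluing errors $O(\taubreve^{8/9})$. The Montiel--Ros argument sidesteps both issues simultaneously, which is exactly why the paper uses it.
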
 

\begin{proof} 
The proof is by constructing two test functions in $W_{\mi}^{++}$ for each $\mi\in\mtwo$ with $\mi\ne m/2$,     
or just one for $\mi  = m/2$.     
In this proof and the next all functions we consider are even      with respect to $\refl_{\Sigma^0}$ and $\refl_{\Sigma^{\pi/2}}$. 
Similarly to \ref{Lapp}\ref{Lapp4}, 
the first test function when $\mi\ne m/2$    
is defined to be 
\qquad 
$\fo := \Ecal_{\UY_0,\mu} \foY$,  
\qquad 
where $\foY$ is a truncated constant supported on 
\qquad 
$\Kbr_{\pp_0}\subset \UY_0:=\xiomo  \cap \YY_0$ 
\qquad 
(recall \ref{NYY} and \ref{LEmu}). 
We define the second test function, or the only one when $\mi  = m/2$,      
to be 
\qquad 
$\ft  :=\Ecal_{\UY_0,\mu} \ftY$, 
\qquad 
where \ $\ftY=J^\CC$ \ on \ $\UY_0$ \ and so vanishes on \ $\partial \UY_0$ \ by the symmetries and satisfies \ref{LEmu}\ref{LEmu3}  trivially. 
\ $\ft  $ \ has Rayleigh quotient \ $0$ \ but because of its derivative jumps at \ $\partial_\pm \YY_0$,\  
it can be improved to give a negative Rayleigh quotient, while retaining its symmetry properties and its orthogonality to the first test function. 
This proof can be viewed as analogous to the use in \cite{LindexI} of a Montiel-Ros argument \cite{montiel-ros91}. 
\end{proof} 

\begin{prop}[Third eigenvalue in $W_{1}^{++}$ and $ V_{1\pm}^{++}$ ]    
\label{Pcos3} 
$\lambda_3^\C(W_{1}^{++},\Lcalh) = \lambda_3(V_{1\pm}^{++},\Lcalh) < 0          $. 
\end{prop}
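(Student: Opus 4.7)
The plan is to exhibit a $3$-complex-dimensional subspace $U\subset W_1^{++}$ on which the Hermitian form $Q(u):=\int\bar u\,\Lcal u\,dV_g$ is positive definite; by min-max, Lemma \ref{Lmu}, and the pointwise identity $\int\bar u\,\Lcalh u\,dV_h=\int\bar u\,\Lcal u\,dV_g$ (which follows at once from $\Lcalh=\tfrac{2}{|A|^2+2}\Lcal$ and $dV_h=\tfrac{|A|^2+2}{2}dV_g$), this yields $\lambda_3(V_{1\pm}^{++},\Lcalh)<0$. The identity above is the main simplification: it lets us read the sign off the unconformally-changed operator $\Lcal$, for which some elements of $W_1^{++}$ are already known eigenfunctions.

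I would take $U=\mathrm{span}_\C\{f_1,f_2,f_3\}$, where:
\begin{enumerate}[label=\emph{(\roman*)}]
\item $f_1\in W_1^{++}$ is an approximate equatorial-constant ground state built by the gluing cutoff of \ref{L:3e0} applied to the one-complex-dimensional mode $W_{1,\mathrm{con}}^{++}$, carrying the phase $e^{i\yy}$; a standard perturbation estimate gives $Q(f_1)\approx \int(|A|^2+2)|f_1|^2\,dV_g>0$ with the mass strongly concentrated on $\Kbreve_{L_0}$.
\item $f_2:=\nu\cdot(\pp_0+i\,\pp_{\pi/2})\in W_{1+}^{++}$. The ambient identity $\Delta_\Mbreve X=-2X$ for the embedding $X:\Mbreve\hookrightarrow\R^4$, combined with the Codazzi equation in constant curvature (so that $\mathrm{div}\,A=\nabla H=0$), gives $\Delta_\Mbreve\nu=-|A|^2\nu$. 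Hence $\Lcal f_2 = 2 f_2$ and $Q(f_2)=2\|f_2\|_g^2>0$ holds \emph{exactly}, with no perturbation argument needed.
\item $f_3\in W_{1+}^{++}$ is built from the polar-sliding Jacobi field $\mathrm{sech}\,\sss\cdot e^{i\vartheta}$ (corresponding on the polar catenoid at $\pp^0$ to the direction $\pp_0+i\pp_{\pi/2}\in T_{\pp^0}\Sph^3\otimes\C$), extended into the graphical region by the Dirichlet--Neumann matching of \ref{LphiDN} and refined using the frequency-comparison principle sketched in the introduction so that $Q(f_3)>0$ holds strictly.
\end{enumerate}

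The hardest step is the construction of $f_3$ together with verification that the $3\times 3$ Hermitian matrix $[Q(f_i,f_j)]$ is positive definite. The function $f_1$ is localised on $\Kbreve_{L_0}$ with the decay established in \ref{L:3ef}, so the entries $Q(f_1,f_j)$ for $j=2,3$ are negligible. The delicate entry is $Q(f_2,f_3)$: by \ref{Enucom}, on each equatorial bridge one has $f_2\approx -\tau\bigl(2+\phie(\sss)\bigr)+i\,\mathrm{sech}\,\sss\sin\vartheta$ while on the polar bridge $f_2\approx \mathrm{sech}\,\sss\,e^{i\vartheta}$, so bounding $|Q(f_2,f_3)|$ requires separating the polar and equatorial pieces via cutoffs and using the orthogonality of catenoidal harmonics provided by \ref{Lphie}. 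The strict positivity $Q(f_3)>0$ is the deepest point: it follows from the introduction's principle that the polar-sliding approximate Jacobi field has zero Rayleigh quotient at frequency $\mu=0$, and that at $\mu=1$ the additional $\partial_\yy^2$ kinetic contribution strictly destabilises the quadratic form once the matching is performed through the catenoidal neck via \ref{Lphie}--\ref{LphiDN}. This last mechanism---producing a strictly negative Rayleigh quotient for a sliding-like test function that would be merely Jacobi at $\mu=0$---is the principal technical obstacle, and is what distinguishes \ref{Pcos3} from the earlier mode-$\mu\ge 2$ analysis in \ref{Pmu++}.
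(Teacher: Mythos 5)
Your overall framework is sound — by conformal invariance of the stability form, the sign of $Q(u)=\int\bar u\,\Lcal u\,dV_g$ determines the sign of the Rayleigh quotient of $\Lcalh$, so a $3$-dimensional $Q$-positive subspace of $W_1^{++}$ does give $\lambda_3^{\C}(W_1^{++},\Lcalh)<0$, and with \ref{Lmu} the real version follows. The observation that $f_2:=\nu\cdot(\pp_0+i\pp_{\pi/2})$ satisfies $\Lcal f_2=2f_2$ exactly (so $Q(f_2)=2\|f_2\|_g^2>0$ with no perturbation needed) is a genuine and appealing simplification not exploited in the paper's proof. However the route you take is genuinely different from the paper's, and in its present form it has a real gap in the part that matters most.

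The paper does not attempt a direct $3$-dimensional test subspace for the full surface. It decomposes $\xibreveomo=U_0\cup U_2$ with $U_2:=\Sph^3_{|\xx|\ge \xx_-}\cap\xibreveomo$ containing the polar bridges, and handles the ``polar'' contribution to the index not by a test-function construction at all, but by \ref{CJnodal}: the Jacobi field $J^{\cC_0^0}$ has a nodal circle projecting near $\CC^\parallel_{\xxmax}$, and $\xx_-<\xxmax$, so domain monotonicity gives $\lambda_1(\LcalhD,U_2)<0$ directly. On $U_0$ (where the polar bridges are absent) it then constructs two test functions: a truncated constant at $\Kbr_{\pp_0}$, and a modification $\fhat'_2$ of the Jacobi field $J^\CC$ whose frequency is raised from $\mu=0$ to $\mu=1$ — the Montiel--Ros style argument, which is the ``frequency-comparison'' principle you cite, but note it applies to slidings \emph{along} $\CC$, precisely the content of \ref{Pmu++}. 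Dirichlet conditions on the common boundary separate the analysis and avoid any Gram-matrix off-diagonal estimates.

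The gap in your proposal is concentrated in $f_3$, and I think the stated mechanism would not close it. First, there is no ``polar-sliding approximate Jacobi field at frequency $\mu=0$'': the sliding in the direction $\pp_0+i\pp_{\pi/2}$ is intrinsically a frequency-$1$ object (it induces $\sech\sss\,e^{i\vartheta}$ on the bridge), while the $\mu=0$ motion at $\pp^0$ is the translation along the bridge axis, which is precisely the normal component $\nu\cdot\pp^{\pi/2}$ with $Q>0$ exactly — not a Jacobi field with zero Rayleigh quotient. So the comparison you invoke has no well-defined $\mu=0$ endpoint. Second, the phrase ``strictly destabilises the quadratic form'' points in the wrong direction: you need $Q(f_3)>0$, i.e.\ the quadratic form more negative than $2$ in the conformal normalization, and the added $\partial_\yy^2$ kinetic term, if anything, increases the Rayleigh quotient of a fixed profile. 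The paper's actual mechanism for the polar piece — that $J^{\cC_0^0}$ has an interior nodal circle encircling each polar bridge — is geometric, and it is not recovered by the frequency comparison. Third, even granting $Q(f_3)>0$, the Gram-matrix analysis is more delicate than ``$Q(f_1,f_j)$ negligible''. On the polar bridge $f_2$ coincides (to leading order) with the polar sliding, i.e.\ with $f_3$; the mass of $f_3$ lives essentially entirely where $f_2$ and $f_3$ are aligned, while the diagonal entries $Q(f_2)\sim\|f_2\|_g^2\sim 1$ and $Q(f_3)$ scale very differently with $\tau_2$. Showing $|Q(f_2,f_3)|^2<Q(f_2)Q(f_3)$ therefore requires precise power-of-$\tau$ bookkeeping that you do not supply. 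The paper's $U_0/U_2$ splitting sidesteps all of this by construction.

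In short: your framework is correct and $f_1,f_2$ are well-chosen, but the whole burden of the proposition is concentrated in $f_3$ and the off-diagonal $Q(f_2,f_3)$, and the mechanism you invoke for $Q(f_3)>0$ is not the right one. To repair the argument along your lines, you would either need to replace the frequency-comparison reasoning for $f_3$ by a nodal-domain argument using \ref{CJnodal} (at which point it becomes advantageous to impose a Dirichlet wall as the paper does), or supply a genuinely new positivity estimate for the polar sliding together with a quantitative separation of $f_2$ and $f_3$ on the polar bridges.
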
 

\begin{proof} 
The idea of this proof is similar to the previous one complicated by the presence of an extra sliding at the polar bridge. 
In this spirit we introduce the decomposition 
\quad  $\xibreveomo      = U_0 \cup U_2$  \quad   
into two compact domains defined by (recall \ref{CJnodal}) 
\quad $U_0:= \Sph^3_{|\xx|\le   \xx_-} \bigcap \xibreveomo $  \quad      
and 
\quad $U_2:= \Sph^3_{|\xx|\ge   \xx_-} \bigcap \xibreveomo $  \quad      
so that 
\quad $\Pi_\Sigma ( \partial U_0 ) = \CC^\parallel_{\xx_- } \cup \CC^\parallel_{-\xx_- } $.       
By \ref{CJnodal} we have 
(recall \ref{D:eig-res}) 
\qquad 
$\lambda_1(V_{1-}^{++},{\LcalhD} , U_2 ) <0 $, 
\qquad 
and therefore it is enough to prove that 
\qquad 
$\lambda_2^\C( W_{1}^{++} ,\LcalhD, U_0 ) <0 $. 
\qquad 
We achieve this by constructing two test functions in \quad $W_{1+}^{++}[U_0]$ \quad as follows. 

Let 
\quad 
$\UY_0:=\xiomo  \cap \YY_0$,  
\quad 
and 
\quad     $\foY,\ftY$, \quad 
be as in the proof of \ref{Pmu++},     
and in analogy we define 
\quad 
$\fo   := \Ecal_{\UY_0, 1 } \foY$,  
\quad 
and 
\quad 
$\ft   := \Ecal_{\UY_0, 1 } \ftY $.   
We do not need to modify \   $\fo  $ \   because it already vanishes on \   $\partial_2U_0$.\    
The second test function is defined to be 
\qquad 
$\fhat'_2 := \fhat_2 \cdot \psicut[\sss_-, \sss_- -1 ] \circ \sss $, 
\qquad 
where 
\qquad 
$\fhat_2 := \Ecal_{\UY_0, 1 } \fhattY  $  
\qquad 
is a modification of \   $\ft   $ \   defined below. 
Note that the truncation is needed to ensure that the Dirichlet condition is satisfied at \      $\partial_2U_0$.   \       
The reason for the modification is to lower the Rayleigh quotient in order to compensate for any increase due to the truncation. 

We recall first that the nodal lines of \ $J^\CC$ \ on \ $\xiomo$ \, are given by \quad $\xiomo\cap\Sph^3_{\yy= j \pi/m}$ \      ($j\in Z$). \quad 
In between these nodal lines \ $J^\CC$ \ does not change sign by \ref{Lapp}\ref{Lapp2}. 
Using standard arguments based on approximating by harmonic functions in the $\chiK$ metric, 
separation of variables, the symmetries, and \ref{dgtSS}\ref{dgtSS1},    
we obtain the following estimate 
(recall \ref{D:norm-g}, \ref{Ecyl}, \ref{Ncyl} and \ref{Dcyl}   
and note that $\Usq$ does not intersect the bridges or the nodal lines). 
\begin{equation} 
\label{EJest} 
\begin{gathered} 
\| \, J^\CC \, : \, C^{3}( \xiomo \setminus \Pi_\Sigma^{-1} D_{L}^\Sigma(\pi/7m) \, , 1 , m^2 \chiK , e^{- m \ssst /2 } ) \, \| \, \le  \, C \, A, 
\\ 
\text{ where } \qquad A:= \inf_{\Usq  } |J^\CC|, \qquad \Usq  := \Pi_\Sigma^{-1} \Sigma^0_{|\xx|\le 1/m, \, |\yy- \pi/2m| \le \pi/3m }.  
\end{gathered} 
\end{equation} 

We define now a neighborhood 
\quad $\UY_0' := \xiomo  \cap \Sph^3_{\yy\in [ -\pi/2m , \pi/2m ] } \supset \Kbr_{\pp_0}$ \quad  
of the nodal line \quad $\xiomo\cap\Sph^3_{\yy= 0      }$, \quad 
a neighborhood 
(not intersecting the equatorial bridges)  
\quad $\UY'_1 := \xiomo  \cap \Sph^3_{\yy\in [ \pi/2m , 3\pi/2m ] } $ \quad 
of the nodal line \quad $\xiomo\cap\Sph^3_{\yy= \pi/m}$, \quad  
and boundary components 
\quad $\partial_- \UY'_1  :=  \xiomo  \cap \Sph^3_{\yy = \pi/2m } $,   \quad 
\quad $\partial_+ \UY'_1  :=  \xiomo  \cap \Sph^3_{\yy = 3\pi/2m } = \refl_{\Sigma_{\pi/m}} \partial_- \UY'_1  $,   
\quad so   that \quad 
$\partial \UY'_1  = \partial_- \UY'_1  \cup \partial_+ \UY'_1 $.  \quad 
Note that \quad $\UY_0'\cup\UY_1'$ \quad is a fundamental domain for the action of \ $\rot^\CC_{2\pi/m}$  \  which contains two nodal lines, 
unlike \ $\UY_0$ \ which contains three with two on the boundary. 
Moreover since \quad $\UY'_1 \subset \Mbreve_{\gr}$ \quad is a narrow strip, 
it is easy to see \quad $\lambda_1( \, W_{1}^{++}, \LcalhD, \UY'_1 \,) > 0$  \quad  (recall \ref{D:eig-res}).
We can define then 
\   $\fhat_2$ \   uniquely by requiring 
\begin{equation*} 
\begin{gathered} 
\fhat_2=\fhattY  \, := \, \ft  =\ftY = J^\CC   \quad \text{ on } \quad \UY'_0  \subset \UY_0 , 
\\ 
\LcalM \fhat_2 = 0 \quad \text{ on } \quad \UY'_1, 
\qquad  
\fhat_2= \ft   \quad \text{ on } \quad  \partial \UY'_1, 
\end{gathered} 
\end{equation*} 
so that 
\   $\left. \fhat_2 \right|_{\UY'_1} $ \   is a smooth solution to a Dirichlet problem and \   $\fhat_2 $ \   is continuous and piecewise smooth on \    $\xibreveomo$.

We define now 
\quad $\ftilde_{2}  \, := \,  e^{-\pi \ii/m} \fhat_2$ \quad {on} \      $\xibreveomo$ .       
By \ref{Lderot}\ref{Lderot4},  the definitions, 
the symmetries of $J^\CC$, and the uniqueness of $\fhat_2$,
we have then 
\begin{equation*} 
\begin{gathered} 
\ftilde_2 \, = \, e^{-\pi \ii/m} \fhat_2 \, = \, e^{\pm\pi \ii/m} \, J^\CC \qquad \text {on} \qquad \partial_\pm  \UY'_1, 
\\ 
J_\CC   \, = \, - J_\CC  \circ \refl_{\Sigma_{\pi/m}}  
\quad \text{and} \quad  
\overline{\ftilde_2 }   \, = \, - \ftilde_2 \circ \refl_{\Sigma_{\pi/m}}  
\quad \text{on} \quad  \UY'_1. 
\end{gathered} 
\end{equation*} 
We conclude then that there is $\R$-valued \quad $\ftildep\in C^\infty(\UY'_1)$ \quad satisfying 
\begin{equation*} 
\begin{gathered} 
\ftilde_2 \, = \, 
\cos \tfrac \pi m  \, J^\CC \, + \, \sin \tfrac \pi m  \, \ftildep  \, \ii \quad \text {on} \quad  \UY'_1, 
\\ 
\text{where} \quad \ftildep =\ftildep \circ \refl_{\Sigma_{\pi/m}} \quad \text{and} \quad \Lcalh \ftildep =0 \quad \text {on} \quad  \UY'_1, 
\\ 
\ftildep  = J^\CC \quad \text {on} \quad  \partial_- \UY'_1.  
\end{gathered} 
\end{equation*} 
We have then that there is an absolute constant \ $C'>0$ \ such that 
\begin{equation} 
\label{Emodif} 
\begin{gathered} 
\int_{\UY'_1} \, |\ft|^2 \, dV_\hM 
\, - \,
\int_{\UY'_1} \, |\fhat_2|^2 \, dV_\hM 
\hfill \phantom{kkkkkkkkkkkk} 
\\ 
\phantom{kkkkkkkkkkkk} \hfill 
\, = \, 
\sin^2 \tfrac \pi m  \, \int_{\UY'_1} \left(  (J^\CC)^2 - \ftilde_+^2     \right) dV_\hM  \, < \, - C' \, A^2/m^4  , 
\\ 
\int_{\UY'_1} \, |\nabla \ft|^2 \, dV_\hM 
\, - \,
\int_{\UY'_1} \, |\nabla \fhat_2|^2 \, dV_\hM 
\hfill \phantom{k} 
\\ 
\phantom{k} \hfill 
\, = \, 
\sin^2 \tfrac \pi m  \, \int_{\UY'_1} \left(  |\nabla J^\CC|^2 - | \nabla \ftilde_+ |^2     \right) dV_\hM  \, > \, C' \, A^2/m^3  , 
\end{gathered} 
\end{equation} 
where the equalities follow from the definitions and the inequalities follow from the definition of \  $A$ \    by standard arguments as for \ref{EJest} using also that 
by the definition of \    $J^\CC$ \    we have 
\quad 
$ {\int |\nabla \fhat_2 |^2}= 2{\int |\fhat_2 |^2} \Sim1$. \quad  
We conclude that 
\begin{equation} 
\label{ERayl2} 
\, \frac{\int |\nabla \fhat_2 |^2}{\int |\fhat_2 |^2}  
\, < \, 
2 \,-\,  C' \, A^2 /m^2 . 
\end{equation} 
The exponential decay in \ref{EJest} and a similar estimate for $\ftilde_2$ imply that 
\begin{equation} 
\label{ERayl1} 
\left| 
\, \frac{\int |\nabla \fhat'_2 |^2}{\int |\fhat'_2 |^2}  
\, - \, 
\frac{\int |\nabla \fhat_2 |^2}{\int |\fhat_2 |^2}  \, 
\right| 
< \taubreve^{100} \, A^2  . 
\end{equation} 
Combining with \ref{ERayl2} we complete the proof. 
\end{proof} 

\begin{prop}[Fourth eigenvalue in $W_{1}^{++}$ and $ V_{1\pm}^{++}$ ]    
\label{Pcos4} 
For $ \epsilon$ and $m$ as in \ref{Lapp} we have 
$\epsilon < \lambda_4^\C(W_{1}^{++},\Lcalh) = \lambda_4(V_{1\pm}^{++},\Lcalh) $. 
\end{prop}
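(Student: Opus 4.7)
The plan is to start from the upper bound in Proposition~\ref{P:appr}\ref{P:appr6} applied to $\Vt = W_1^{++}$ with $a = \epsilon$, and then sharpen it. The symmetry computation carried out in the proof of \ref{Lapp}\ref{Lapp3} gives $\dim_\C \Vt_\con = 1$ (the equatorial constant mode at $L_0$, since $\mu=1$ equivariance forces polar constants to vanish) and $\dim_\C \Vt_\Sl = 2$ (the $\CC$-sliding at $L_0$ together with the polar sliding at $L_2$ in the direction $\pp_0 + \ii\,\pp_{\pi/2}$). Combined with $\#_{[-2,\epsilon]}^\C(W_1^{++}, \LcalhN, \SbreveS) = 1$ from Lemma~\ref{LapprS}\ref{LapprS2}, this yields $\#_{[-2,\epsilon]}^\C(W_1^{++}, \Lcalh, \Mbr) \le 1 + 2 + 1 = 4$. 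Together with the three low eigenvalues $\lambda_1, \lambda_2, \lambda_3$ supplied by \ref{Lapp}\ref{Lapp3} and \ref{Pcos3}, what remains is to exclude a fourth such eigenvalue by refining this upper bound to $3$.

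The key mechanism is that the four candidate low modes entering the bound are not independent. The function $\nu \cdot (\pp_0 + \ii\,\pp_{\pi/2}) \in W_{1+}^{++}$ is, by the $\Sph^3$ ambient curvature, a globally defined eigenfunction of the unmodified operator $\Lcal_\Mbr$ with eigenvalue $-2$ (as noted parenthetically in \ref{Lapp}\ref{Lapp3}). Its restriction to $\SbreveS$ coincides, up to the $\epsilon$-error of \ref{LapprS}\ref{LapprS2}, with the first Neumann eigenfunction ${f'}^{N+}$ there, while its behavior at the polar bridge matches precisely the boundary data that the polar sliding approximate kernel mode in $\Vt_\Sl$ is designed to realize. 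Thus the $\SbreveS$-Neumann mode and the polar sliding mode are two local incarnations of the single global eigenfunction $\nu \cdot (\pp_0 + \ii\,\pp_{\pi/2})$; when glued across $\partial\Sbreve_{\pp^0}$ in the approximate kernel machinery they coalesce into one global mode of $\Lcalh$ rather than two, reducing the count by one.

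To make this precise, I would construct a complex $3$-dimensional subspace $F \subset W_1^{++}$ spanned by test functions approximating the three known eigenfunctions corresponding to $\lambda_1, \lambda_2, \lambda_3$, where the third test function is built as the distinguished linear combination of the polar sliding mode and the extension of ${f'}^{N+}$ that matches smoothly at $\partial \Sbreve_{\pp^0}$ via the flux/Wronskian calculus from \ref{Lphie} and \ref{LphiDN}. Any further eigenfunction of $\Lcalh$ with eigenvalue $\le \epsilon$ would then lie in the $L^2(h)$-orthogonal complement of $F$ in $W_1^{++}$, where the Rayleigh quotient is bounded below because the order-one $L^2(h)$-mass of $\nu \cdot (\pp_0 + \ii\,\pp_{\pi/2})$ on the bridges (amplified there by the conformal weight $(|A|^2+2)/2$) forces any such candidate to pay positive energy. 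The hard part will be quantifying the coalescence: one must estimate the $L^2(h)$-inner products between the polar sliding test function, the Neumann test function from $\SbreveS$, and $\nu \cdot (\pp_0 + \ii\,\pp_{\pi/2})$ with enough precision, using the bridge energy estimates from \ref{LKh} together with the Rayleigh-quotient reduction arguments developed in \ref{Pmu++} and \ref{Pcos3}.
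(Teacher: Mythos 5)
Your starting point, the bound $\#_{[-2,\epsilon]}^\C(W_1^{++},\Lcalh,\Mbreve) \le 4$ from Proposition~\ref{P:appr}\ref{P:appr6}, is correct, and your heuristic---that the polar-sliding approximate kernel mode and the lowest Neumann mode on $\SbreveS$ are two local shadows of one global object and should not be counted twice---correctly captures the informal reason why the bound is not saturated. But the argument you sketch to convert this heuristic into a proof has a genuine gap, and it is not the route the paper takes.

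The gap is in the min-max step. After constructing the three-dimensional test space $F$ approximating $\lambda_1,\lambda_2,\lambda_3$, you assert that the Rayleigh quotient is bounded below by $\epsilon$ on the $L^2(h)$-orthogonal complement of $F$, ``because the order-one $L^2(h)$-mass of $\nu\cdot(\pp_0+\ii\,\pp_{\pi/2})$ on the bridges forces any such candidate to pay positive energy.'' This does not follow. Orthogonality to a single well-chosen function (or even to all of $F$) gives no direct energy lower bound; the Dirichlet--Neumann bracketing inequality used in \ref{P:appr} is sharp for the decomposition $\Mbreve=\SbreveS\cup\SbreveL$ with free Neumann interfaces, and to beat it one must actually exploit the matching conditions (continuity of value and flux) at the interfaces $\partial\SbreveL$. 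Your sketch does not engage with those matching conditions quantitatively, and the phrase ``flux/Wronskian calculus from \ref{Lphie} and \ref{LphiDN}'' gestures at the right tools without saying how they rule out a fourth low mode. In effect your proposed test-function argument would, if pushed through, require precisely the kind of flux identity / uniqueness analysis that constitutes the hard part, and you have not supplied it.

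The paper proceeds quite differently. It first switches to the decomposition $\Mbreve=\Sbreve_{L_2}\cup\Sbreve'_\Sigma$ with $\Sbreve'_\Sigma=\SbreveS\cup\Sbreve_{L_0}$, isolating the polar bridges, and reduces the claim to a statement about the Neumann spectrum of $\Sbreve'_\Sigma$. It then replaces the Neumann boundary on $\partial\Sbreve_{L_2}$ by surgery: $\Mbreve''$ is $\Mbreve$ with the polar bridges excised and the holes capped off smoothly, so one is left with a \emph{closed} comparison surface carrying only equatorial bridges. The heart of the proof is the decay estimate \eqref{Eiii}, which says that any $\Lcalh$-eigenfunction on $\Mbreve''$ with eigenvalue in $(-\epsilon,\epsilon)$ is uniformly small away from $\CC$ (on $|\xx|\ge 1/20$). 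This is established by a Green's-identity flux balance \eqref{Egreen2}--\eqref{Efavg} linking the value $f_\parallel(1/m)$ near the equatorial bridges to the average $f_\avg(\sbar_2)$ on the bridge, combined with ODE comparison, yielding $f_\parallel(1/m)< Cm^{-1/2}$. With \eqref{Eiii} in hand one truncates to $\Mbreve'''=\Mbreve\cap\Sph^3_{|\xx|\le 1/10}$, where $\LcalhD$ has a one-dimensional approximate kernel; two orthonormal low eigenfunctions on $\Mbreve''$ would then project to two approximately orthonormal vectors in a one-dimensional space, a contradiction. None of these steps---the surgical removal and capping of the polar bridges, the quantitative $C^3$ decay away from $\CC$, the Green's identity flux balance---appear in your proposal, and they are what actually close the argument.
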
 

\begin{proof} 
Following the proof of \ref{Lapp}\ref{Lapp3} 
we use now the slightly different decomposition with disjoint interiors $\Mbreve= \Sbreve_{L_2} \bigcup \Sbreve'_{\Sigma}$ 
where $\Sbreve'_{\Sigma}\, := \, \SbreveS \bigcup  \Sbreve_{L_0} $. 
It is enough to prove 
\quad 
$  \epsilon < \lambda_3^\C(W_{1}^{++},\LcalhN, \SbreveS'  ) = \lambda_3(V_{1\pm}^{++},\LcalhN, \SbreveS'  ) $ 
\quad 
by arguing as in the proof of \ref{P:appr}. 
Using the usual arguments and redefining $\epsilon$ it is then enough to prove
$$    
\epsilon < \lambda_3^\C(W_{1}^{++},\Lcalh, \Mbreve''   ) = \lambda_3(V_{1\pm}^{++},\Lcalh, \Mbreve''   ) , 
$$    
where 
\ $\Mbreve''$ \ is \ $\Mbreve$ with the polar bridges removed and the holes left filled in: 
the precise definition is 
\quad 
$\Mbreve'' \, := \, \left( \SbreveS' \setminus \Pi_\Sigma^{-1}( \Sigma_{|\xx|\ge \pi/4 } ) \right) \cup \hat{S}$, 
\quad 
where \  $\hat{S}$ \  is the union of the ghaphs of \  $\pm\hat{u}$ \  over \  $\Sigma_{|\xx|\ge \pi/4 }$ \  with \  $\hat{u}=\ubreve$ \  
on \  $\Sigma_{\pi/3\ge|\xx|\ge \pi/4 }$, \  
\   $\hat{u} =0$ \ on \  $\Sigma_{|\xx|\ge 8\pi/9 }$, 
\  and \  $\hat{u}$ \ is the product of \   $\ubreve$ \    
with a cutoff function on \  $\Sigma_{8\pi/9\ge|\xx|\ge \pi/3 }$. \  
Let $\Mbreve''':= \Mbreve \cap \Sph^3_{|\xx|\le1/10} \subset \Mbreve''$. 
Using the decompositions with disjoint interiors 
\    $\Mbreve''  = \Sbreve_{L_0} \cup \Sbreve''_\Sigma $ \     
and 
\    $\Mbreve'''  = \Sbreve_{L_0} \cup \Sbreve'''_\Sigma $ \     
(recall $\Sbreve_{L_0}  \subset \Mbreve \cap \Mbreve''$)  
we have as before (using appropriate modifications of \ref{LapprS}\ref{LapprS2}) that 
\begin{equation*} 
\begin{gathered} 
-2< \lambda_1^\C(W_{1}^{++},\LcalhN, \Mbreve''  ) = \lambda_1(V_{1\pm}^{++},\LcalhN, \Mbreve''  )<   -2+\epsilon,  
\\ 
-\epsilon < \lambda_2^\C(W_{1}^{++},\LcalhN, \Mbreve''  ) = \lambda_2(V_{1\pm}^{++},\LcalhN, \Mbreve''  ),   
\\ 
-2< \lambda_1^\C(W_{1}^{++},\LcalhD, \Mbreve'''  ) = \lambda_1(V_{1\pm}^{++},\LcalhD, \Mbreve'''  )<   -2+\epsilon,  
\\ 
-\epsilon < \lambda_2^\C(W_{1}^{++},\LcalhD, \Mbreve'''  ) = \lambda_2(V_{1\pm}^{++},\LcalhD, \Mbreve'''  )<   \epsilon,  
\\ 
3 < \lambda_3^\C(W_{1}^{++},\LcalhD, \Mbreve'''  ) = \lambda_3(V_{1\pm}^{++},\LcalhD, \Mbreve'''  ).   
\end{gathered} 
\end{equation*} 

Let \   $f=f_+ + \ii f_- \in        W_{1+}^{++} $ \ with \ $f_\pm\in V_{1\pm}^{++}$ \      
be an \       $\Lcalh$ \  eigenfunction  
of eigenvalue \   $ \lambda \in  ( -\epsilon , \epsilon ) $ \   
on \      $\Mbreve''  $ \     
normalized by  (recall \ref{ELh}) \quad $\|f:L^2(\Mbreve'', h_{\Mbreve''} )\,\| \, = \, 1$. 
We will prove that if $m$ is large enough in terms of given $\epsilon'>0$, 
then $f$ satisfies 
\begin{equation} 
\label{Eiii} 
\|f:C^3(\Mbreve'' \cap \Sph^3_{|\xx|\ge1/20}, h_{\Mbreve''} )\,\| \, < \, \epsilon'.  
\end{equation} 
This allows us to modify   $f$ to a smooth function \ $f'\in C^\infty (\Mbreve''') $ \   satisfying \ $f'=0$ on $\partial\Mbreve'''$,  \   
with 
\     $  f-f'                 $ \   and $\LhM f' + \lambda f' $  supported on \quad 
$       \Mbreve'' \cap \Sph^3_{|\xx|\ge1/20} $, \quad 
and satisfying 
\quad 
$\|f- f':C^3(\Mbreve''' , h_{\Mbreve''} )\,\| \, < \, C \, \epsilon'$   
\quad 
and 
\quad 
$\|\LhM f' + \lambda f' :C^1(\Mbreve''' , h_{\Mbreve''} )\,\| \, < \, C \, \epsilon'$.  
\quad 
If $f_1,f_2$ are then two orthogonal such eigenfunctions,  
the projections of their truncations to the one-dimensional approximate kernel of $\LcalhD$ on $\Mbreve'''$ will form an approximately orthonormal set, a contradiction. 
It only remains then to prove \ref{Eiii}. 

Using interior estimates in the spherical regions and the usual analysis on the rest of $\Mbreve''$ we conclude that 
\begin{equation} 
\label{E3C0} 
\|f:C^3(\Mbreve'' \cap \Sph^3_{|\xx|\ge1/m }, h_{\Mbreve''} )\,\| \, < \, C \, m^3, 
\qquad  
\|f  :C^0(\Mbreve'')\|  \, < \, C . 
\end{equation} 
Using \ref{dgtSS}\ref{dgtSS1}\ref{dgtSS1b}, the symmetries, and separation of variables for $\Lcal_\Sigma$ we conclude that (recall \ref{Ecyl}) 
\begin{equation} 
\label{Eiii1} 
\|f_{(1\perp)} :C^3(\Mbreve'' \cap \Sph^3_{|\xx|\ge1/20}, h_{\Mbreve''} )\,\| \, < \, \taubreve^{7/9}.  
\end{equation} 
We define the operator \   
$\Lcal := \Delta_\Sigma+2+\lambda$ \ 
which can be applied to either functions on domains of $\Sigma$ or functions on domains in the graphical region of \  $\Mbreve''$ \   in analogy with convention  \ref{dgtSS}\ref{dgtSS1}\ref{dgtSS1z}. 
By an analogous statement to \ref{LSh}, the bound on $\lambda$, and interior Holder estimates for $f$ using \ref{E3C0}, 
we have 
\   $\| \Lcal f : C^1( \Mbreve'' \cap \Sph^3_{|\xx|\ge1/m } ) \| < C m^3 \taubreve^{7/9} $. \     
By averaging to obtain the ODE  \   $\Lcal f_{(1\parallel)} \approx 0$, \ and using the implied bound for  \    $\Lcal f_{(1\parallel)}$,  \   
we obtain 
\begin{equation} 
\label{Eiii2} 
\begin{gathered} 
\|f:C^3(\Mbreve'' \cap \Sph^3_{|\xx|\ge1/20}, h_{\Mbreve''} )\,\| \, < \, C\, ( \, |f_{\parallel}(1/m)| + \taubreve^{7/9} \, ), 
\\      
\left| \tfrac{\partial}{\partial\xx}f_{\parallel}(1/m ) \right|  \, < \, C\, ( \, \epsilon |f_{\parallel}(1/m)| + \taubreve^{7/9} \, ), 
\end{gathered} 
\end{equation} 
where the function \ $f_{\parallel} :[\tau_0^{2\alpha}, \pi/2]\to\R$ \ is defined by \ $f=   f_{\parallel} (\xx) e^{\ii\yy}$.

Let $p\in L_0$ and following \ref{dnorm}\ref{dnormS} we define 
\ $\Mbreve_{(p)} := \Mbreve_{\rr=\dbold^{\Sigma}_p \circ \Pi_\Sigma} \supset \Kbr_p$, 
so that \     $\Pi_\Sigma ( \, \partial \Mbreve_{(p)} \, ) \, = \, \partial \Sigma_{(p)} $. \   
Let \    $\Kbr'''_p := \Kbr_{p,\rho\in[\tau_0^{2\alpha} /2  , 2 \tau_0^{2\alpha} ]}  $. \   
Using cutoff functions and $\chibar$ we extend the $\chiK$ metric from $\Kbr_p$ to $\Mbreve_{(p)}$,  
and using then separation of variables we conclude that  
\begin{equation} 
\label{Eiii3} 
\begin{gathered} 
\|f_\osc \, : \, C^3(     \,  \Kbr'''_p   , \chiK       \,  )\,\| \, < \, C\, m^3 \tau_0^{2\alpha} + C\, \tau_0^{1-2\alpha} \, < \, C\, m^3 \tau_0^{2\alpha} ,        
\\      
\|\, \tfrac{\partial}{\partial\sss}f_\avg -a_2 f_\avg( \sbar_2 ) \, : \, C^3(     \,  \Kbr'''_p                   , \chiK       \,  )\,\| \, < \, C\, m^3 \tau_0^{2\alpha},       
\end{gathered} 
\end{equation} 
where \ $\rho(\sbar_2)=\tau_0^{2\alpha}$  \     (using the identification in \ref{dgtSS}\ref{dgtSS2}\ref{dgtSS2b}), and \     $a_2\Sim^5 1/\sqrt{m}$  \    
(by calculating the cylindrical length in the $\chiK$ metric).  

We recall now Green's second identity for $\C$-valued functions in the form 
\begin{equation} 
\label{Egreen2} 
\left\langle  \Lcal u , \phi \right\rangle_{U_0} - 
\left\langle  u , \Lcal \phi \right\rangle_{U_0} = 
\left\langle  \eta  u , \phi \right\rangle_{\partial U_0} - 
\left\langle  u , \eta  \phi \right\rangle_{\partial U_0}.  
\end{equation} 
where $\eta$ is the outward unit normal  of a compact smooth Riemannian domain $U_0$,  
we use 
the standard definition    
\    $ \left\langle  u_1 , u_2  \right\rangle_{\Om} := \Re \int_\Om u_1 \overline{u_2} $ \   and similarly for $\partial U_0$ in place of $U_0$, 
$u,\phi$ are $\C$-valued $C^2$ functions on $U_0 $,  
and $u_1,u_2$ are $\C$-valued $C^0$ functions on $U_0 $.  
We define \   $\UC  :=\Sigma_{|\xx|\le1/m}$ \   and \   $U_0:= \UC   \setminus D_L^\Sigma(\tau_0^{2\alpha})$ \    (recall \ref{tubular}). 
We choose \ $\phi:\UC  \to\C$ \ to be the solution to the Dirichlet problem \ $\Lcal \phi=0$ \  on $\UC  $ \ and \ $\phi= e^{\ii\yy}$ \ on \ $\partial \UC  $; \ 
note that clearly $\phi$ is in $W_{1+}^{++}$.  
By \ref{Egreen2} we have 
\begin{equation} 
\label{Euflux} 
| F_u - F'_u |   \, \le \, \tfrac7m \, \| \Lcal u : C^0 (U_0)\| ,            
\end{equation} 
where \      $F_u, F'_u$ \      can be thought of as \emph{fluxes} of $u$ through \        $\partial \UC \subset \partial U_0 $
and \   $\partial' U_0 := \partial U_0 \setminus \partial \UC$ \ defined by 
\begin{equation} 
\label{EufluxD} 
\begin{gathered} 
F_u  \, := \, 
\left\langle  \eta  u , \phi \right\rangle_{\partial \UC} - 
\left\langle  u , \eta  \phi \right\rangle_{\partial \UC},  
\\       
F'_u \, := \, 
-\left\langle  \eta  u , \phi \right\rangle_{\partial' U_0} + 
\left\langle  u , \eta  \phi \right\rangle_{\partial' U_0}.  
\end{gathered} 
\end{equation} 

We choose now 
\ $ u  :U_0  \to\C$ \ to be the solution to the Dirichlet problem \ $\Lcal  u  =0$ \  on $U_0  $, \ $ u  = 0 = f-\fC $ \ on \ $\partial \UC  $, \ and \ $u= f- \fC $ \  
on \ $\partial' U_0 $, \ 
where 
\ $ \fC :\UC  \to\C$ \ is the solution to the Dirichlet problem \ $\Lcal  \fC =0$ \  on $\UC  $ \ and \ $ \fC = f         $ \ on \ $\partial \UC  $; \ 
note that $u,\fC$ are in   $W_{1+}^{++}$.  
Applying \ref{Euflux} we conclude that \ $F_u=F'_u$. 
In order to obtain estimates on $u$ we reconstruct $u$ by solving a Dirichlet problem using separation of variables on disjoint congruent maximal annuli contained in $\UC  $ 
and centered at each $p\in L$, 
then combining the solutions to get an approximate solution to the Dirichlet problem on $U_0$, and finally correcting. 
The annuli are conformal to standard cylinders of length \  $\alpha\sqrt{2m\,} + O(\log m)$ \ and so we conclude by \ref{Eiii3}, 
and observing that the symmetries imply that the contributions from the connected components of $\partial' U_0$ are all the same,          
$$   
F_u= F'_u \ = \   \tfrac {2\pi m}{\alpha \sqrt{2m\,} + O(\log m) }  (f_\avg (\sbar_2) -\fC(\pp_0)) + O(\tau^{2\alpha} ) , 
$$  
where the identification is through $\Xbreve_{\pp_0}$ and $f_\avg(\sbar_2)\in\R$ by the symmetries. 

On the other hand by \ref{Eiii2} \   
\begin{equation} 
\label{Efluxf} 
|F_f| \, \le \, C\, ( \, \epsilon |f_{\parallel}(1/m)| + \taubreve^{7/9} \, ), 
\end{equation} 
and by the definition of $\fC$ we easily prove that 
$$  
|F_{\fC}| \, \le \, C\, ( \, \frac1m |f_{\parallel}(1/m)| + \taubreve^{7/9} \, ). 
$$ 
Moreover 
\   $u-f+\fC=0$ \  on \    $\partial U_0$ \    
and \ $\Lcal (u-f+\fC) = \Lcal f$ \     
which can be estimated by the eigenfunction equation and 
\ref{dgtSS}\ref{dgtSS1}.   
Estimating the Dirichlet problem we obtain that 
$$  
|F_u-F_f+F_{\fC} | < \taubreve^{7/9}. 
$$ 

Combining we conclude that 
$$
| f_\avg ( \sbar_2) -\fC(\pp_0) + O(\tau^{2\alpha}_0) | \, \le \, C\, ( \, \epsilon |f_{\parallel}(1/m)| + \taubreve^{7/9} \, ) / \sqrt{m} .  
$$
Clearly $|\fC(\pp_0) - f_{\parallel}(1/m) |< 1/m$ and hence  
\begin{equation} 
\label{Efavg} 
\left| f_\avg ( \sbar_2) -  f_{\parallel}(1/m)  \right|      \, \le \, m^{-1} \, + \, m^{-1/2} \,   \left| f_{\parallel}(1/m) \right|.  
\end{equation} 

Combining \ref{EufluxD}, \ref{Eiii3}, and \ref{Efavg}, we conclude that 
$$ 
\left|\, F'_f - 2 \pi a_2 m f_{\parallel}(1/m)  \right| \, < \, C      \, + \, C |f_{\parallel}(1/m)| + C\, m^4 \tau_0^{2\alpha},       
$$ 
which implies that there is \quad $a_3 \Sim^6 2\pi\sqrt{m}$ \quad such that 
$$ 
\left|\, F'_f - a_3 f_{\parallel}(1/m)  \right| \, < \, C.       
$$ 
We apply now \ref{Euflux} a second time choosing $f$ in place of $u$, to obtain 
$$ 
\left| F_f - F'_f \right|   \, \le \, \tfrac7m \, \| \Lcal f : C^0 (U_0)\| \, \le \, \taubreve^{7/9}.  
$$ 
Combining we obtain \quad $f_{\parallel}(1/m) < Cm^{-1/2}$, \quad  
which combined with \ref{Eiii2} confirms \ref{Eiii} and the proof is complete. 
\end{proof}

\subsection*{Index and nullity in $V^{+-}_{\mu}$ ($\mu\ne0$) of $\xibreveomo$ } 
\nopagebreak

\begin{prop}[Eigenvalue signs in $W_{\mi+}^{+-}$ ($\mi\in {[2, m/2]} $) ]    
\label{Pmu+-} 
$\phantom{kk}$ 
$\forall \mi\in\mtwo$ we have 
\quad 
$0 < \lambda_1^\C(W_{\mi}^{+-},\Lcalh) = \lambda_1(V_{\mi\pm}^{+-},\Lcalh) $, 
\quad 
and if $m\in2\Z$,  \quad then 
\quad 
$0 < \lambda_1(V_{\mh+}^{+-},\Lcalh) $. 
\end{prop}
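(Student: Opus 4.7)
The approach combines the variational characterization of $\lambda_1$ with a frequency-monotonicity principle, comparing the Rayleigh quotient at frequency $\mu\geq 2$ to the known Jacobi field at frequency $\mu=1$ (which has vanishing Rayleigh quotient). By \ref{Lmu} we may work with the complex space $W^{+-}_{\mu+}$, and by \ref{Lapp}\ref{Lapp10}--\ref{Lapp11} it suffices to rule out the possibility that $\lambda_1^\C(W^{+-}_\mu,\Lcalh)\in(-\epsilon,0]$.

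The first step is structural. Given an eigenfunction $f\in V^{+-}_{\mu\pm}$ with eigenvalue $\lambda\in(-\epsilon,\epsilon)$, I would show that $f$ is, to leading order, determined by its unique admissible sliding datum, namely a scalar multiple of the equatorial sliding towards the pole (recall \ref{Dsl} and the one-dimensionality of $V^{+-}_{\mu\pm\Sl}$ noted in \ref{Lapp}\ref{Lapp10}). Using the approximate-kernel analysis of \ref{P:appr} and \ref{LapprS}, together with the graphical estimates \ref{Tep}\ref{Tep10} and \ref{LSh} and the catenoidal ODE analysis of \ref{LphiDN}, a flux-balancing argument modeled on the one in the proof of \ref{Pcos4} yields that in the graphical region $f\approx\phi(\xx)\,e^{\pm i\mu\yy}$ for a common radial profile $\phi$ which vanishes to leading order near the poles (no polar sliding being available for $\mu\ge 2$) and matches the catenoidal sliding profile of \ref{D:JS} at each equatorial bridge.

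The key step is the frequency shift. Given $f$ as above, define a comparison function $\tilde f\in V^{+-}_{1\pm}$ by keeping the same radial profile $\phi$ and the same matching to the equatorial bridges, but replacing the angular mode $e^{\pm i\mu\yy}$ by $e^{\pm i\yy}$. Using the metric expression \ref{EThetag}, the two functions agree in $L^2$ norm and in the bridge contributions to $\int|\nabla\cdot|^2_h$ up to negligible error, while the graphical kinetic-energy contribution differs by $(\mu^2-1)\int\phi^2/\cos\xx\,d\xx$. Since the ratio of this quantity to $\int\phi^2\cos\xx\,d\xx$ is bounded below by an absolute constant $c'>0$, the Rayleigh quotients $R(f)$ and $R(\tilde f)$ satisfy
\begin{equation*}
R(f) \;\geq\; R(\tilde f) \,+\, c'(\mu^2-1) \;\geq\; \lambda_1(V^{+-}_{1\pm},\Lcalh) \,+\, 3c'.
\end{equation*}
By \ref{Lapp}\ref{Lapp9} the right-hand side exceeds $3c'-\epsilon$, which is strictly positive for $\epsilon$ small enough in terms of $c'$. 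This forces $\lambda_1^\C(W^{+-}_\mu,\Lcalh)>0$. The case $\mu=m/2$ (when $m$ is even) proceeds identically, since by \ref{Lapp}\ref{Lapp11} the space $V^{+-}_{\bar m+}$ carries the same single-sliding structure and the comparison test function $\tilde f \in V^{+-}_{1\pm}$ is constructed in the same way.

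The main obstacle lies in the structural first step: rigorously establishing the approximate product form $f\approx\phi(\xx)e^{\pm i\mu\yy}$ in the graphical region, with error small enough to preserve the sign of the Rayleigh quotient comparison. One must perform a careful separation of variables for $\Delta_{\Sigma^0}+2$ on annular regions between adjacent equatorial bridges, coupled to the catenoidal analysis of \ref{CKh} and \ref{LphiDN} on the bridges themselves. In particular, the oscillatory (non-Fourier-$\mu$) components of $f$ must be shown to be exponentially suppressed on the graphical region, in the spirit of the decay estimate \ref{EJest} from the proof of \ref{Pcos3}, and a Green's-identity / flux-balancing step adapted from \ref{Pcos4} is needed to pin down the amplitude $\phi$ precisely from the sliding data. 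This is the point at which the smallness of $\taubreve$ in terms of $\alpha$ and $m$, as in \ref{A:sim}, is crucial.
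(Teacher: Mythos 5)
Your proposal correctly identifies frequency-monotonicity of the Rayleigh quotient as the engine of the argument, and this is indeed the key mechanism in the paper. But there is a quantitative gap at the central estimate that changes the structure of the proof in an essential way.

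The step $R(f) \geq R(\tilde f) + c'(\mu^2-1)$ with $c'>0$ an \emph{absolute} constant is incorrect. The sliding eigenfunctions in $W^{+-}_\mu$ concentrate on the bridges: in the $h$ metric each bridge is roughly a unit sphere on which $|f|\sim 1$, while the analysis (see Lemma \ref{Lmu+-}, item \ref{Lmu+-5}, for $\mu=2$) gives $|f|\lesssim A\Sim\tau_0$ in the graphical region. Thus $\int_{\xibreveomo}|f|^2\,dV_h\sim m$ is dominated by bridge contributions, whereas the graphical-region kinetic-energy gain from the frequency change is only $\sim(\mu^2-1)\int\phi^2/\cos\xx\sim(\mu^2-1)A^2$. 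The Rayleigh-quotient gain therefore scales like $(\mu^2-1)A^2/m$, which vanishes as $m\to\infty$; it cannot dominate the $\epsilon$ in $\lambda_1(V^{+-}_{1\pm},\Lcalh)>-\epsilon$ from \ref{Lapp}\ref{Lapp9}. Comparing against that global first eigenvalue yields only $\lambda_1^\C(W^{+-}_\mu,\Lcalh)>-\epsilon+O(A^2/m)$, which does not close to positivity.

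The paper closes this gap differently and in two stages. First, a Courant nodal-domain count is used to reduce to $\mu=2$: from the rotational covariance $f_\pm\circ\rot_{2j\pi/m}^\CC$ on $\Sph^3_{\yy=0}$ one shows $f_-$ has at least $\mu$ nodal domains on $\xiomo\cap\Sph^{3++}_+$, and since $\mu=1$ contributes exactly one negative eigenvalue (\ref{Lapp}\ref{Lapp9}), a pigeonhole argument gives $\lambda_1^\C(W^{+-}_\mu,\Lcalh)>0$ for all $\mu\geq3$ once $\mu=2$ is settled. Second, and crucially, for $\mu=2$ the comparison eigenvalue is \emph{not} $\lambda_1(V^{+-}_{1\pm},\Lcalh)$ on all of $\xibreveomo$, but the Dirichlet eigenvalue $\lambda_1^\C(W_1^{+-},\LcalhD,U_-)$ on the truncated region $U_-=\xiomo\cap\Sph^3_{\xx\in[0,\xx_-]}$ cut out below the nodal circle of $J^{\cC_0^0}$ (Corollary \ref{CJnodal}). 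That eigenvalue is strictly positive, so the Rayleigh quotient only has to drop by a strictly positive amount of order $A^2/m$, which must merely beat the truncation error $O(A^2/m^2)$ — it is not required to overpower $\epsilon$. Your global construction of $\tilde f$ cannot access this positive Dirichlet eigenvalue; to use it you would need to truncate as in \eqref{Ef1} and then control the truncation loss, and this is precisely what the detailed ground-state estimates of Lemma \ref{Lmu+-} (which you correctly identify as the hard ``structural first step'') are designed to provide.
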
 

\begin{proof} 
Let $\mi\in {[2, m/2]} $
and \   $f:= f_+ + \ii f_- $ \   be the ground state in $W_{\mu+}^{+-}$, 
where \   $f_\pm\in V_{\mi\pm}^{+-}$ \  are ground states 
of the same eigenvalue (recall \ref{Lderot}\ref{Lderot4+}),  
and \    $f$ \    is normalized uniquely by 
\begin{equation} 
\label{Efnor} 
\quad \| f : L^2 (\xiomo , \hM) \|^2 \, = \, 4m    \quad     
\end{equation} 
and \quad 
$f_+ \ge 0$  \quad     
on \         $\xiomo\cap\Sph^{3++}_{\yy=0}$ \    in the vicinity of the waist of \ $\Kbr_{\pp_0}$. \      
By the symmetries we have 
\quad on \quad $\xiomo \cap \Sph^3_{\yy=0}$ \quad $\forall j\in \Z$ \quad (recall \ref{N:xx}) 
\begin{equation} 
\label{Efsin} 
f_+\circ\rot_{2j\pi/m}^\CC = \cos {2j\mi{\textstyle\frac\pi m}} \, f_+ ,  
\qquad 
f_-\circ\rot_{2j\pi/m}^\CC = \sin {2j\mi{\textstyle\frac\pi m}} \, f_+ . 
\end{equation} 
By considering then the intersection of the nodal sets of $f_-$ with 
\ $\xiomo \cap \Sph^3_{\yy=2j\pi/m}$  \ 
and 
\ $\xiomo \cap \Sph^3_{\yy=2(j+1) \pi/m}$  \ 
when \      $\sin {2j\mi{\textstyle\frac\pi m}} $ \  
and  \      $\sin {2(j+1) \mi{\textstyle\frac\pi m}} $ \  have different signs, 
it is easy to see that \   $f_-$ \   has at least \   $\mu$ \   nodal domains on \   $\xiomo\cap\Sph^{3++}_+$, \     
and strictly more than \   $\mu$ \   if \   $f$ \   changes sign on 
\         $\xiomo\cap\Sph^{3++}_{\yy=0}$. \     

By Courant's nodal theorem therefore, 
there are at least two eigenvalues (corresponding to eigenfunctions in $V_{-}^{+-}$) 
which are strictly smaller than 
the smallest eigenvalue 
\     $ \lambda_{\min}$  \  
among the ones for $\mu\ge3$. 
Clearly these two eigenvalues will have to be eigenvalues for $\mu=1,2$.      
Therefore if \  $\lambda_{\min} \le 0$,  \ 
then there are at least two negative eigenvalues for $\mu=1,2$.  
Since there is only one negative eigenvalue for $\mu=1$ by \ref{Lapp}\ref{Lapp9}, 
we need to prove the Proposition only for $\mu=2$. 
In the rest of the proof then we study the normalized ground state \  
$f$ \  as above for $\mu=2$; 
we will denote the corresponding eigenvalue by \  $\lambda$. 

If \     $\lambda\le0$, \ then since there is only one negative eigenvalue for \    $\mu=1$, 
we conclude by Courant's nodal theorem that \ $f_-$  \     has       at most two nodal domains on \   $\xiomo\cap\Sph^{3++}_+$, \     
and therefore by an earlier assertion \ $f+$ \ cannot change sign on 
\         $\xiomo\cap\Sph^{3++}_{\yy=0}$. \     
Using this and \ref{Efsin}, we conclude that for \  $j\in [1, m/4]\cap\N$ \          
we have         
\         $\xiomo\cap\Sph^{3++}_{\yy\in [0, 2 j  \pi/m  ]  \ }$ \  
contained in a nodal domain of \  $f_-$. \quad  
By monotonicity of domain this implies that 
\begin{equation*} 
\begin{aligned} 
\lambda_1 \big(V^{+-}, \LcalhD , \xiomo\cap\Sph^{3++}_{\yy\in [0, 2 j  \pi/m  } \big) \, >& \, \lambda,  
\\ 
\text{and so} \quad  
\lambda_1 \big(V^{+-}, \LcalhD , \xiomo\cap\Sph^{3++}_{|\yy|  \, < \,    j  \pi/m  } \big) \, >& \, \lambda . 
\end{aligned} 
\end{equation*} 
We conclude then 
that \ $f_+$ \   does not change sign on (recall \ref{NYY}) 
$$ 
\xiomo\cap  \Sph^{3++}_{|\yy|  \, < \,    j  \pi/m  } \supset \UY^{++}_0 := \xiomo\cap\Sph^{3++}\cap\YY_{0} . \ 
$$
Note that by the symmetries it is enough to estimate \    $f$ \     on \  $\UY^{++}_0$    \      
where we claim the following.

\begin{lemma}[Ground state in $W_{ 2 +}^{+-}$ ($\mi=2$) ]    
\label{Lmu+-} 
For \      $f=f_+ + \ii f_-$ \      as above 
we have the following,  where 
\quad 
$\partial_+ \UY^{++}_0 := \UY^{++}_0 \cap\partial_+ \YY_{0} $    
\quad 
and 
\quad 
$\UY_{\parallel} := \UY^{++}_0 \cap   \Pi_\Sigma^{-1} {\CC^\parallel_{1/m }} $  
\quad 
(recall \ref{NYY} and \ref{N:xx}).  
\begin{enumerate}[label=\emph{(\roman*)}]
\item 
\label{Lmu+-1} 
$f_+\ge0$  \    on \     $\UY^{++}_0 $ \ and \, 
$f_-\ge0$  \    on \     $\UY^{++}_0 \cap \Sph^{3}_+ $.   
\item 
\label{Lmu+-3} 
$f_- \, = \, f_+ \, {\tan 2\pi/m} $      
\    on \     $\partial_+ \UY^{++}_0 $.    
\item 
\label{Lmu+-5} 
$A:= \inf_{\UY_\parallel} f_+ \Sim^{m^3} \tau_0  $. \ 
\item 
\label{Lmu+-2} 
$f_+ \Sim^\cbar \, e^{- 2 \ssst } A $  \    on \     
$\UY^{++}_0 \setminus \Pi_\Sigma^{-1} D^{\Sigma^0}_{\CC}(1/m)   $ \    where \       
$A:= \inf_{\UY_\parallel} f_+$. \ 
\item 
\label{Lmu+-4} 
$\|\, f \,  : \, C^{3}(     \, \UY^{++}_0 \setminus \Pi_\Sigma^{-1} D^{\Sigma^0}_{\pp_0}(1/m)  \,  , 1 , \chiK , e^{- 2 \ssst } ) \, \| \, \le  \, C A    $. 
\end{enumerate} 
\end{lemma} 

We postpone the proof of the lemma and complete the proof of the Proposition assuming the lemma. 
By \ref{CJnodal} we have 
$$ 
0 < \lambda_1^\C(W_{1}^{+-},\LcalhD,U_-) = \lambda_1(V_{1\pm}^{+-},\LcalhD,U_-)  , 
$$ 
where 
\quad 
$U_-:=\xiomo\cap \Sph^3_{\xx\in[0,\xx_-]}$. 
\quad 
It is enough then to prove 
\begin{equation}
\label{Emu+-} 
\lambda_1^\C(W_{1}^{+-},\LcalhD,U_-) \, < \, 
\lambda_1^\C(W_{ 2 }^{+-},\Lcalh).
\end{equation} 
To achieve this we convert the ground state \    $f \in W_{ 2 +}^{+-}$ \   defined above,    
to a test function \     $f_1 \in W_{1}^{+-} [U_-]$   \     defined as follows (recall \ref{LEmu}). 
\begin{equation} 
\label{Ef1} 
\begin{gathered} 
f_1 := \Ecal_{U_-,1} f'_1 \in \left. W_{1}^{+-} \right|_{U_-} ,   
\quad \text{ where } \quad 
f'_1 \in \left. W_{1}^{+-} \right|_{U_-\cap\YY_0},  
\\ 
f'_1  \, := \,  f'_+ \, + \, a'  f'_- \, \ii, 
\qquad 
a' \, := \, \tfrac{\tan \pi/m}{\tan 2\pi/m} = \tfrac12 + O(1/m) , 
\\
f'_\pm \, := \,  f_\pm \, \psi_-, 
\qquad 
\psi_-:= \psicut[\sss_-, \sss_- -1 ] \circ \sss . 
\end{gathered} 
\end{equation} 
The conversion amounts to a truncation using $\psi_-$, followed by a change of frequency from $\mu=2$ to $\mu=1$  (recall \ref{LEmu}). 

To complete the proof it is enough to prove  
\begin{equation} 
\label{Erayl} 
\frac {\disint |\nabla f'_+|^2 + {a'}^2              \disint |\nabla f'_-|^2 } {\disint | f'_+|^2 + {a'}^2              \disint | f'_-|^2 }     
\, < \, \frac {\disint |\nabla f_+|^2 + \disint |\nabla f_-|^2 } {\disint | f_+|^2 + \disint | f_-|^2 },    
\end{equation} 
where all integrals are taken with respect to the $h$ metric  
with the ones on the left Rayleigh quotient over \  $\UY^{++}_0 \cap U_- $ \    and the ones on the right over \  $\UY^{++}_0$. \    
Clearly by \ref{CJnodal} the intersection of   \   $\UY^{++}_0$ \   with the region where the cutoff function changes value has area \   $\Sim^4 1/m$ \   in the $\chiK$ metric.   
In this region  we have   \quad      $e^{-2\sss}\Sim^2 1/\sqrt{m}$  \quad   
and so by \ref{Lmu+-}\ref{Lmu+-4} the \   $C^3(\chiK)$ \   norm of \   $f$ \, is \   $\le C A /\sqrt{m}$. \ 
We conclude that
the Rayleigh quotient for \    $f$ \    increases by at most 
\quad    $O(A^2/{m}^2)$ \quad    by the truncation. 
On the other hand 
by \ref{Lmu+-}\ref{Lmu+-3},\ref{Lmu+-2},\ref{Lmu+-4}      \ $f'_-$ \ 
changes from \ $0$ \ to \ $\Sim A/m$ \  over a distance \ $\Sim 1/m$, \ and hence by standard arguments  
we have \quad $\int | \nabla f'_-|^2 >    C \cbar^2 A^2  /m$.    \quad  
Similarly \quad $\int | f'_-|^2  <   C  A^2 /m^3$.    \quad  
This implies that the change of frequency reduces   the Rayleigh quotient by at least  \quad $C \cbar^2 A^2 /m $  \quad                   
which is \ $> O(A^2/{m}^2)$ \ for large $m$ and so implies \ref{Erayl} as we want. 
It only remains then to prove the lemma. 

\emph{Proof of lemma \ref{Lmu+-}.} 
\ref{Lmu+-1} follows from the more general statements earlier in the proof. \ref{Lmu+-3} follows from \ref{LEmu}. 
Applying standard arguments as in the proof of \ref{Lapp}\ref{Lapp10} following \cite[Appendix B]{kapouleas:1990} 
we conclude that there is \   $\sbars\in [10,20]$ \   
such that for $m$ large enough depending on given \      $\epsilon>0$   \    we have 
\begin{equation} 
\label{Esbars}  
\| (f_+)_{(1\perp)} \, : \, C^3(   \Kbr_{\pp_0,\sss=\sbars}, \chiK) \| \, < \, \epsilon. 
\end{equation} 
(To prove this we first find two such $\sbars$'s satisfying $L^2$ estimates and then upgrade for $\sbars$ inbetween  them using separation of variables and approximation). 
We define then \quad $\Ustar:= \xiomo \setminus (\Kbr_{L_0})_{\sss<\sbars}$. \quad 
By standard arguments we have 
$$
3 < \lambda_1^\C(W_{ 2 }^{+-},\LcalhD, \Ustar ) = \lambda_1(V_{\mi\pm}^{+-},\LcalhD,\Ustar) , 
$$  
and so the Dirichlet problem for \   $\Lcal:= \Lcalh +\lambda $ \   on  \  $\Ustar$  \    has a unique solution for given boundary data. 
We prove then \ref{Lmu+-5}-\ref{Lmu+-4} by reconstructing \    $f$ \   from its boundary data and obtaining estimates as follows. 

Let \   $\Ustarp$ \ be a disjoint union of annuli with \quad $\partial \Ustarp = \partial \Ustar \disjun \partial' \Ustarp $ \quad 
where \quad $\partial' \Ustarp \, := \, \{ q\in \xiomo \, : \, \rr                                    (q) = 1/5m \} $  \quad and \quad  
$\rr \, := \, \dbold^{\Sigma}_{L_0} \circ \Pi_\Sigma$. \quad  
We define \ $\fstarp$ \ to be the unique solution to the Dirichlet problem 
$$ 
\Lcal \fstarp = 0 
\quad \text{on} \quad \Ustarp, 
\qquad \fstarp =f 
\quad \text{on} \quad \partial \Ustar, 
\qquad \fstarp =0 
\quad \text{on} \quad \partial' \Ustarp. 
$$ 
We define now (recall \ref{Epsiab}) \quad 
$
\psistar \, := \psicut[ 1/5m, 1/9m ] \circ \rr 
$ 
and \ $\fstar$ \ to be the unique solution to the Dirichlet problem 
$$ 
\Lcal \fstar =  -\Lcal ( \, \psistar \cdot \fstarp \, ) 
\quad \text{on} \quad \Ustar, 
\qquad \fstar = 0 
\quad \text{on} \quad \partial \Ustar. 
$$ 
Clearly then \quad $f= \psistar \cdot \fstarp + \fstar $ \quad on \quad $\Ustar$ \quad 
and we can produce the required estimates by estimating \ $\fstarp$ \ and \ $\fstar$. \   
By using standard arguments, separation of variables, comparison with the catenmoidal bridges under the \ $\chiK$ \ metric, 
and comparison with \ $\Sigma$ \ it is straightforward (but lengthy) to complete the proof. 
\end{proof}

\subsection*{Conclusion on index and nullity of $\xibreveomo$ } 
\nopagebreak

\begin{prop}[Eigenvalue signs in $V^{-+}$ ]    
\label{P-+} 
\quad 
$\lambda_1(V^{-+},\Lcal ) = -2$, 
\\    
$\lambda_2(V^{-+},\Lcal ) = \lambda_3(V^{-+},\Lcal ) = 0$, 
\quad 
and 
\quad 
$0 < \lambda_4(V^{-+},\Lcal ) $. 
\end{prop}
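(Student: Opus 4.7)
The plan is to exhibit the three explicit eigenfunctions given in \ref{LJnu}---namely $\nu\cdot\pp^{\pi/2}\in V^{-+}_{0+}$ with $\Lcal$-eigenvalue $-2$, together with the Jacobi fields $J^{\cC_0^{\pi/2}}\in V^{-+}_{1+}$ and $J^{\cC_{\pi/2}^{\pi/2}}\in V^{-+}_{1-}$ (both of eigenvalue $0$)---and then to show that they account for all non-positive eigenvalues in $V^{-+}$ while the next eigenvalue is strictly positive. The analysis proceeds along the decomposition $V^{-+} = \bigoplus_{\mu=0}^{\mbar}\bigl(V^{-+}_{\mu+} \oplus V^{-+}_{\mu-}\bigr)$ of \ref{Lderot}\ref{Lderot10}, reducing the problem to determining the sign of the lowest eigenvalues in each summand.

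First I would identify these three eigenfunctions as ground states in their respective summands via a Courant-type nodal argument. Since $\nu\cdot\pp^{\pi/2}$ does not change sign on the fundamental domain $\xiomo^{++}$ by \ref{Lnup}, it is the ground state of $V^{-+}_{0+}$, giving $\lambda_1(V^{-+}_{0+},\Lcal) = -2$. Similarly, $J^{\cC_{\pi/2}^{\pi/2}}$ does not change sign on $\xiomo^{++}\cap\Sph^3_+$ by \ref{LJpp}, which together with the $\mu=1$ Floquet identification of \ref{LEmu} makes it the ground state of $V^{-+}_{1-}$, yielding $\lambda_1(V^{-+}_{1-},\Lcal) = 0$; a symmetric argument gives $\lambda_1(V^{-+}_{1+},\Lcal) = 0$.

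Next I would apply \ref{P:appr} after counting the dimensions of bridge constants and slidings in each summand. A key observation is that $\dim V^{-+}_{\mu\pm,\con} = 0$ for every $\mu$: since $L\subset\Sigma^0$ is fixed pointwise by $\refl_{\Sigma^0}$, the $V^-$ antisymmetry forces every bridge constant to vanish. The slidings $V^{-+}_{\mu\pm,\Sl}$ are constrained to the $\pp^{\pi/2}$-direction (the unique direction normal to $\Sigma^0$ compatible with $V^-$), and a direct tally using the explicit Killing fields in \eqref{E:K}--\eqref{E:Kcom} gives $\dim V^{-+}_{0-,\Sl} = 0$, $\dim V^{-+}_{1\pm,\Sl} = 1$ (matched exactly by the Jacobi fields from $\vecK_{\cC_{\pi/2}^0}$ and $\vecK_{\cC_0^0}$), and $\dim V^{-+}_{\mu\pm,\Sl} = 1$ for $\mu\geq2$ (with no corresponding Killing field on $\Sph^3$). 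This immediately yields positivity of the first eigenvalue in $V^{-+}_{0-}$ and of the second eigenvalue in $V^{-+}_{1\pm}$; for $\mu\geq2$ the positivity of the sliding eigenvalue follows by adapting the nodal-domain argument of \ref{Pmu+-}: a hypothetical non-positive sliding eigenvalue in $V^{-+}_{\mu\pm}$ would, via the complex ground state in $W^{-+}_\mu$, force at least $\mu\geq2$ nodal domains on $\xiomo^{++}\cap\Sph^3_+$, inconsistent with the single negative eigenvalue of $V^{-+}$ already accounted for by $\nu\cdot\pp^{\pi/2}$.

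The main obstacle is proving $\lambda_2(V^{-+}_{0+},\Lcal) > 0$. Here $\dim V^{-+}_{0+,\Sl} = 2$, generated by the rotation-invariant $\pp^{\pi/2}$-sliding along $L_0$ and the $\pp^{\pi/2}$-sliding at $\pm\pp^0$; neither arises from a global Killing field of $\Sph^3$ (the nearest Killings carry $\mu=1$ phase along $\CC$), so both produce small eigenvalues of a priori undetermined sign. I would determine these signs by adapting the gluing-type construction in the proof of \ref{P3rd}: build explicit approximate eigenfunctions on the decomposition pieces $\Sbreve[L_j,\sss_j]$ and $\Sbreve[\Sigma,\sbar_0,\sbar_2]$ in the spirit of \ref{L:3e0} and \ref{L:3ef}, where now the $V^-$ antisymmetry enforces a Dirichlet-type condition at the bridge waists rather than the Neumann-type condition used in \ref{P3rd}. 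Applying the Hadamard derivative formulas as in \ref{L:3ed} to the resulting $2\times2$ matching system, the two eigenvalues should emerge as positive combinations of flux derivatives whose signs are governed by the linearized balancing of $\xibreveomo$ transverse to $\Sigma^0$, completing the proof.
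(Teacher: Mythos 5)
Your overall framing — identify $\nu\cdot\pp^{\pi/2}$ (eigenvalue $-2$) and the two Jacobi fields $J^{\cC_0^{\pi/2}}\in V^{-+}_{1+}$, $J^{\cC_{\pi/2}^{\pi/2}}\in V^{-+}_{1-}$ (eigenvalue $0$), show they are ground states via \ref{Lnup} and \ref{LJpp}, and count the $V^{-+}$-constants (all zero by antisymmetry) and $V^{-+}$-slidings (all in the $\pp^{\pi/2}$-direction) so that \ref{P:appr} confines the low spectrum — matches the paper's setup and is correct. But both of the remaining steps have genuine problems. For $\mu\ge2$, your Courant contradiction invokes "the single negative eigenvalue of $V^{-+}$ already accounted for by $\nu\cdot\pp^{\pi/2}$," but $\nu\cdot\pp^{\pi/2}\in V^{-+}_{0+}\subset V^{-+}_+$ lies in the opposite $\refl_{\Sigma_0}$-symmetrization from $V^{-+}_{\mu-}\subset V^{-+}_-$ and gives no nodal bound there. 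What actually closes this (and is what the paper does) is to note via \ref{Lderot}\ref{Lderot4+} that the imaginary part of a $W^{-+}_{\mu+}$-eigenfunction lands in $V^{-+}_{\mu-}\subset V^{-+}_-$, and that $J^{\cC_{\pi/2}^{\pi/2}}$ is sign-definite on $\xiomo^{++}\cap\Sph^3_+$ (\ref{LJpp}), so by Courant it is the simple ground state of $V^{-+}_-$ with eigenvalue $0$; hence $\lambda_1^\C(W^{-+}_\mu,\Lcal)\ge0$ for all $\mu\ge1$ with equality only at $\mu=1$. (Also, for $m$ even, $V^{-+}_{m/2,-,\Sl}=0$ rather than $1$, and \ref{Lmu} does not identify $V^{-+}_{m/2,+}$ with $V^{-+}_{m/2,-}$, so $\mu=m/2$ is not simply a generic case of ``$\mu\ge2$.'')

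The more serious gap is the step you flag yourself as the main obstacle, $\lambda_2(V^{-+}_0,\Lcal)>0$. Your plan is to adapt the gluing construction of \ref{P3rd}, asserting that the two small eigenvalues "should emerge as positive combinations of flux derivatives whose signs are governed by the linearized balancing of $\xibreveomo$ transverse to $\Sigma^0$." This is a hope, not an argument, and the analogy to \ref{P3rd} fails at a structural level: the near-zero eigenvalue of \ref{P3rd} is a scaling, and what makes \ref{L:3e0}–\ref{L:3ed} work is that varying the catenoid parameter $\tau$ produces a genuine one-parameter family of approximate minimal doublings, so the Hadamard/flux matching is both tractable and of definite sign $\Sim 1/m$. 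The two $V^{-+}_{0+}$ slidings — pushing all equatorial bridges, or both polar bridges, uniformly off $\Sigma^0$ — do not come from a family of approximately minimal doublings, and nothing in your sketch shows the resulting flux balance has a determinate sign, let alone a positive one. The paper sidesteps the entire quantitative question by a qualitative nodal argument on the annulus $\xiomo^{++}$: if the second $V^{-+}_0$-eigenfunction has a nodal domain whose closure avoids $\partial_0\xiomo^{++}$, Dirichlet domain monotonicity against $\lambda_1(V^{--},\Lcal)=0$ (ground state $J^{\CCperp}$, via \ref{LJperp} and \ref{Lapp}\ref{Lapp--}) gives $\lambda>0$; otherwise, by the $\mu=0$ periodicity and the annulus topology the nodal domain is confined to $\yy\in[-2\pi/m,2\pi/m]$, and one compares with the rotated domain on which $J^{\cC_{\pi/2}^{\pi/2}}$ is sign-definite (\ref{LJpp}). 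Without this (or some comparably decisive) idea, your proposal leaves the hardest step unproved.
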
 

\begin{proof} 
We prove first that 
$$
\lambda_1^\C(W^{-+}_\mu,\Lcal ) \ge 0 
\quad 
\forall \mi\in {[1, m/2]} \cap\Z,   
$$
with equality only if $\mu=1$ in which case 
$\lambda_2^\C(W^{-+}_1  ,\Lcal ) > 0$.  
Clearly the imaginary part of an eigenfunction in 
$W^{-+}_{\mu+}$ is in 
$V^{-+}_{\mu-} \subset V^{-+}_{-} $. 
By \ref{Lderot}\ref{Lderot4+}, \ref{LJpp} and Courant's nodal theorem the claim follows. 

By \ref{Lderot} it remains only to prove that 
\begin{equation} 
\label{E-+} 
\lambda_2(V^{-+}_{0},\Lcal ) >0.  
\end{equation} 
Let $f$ be the corresponding eigenfunction. 
We assume without loss of generality that either $f\in V^{-+}_{0+}$ or $f\in V^{-+}_{0-}$.   
We recall that 
$\xiomo^{++}$ is an annulus topologically and its boundary $\partial \xiomo^{++}$ has two connected components: 
$\partial_2 \xiomo^{++}\subset\Sigma^0$ which is the waist of the catenoidal bridge at $\pp^0$, 
and $\partial_0 \xiomo^{++}\subset\Sigma^0 \cup\Sigma^{\pi/2}$     
which lies in the vicinity of $\CC$. 
If $f$ has a nodal domain whose interior does not intersect $\partial_0 \xiomo^{++}$    
we conclude by monotonicity of domain that 
$\lambda_2(V^{-+}_{0+},\Lcal ) > \lambda_1(V^{--},\Lcal )$  
and we apply \ref{Lapp}\ref{Lapp--}. 
If not, it is clear by the topology and the symmetries that $f$ has a nodal domain whose intersection with $\Sph^{3++}$ is contained in 
$$  
\Sph^3_{\yy\in[-2\pi/m , 2\pi/m ]} \, \subset \,  
\Sph^3_{\yy\in[-2\pi/m , \pi-2\pi/m ]} \, = \, \rot^\CC_{-2\pi/m} \Sph^3_+. 
$$
The proof is then completed by using monotonicity of domain, \ref{LJpp}, and Courant's nodal theorem. 
\end{proof} 

\begin{theorem}[Main Theorem] 
\label{Tmain} 
For $m$ large enough the index of $\xibreveomo$ is $2\gamma+5=2m+7$ and its nullity $6$, 
so it has no exceptional Jacobi fields and is $C^1$ isolated. 
\end{theorem}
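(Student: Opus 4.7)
The plan is to decompose the space $V$ of real-valued functions on $\Mbreve=\xibreveomo$ using the $\GrpSigmam$-invariant projections into the summands $V^{\pm\pm}_{\mu\pm}$ as in \ref{Lderot}\ref{Lderot10}, observe that the Jacobi operator $\Lcal$ (equivalently $\Lcalh$) preserves this decomposition by \ref{Lderot}\ref{Lderot1}, and then simply tabulate in each summand the number of negative eigenvalues and the number of zero eigenvalues, using the detailed count already assembled in Section \ref{S:eq-p}. Since sign of eigenvalue is unaffected by the conformal rescaling $\Lcal \mapsto \Lcalh$, I work with $\Lcalh$ throughout, and write $n^-(U):= \#_{<0}(U,\Lcalh)$, $n^0(U):= \#_{=0}(U,\Lcalh)$ for each summand $U$.

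First I would handle the $\mu=0,1$ pieces. In $V^{++}_{0+}$, \ref{Lapp}\ref{Lapp1} together with \ref{P3rd} give $n^-=2$, $n^0=0$; in $V^{++}_{0-}$, \ref{Lapp}\ref{Lapp2} gives $n^-=0$, $n^0=1$ (from $J^\CC$); in each of $V^{++}_{1+}$ and $V^{++}_{1-}$, \ref{Lapp}\ref{Lapp3} combined with \ref{Pcos3} and \ref{Pcos4} yields $n^-=3$, $n^0=0$. Next, in $V^{+-}_{0+}$, \ref{Lapp}\ref{Lapp7} together with \ref{P3rd} gives $n^-=2$, $n^0=0$; $V^{+-}_{0-}$ is positive by \ref{Lapp}\ref{Lapp8}; each of $V^{+-}_{1+}$ and $V^{+-}_{1-}$ contributes $n^-=1$, $n^0=1$ by \ref{Lapp}\ref{Lapp9} (the zero eigenfunctions being $J_{\cC^{\pi/2}_{\pi/2}}$ and $J_{\cC^0_{\pi/2}}$). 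The summand $V^{-+}$ is handled globally by \ref{P-+}, giving $n^-=1$ (the constant $-2$ eigenvalue associated to $\nu\cdot\pp^{\pi/2}$) and $n^0=2$ (the two Jacobi fields listed in \ref{LJnu}). Finally $V^{--}$ contributes $n^-=0$, $n^0=1$ by \ref{Lapp}\ref{Lapp--} ($J^{\CCperp}$).

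It remains to process the higher frequencies $\mu\in\mo$. For each $\mu\in\mtwo$, \ref{Lapp}\ref{Lapp4} combined with \ref{Pmu++} gives $n^-=2$, $n^0=0$ in each of $V^{++}_{\mu+}$ and $V^{++}_{\mu-}$, while \ref{Lapp}\ref{Lapp10} combined with \ref{Pmu+-} gives $n^-=0$, $n^0=0$ in each of $V^{+-}_{\mu+}$ and $V^{+-}_{\mu-}$. When $m$ is even, the summands at $\mu=m/2$ are covered separately: \ref{Lapp}\ref{Lapp5} and \ref{Pmu++} yield $n^-=1$, $n^0=0$ in each of $V^{++}_{\mh+}$ and $V^{++}_{\mh-}$; and \ref{Lapp}\ref{Lapp11} together with \ref{Pmu+-} yields $n^-=0$, $n^0=0$ in $V^{+-}_{\mh\pm}$. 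Summing over the decomposition \ref{Lderot}\ref{Lderot10}, the total nullity is $1+1+1+2+1=6$, matching exactly the six Jacobi fields listed in \ref{LJnu}, so there are no exceptional Jacobi fields and $\Mbreve$ is $C^1$-isolated. For the index, adding up negative counts gives
\begin{equation*}
\underbrace{2+0}_{V^{++}_{0\pm}} + \underbrace{3+3}_{V^{++}_{1\pm}} + \underbrace{\textstyle\sum_{\mu\in\mtwo} 4}_{V^{++}_{\mu\pm}} + \underbrace{2+0}_{V^{+-}_{0\pm}} + \underbrace{1+1}_{V^{+-}_{1\pm}} + \underbrace{1}_{V^{-+}} + \underbrace{0}_{V^{--}} \quad (+\, 1+1 \text{ if } m\text{ even}),
\end{equation*}
which evaluates in both parities to $2m+7=2\gamma+5$, since $|\mtwo|=(m-3)/2$ when $m$ is odd and $|\mtwo|=m/2-2$ together with the two extra contributions at $\mu=m/2$ when $m$ is even.

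The only non-routine step is making sure the accounting in $V^{++}_{1\pm}$ is complete and sharp: this is where $\dim\Vt_{\Sl}$ jumps from $1$ to $2$ (two independent slidings, one along $\CC$ and one at the poles) so the approximate low spectrum supplies three eigenvalues near $-2$ and in $(-\epsilon,\epsilon)$ that must be sign-separated. The hardest single ingredient is therefore Proposition \ref{Pcos4}, which shows no fourth negative eigenvalue appears — and once that is in hand together with \ref{Pcos3} pinning down the third as negative, the remainder of the proof is the bookkeeping above.
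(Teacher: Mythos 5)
Your proposal is correct and follows essentially the same strategy as the paper's proof: decompose $V$ into the summands $V^{\pm\pm}_{\mu\pm}$, count negative and zero eigenvalues in each summand using \ref{Lapp}, \ref{P3rd}, \ref{Pcos3}, \ref{Pcos4}, \ref{Pmu++}, \ref{Pmu+-}, \ref{P-+}, and \ref{LJnu}, and sum. The paper compresses the higher-frequency tally into the single expression $2(m-3)$ regardless of parity, while you write out the even/odd cases separately, but the bookkeeping agrees in both parities and the identification of all six Jacobi fields with the kernel is the same; the only blemish is a harmless notational slip ($J_{\cC^0_{\pi/2}}$ should read $J_{\cC_0^{\pi/2}} = J^{\cC_{\pi/2}^0}$, the element of $V^{+-}_{1-}$ given in \ref{LJnu}).
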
 

\begin{proof} 
We have already seen that on $\xibreveomo$ all eigenfunctions with $0$ eigenvalue are induced by Killing fields. 
We calculate now the index as follows. 
\\  
$\ind(V^{++}) 
\, = \, \ind(V^{++}_{0+}) + \ind(V^{++}_{0-}) + \ind(V^{++}_1) + \ind( \, \Oplus_{\mi\in\mo} V^{++}_\mu \, )  
\, = \, 2 + 0 + 6 + 2(m-3) = 2m+2$ 
by \ref{Lapp}\ref{Lapp1}, \ref{P3rd}; \ref{Lapp}\ref{Lapp2}; \ref{Lapp}\ref{Lapp3}, \ref{Pcos3}, \ref{Pcos4};    \ref{Lapp}\ref{Lapp4}-\ref{Lapp5}, \ref{Pmu++}. 
\\
$\ind(V^{+-}) 
\, = \, \ind(V^{+-}_{0+}) + \ind(V^{+-}_{0-}) + \ind(V^{+-}_1) + \ind( \, \Oplus_{\mi\in\mo} V^{+-}_\mu \, )  
\, = \, 2 + 0 + 2 + 0      = 4$
by \ref{Lapp}\ref{Lapp7}, \ref{P3rd}; \ref{Lapp}\ref{Lapp8}; \ref{Lapp}\ref{Lapp9}; \ref{Lapp}\ref{Lapp10}-\ref{Lapp11}, \ref{Pmu+-}. 
\\
$\ind(V^{-+}) = 1 $ by \ref{Lapp}\ref{Lapp-+}, \ref{P-+}. 
\\
$\ind(V^{--}) = 0 $ by \ref{Lapp}\ref{Lapp-+}. 
\\
By adding and since the genus is \, $\gamma=m+1$ \,  
we conclude that $\ind(V) = 2m+7=2\gamma+5 $.   
\end{proof}

\bibliographystyle{amsplain}
\bibliography{paper}
\end{document}